\documentclass[a4paper,12pt]{article}
\usepackage{amsmath,amsthm,amssymb,amscd,mathtools}
\usepackage{xfrac}
\usepackage[
            scr=boondoxo]
           {mathalpha}
\usepackage{hyperref,enumitem,cleveref}
\setlist[enumerate,1]{label=\textnormal{(\roman*)},ref=(\roman*)}
\usepackage{xspace}
\hypersetup{
    colorlinks=true,
    linkcolor=purple,
    citecolor=magenta,
    filecolor=cyan,
    urlcolor=cyan,
    pdftitle={A classification of group gradings on incidence algebras over commutative rings},
    pdfpagemode=FullScreen,
    }
\usepackage{tikz}
\usetikzlibrary{calc}
\usetikzlibrary{positioning}
\tikzset{
  treenode/.style = {align=center, inner sep=0pt, text centered,
    font=\sffamily},
 arn_n/.style = {treenode, circle, white, font=\sffamily\bfseries, draw=black,
    fill=white, text width=0.6ex},
  arn_p/.style = {treenode, circle, white, font=\sffamily\bfseries, draw=black,
    fill=white, text width=0.6ex},
}
\newif\ifelementary
\elementaryfalse
\newtheorem{theorem}{Theorem}[section]
\newtheorem{corollary}[theorem]{Corollary}
\newtheorem{lemma}[theorem]{Lemma}
\newtheorem{proposition}[theorem]{Proposition}

\theoremstyle{definition} 
\newtheorem{definition}[theorem]{Definition}
\newtheorem{example}[theorem]{Example}
\newtheorem{remark}[theorem]{Remark}

\theoremstyle{definition}
\providecommand{\subjclass}[2][2020]{\textbf{\textit{#1 Mathematics Subject Classification: }} #2}
\providecommand{\keywords}[1]{\textbf{\textit{Keywords: }} #1}

\newcommand{\highlight}[1]{\textit{#1}}

\DeclareMathOperator{\Hom}{Hom}
\DeclareMathOperator{\End}{End}

\DeclareMathOperator{\UT}{UT}
\DeclareMathOperator{\Aut}{Aut}

\DeclareMathOperator{\Exp}{Exp}

\DeclareMathOperator{\Span}{Span}

\newcommand{\mathmode}[1]{\ensuremath{#1}\xspace}

\newcommand{\aset}[1]{\mathmode{\mathcal{#1}}} 

\newcommand{\Sset}{\aset{S}}

\newcommand{\sset}{\mathmode{\star}-set\xspace} 
\newcommand{\ssets}{\mathmode{\star}-sets\xspace} 

\newcommand{\nbset}[1]{\mathmode{\mathbb{#1}}}
\newcommand{\ZZ}{\nbset{Z}}

\newcommand{\RR}{\nbset{R}}
\newcommand{\CC}{\nbset{C}}

\newcommand{\JR}[1]{\mathmode{J(#1)}} 
\newcommand{\NR}[1]{\mathmode{N(#1)}} 
\newcommand{\ZR}[1]{\mathmode{Z(#1)}} 

\DeclareMathOperator{\car}{char}
\newcommand{\omegas}[1]{\mathmode{\mathscr{#1}}}
\newcommand{\Eset}{\omegas{E}}

\newcommand{\Tset}{\omegas{T}}
\newcommand{\R}{\mathmode{\mathcal{R}}}   
\newcommand{\dR}{an indecomposable commutative ring with 1\xspace} 
\newcommand{\LR}{\mathmode{\mathcal{R}}}  
\newcommand{\dLR}{a commutative ring with $1$\xspace} %

\newcommand{\uR}{\mathmode{\R^{\times}}} 

\newcommand{\iM}{\mathmode{\mathfrak{m}}}

\newcommand{\F}{\mathmode{\mathbb{F}}}

\newcommand{\poset}[1]{\mathmode{\mathscr{#1}}}
\newcommand{\PO}{\poset{P}} 
\newcommand{\QO}{\poset{Q}} 
\newcommand{\RO}{\poset{R}} 
\newcommand{\CO}{\poset{C}} 
\newcommand{\dPO}{a finite partially ordered set\xspace} 
\newcommand{\dLPO}{a locally finite partially ordered set\xspace} 

\newcommand{\oP}{\mathmode{{|\PO|}}}

\newcommand{\alg}[1]{\mathmode{\mathscr{#1}}} 
\newcommand{\Aalg}{\alg{A}}
\newcommand{\Balg}{\alg{B}}

\newcommand{\Dalg}{\alg{D}} 

\newcommand{\modu}[1]{\mathmode{\mathcal{#1}}} 
\newcommand{\mM}{\modu{M}}
\newcommand{\mN}{\modu{N}}

\newcommand{\Inc}[2]{\mathmode{{I(#1,#2)}}} 
\newcommand{\IPR}{\mathmode{\Inc{\PO}{\R}}}  

\newcommand{\jIPR}{\mathmode{\JR{\Inc{\PO}{\R}}}}  
\newcommand{\zIPR}{\mathmode{\ZR{\Inc{\PO}{\R}}}}  
\newcommand{\nIPR}{\mathmode{\NR{\Inc{\PO}{\R}}}}  
\newcommand{\IQR}{\mathmode{\Inc{\QO}{\R}}}  
\newcommand{\IRR}{\mathmode{\Inc{\RO}{\R}}}  

\newcommand{\idemp}[1]{\mathmode{\mathscr{#1}}} 
\newcommand{\bidemp}[1]{\mathmode{\mathbf{#1}}} 
\newcommand{\ee}{\idemp{e}}
\newcommand{\ff}{\idemp{f}}
\newcommand{\cc}{\idemp{c}}
\newcommand{\dd}{\idemp{d}}
\newcommand{\hh}{\idemp{h}}
\newcommand{\bee}{\bidemp{e}}

\newcommand{\kk}{\idemp{k}}
\newcommand{\ww}{\mathmode{w}} 

\DeclareMathOperator{\supf}{Supp} 
\DeclareMathOperator{\minsup}{min\,Supp} 
\newcommand{\Hsup}[2]{\H_{#1}^{#2}}

\newcommand{\phisup}[2]{\phi_{#1}^{#2}}

\newcommand{\G}{\mathmode{G}}
\renewcommand{\H}{\mathmode{H}} 
\newcommand{\K}{\mathmode{K}}
\newcommand{\U}{\mathmode{U}}
\renewcommand{\L}{\mathmode{L}}
\renewcommand{\S}{\mathmode{S}}
\newcommand{\gL}{\mathmode{L}}
\newcommand{\gA}[2]{\mathmode{#1#2}} 
\newcommand{\RG}{\gA{\R}{\G}}
\newcommand{\FG}{\gA{\F}{\G}}
\newcommand{\RH}{\gA{\R}{\H}}
\newcommand{\RK}{\gA{\R}{\K}}
\newcommand{\RL}{\gA{\R}{\gL}}
\newcommand{\hG}{\mathmode{{\widehat{\G}}}} 
\newcommand{\hH}{\mathmode{{\widehat{\H}}}}
\newcommand{\hK}{\mathmode{{\widehat{\K}}}}
\newcommand{\hL}{\mathmode{{\widehat{\L}}}}
\newcommand{\hS}{\mathmode{{\widehat{\S}}}}
\newcommand\oG{\mathmode{{|\G|}}}
\newcommand\oH{\mathmode{{|\H|}}}
\newcommand\oK{\mathmode{{|\K|}}}
\newcommand{\Q}{\mathmode{Q}}            
\newcommand{\RQ}{\gA{\R}{\Q}}
\newcommand{\hQ}{\mathmode{{\widehat{\Q}}}}
\newcommand{\oQ}{\mathmode{{|\Q|}}}
\newcommand{\mA}{\modu{A}}               
\newcommand{\mP}{\modu{P}}
\newcommand{\gr}{\mathrm{gr}}
\newcommand{\Homgr}{\Hom^{\gr}}
\newcommand{\Ext}[2]{\mathmode{E(#1,#2)}} 
\newcommand{\Types}{\mathmode{\Tset_{\H,\K}(\G)}}

\title{A classification of group gradings on incidence algebras over commutative rings}
\author{Felipe de Mattos Chafik Hindi\footnote{Department of Mathematics, UFSCar, Brazil, \texttt{felipehindi@gmail.com}}\\
Waldeck Sch\"utzer\footnote{Department of Mathematics, UFSCar, Brazil, \texttt{waldeck@dm.ufscar.br}} }
\date{September 17, 2026}
\begin{document}
\maketitle

\begin{abstract}
Let \LR be \dLR, \PO \dLPO, and \G a group. We derive necessary and sufficient conditions for an \R-algebra isomorphism between the incidence algebra \IPR and the group algebra \RG. Then, for an indecomposable ring \R, a finite poset \PO and an arbitrary group \G, we classify the \G-gradings of \IPR up to graded isomorphism. The classification rests on a complete set of primitive orthogonal homogeneous idempotents. The corner algebras are split group algebras of finite abelian subgroups of \G, and the off-diagonal Peirce blocks are multiplicity-free sums of bimodules induced from characters of double coset stabilizers. Graded isomorphisms are shown to have a rigid form, and a grading is determined up to graded isomorphism by the poset of idempotents, the corner groups, the types of the atomic bimodules and the structure constants of their multiplication. The data which occur are characterized by polynomial conditions, and over an algebraically closed field of characteristic zero only finitely many graded isomorphism classes share given partial invariants. An example shows that the structure constants cannot be omitted. Some previous results are extended and enhanced, while providing alternative proofs for some known facts.
\end{abstract}
\subjclass[2020]{Primary 16W50; Secondary 16S50, 16D20, 06A11}

\keywords{Group algebras, Partially ordered sets, Incidence algebras, Group gradings}

%
%
%
\section{Introduction}\label{sec:intro}
Incidence algebras of partially ordered sets and their generalizations
have attracted considerable attention ever since their
inception. This became evident shortly
after the series of seminal papers of Giancarlo Rota and his collaborators \cite{Rota}. Although the initial interest was mainly in the possible applications in combinatorics,
it has become evident that these objects are interesting in their own right.
These algebras can be seen as analogs to group algebras,
since both belong to the larger class known as
category algebras, in which the role of the group or the partially
ordered set is played by an arbitrary (locally finite, small) category.
The structural theory of both types of algebras is well developed, particularly in regards to the question of whether
any two algebras pertaining to one of these classes are isomorphic.
Although the main object of study here are the gradings on incidence algebras, it seemed quite natural to us to ask whether an incidence algebra could
be isomorphic to a group algebra. The question arises naturally
in our study and it seemed sufficiently interesting to give it some attention.
As a result, necessary and sufficient conditions for the existence of the
isomorphism were obtained. As an application,
we completely classified the group gradings on
incidence algebras up to graded isomorphism when \R is \dR and \PO is finite, no restriction being imposed on the group \G. The classification is \Cref{thm:classification}: a complete invariant is attached to each grading, two gradings are isomorphic precisely when their invariants agree, and \Cref{prop:incidence:condition,prop:incidence:explicit} decide which values of the invariant occur. The main results along the way are \Cref{thm:ipr:iso:comm:fin:grpalg,thm:xmin,thm:biregular:graph,thm:graded:peirce,cor:graded:radicals}, the structure theorem for the bimodules $\ee\IPR\ff$ (\Cref{teo:structure:bimod}) and the description of graded isomorphisms (\Cref{thm:graded:iso:structure}).

We acknowledge that this work draws from previous works, particularly \cite{Santulo-Souza-Yasumura} and \cite{Miller-Spiegel}, but also that our techniques and results differ significantly. In particular, we rely mostly on ring-theoretic concepts and facts, including the Galois theory on commutative rings, we impose no conditions on the group \G, and the only restriction on the unital commutative ring \R is indecomposability. We hope that the results contained here will greatly enhance our understanding of graded incidence algebras and their applications.

The paper is organized as follows. \Cref{sec:prelim} fixes notation and collects elementary facts about incidence algebras, their idempotents and split group algebras. \Cref{sec:groupalg} determines when an incidence algebra is isomorphic to a group algebra (\Cref{thm:ipr:iso:comm:fin:grpalg}). The resulting description of the isomorphism by characters is the model for what follows. \Cref{sec:xmin} contains the basic local result, \Cref{thm:xmin}: an extremal element $x$ of \PO yields a primitive homogeneous idempotent $\ee_x$ whose corner subalgebra is the split group algebra $\RH_x$ of a finite abelian subgroup $\H_x$ of \G. Borrowing terminology from \cite[Definition 1]{Miller-Spiegel}, in \Cref{sec:stars} these idempotents are assembled into a \sset primitive homogeneous pairwise orthogonal idempotents, which is unique up to graded inner automorphism and is partially ordered by the nonzero Peirce blocks. \Cref{sec:peirce} derives the graded triangular Peirce decomposition of \IPR, with consequences for good gradings, gradings by torsion-free groups, gradings on $\UT_n\R$ and the graded radicals. \Cref{sec:bimodules} describes the off-diagonal Peirce blocks in terms of double cosets and characters of stabilizers, and computes the tensor product of two such blocks (\Cref{prop:mackey}). \Cref{sec:isos} shows that graded isomorphisms have a rigid form (\Cref{thm:graded:iso:structure}). \Cref{sec:classification} identifies a complete invariant of a graded incidence algebra, namely the poset of a \sset, the corner groups, the types of the atomic bimodules and the structure constants of their multiplication, and characterizes the data which arise from incidence algebras. The two together classify the \G-graded incidence algebras of finite posets over \R up to graded isomorphism (\Cref{thm:classification,prop:incidence:condition}). An example shows that the structure constants cannot be dispensed with, and a rigidity theorem (\Cref{thm:rigidity}) shows that, over an algebraically closed field of characteristic zero, they take only finitely many values up to equivalence.

%
%
%

\section{Preliminaries}\label{sec:prelim}
We begin by recalling that a \highlight{locally finite partially ordered set} \PO is one whose
closed intervals $[x,y]=\{t\in \PO \mid x\leq t\leq y\}$ are finite. We say that \PO is \highlight{bounded} if it contains no infinite chain (a totally ordered subset) and \highlight{doubly infinite} if it admits an embedding of the totally ordered set $\mathbb{Z}$ (with the natural order). The partially ordered sets $\mathbb{Z}_{+}$, $\mathbb{Z}_{-}$,
and $\dot{\cup}_{n\geq 1}\CO_{n}$ (infinite disjoint union of increasingly
longer chains) are examples of unbounded partially ordered sets which
are not doubly infinite. A subset \QO of \PO is an \highlight{antichain}
if the elements in \QO are pairwise unrelated in \PO.

If \LR\ is any ring, the \highlight{incidence algebra} \IPR\ of \PO\ is the set
\[
\IPR = \{f:\PO\times \PO\rightarrow \LR\,|\,f(x,y)\neq 0\implies x\leq y\},
\]
together with the operations
\begin{align*}
(rf)(x,y) &= rf(x,y),   \\
(f+g)(x,y) &= f(x,y) + g(x,y), \\
(fg)(x,y) &= \sum_{z\in[x,y]}f(x,z)g(z,y), \\
\end{align*}
for all $f,g\in \IPR$, $r\in\LR$, and $x,y,z\in \PO$. When \R is a ring with 1, for $x,y\in\PO$, one may consider the
element
$\ee_{xy}$ defined as $\ee_{xy}(u,v)\neq 0\implies (u,v)=(x,y)$, $x\leq y$,  and $\ee_{xy}(x,y)=1$. For $x\nleq y$ we set $\ee_{xy}=0$. These elements are such  that $\ee_{xy}\ee_{yz} = \ee_{xz}$, if $x\leq y\leq z$, and $\ee_{xy}\ee_{uv}=0$ if $y\neq u$. The incidence algebra \IPR\ becomes unital, the unity element $1$ being defined as $1(x,y)\neq 0\implies x=y$ and $1(x,x)=1$ for all $x\in\PO$.
The following identity is easily verified in \IPR:
\[
\ee_{xy} f \ee_{zw} = f(y,z) \ee_{xw},
\]
for all $x,y,z,w\in\PO$. In particular, $\ee_{xx}f\ee_{yy}=f(x,y)\ee_{xy}$.
It can be shown that
every $f\in\IPR$ is uniquely expressible as a (possibly infinite)
sum of the form:
\[
   f = \sum_{(x,y)\in\PO\times\PO} f(x,y)\ee_{xy}.
\]
This is possible since \IPR is a complete Hausdorff topological algebra with the finite topology
inherited from $\R^{\PO\times \PO}$, where \R is usually considered as a topological ring with the discrete topology.

It follows from Szpilrajn's Lemma that
\PO can be embedded in a totally ordered set. In particular, when $\PO=\{x_1,\ldots,x_n\}$ is finite,
it is possible to index its elements in such a way that $x_i\leq x_j\implies i\leq j$, and it is common to write $\ee_{ij}$ for $\ee_{x_ix_j}$. In this case, the map $\ee_{ij}\mapsto E_{ij}$ defines an embedding
of $\IPR$ into the algebra $\UT_n(\R)$ of upper triangular matrices with entries in \R.
Even when \PO is infinite, we usually say that the elements in $\{f\in\IPR\,|\, f(x,y)\neq 0\implies x=y\}$ are \highlight{diagonal},
and the elements in
\[
  \zIPR = \{ f\in\IPR\,|\,f(x,y)\neq 0 \implies x\neq y \}
\]
(not to be confused with the center) are \highlight{strictly upper triangular}. Every element $f\in\IPR$ decomposes uniquely into $f=f_d+f_z$ where $f_d$ is diagonal and $f_z $ is strictly upper triangular.
It is easy to check that $\zIPR$ is
a two-sided ideal. This ideal plays an important role in the structure theory
of incidence algebras along with the \highlight{Jacobson Radical} \jIPR and
the \highlight{nilradical} \nIPR. It turns out that \jIPR is the intersection of all
maximal ideals (\cite[Proposition 4.2.8]{Spiegel-ODonnell}) and \nIPR is the intersection of all prime ideals in \IPR, thus
\zIPR and \nIPR are ideals in \jIPR. The following fact concerning these ideals will be very useful to us:
\begin{lemma}\label{lem:radicals}{\cite[Proposition 4.2.8]{Spiegel-ODonnell}} Let \R be a commutative ring with 1 and \PO  \dLPO. Then the map $\IPR/\zIPR\to\prod_{x\in\PO}\R$ given by $f+\zIPR\mapsto (f(x,x))_{x\in\PO}$ is an isomorphism of \R-algebras.
\end{lemma}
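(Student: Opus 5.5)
The plan is to define the map $\pi\colon\IPR\to\prod_{x\in\PO}\R$ by $\pi(f)=(f(x,x))_{x\in\PO}$ and check three things: that it is an \R-algebra homomorphism, that it is surjective, and that its kernel is exactly \zIPR. The isomorphism of \R-algebras then follows from the first isomorphism theorem.

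Linearity of $\pi$ over \R is immediate from the pointwise definitions of $rf$ and $f+g$. The unity $1$ of \IPR satisfies $1(x,x)=1$ for all $x$, so $\pi(1)=(1)_{x\in\PO}$. For multiplicativity, the convolution formula gives
\[
(fg)(x,x)=\sum_{z\in[x,x]}f(x,z)g(z,x)=f(x,x)g(x,x),
\]
since $[x,x]=\{x\}$. This step works for any locally finite \PO, with no finiteness assumption on \PO itself.

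For surjectivity, I would take an arbitrary family $(r_x)_{x\in\PO}$ and define $f(x,y)=r_x$ if $x=y$ and $f(x,y)=0$ otherwise. This $f$ lies in \IPR, because $f(x,y)\neq0$ forces $x=y$ and hence $x\le y$. Its image is $\pi(f)=(r_x)_x$. No convergence question arises here, because elements of \IPR are arbitrary functions subject only to the support condition.

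For the kernel, $\pi(f)=0$ means $f(x,x)=0$ for every $x$, which says exactly that $f(x,y)\neq0$ implies $x\neq y$, i.e. $f\in\zIPR$. So $\ker\pi=\zIPR$, which in particular confirms that \zIPR is a two-sided ideal. The induced map $f+\zIPR\mapsto(f(x,x))_x$ is therefore a bijective \R-algebra homomorphism. I do not expect a real obstacle anywhere. The only point that needs attention is noting that $[x,x]=\{x\}$ makes the diagonal of a product equal to the product of the diagonals; this is what makes $\pi$ multiplicative even though the product $\prod_{x\in\PO}\R$ is taken over an infinite index set.
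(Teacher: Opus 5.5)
Your proof is correct and complete: $\pi$ is an $\R$-algebra homomorphism because $[x,x]=\{x\}$, it is onto via diagonal functions (which lie in $\IPR$ with no convergence issue), and its kernel is exactly $\zIPR$. The paper gives no proof of this lemma and only cites \cite[Proposition 4.2.8]{Spiegel-ODonnell}; your first-isomorphism-theorem argument is the standard direct verification.
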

Our main reference for basic facts concerning incidence algebras of locally finite partially ordered sets over commutative rings is \cite{Spiegel-ODonnell}.

In this work, we consider group gradings on the algebra $\Aalg=\IPR$. Recall that an \R-algebra \Aalg is graded by a group \G if there exist \R-submodules $\Aalg^g$ (possibly zero) for each $g\in\G$, such that:
\begin{enumerate}
    \item $\Aalg = \bigoplus_{g\in\G}\Aalg^g;$
    \item $\Aalg^g\Aalg^h\subseteq \Aalg^{gh}$, for all $g,h\in\G$.
\end{enumerate}
In this case, an element $a\in\Aalg^g$ is said to be \highlight{homogeneous} of degree $g$, and we write $\deg a=g$. In the case where $\Aalg=\Aalg^1$, the grading is called \highlight{trivial}. The grading is \highlight{fine} if every nonzero $\Aalg^g$ is a free \R-module of rank one, and \highlight{strong} if $\Aalg^g\Aalg^h=\Aalg^{gh}$ for all $g,h\in\G$. A \highlight{graded} submodule, subalgebra or ideal is one that contains the homogeneous components of each of its elements. A subalgebra \Dalg of \Aalg is graded precisely when $\Dalg=\bigoplus_g(\Dalg\cap\Aalg^g)$, and then the identity of \Dalg, if any, is homogeneous of degree $1$. A \highlight{graded isomorphism} between graded algebras (or graded modules) is an isomorphism which carries homogeneous elements of degree $g$ to homogeneous elements of degree $g$, for every $g\in\G$. Two \G-gradings on the same algebra \Aalg are \highlight{isomorphic} if some \R-algebra automorphism of \Aalg is a graded isomorphism from the first grading to the second. A \highlight{graded inner automorphism} is the inner automorphism determined by an invertible element of $\Aalg^1$.

The grading on \IPR is called
\highlight{good} if the elements $\ee_{xx}$ are homogeneous, for all $x\in\PO$.
If \PO is finite of size $n$, we may assume that $\PO=\{1,2,\ldots,n\}$. In this case, we say that the grading is \highlight{elementary} if $\deg\ee_{ij}=g^{-1}_i g_j$ for some sequence $(g_1,\ldots,g_n)\in\G^n$. It is clear that an elementary grading is good, but not the other way around (cf. \cite[Example 12]{Jones}). When \R is indecomposable, a good grading makes every $\ee_{xy}$ homogeneous. Indeed, $\R\ee_{xy}=\ee_{xx}\IPR\ee_{yy}$ is then a graded submodule, so $\R\ee_{xy}=\bigoplus_gI_g\ee_{xy}$ with $I_g=\{r\in\R\mid r\ee_{xy}\in\IPR^g\}$, and $\R=\bigoplus_gI_g$ is a direct sum of ideals. Writing $1=\sum_gr_g$ with $r_g\in I_g$, we get $r_gr_h\in I_g\cap I_h=0$ for $g\neq h$, so the $r_g$ are orthogonal idempotents with sum $1$. As \R is indecomposable, exactly one of them is $1$, and $\ee_{xy}=r_g\ee_{xy}$ is homogeneous.
In a separate work \cite{Talpo-Schutzer}, we extended and studied this notion for arbitrary locally finite partially ordered sets by considering certain functions $\theta:\PO\to \G$ such that $\deg \ee_{xy}=\theta(x)^{-1}\theta(y)$.
Moreover, as the next examples show, \IPR might admit group gradings
which are not isomorphic to neither elementary nor good gradings (see also \cite[Example 1]{Miller-Spiegel} and \cite[Example 1]{Santulo-Souza-Yasumura}).
\begin{example}\label{exam:simplest-non-good-grading}
Let $\PO=\{1,2\}$ be the antichain with two elements, \G\ be a group containing an element $g$ of order 2, and assume that $2$ is a unit
in \R. The grading assignment
\begin{align*}
 \deg (\ee_{11}+\ee_{22}) &= 1, \\
 \deg (\ee_{11}-\ee_{22}) &= g,
\end{align*}
gives a well defined \G-grading on \IPR. If this grading were isomorphic to
a good grading, then the good grading would be trivial, which is obviously not the case, since $g\neq 1$.
\end{example}
\begin{example}\label{exam:simplest-non-balanced}Let \PO be the partially ordered set $\PO=\{1,2,3\}$ where
$1<3$ and $2<3$, but $1$ and $2$ are unrelated:
\begin{center}
\begin{tikzpicture}[level/.style={sibling distance = 5cm/#1,
  level distance = 1.5cm}]
\node [arn_n] (a) [label=left:{$\scriptstyle 3$}] {};
\node [arn_n] (b) [below left of=a,label=left:{$\scriptstyle 1$}] {};
\node [arn_n] (d) [below right of=a,label=right:{$\scriptstyle 2$}] {};
\draw []  (a) edge (b);
\draw []  (a) edge (d);
\end{tikzpicture}
\end{center}
Let $\G$ be the abelian 4-group $\{1,g,h,k\}$ with $g^2=1$, $gh=k$, $gk=h$,
and suppose that $o(g)=2$ is a unit in \R. Consider the grading assignment
\begin{align*}
\deg (\ee_{11} + \ee_{22}) &= 1 ,&
\deg (\ee_{11} - \ee_{22}) &= g, \\
\deg \ee_{33} &= 1, \\
\deg (\ee_{13} + \ee_{23}) &= h ,&
\deg (\ee_{13} - \ee_{23}) &= k.
\end{align*}
This gives a well-defined \G-grading on \IPR, which is not isomorphic to a
good grading because $\dim_{\R}\IPR^1<3$ (see also \Cref{cor:equiv:good:grading}).
\end{example}
In both examples above, if we drop the requirement that the order of $g$
is a unit in \R, the
assigment no longer defines a valid grading. For instance, if $\R=\mathbb{Z}/4\mathbb{Z}$, the element $\ee_{11}$ cannot be written as a combination of the given homogeneous elements.

Drawing from \cite{Miller-Spiegel}, the notion of support
and of minimum support
will be useful to the present work.

%
%
\begin{definition}\label{def:minsup}Let \Aalg be a subalgebra of \IPR. The \highlight{support} of $f\in\Aalg$ in \PO is the set
\[ \supf_\Aalg f = \{ x\in \PO \mid f(x,x)\neq 0\}. \]

If $\Aalg$ is a graded subalgebra of \IPR, the \highlight{minimum support} of $x\in\PO$ relative to \Aalg is the set
\[
\minsup_\Aalg(x) = \bigcap_{\mathclap{\substack{f\text{ homogeneous in  \Aalg}\\f(x,x)\neq 0}}}\supf_\Aalg f.
\]
\end{definition}%
When \Aalg is \IPR itself, we write simply $\supf f$ and $\minsup(x)$ for these sets.

Recall that two idempotents $\ee,\ff$ in a ring $\Aalg$ are \highlight{isomorphic},
written $\ee\cong \ff$, if $\ee\Aalg \cong \ff\Aalg$ as right
$\Aalg$-modules.
Also recall that a ring \R is \highlight{indecomposable} if it cannot be expressed
as a direct product of two nonzero rings.
Equivalently, such a ring contains no idempotent elements other than 0 and 1. Over an indecomposable ring the rank of a finitely generated projective module is constant. Whenever \mM is finitely generated projective, we write $\dim_\R\mM$ for this constant rank. Likewise, when \mM is free of finite rank over a corner algebra \Dalg, we write $\dim_\Dalg\mM$ for its free rank.

\begin{lemma}\label{lem:iso:idemp}Let \R\ be an indecomposable commutative ring with 1, and
$e,f$ two idempotents in \IPR. The following are equivalent:
\begin{enumerate}
\item\label{lem:iso:idemp:1} $\supf \ee = \supf \ff$;
\item\label{lem:iso:idemp:4} $\ee-\ff\in\zIPR$;
\item\label{lem:iso:idemp:5} $\ee+\zIPR = \ff+\zIPR$;
\item\label{lem:iso:idemp:2} $\ee\cong \ff$;
\item\label{lem:iso:idemp:6} There are elements $a,b\in\IPR$ such that $\ee=ab$ and $\ff=ba$;
\item\label{lem:iso:idemp:3} There is
an invertible element $h\in\IPR$ such that $\ee=h \ff h^{-1}$.
\end{enumerate}
Moreover, if $\ee$ and $\ff$ commute and satisfy any of the equivalent conditions above, then $\ee=\ff$.
\end{lemma}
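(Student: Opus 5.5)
The plan is to establish the cycle (i)$\Leftrightarrow$(ii)$\Leftrightarrow$(iii), then (iii)$\Rightarrow$(vi)$\Rightarrow$(v)$\Rightarrow$(iv)$\Rightarrow$(i), and finally treat the commuting case.

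\textbf{The cheap equivalences.} (ii)$\Leftrightarrow$(iii) is immediate. For (i)$\Leftrightarrow$(ii), I use \Cref{lem:radicals}. The image of $\ee$ in $\prod_{x\in\PO}\R$ is $(\ee(x,x))_x$, and each coordinate is an idempotent of \R. Since \R is indecomposable, each coordinate is $0$ or $1$. So the image of $\ee$ is the indicator function of $\supf\ee$, and likewise for $\ff$. Thus $\supf\ee=\supf\ff$ holds iff $\ee$ and $\ff$ have the same image modulo \zIPR.

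\textbf{(iii)$\Rightarrow$(vi).} This is the main step. I use the classical lifting argument for idempotents congruent modulo an ideal contained in the Jacobson radical; here $\zIPR\subseteq\jIPR$, as noted earlier. Put
\[
h=\ee\ff+(1-\ee)(1-\ff).
\]
Then $\ee h=\ee\ff=h\ff$. Modulo \zIPR we have $\ee\equiv\ff$, so $h\equiv \ee+(1-\ee)=1$. Hence $h\in 1+\zIPR\subseteq 1+\jIPR$, and $h$ is invertible; in \IPR this is also clear because its diagonal entries are all $1$. Therefore $\ee=h\ff h^{-1}$. This needs no finiteness of \PO.

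\textbf{The remaining implications.} For (vi)$\Rightarrow$(v), take $a=h\ff$ and $b=\ff h^{-1}$. Then $ab=h\ff h^{-1}=\ee$ and $ba=\ff$.

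(v)$\Rightarrow$(iv) is the standard isomorphism $\ee\Aalg\cong\ff\Aalg$. One may replace $a$ by $\ee a\ff=a\ff=\ee a$ and $b$ by $\ff b\ee$. Left multiplication then gives mutually inverse maps: $b\cdot-:\ee\Aalg\to\ff\Aalg$ and $a\cdot-:\ff\Aalg\to\ee\Aalg$.

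For (iv)$\Rightarrow$(i), an isomorphism $\ee\Aalg\cong\ff\Aalg$ yields $a\in\ee\Aalg\ff$ and $b\in\ff\Aalg\ee$ with $ab=\ee$ and $ba=\ff$. Fix $x\in\supf\ee$. Comparing $(x,x)$ entries gives
\[
1=\ee(x,x)=\sum_{x\le z\le x}a(x,z)b(z,x)=a(x,x)b(x,x).
\]
Since $a=a\ff$, we also have $a(x,x)=a(x,x)\ff(x,x)$. So $\ff(x,x)\neq0$, and $x\in\supf\ff$. The reverse inclusion follows by symmetry, and more generally $(\ee(x,x))_x=(\ff(x,x))_x$.

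\textbf{The commuting case.} If $\ee\ff=\ff\ee$, then $\ee-\ff$ lies in \zIPR. Moreover
\[
(\ee-\ff)^3=\ee-\ff.
\]
To see this, note $(\ee-\ff)^2=\ee+\ff-2\ee\ff$ when the two commute, and multiplying by $\ee-\ff$ gives $\ee-\ff$ back. Write $u=\ee-\ff$. Then $u(1-u^2)=0$, and $1-u^2$ is invertible because $u\in\zIPR\subseteq\jIPR$. Hence $u=0$, that is, $\ee=\ff$.
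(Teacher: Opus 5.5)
Your proof is correct and is essentially the paper's: the same conjugating unit $h=\ee\ff+(1-\ee)(1-\ff)$ drives the key implication, and the remaining links are the same standard idempotent manipulations, arranged in a slightly different cycle. The one small divergence is the commuting case. There you conclude from $u(1-u^2)=0$, with $1-u^2$ a unit because $u\in\zIPR\subseteq\jIPR$, whereas the paper iterates $u=u^{2n+1}$ and uses that high powers of a strictly upper triangular element vanish entrywise on each finite interval $[x,y]$; both arguments work for locally finite \PO.
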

\begin{proof}
\labelcref{lem:iso:idemp:1}$\iff$\labelcref{lem:iso:idemp:4} because \R is indecomposable, so that $\ee(x,x)$ and $\ff(x,x)$ are $0$ or $1$ and agree precisely on the common support.
\labelcref{lem:iso:idemp:4}$\implies$\labelcref{lem:iso:idemp:5} is clear, and the converse holds because $\IPR/\zIPR\cong\prod_{x\in\PO}\R$ is a commutative algebra.
\labelcref{lem:iso:idemp:6}$\iff$\labelcref{lem:iso:idemp:2} is the usual description of isomorphic idempotents, an isomorphism $\ee\IPR\to\ff\IPR$ of right modules being given by left multiplication by $b$ and its inverse by left multiplication by $a$, after replacing $a$ by $\ee a\ff$ and $b$ by $\ff b\ee$.
\labelcref{lem:iso:idemp:4}$\implies$\labelcref{lem:iso:idemp:3}: If $h=\ee\ff + (1-\ee)(1-\ff)$, then
\[
1-h = (\ee-\ff)(1-\ff) + (\ee-1)(\ee-\ff) \in \zIPR\subseteq\jIPR.
\]
It follows that $h$ is invertible and $\ee h = \ee\ff = h \ff$, as required.
\labelcref{lem:iso:idemp:3}$\implies$\labelcref{lem:iso:idemp:6}: simply take $a=h\ff$ and $b=h^{-1}$.
\labelcref{lem:iso:idemp:6}$\implies$\labelcref{lem:iso:idemp:4}: for all $x\in\PO$, we have $(\ee-\ff)(x,x)=a(x,x)b(x,x)-b(x,x)a(x,x)=0$, hence $\ee-\ff\in\zIPR$.

For the last assertion, let $u=\ee-\ff\in\zIPR$. If \ee and \ff commute, then $u^3=u$, hence $u=u^{2n+1}$ for all $n\geq0$. Since $u$ is strictly upper triangular, $u^{m}(x,y)$ is a sum over the strictly increasing chains $x=x_0<x_1<\cdots<x_m=y$ in the finite interval $[x,y]$, and therefore vanishes for $m\geq|[x,y]|$. Thus $u=0$.
\end{proof}

\begin{lemma}\label{lem:iso:corner}Let \R be \dR, \ee be an idempotent of \IPR, and $\QO=\supf\ee$. Then $\ee\IPR\ee\cong\IQR$ as \R-algebras. Furthermore, if \IPR is graded by a group (more generally by a semigroup with cancellation) and \ee is homogeneous, then \IQR inherits the grading.
\end{lemma}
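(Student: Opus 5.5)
The plan is to reduce to the diagonal idempotent with the same support, and then to build an explicit isomorphism onto \IQR. Let $\bee=\sum_{x\in\QO}\ee_{xx}$. This infinite sum makes sense in the finite topology. By \Cref{lem:radicals}, $\bee(x,x)=\ee(x,x)$ for all $x$, so $\supf\bee=\supf\ee=\QO$. Since \R is indecomposable, \Cref{lem:iso:idemp} then gives an invertible $h\in\IPR$ with $\ee=h\bee h^{-1}$. Conjugation by $h$ is an \R-algebra isomorphism $\bee\IPR\bee\to\ee\IPR\ee$. So it suffices to show that $\bee\IPR\bee\cong\IQR$.

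For that, I would use the identity $\ee_{xx}f\ee_{yy}=f(x,y)\ee_{xy}$. It gives $(\bee f\bee)(x,y)=f(x,y)$ when $x,y\in\QO$, and $0$ otherwise. Define $\phi:\bee\IPR\bee\to\IQR$ by restricting $f$ to $\QO\times\QO$. This map is clearly \R-linear and injective, and it is surjective: extend $g\in\IQR$ by zero to get an element of \IPR, which lies in $\bee\IPR\bee$ because \QO carries the induced order. For multiplicativity, take $f,g\in\bee\IPR\bee$ and $x,y\in\QO$. Then
\[(fg)(x,y)=\sum_{z\in[x,y]}f(x,z)g(z,y).\]
Any term with $z\notin\QO$ vanishes, since $f=f\bee$ forces $f(x,z)=0$ for such $z$. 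The remaining sum runs over $[x,y]\cap\QO$, which is exactly the interval of $x$ and $y$ in \QO. It is finite, so \QO is locally finite and \IQR is defined. The identity $\bee$ maps to $1$.

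For the grading, suppose \ee is homogeneous. Since \ee is idempotent, $\deg\ee\cdot\deg\ee=\deg\ee$, and by cancellation $\deg\ee=1$. Then $\ee\IPR\ee=\bigoplus_g\ee\IPR^g\ee$, because $\ee\IPR^g\ee\subseteq\IPR^g$ and every $a=\ee a\ee$ decomposes through its homogeneous components. So the corner is a graded subalgebra. The grading transfers to \IQR along the isomorphism $\phi\circ(\text{conj by }h^{-1})$; the conjugation is applied here only as a map of ungraded algebras, so $h$ need not be homogeneous. For a semigroup with cancellation the same argument works: the condition $\deg\ee^2=\deg\ee$ still gives idempotent degree, and in any case $\ee\IPR^g\ee\subseteq\IPR^{sgs}$, where $s=\deg\ee$ is idempotent and hence the identity under cancellation.

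The main subtle point is the appeal to \Cref{lem:iso:idemp}, which needs indecomposability: it makes $\ee(x,x)\in\{0,1\}$, so that $\bee$ really has the same support as \ee. Besides that, one has to check that \IQR inherits local finiteness and that the infinite sum defining $\bee$ is legitimate. Both are routine.
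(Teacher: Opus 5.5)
Your proof is correct and follows the paper's own argument. The paper likewise takes the diagonal part $\ff=(\ee)_d$ (your $\bee$), conjugates it to $\ee$ by the unit $h$ of \Cref{lem:iso:idemp}, and identifies $\ff\IPR\ff$ with $\IQR$ by restriction to $\QO\times\QO$; it then transfers the grading from $\ee\IPR\ee=\bigoplus_g\ee\IPR^g\ee$ after noting that $\deg\ee=1$.
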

\begin{proof}
Let $\ff=(\ee)_d$ be the diagonal part of \ee so that $\ee-\ff\in\zIPR$. Then the isomorphism is the map $a\mapsto (h^{-1}\ee a \ee h)|_{\QO\times\QO}$, where $h$ is the element in \Cref{lem:iso:idemp}\labelcref{lem:iso:idemp:3}, so that $h^{-1}\ee h=\ff$ and $h^{-1}(\ee\IPR\ee)h=\ff\IPR\ff$, and $\ff\IPR\ff\to\IQR$, $a\mapsto a|_{\QO\times\QO}$, is an isomorphism because \ff is diagonal. If \IPR is graded, and \ee is homogeneous, then $\deg\ee=1$, hence defining $\IQR^g$ as the image of $\ee(\IPR^g)\ee$ gives a well-defined grading on \IQR. Note that the isomorphism preserves diagonal entries: $(h^{-1}\ee a\ee h)(x,x)=a(x,x)$ for $x\in\QO$, while $a(x,x)=\ee(x,x)a(x,x)\ee(x,x)=0$ for $a\in\ee\IPR\ee$ and $x\notin\QO$. When \ee is diagonal, $h=1$ and the isomorphism is simply the restriction $a\mapsto a|_{\QO\times\QO}$.
\end{proof}

The following elementary fact about idempotents will be used repeatedly.
\begin{lemma}\label{lem:idemp:vanishing}Let $\ee$ be an idempotent in \IPR. If the interval $[x,y]$ does not meet $\supf\ee$, then $\ee(x,y)=0$.
\end{lemma}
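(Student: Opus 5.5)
We argue by strong induction on the size of the finite interval $[x,y]$, working directly with the multiplication formula in \IPR. If $x\nleq y$ there is nothing to prove, since then $\ee(x,y)=0$ by definition of \IPR. So assume $x\leq y$. By hypothesis neither $x$ nor $y$ lies in $\supf\ee$, i.e.\ $\ee(x,x)=0$ and $\ee(y,y)=0$. This already settles the base case $x=y$.

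For $x<y$, we use idempotence to write
\[
\ee(x,y)=\ee^2(x,y)=\sum_{z\in[x,y]}\ee(x,z)\ee(z,y).
\]
The terms with $z=x$ and $z=y$ contain the factors $\ee(x,x)=0$ and $\ee(y,y)=0$, so they vanish. For $x<z<y$, the interval $[x,z]$ is a subset of $[x,y]$ not containing $y$. It is therefore strictly smaller, and it still does not meet $\supf\ee$. The induction hypothesis gives $\ee(x,z)=0$, so every remaining term vanishes, and hence $\ee(x,y)=0$.

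The induction is legitimate because \PO is locally finite, so $|[x,y]|$ is a natural number and $[x,z]\subsetneq[x,y]$. The statement is quantified over all pairs, so applying the hypothesis to the pair $(x,z)$ is allowed. No indecomposability of \R is needed; the argument uses only that \R is a ring.

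The only point requiring care is to peel off the endpoint terms before invoking induction. Otherwise the sum would contain $\ee(x,y)$ itself, since the terms $z=x$ and $z=y$ produce $\ee(x,x)\ee(x,y)$ and $\ee(x,y)\ee(y,y)$.
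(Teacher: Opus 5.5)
Your proof is correct and is essentially the paper's argument, organized as a strong induction on $|[x,y]|$ using only $\ee^2=\ee$. The paper instead sets $n=|[x,y]|$ and expands $\ee(x,y)=\ee^{n}(x,y)$ over weakly increasing chains $x=x_0\leq\cdots\leq x_n=y$. By pigeonhole each such chain has a repeat $x_{i-1}=x_i$, hence a vanishing factor $\ee(x_i,x_i)$, so no induction is needed; like your argument, this uses no indecomposability of the ring.
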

\begin{proof}
Let $n=|[x,y]|$. Since \ee is idempotent, $\ee=\ee^{n}$, hence
\[
\ee(x,y)=\ee^{n}(x,y)=
\sum_{\mathclap{x=x_0\leq x_1\leq\cdots\leq x_n=y}}\ee(x_0,x_1)\ee(x_1,x_2)\cdots\ee(x_{n-1},x_n),
\]
the sum being over the chains of $n+1$ terms in $[x,y]$. Each such chain repeats some term, say $x_{i-1}=x_i$, and then $\ee(x_{i-1},x_i)=\ee(x_i,x_i)=0$ because $x_i\in[x,y]$. Every summand therefore vanishes.
\end{proof}
In particular, a nonzero idempotent has nonempty support. Suppose next that $\ee a\ff\neq0$ for idempotents $\ee,\ff$ and some $a\in\IPR$. Then there are $x\in\supf\ee$ and $y\in\supf\ff$ with $x\leq y$: indeed $(\ee a\ff)(p,q)=\sum \ee(p,u)a(u,v)\ff(v,q)\neq0$ for some $p\leq q$, and then $[p,u]\cap\supf\ee\neq\varnothing\neq[v,q]\cap\supf\ff$ for some $u\leq v$.

Throughout this work, the main objects of study are the corner subalgebras $\ee\IPR\ee$ and the \R-submodules $\ee\IPR\ff$ where \ee and \ff are idempotent elements. To fix the notation, we write $\Dalg_{\ee}=\ee\IPR\ee$ and $\mM_{\ee\ff}=\ee\IPR\ff$ and we observe that the $\mM_{\ee\ff}$ are in fact $(\Dalg_\ee,\Dalg_\ff)$-bimodules with respect to the multiplication in \IPR.

\begin{remark}\label{rem:diagonalization}
Idempotent elements in \IPR are diagonalizable by an inner automorphism and, if $\ee$ and $\ff$ are commuting idempotents, then they are simultaneously diagonalizable. Explicitly, if $\ee_1,\ldots,\ee_r$ are pairwise orthogonal idempotents with diagonal parts $\dd_1,\ldots,\dd_r$, and $\ee_0=1-\sum_i\ee_i$, $\dd_0=1-\sum_i\dd_i$, then $u=\sum_{i=0}^r\dd_i\ee_i$ has diagonal part $1$, hence is invertible, and $\dd_iu=u\ee_i$ for all $i$. When \IPR is graded by a group \G and the $\ee_i$ are homogeneous, the unit $u$ need not be homogeneous. The grading may nevertheless be transported along the inner automorphism determined by $u$, and in this sense homogeneous idempotents may be assumed to be diagonal without loss of generality.
\end{remark}

%
%
%

\section{Isomorphism between group and incidence algebras}\label{sec:groupalg}

In this section, necessary and sufficient
conditions are given for the existence of an isomorphism between an incidence algebra and a group algebra. The main result is this:

\begin{theorem}\label{thm:ipr:iso:comm:fin:grpalg}
Let \R be \dLR, \G a group and \PO \dLPO. Then the incidence algebra \IPR and the group algebra \RG are isomorphic if, and only if, the following holds:
\begin{enumerate}
\item\label{thm:ipr:iso:comm:fin:grpalg:1} \G is a finite abelian group;
\item\label{thm:ipr:iso:comm:fin:grpalg:2} \PO is a finite antichain and $\oP=\oG$;
\item\label{thm:ipr:iso:comm:fin:grpalg:3} $\RG\cong \R^\oP=\R^\oG$ as \R-algebras.
\end{enumerate}
\end{theorem}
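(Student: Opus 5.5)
The plan is to handle sufficiency directly and, for necessity, to use commutativity first and then a reduction modulo a maximal ideal of \R. Throughout, assume $\R\neq 0$, as is implicit in the statement. If \PO were empty, then \IPR would be $0$ while \RG is not, so we may also assume $\PO\neq\varnothing$.

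\emph{Sufficiency.} Suppose \labelcref{thm:ipr:iso:comm:fin:grpalg:1,thm:ipr:iso:comm:fin:grpalg:2,thm:ipr:iso:comm:fin:grpalg:3} hold. When \PO is a finite antichain, every $f\in\IPR$ is diagonal, so $\zIPR=0$. By \Cref{lem:radicals}, $\IPR\cong\prod_{x\in\PO}\R=\R^{\oP}$. Combined with \labelcref{thm:ipr:iso:comm:fin:grpalg:3}, this gives $\IPR\cong\RG$.

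\emph{Necessity, step 1: commutativity.} Let $\varphi\colon\IPR\to\RG$ be an isomorphism of \R-algebras. If $x<y$ in \PO, then $\ee_{xy}\ee_{yy}=\ee_{xy}\neq0=\ee_{yy}\ee_{xy}$, so \IPR is not commutative. Likewise, if $g,h\in\G$ do not commute, then $gh\neq hg$ in \RG, since the elements of \G form an \R-basis and $\R\neq0$. An isomorphism preserves commutativity, so each algebra is commutative exactly when the other is. It therefore suffices to prove one of them commutative, and I will obtain this at the end of step 2 once the two conditions are tied together. To break the symmetry here, I would run step 2 in a form that does not presuppose either condition, working modulo a maximal ideal.

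\emph{Necessity, step 2: reduction mod a maximal ideal.} Fix a maximal ideal \iM of \R and set $k=\R/\iM$. Tensoring $\varphi$ with $k$ gives an isomorphism $\IPR/\iM\IPR\cong k\G$. For $x\in\PO$, $\ee_{xx}\IPR\ee_{xx}=\R\ee_{xx}$ is a free \R-module of rank one. Its image under reduction is $\bar\ee_{xx}(\IPR/\iM\IPR)\bar\ee_{xx}\cong k$, and this is a direct summand, so the image is nonzero.

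Now consider the idempotent $e=\bar\varphi(\bar\ee_{xx})$ in $k\G$, which satisfies $e\,k\G\,e=ke$. For every $g\in\G$ we have $ege\in ke$; this is where I expect to need care. If \IPR is commutative, i.e.\ \PO is an antichain, then $\IPR=\R^{\PO}$ and $e$ is a central idempotent with $e\,k\G=ke$. It follows that $ge=\lambda_g e$ for every $g\in\G$. Writing $e=\sum c_hh$ with finite support $S\neq\varnothing$, the relation $ge=\lambda_ge$ with $\lambda_g\neq0$ (as $g$ is a unit) forces $gS=S$ for all $g$. Hence \G is finite.

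The main obstacle is the noncommutative case. There I would show that commutativity of $k\G$ (which depends on \G alone) and commutativity of $\IPR/\iM\IPR$ (which holds exactly when \PO is an antichain, since $\ee_{xy}\notin\iM\IPR$) are equivalent via $\bar\varphi$. This equivalence is simply the statement of step 1 applied over $k$, and it reduces the analysis to the situation where \G is abelian and \PO is an antichain. To make this rigorous without circularity, I would argue through the Jacobson radical. First I would prove the auxiliary claim that if \PO is not an antichain, then $\ee_{xy}\in\zIPR\subseteq\jIPR$ gives a nonzero nilpotent ideal-generating element modulo \iM. I would then try to contradict the existence of the rank-one idempotent picture above. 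If this route fails, I would instead try a direct proof that $k\G$ has no idempotent $e$ with $e\,k\G\,e=ke$ and $k\G e$ of small dimension unless \G is finite.

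\emph{Necessity, step 3: counting.} Once \G is finite and abelian and \PO is an antichain, the algebra $k\G$ has dimension $\oG$ over $k$. The images of the $\ee_{xx}$ are pairwise orthogonal nonzero idempotents in it, so $\oP\le\oG$, and in particular \PO is finite. Then $k^{\oP}\cong k\G$, and comparing dimensions gives $\oP=\oG$. Finally, $\RG\cong\IPR\cong\R^{\oP}$ by \Cref{lem:radicals}, which is \labelcref{thm:ipr:iso:comm:fin:grpalg:3}.
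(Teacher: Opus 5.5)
There is a genuine gap: the necessity direction is never completed. Step 1 only shows that \IPR is commutative if and only if \RG is, and that decides neither. Step 2 proves that \G is finite only under the assumption that \PO is an antichain, and leaves the general case as a list of routes you ``would try''. Invoking Step 1 over $k$ to reduce to the commutative case is circular. Step 3 then opens by assuming that \G is abelian and \PO is an antichain, which nothing earlier establishes. In the general case, the fact that $g$ is a unit does not stop $ege$ from vanishing, so your argument gives no control over $ege$.

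The missing idea is that $f\mapsto f(x,x)$ is an \R-algebra homomorphism $\IPR\to\R$, because $[x,x]=\{x\}$. Hence $\chi_x(g)=\varphi^{-1}(g)(x,x)$ is a character of \G with values in \uR. Moreover $\ee_{xx}\varphi^{-1}(g)\ee_{xx}=\chi_x(g)\ee_{xx}$ holds with no commutativity assumption. The relation $ege\in ke$ you wrote down is the right one; what you lacked is that the scalar is a unit.

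Work directly in \RG with $e=\varphi(\ee_{xx})=\sum_{s\in S}c_ss$, where $S$ is finite and nonempty. In the identity $ege=\chi_x(g)e$, the right side is supported exactly on $S$ and the left side is supported in $SgS$. Hence $S\cap SgS\neq\varnothing$, so $g\in S^{-1}SS^{-1}$, and \G is finite. This is the paper's argument, phrased there through the grading $\IPR^g=\R\varphi^{-1}(g)$, and it needs no reduction modulo \iM. Once \G is finite, your rank count makes \PO finite.

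You still need an argument for commutativity. The paper compares coefficients in $ehe=\chi_x(h)e$ and $e^2=e$ to obtain $e=\oG^{-1}\sum_g\chi_x(g^{-1})g$, and in particular $\oG\in\uR$. It follows that $he=\chi_x(h)e=eh$. The $\varphi(\ee_{xx})$ are then central orthogonal idempotents summing to $1$, with rank-one corners $\R\varphi(\ee_{xx})$. So \RG is commutative, \G is abelian and \PO is an antichain (\Cref{lem:iso:struct}). Only at that point does your counting in Step 3 apply.
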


To prove this, we need an elementary lemma:

\begin{lemma}\label{lem:iso:struct}
Assume that \PO and $\G$ are finite and let $\varphi\colon \IPR\rightarrow \RG$ be an \R-algebra isomorphism. Then $\oG\in\uR$, \G is abelian, \PO is an antichain of size \oG, and the isomorphism $\varphi$ and its inverse are respectively given by

\begin{displaymath}
	\ee_{xx} \mapsto	\sum\limits_{g \in \G} \oG^{-1}\chi_x(g^{-1}) g
	\qquad\qquad
	g	\mapsto		\sum\limits_{x\in \PO} \chi_x(g) \ee_{xx},
\end{displaymath}
where $\chi_x(g)=\varphi^{-1}(g)(x,x)$ are well-defined, pairwise distinct \R-valued characters of \G.
Consequently, if \R is indecomposable, up to relabelling the elements of \PO, the isomorphism $\varphi$ is unique.
\end{lemma}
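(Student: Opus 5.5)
The plan is to exploit commutativity of \RG first. Since $\varphi$ is an isomorphism and \RG need not be commutative a priori, I would begin with the structure of \IPR. The ideal \zIPR lies in \jIPR, and under $\varphi$ it maps into $\JR{\RG}$. The key observation is that $\ee_{xy}$ for $x<y$ satisfies $\ee_{xy}^2=0$ and $\ee_{xx}\ee_{xy}=\ee_{xy}\neq 0=\ee_{xy}\ee_{xx}$. So if \PO is not an antichain, \IPR is noncommutative.

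To rule this out I would use the trace (augmentation-coefficient) functional $\tau\colon\RG\to\R$, $\tau(\sum a_g g)=a_1$. It satisfies $\tau(ab)=\tau(ba)$ and $\tau(g)=\oG\,\delta_{g,1}$ on the regular representation; more precisely, the left-regular trace is $\mathrm{tr}(L_a)=\oG\,\tau(a)$. Since \RG is free of rank \oG over \R, the regular trace $T(a)=\mathrm{tr}_\R(L_a)$ on $\RG$ is available. Transport it to \IPR, where \IPR is free over \R with basis $\{\ee_{xy}\}_{x\le y}$.

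First, compute ranks. \IPR is free of rank equal to the number of pairs $x\le y$, and \RG is free of rank \oG. This gives $|\{(x,y):x\le y\}|=\oG$, assuming $\R\neq0$ so that ranks of free modules over a commutative ring are well defined.

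Second, compare the idempotent traces. In \IPR, left multiplication by $\ee_{xx}$ has trace equal to the number of $y\ge x$, which is at least $1$. In \RG we have $T(e)=\oG\,\tau(e)$ for any idempotent $e$. Summing over $x$ gives $T(1)=\oG$.

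Third, obtain commutativity. I would use the fact that $\ee_{xy}$ ($x<y$) is nilpotent, so $T(\ee_{xy})=0$ holds in both algebras. To show that \PO must be an antichain, consider the center: $\varphi$ maps center to center. The center of \RG contains the class sums, and \RG is commutative iff \G is abelian. That route is awkward, so I would prefer the following.

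The cleaner route is duality. \RG carries the nondegenerate symmetric form $(a,b)\mapsto\tau(ab)$, and nondegeneracy holds since $\tau(g\cdot g^{-1})=1$. Thus \RG is a symmetric (Frobenius) algebra with $\tau(ab)=\tau(ba)$. Pull this back to a symmetric nondegenerate form $\lambda(ab)$ on \IPR. For $x<y$, take $u=\ee_{xy}$. Then $\lambda(\ee_{xy}a)=\lambda(a\ee_{xy})$ for all $a$. Choosing $a=\ee_{zw}$ shows $\lambda(\ee_{xy}\ee_{zw})$ is nonzero only if $y=z$, and then equals $\lambda(\ee_{xw})$. Symmetry gives $\lambda(\ee_{xw})=\lambda(\ee_{zw}\ee_{xy})$, which is $0$ unless $w=x$, and $w\ge y>x$. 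Hence $\lambda(\ee_{xy}\,\cdot)=0$ identically, contradicting nondegeneracy. Therefore \PO is an antichain, \IPR$\cong\R^\oP$ is commutative, \G is abelian, and $\oP=\oG$ by the rank count.

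For $\oG\in\uR$: now $\RG\cong\R^\oP$ is separable, and the regular trace form of $\R^n$ is nondegenerate. But the regular trace of \RG is $T(g)=\oG\,\delta_{g,1}$, so its Gram matrix has determinant $\pm\oG^{\oG}$. It must be a unit, so $\oG\in\uR$. Set $\chi_x(g)=\varphi^{-1}(g)(x,x)$. Since $\varphi^{-1}$ is multiplicative into $\R^\PO$, each $\chi_x$ is a homomorphism $\G\to\uR$. They are pairwise distinct because the $\varphi^{-1}(g)$ span $\R^\PO$. This gives the stated formula for $g\mapsto\sum_x\chi_x(g)\ee_{xx}$.

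Inverting uses orthogonality. For distinct characters $\chi\ne\psi$ with $\oG$ a unit, $\sum_g\chi(g)\psi(g^{-1})=0$: pick $h$ with $\chi(h)\ne\psi(h)$, then $(\chi(h)\psi(h)^{-1}-1)S=0$. Over a general ring I must ensure this difference is not a zero divisor. I would circumvent that by computing directly: $\varphi(\ee_{xx})=\sum_g a_g g$ with $a_g=\oG^{-1}T(\ee_{xx}\cdot \varphi^{-1}(g^{-1}))$, and the trace on $\R^\PO$ gives $\oG^{-1}\chi_x(g^{-1})$. Uniqueness for indecomposable \R follows because the $\ee_{xx}$ are then the only primitive idempotents, so $\varphi$ is determined by a bijection $\PO\to\{\chi_x\}$.

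The main obstacle is the nondegeneracy/antichain step. I expect the symmetric-form argument to be the crux, together with keeping the ground ring general.
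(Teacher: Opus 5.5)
Your proof is correct, but it runs in the opposite order from the paper's.

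The paper uses no trace form. It writes $\varphi(\ee_{xx})=\sum_g c^g_{xx}g$, applies $\varphi$ to the identity $\ee_{xx}\varphi^{-1}(h)\ee_{xx}=\chi_x(h)\ee_{xx}$ and compares coefficients, which gives $\varphi(\ee_{xx})=c^1_{xx}\sum_g\chi_x(g^{-1})g$ before anything is known about the shape of \PO. Evaluating $\varphi^{-1}$ of this at $(x,x)$ gives $\oG c^1_{xx}=1$, and at $(y,y)$ gives orthogonality of the characters. Only then does it check that the $\varphi(\ee_{xx})$ are central, so that $\RG=\bigoplus_x\R\varphi(\ee_{xx})$ is commutative, whence \G is abelian and \PO is an antichain.

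You obtain the antichain property first, from the symmetric nondegenerate form $(a,b)\mapsto\tau(ab)$ on \RG. Its pull-back kills commutators, hence every $\ee_{xw}=\ee_{xw}\ee_{ww}-\ee_{ww}\ee_{xw}$ with $x<w$, so $\ee_{xy}\IPR$ lies in its kernel. In other words, the incidence algebra of a non-antichain is never a symmetric algebra. After that you read everything off from $\IPR=\R^{\PO}$:
\begin{itemize}
\item the $\chi_x$ are the coordinate projections;
\item $\oG\in\uR$ follows by comparing the discriminants $\pm\oG^{\oG}$ and $1$ of the two regular trace forms, which are congruent via an invertible matrix;
\item the formula for $\varphi(\ee_{xx})$ follows by extracting coefficients with the regular trace, which rightly avoids orthogonality relations over a ring with zero divisors;
\item uniqueness follows because any isomorphism carries $\{\ee_{xx}\}$ onto the intrinsic set of primitive idempotents of \RG.
\end{itemize}

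What each route buys: yours is more conceptual, isolates the real obstruction, and makes the uniqueness statement self-contained. The paper's uniqueness argument instead leans on the count $|\hG|=\oG$ from \Cref{lem:ipr:iso:comm:fin:grpalg}. The paper's route is a purely elementary coefficient computation, and it yields the pairwise orthogonality of the $\chi_x$ and the centrality of the $\varphi(\ee_{xx})$ as by-products, which are quoted again in \Cref{rem:galois:extensions}.
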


\begin{proof}
Let $\varphi$ and its inverse be expressed by
\begin{displaymath}
	\ee_{xy} \mapsto	\sum_{g \in \G} c^{g}_{xy} g
	\qquad\qquad
	g	\mapsto		\sum_{x,y \in \PO} k^{xy}_{g} \ee_{xy}.
\end{displaymath}
where $c^g_{xy},k^{xy}_{g}\in\R$, and $c^g_{xy}=0$ for all but finitely many $g\in\G$. Of course $k^{xy}_{g}=\varphi^{-1}(g)(x,y)$, and evaluating $\varphi^{-1}(gh)=\varphi^{-1}(g)\varphi^{-1}(h)$ at $(x,x)$ gives the relation $k^{xx}_{gh}=k^{xx}_g k^{xx}_h$, and hence $\chi_x(gh)=\chi_x(g)\chi_x(h)$, for all $x\in\PO$ and all $g,h\in\G$. Further, $\varphi^{-1}(1)=1$, hence $k^{xy}_1 = 0$ for $y\neq x$, and $1=k^{xx}_1=\chi_x(1)$. In particular, this shows that the $\chi_x$ are well defined \R-valued characters of \G.

Next, we observe that in \IPR the following relation holds: $\ee_{xx}\varphi^{-1}(h)\ee_{xx}=k^{xx}_h\ee_{xx}$. Applying $\varphi$ to it, we obtain $\sum_{u,v\in\G}c_{xx}^u c_{xx}^v uhv = k^{xx}_h\sum_{w\in\G}c_{xx}^w w$, which gives $$\sum_{\substack{u,v\in\G \\ uhv=w}}c_{xx}^u c_{xx}^v=k^{xx}_h c_{xx}^w.$$
Setting $w=1$ in this identity, we obtain $\sum_{v\in\G}c_{xx}^{v^{-1}h^{-1}}c_{xx}^{v} = k^{xx}_h c_{xx}^1$, for all $h\in G$, and by substituting $w=h^{-1}$ and $h=1$, we obtain $\sum_{u\in\G}c_{xx}^u c_{xx}^{u^{-1}h^{-1}} = c_{xx}^{h^{-1}}$ (keep in mind that $k^{xx}_1=1$), for all $h\in\G$. From these two relations, we get $c_{xx}^{h^{-1}}=k^{xx}_h c_{xx}^{1}$, which gives $c_{xx}^h=k^{xx}_{h^{-1}}c_{xx}^1$. We conclude that
$\varphi(\ee_{xx})=c_{xx}^1\sum_{g\in\G}k_{g^{-1}}^{xx}g$. Applying $\varphi^{-1}$ to this relation and evaluating at $(x,x)$, we get
$1 = c_{xx}^1\sum_{g\in\G}k^{xx}_{g^{-1}}k^{xx}_g=c_{xx}^1\oG$. It follows that $\oG\in\uR$, $c^{1}_{xx}=\oG^{-1}$, and thus
$$
\varphi(\ee_{xx})=\frac{1}{|G|}\sum_{g\in\G}\chi_x(g^{-1})g,
$$
for all $x\in \PO$. It is worth noting that, evaluating that same relation at $(y,y)$ for $y\neq x$ gives
$$
\frac{1}{\oG}\sum_{g\in G}k^{xx}_{g^{-1}}k^{yy}_g = 0,
$$
which shows that the characters $\chi_x$ are pairwise orthogonal, hence distinct.

Next, we compute:
$$
\varphi(\ee_{xx})h = \frac{1}{\oG}\sum_{g\in\G}k^{xx}_{g^{-1}}gh=\frac{1}{\oG}\sum_{g\in\G}k^{xx}_{hg^{-1}}g=k^{xx}_h\varphi(\ee_{xx})
$$
and similarly
$$
h\varphi(\ee_{xx}) = \frac{1}{\oG}\sum_{g\in\G}k^{xx}_{g^{-1}}hg=\frac{1}{\oG}\sum_{g\in\G}k^{xx}_{g^{-1}h}g=k^{xx}_h\varphi(\ee_{xx})
$$
for all $x\in\PO$ and all $h\in\G$. It follows that $\{\varphi(\ee_{xx})\mid x\in\PO\}$ is a complete set of central orthogonal idempotents in \RG. In particular \G is an abelian group: $\RG=\bigoplus_x\varphi(\ee_{xx})\RG$ as algebras, and each factor $\varphi(\ee_{xx})\RG=\varphi(\ee_{xx}\IPR\ee_{xx})=\R\varphi(\ee_{xx})$ is commutative. But then \IPR is a commutative algebra, hence \PO is an antichain. This proves that $\varphi$ and $\varphi^{-1}$ are as claimed. Finally, $\varphi$ is determined by the labelling $x\mapsto\chi_x$ of \PO by characters. If \R is indecomposable, the $\oP=\oG$ distinct characters $\chi_x$ exhaust \hG, which has \oG elements by \Cref{lem:ipr:iso:comm:fin:grpalg} below (applied to $\RG\cong\IPR\cong\R^{\oP}$). Hence the set $\{\chi_x\}$ does not depend on $\varphi$, and $\varphi$ is unique up to relabelling. (Without indecomposability this fails: for $\R=\CC\times\CC$ and $\G$ of order $2$, \hG has four elements, and different pairs of orthogonal characters give isomorphisms $\Inc{\PO}{\R}\to\RG$ which are not related by a permutation of the two points of \PO.) The proof is complete.
\end{proof}

Now we prove \Cref{thm:ipr:iso:comm:fin:grpalg}:

\begin{proof}
If conditions \labelcref{thm:ipr:iso:comm:fin:grpalg:1,thm:ipr:iso:comm:fin:grpalg:2,thm:ipr:iso:comm:fin:grpalg:3} of the Theorem hold, then $\zIPR=0$ and $\IPR\cong\R^\oP=\R^\oG\cong \RG$ (cf. \Cref{lem:radicals}), and we are done. So we may assume that an isomorphism $\varphi\colon\IPR\to\RG$ exists.

For each $x\in\PO$, the map $\chi_x\colon\G\to\R$ sending $g$ to $\varphi^{-1}(g)(x,x)$ is multiplicative, because $(\varphi^{-1}(g)\varphi^{-1}(h))(x,x)=\varphi^{-1}(g)(x,x)\varphi^{-1}(h)(x,x)$, the interval $[x,x]$ being $\{x\}$, and $\chi_x(1)=1$. It follows that $\chi_x$ is an \R-valued character of \G, and $\ee_{xx}\varphi^{-1}(h)\ee_{xx}=\chi_x(h)\ee_{xx}$ holds for all $h\in\G$. These facts do not depend on the finiteness assumptions of \Cref{lem:iso:struct}, which will be invoked only after \G and \PO are known to be finite. Further, the map $\varphi^{-1}$ induces a \G-grading on \IPR with $\IPR^g=\R\varphi^{-1}(g)$ for all $g\in\G$ (and this grading is fine and strong).

For any $x\in\PO$ fixed and all $h\in\G$, if $\ee_{xx}=\sum_{i=1}^{n}r_i \varphi^{-1}(g_i)$ is the homogeneous decomposition of $\ee_{xx}$, where $r_i\in\R$ are all nonzero, then $\sum_{i,j=1}^{n}r_ir_j\varphi^{-1}(g_i h g_j)=\sum_{k=1}^{n}r_k\chi_x(h)\varphi^{-1}(g_k)$. For at least one triple $(i,j,k)$, we must have $g_i h g_j = g_k$, and thus $h=g_i^{-1}g_k g_j^{-1}$ belongs to a finite set. It follows that \G is a finite group.

But then, \IPR is a free \R-module of finite rank equal to $\oG$. Therefore \PO is finite: for a finite subset $F\subseteq\PO$, the submodule $\bigoplus_{x\in F}\R\ee_{xx}$ is a direct summand of \IPR, a complement being the kernel of $f\mapsto\sum_{x\in F}f(x,x)\ee_{xx}$. Reducing modulo a maximal ideal of \R gives $|F|\leq\oG$. By the Lemma, \G is an abelian group, \IPR is commutative, and \PO is antichain. Further, $\oP=\dim_{\R}\IPR=\dim_{\R}\RG=\oG$. To finish up, we notice again that $\zIPR=0$, hence $\RG\cong\IPR\cong\R^\oP=\R^\oG$, and thus conditions \labelcref{thm:ipr:iso:comm:fin:grpalg:1,thm:ipr:iso:comm:fin:grpalg:2,thm:ipr:iso:comm:fin:grpalg:3} of the Theorem hold, as required.
\end{proof}


Condition \labelcref{thm:ipr:iso:comm:fin:grpalg:3} of \Cref{thm:ipr:iso:comm:fin:grpalg} says that \R is a splitting ring for \G. We record the facts about split group algebras which will be used throughout.

Recall that a commutative ring \R is a \highlight{splitting ring} for a finite group \G if the group algebra \RG is isomorphic to a direct sum of matrix algebras over \R, namely $\RG\cong\bigoplus_{i=1}^n M_{d_i}(\R)$ for certain positive integers $d_i$. The following well-known lemma gives a characterization of splitting rings for finite groups, which is according to our intentions. Here $\hG\coloneqq \Hom(\G,\uR)$ is the dual group of \G, namely set of all \R-valued characters of \G. An element $\omega\in\R$ is a \highlight{primitive $n$-th root of unity} if it has order exactly $n$ in \uR. When \R is indecomposable and $n\in\uR$, the polynomial $x^n-1$ is separable over \R, so that a primitive $n$-th root of unity is a root of the $n$-th cyclotomic polynomial, and \R contains at most $n$ roots of $x^n-1$ \cite[Corollaries 2.4 and 2.5]{janusz}. Every divisor $d$ of $n$ is again a unit, so the same applies to $x^d-1$. The $n$-th roots of unity in \R therefore have at most $d$ elements of order dividing $d$ for each $d\mid n$, and a finite abelian group with that property is cyclic. In particular the $n$-th roots of unity in \R form a cyclic group.

\begin{lemma}\label{lem:ipr:iso:comm:fin:grpalg}
Let \R be \dR and \G be a finite group. Then, $\RG\cong \R^\oG$ as \R-algebras if, and only if:
	\begin{enumerate}

        \item\label{lem:ipr:iso:comm:fin:grpalg:1} \G is abelian;

        \item\label{lem:ipr:iso:comm:fin:grpalg:2}$\oG\in\uR$, that is \RG is a separable \R-algebra;

        \item\label{lem:ipr:iso:comm:fin:grpalg:3}\R contains a primitive $\Exp(\G)$-th root of the unity;



    \end{enumerate}
    Under these conditions, we have $\G\cong\hG$. Furthermore, \RG is a Galois extension of \R with Galois group \hG and with normal \R-basis consisting of the complete set of pairwise orthogonal idempotents $\{\bee_\chi\mid \chi\in\hG\}$, where $\bee_\chi=\frac{1}{|\G|}\sum_{g\in\G}\chi(g^{-1})g$ (the \highlight{Fourier idempotents} of \RG).
\end{lemma}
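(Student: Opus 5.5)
We prove the two implications separately and then read off the Galois structure.

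\emph{Necessity.} Suppose $\RG\cong\R^{\oG}$. Since $\R^{\oG}$ is commutative, \G is abelian. Composing with the $\oG$ coordinate projections gives $\oG$ algebra maps $\RG\to\R$, and each restricts to a character $\chi_i\in\hG$. Letting $\bee_i$ be the preimages of the standard idempotents of $\R^{\oG}$, we get $g\bee_i=\chi_i(g)\bee_i$ for all $g$. Writing $\bee_i=\sum_g c_g g$ and comparing coefficients gives $c_g=\chi_i(g^{-1})c_1$. The augmentation-type evaluation used in \Cref{lem:iso:struct} then gives $1=\chi_i(\bee_i)=c_1\oG$. Alternatively, we may simply invoke \Cref{lem:iso:struct} with \PO the antichain of size \oG, since $\R^{\oG}=\IPR$. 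Either way, $\oG\in\uR$, and we obtain $\oG$ pairwise orthogonal, hence distinct, characters, so $|\hG|\ge\oG$.

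It remains to get the root of unity. Put $n=\Exp(\G)$ and write $\G\cong\prod_j \ZZ/n_j$ with $n_j\mid n$. A character is determined by the images of the generators, which are $n_j$-th roots of unity in \R. These lie in the cyclic group $\mu$ of $n$-th roots of unity in \R, which is cyclic by the remark before the lemma since $n\mid\oG$ is a unit. If $\mu$ has order $m$, then $|\hG|=\prod_j\gcd(n_j,m)$. This is at least $\oG=\prod_j n_j$ only if $n_j\mid m$ for all $j$, so $n\mid m$, while also $m\mid n$. Thus $\mu$ is cyclic of order $n$, and its generator is a primitive $n$-th root of unity.

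\emph{Sufficiency.} Assume \G is abelian, $\oG\in\uR$, and $\omega$ is a primitive $n$-th root of unity. Then $\hG\cong\G$, because \G is a product of cyclic groups $\ZZ/n_j$ and $\omega^{n/n_j}$ generates the $n_j$-th roots of unity. Define $\bee_\chi$ as in the statement. Standard computations show $g\bee_\chi=\chi(g)\bee_\chi$, that $\bee_\chi$ is idempotent, and that $\bee_\chi\bee_\psi=\chi(\bee_\psi)\bee_\chi$. The crux is the orthogonality relation $\sum_g\chi(g)=0$ for $\chi\ne1$. Pick $g$ with $\chi(g)=\zeta\ne1$, where $\zeta$ is an $n$-th root of unity. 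Then $(1-\zeta)\sum=0$. Since $x^n-1$ is separable, $1-\zeta$ is a unit for $\zeta\neq1$ over an indecomposable ring: the discriminant argument gives that $\prod_{\zeta\ne1}(1-\zeta)=n$ is a unit. This is the main subtle step, because \R need not be a domain.

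Hence the $\bee_\chi$ are pairwise orthogonal. Their sum is $\frac1{\oG}\sum_g\bigl(\sum_\chi\chi(g^{-1})\bigr)g=1$, by dual orthogonality, which uses $\hG\cong\G$ separating points. Each $\bee_\chi\RG=\R\bee_\chi$, so $\RG=\bigoplus_\chi\R\bee_\chi\cong\R^{\oG}$.

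\emph{Galois structure.} The group \hG acts on \RG by $\psi\cdot g=\psi(g)g$. This action sends $\bee_\chi\mapsto\bee_{\chi\psi^{-1}}$, so it permutes the idempotents simply transitively. Under $\RG\cong\R^{\oG}$ this is the regular permutation action on coordinates, which is the standard example of a Galois extension with group \hG. Its fixed ring is \R, and $\{\bee_\chi\}$ is a normal basis since it is the \hG-orbit of $\bee_1$.
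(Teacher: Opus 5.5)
Your proof is correct. The sufficiency half is essentially the paper's argument: orthogonality from $1-\zeta\in\uR$, after which the Fourier idempotents split \RG, which is the paper's invertible character table in other words. Necessity and the Galois statement, however, go by different routes.

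\textbf{Necessity.} The paper obtains $\oG\in\uR$ by passing to residue fields $\R/\mathfrak{m}$ and invoking Maschke. It gets $|\hG|=\oG$ exactly, by showing via indecomposability that every \R-algebra map $\R^{\oG}\to\R$ is a coordinate projection. It then locates a primitive $\Exp(\G)$-th root of unity among the values $\chi_i(g)$ at an element $g$ of maximal order, since $\varphi(g)$ must have order $\Exp(\G)$ in $(\R^{\oG})^\times$.

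You read off $\oG\in\uR$ directly from the coefficients of the preimages of the coordinate idempotents, as in the computation behind \Cref{lem:iso:struct}. This needs neither residue fields nor Maschke. You also need only the inequality $|\hG|\geq\oG$, which your count $|\hG|=\prod_j\gcd(n_j,m)$ in the cyclic group of $n$-th roots of unity turns into $m=n$. This is a clean alternative to the paper's exponent argument.

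\textbf{Galois structure.} The paper checks the criterion of \cite[Theorem 1.3(b)]{Chase} with the separability idempotent $\oG^{-1}\sum_g g^{-1}\otimes g$. This simultaneously justifies the gloss in (ii) that \RG is separable. You instead transport the \hG-action to the regular translation action on $\R^{\hG}$, namely the trivial Galois extension. This is shorter and makes the normal basis obvious. However, you never address the separability clause; it is the one-line observation that the element above is a separability idempotent once $\oG\in\uR$.

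\textbf{One justification to tighten.} Deriving $1-\zeta\in\uR$ from $\prod_{\zeta\neq1}(1-\zeta)=n$ presupposes the splitting $x^n-1=\prod_\zeta(x-\zeta)$ over \R. Over a ring with zero divisors that splitting is itself usually derived from the very unit property you want. Nothing in your discriminant sketch shows where indecomposability enters, although it is essential: for $\R=\CC\times\CC$ and $n=2$, the root $\zeta=(1,-1)$ has $1-\zeta=(0,2)\notin\uR$.

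Either cite \cite[Lemma 2.1]{janusz}, as the paper does, or argue directly. Write $x^n-1=(x-1)\Phi(x)$ with $\Phi(1)=n$ and put $u=\zeta-1$. Then $\Phi(\zeta)=n+uh$ for some $h\in\R$, so $0=u(n+uh)$ gives $u=u^2v$ with $v=-n^{-1}h$. Hence $uv$ is an idempotent and $u=u\cdot uv$, so indecomposability forces $u=0$ or $u\in\uR$.
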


\begin{proof}First assume that $\RG\cong\R^\oG$ as \R-algebras, so that \R is a splitting ring for \G. \Cref{lem:ipr:iso:comm:fin:grpalg:1} is obvious. To see \cref{lem:ipr:iso:comm:fin:grpalg:2}, let \iM be an arbitrary maximal ideal of \R and let $\F=\R/\iM$. Since $\RG\otimes_\R \F\cong \R^\oG\otimes_\R\F$, it follows that $\FG\cong\F^\oG$ as \F-algebras. In particular, \FG is semisimple, hence from Maschke's Theorem (\cite[Theorem 3.4.7]{Polcino-Sehgal}) it follows that $\oG\cdot 1_\F$ is a unit in \F, so $\oG\cdot 1_\R\notin\iM$. Since \iM is arbitrary, $\oG\cdot 1_\R$ must be a unit in \R. Now the element $\bee=\oG^{-1}\sum_{g\in\G}g^{-1}\otimes g\in \RG\otimes_\R\RG$ is a separability idempotent, since
$\mu(\bee)=\oG^{-1}\sum_{g\in\G}g^{-1}g=1_\G$. Therefore, \RG is a separable \R-algebra (\cite[proof of Lemma 1.2]{Chase}).

To prove \cref{lem:ipr:iso:comm:fin:grpalg:3}, we first show that $|\G|=|\hG|$, where $\hG=\Hom(\G,\uR)$ is the set of \R-valued characters of \R.  Let $\pi_i\colon\R^\oG\to \R$ be the natural projection onto the $i$-th coordinate. The composition $\pi_i\circ\varphi$ is a surjective \R-algebra map from \RG to \R, whose restriction to \G defines the character $\chi_i\colon\G\to\uR$. These must be distinct. Indeed, if  $(\pi_i-\pi_j)\circ\varphi$ were the zero map, then the image of $\varphi$, namely $\R^\oG$, would be contained in $\ker(\pi_i-\pi_j)$, which is impossible since $\pi_i-\pi_j\neq 0$. This contradiction shows that the $\chi_i$ are distinct.

If $f\colon\R^\oG\to\R$ is any \R-algebra map and $\ee_i=(0,\ldots,1,\ldots,0)\in\R^\oG$ is the central idempotent with 1 at the $i$-th position, then $f(\ee_i)=f(\ee_i^2)=f(\ee_i)^2$, hence $f(\ee_i)$ is an idempotent in \R. Since \R is indecomposable, we have $f(\ee_i)\in\{0,1\}$. Since $f(\ee_i)f(\ee_j)=f(\ee_i\ee_j)=0$ for $i\neq j$ and $f(\ee_1)+\cdots+f(\ee_{\oG})=f(1)=1$, we have $f(\ee_i)=1$ for precisely one index $i$ and $f(\ee_j)=0$ for all $j\neq i$. Therefore, $f=\pi_i$. It follows that $|\G|=|\Hom_{\R\text{-alg}}(\R^\oG,\R)|=|\Hom_{\R\text{-alg}}(\RG,\R)|=|\hG|$.

We have established that all characters $\chi\in\hG$ are of the form $\pi_i\circ\varphi$ and thus we may write $\varphi(g)=(\chi_1(g),\ldots,\chi_{\oG}(g))$ for all $g\in\G$. Let $n=\Exp(\G)$. Since \G is a finite abelian group, it contains an element $g$ of order $n$. Since $\varphi$ is an isomorphism, the order of $\varphi(g)=(\chi_1(g),\ldots,\chi_\oG(g))$ in $(\R^\oG)^\times$ is $n$, that is, the subgroup of \uR generated by the $\chi_i(g)$ has exponent $n$. Call this subgroup $S$. Every divisor $d$ of $n$ is again a unit in \R, so $x^d-1$ has at most $d$ roots in \R because \R is indecomposable (\cite[Corollary 2.5]{janusz}). Hence $S$ has at most $d$ elements of order dividing $d$ for every $d\mid n$, and a finite abelian group with that property is cyclic. As $S$ has exponent $n$, it is cyclic of order $n$. Taking $d=n$ shows that $S$ is the whole group of $n$-th roots of unity of \R, and one of the $\chi_i(g)$ is a primitive $n$-th root of unity $\omega$ with $S=\langle\omega\rangle$.

Under these conditions, the image of any character in \hG consists of $n$-th roots of unity, hence lies in $\langle\omega\rangle$ by the previous paragraph. This subgroup, in turn, is isomorphic to the subgroup $\langle e^{i2\pi/n}\rangle\subset\CC$, hence we may view $\hG$ as the usual Pontryagin dual of \G. In particular, $\G\cong\hG$, and the elements of \hG satisfy the usual Schur orthogonality relations. In turn, it follows that $\{\bee_\chi\mid\chi\in\hG\}$ forms a complete set of orthogonal idempotents for \RG.

Now we show that $\RG$ is a Galois extension of \R with Galois group \hG and having the idempotent basis as a normal basis. To see this, consider the map $\alpha_\chi\colon \RG\to\RG$ such that $g\mapsto \chi(g^{-1})g$. Since $\alpha_\chi(1)=\chi(1)1=1$ and $\alpha_\chi(g_1g_2)=\chi(g_2^{-1}g_1^{-1})g_1g_2=(\chi(g_1^{-1})g_1)(\chi(g_2^{-1})g_2)=\alpha_\chi(g_1)\alpha_\chi(g_2)$, we see that $\alpha_\chi\in\Aut_{\R\text{-alg}}(\R\G)$. Moreover, $\alpha_\chi(\bee_\rho)=\oG^{-1}\sum_{g\in\G}\rho(g^{-1})\alpha_\chi(g)=\oG^{-1}\sum_{g\in\G}\rho(g^{-1})\chi(g^{-1})g=\bee_{\chi\rho}$, hence \hG acts transitively on the idempotent basis. To compute the fixed ring of \hG,
for arbitrary $a\in\RG$, there are unique $a_\rho\in\R$ such that $a=\sum_{\rho\in\hG}a_\rho\bee_\rho$. Then, $\alpha_\chi(a)=a$ if and only if $a_{\chi^{-1}\rho}=a_\rho$ for all $\chi,\rho\in\hG$. In particular, for $\rho=\chi$, we get $a_{\hat{1}}=a_\chi$ for all $\chi\in\hG$, therefore $a=a_{\hat{1}}1_\G\in\R$.

With respect to the separability idempotent $\bee$ from before, we have
$$
\mu(\operatorname{id}\otimes\alpha_\chi)(\bee)=\frac{1}{\oG}\sum_{g\in\G}g^{-1}\alpha_\chi(g)=\frac{1}{\oG}\sum_{g\in\G}\chi(g^{-1})g^{-1}g=\delta_{\chi,\hat{1}}1_\G,
$$
for all $\chi\in\hG$. It follows from \cite[Theorem 1.3(b)]{Chase} that $\RG$ is a Galois extension of \R with Galois group \hG and the idempotent basis is a normal basis, as required.

It remains to prove the converse. If all three conditions are holding, then the $n$-th roots of unity in \R form a cyclic group of order $n=\Exp(\G)$, whence $\hG\cong\G$ (write \G as a product of cyclic groups), and the orthogonality relations $\sum_{g\in\G}\chi(g)\rho(g^{-1})=\oG\delta_{\chi\rho}$ hold: for $\chi\neq\rho$ the sum is invariant under multiplication by some value $\zeta\neq1$ of $\chi\rho^{-1}$, and $1-\zeta$ is a unit (\cite[Lemma 2.1]{janusz}). It follows that the character table $(\chi(g))$ is invertible over \R, and the map $g\mapsto(\chi(g))_{\chi\in\hG}$ gives an \R-algebra isomorphism between \RG and $\prod_{\chi\in\hG}\R=\R^\oG$.
\end{proof}

\begin{remark}\label{rem:galois:extensions}
Under the conditions of \Cref{thm:ipr:iso:comm:fin:grpalg}, \Cref{lem:iso:struct,lem:ipr:iso:comm:fin:grpalg} describe the isomorphism $\varphi\colon\IPR\to\RG$ completely. We fix the following conventions.
\begin{enumerate}
\item\label{rem:galois:points} \emph{Points and characters.} The map $x\mapsto\chi_x=\varphi^{-1}(\cdot)(x,x)$ is a bijection $\PO\to\hG$, since the $\chi_x$ are $\oP=\oG=|\hG|$ pairwise orthogonal, hence distinct, characters. Moreover $\varphi(\ee_{xx})=\bee_{\chi_x}$. We identify \PO with \hG by this map, and $\varphi^{-1}$ becomes the map $g\mapsto(\chi(g))_{\chi\in\hG}$ of $\RG$ onto $\R^{\hG}=\Inc{\hG}{\R}$. The identification depends on the choice of $\varphi$, as described in \labelcref{rem:galois:galois}.
\item\label{rem:galois:action} \emph{Action of \G on \IPR.} For $g\in\G$ and $f\in\IPR$ we write $g\,f=\varphi^{-1}(g)f$ and $f\,g=f\varphi^{-1}(g)$. Therefore $g\,\ee_{xy}=\chi_x(g)\ee_{xy}$ and $\ee_{xy}\,g=\chi_y(g)\ee_{xy}$. The element $\ee_{xx}$ acts on \RG as the idempotent $\bee_{\chi_x}$, and $\bee_\chi\,f\,\bee_\rho$ is the $(\chi,\rho)$-entry of $f$. The identification will be used in this form for the bimodules $\ee\IPR\ff$ in \Cref{sec:bimodules}.
\item\label{rem:galois:galois} \emph{Galois action.} The automorphism $\alpha_\chi$ of \RG, $g\mapsto\chi(g^{-1})g$, satisfies $\alpha_\chi(\bee_\rho)=\bee_{\chi\rho}$, so $\varphi^{-1}\alpha_\chi\varphi$ is the automorphism of \IPR which sends $\ee_{xx}$ to $\ee_{x'x'}$, where $\chi_{x'}=\chi\chi_x$. Under $\PO=\hG$ it is the translation $\rho\mapsto\chi\rho$. It follows that \hG acts simply transitively on \PO. Replacing $\varphi$ by $\alpha_\chi\circ\varphi$ multiplies every label $\chi_x$ by $\chi$. The $\alpha_\chi$ are precisely the automorphisms of \RG preserving its grading by \G, since such an automorphism has the form $g\mapsto t(g)g$ with $t\in\hG$, and equals $\alpha_{t^{-1}}$. Hence two isomorphisms $\varphi,\varphi'$ which differ by a graded automorphism of \RG give labellings of \PO differing by a translation.
\item\label{rem:galois:base:points} \emph{Base points.} In \Cref{sec:xmin} this remark is applied to the corner algebras $\Dalg_{\ee}=\ee\IPR\ee\cong\RH_\ee$ of a graded incidence algebra. There the isomorphism is normalized at a base point $x\in\supf\ee$, by $|\H_x|\ee_{xx}^h\mapsto h$. The bijection $\supf\ee\to\hH_\ee$ becomes $p\mapsto\chi_x^p$, and a change of base point from $x$ to $x'$ multiplies all labels by the character $\chi_{x'}^{x}$, in accordance with \labelcref{rem:galois:galois}. This dependence must be taken into account in \Cref{sec:classification}.
\end{enumerate}
\end{remark}

\begin{example}
Let $\R$ be either $\F_3$, the field with 3 elements, or the integers $\mathbb{Z}$, and $\G=\{1,g,g^2\}$ be the cyclic group with 3 elements. Since $3$ is not a unit in \R, in view of \Cref{thm:ipr:iso:comm:fin:grpalg,lem:ipr:iso:comm:fin:grpalg}, \RG cannot be isomorphic to any \IPR as \R-algebras.
\end{example}

\begin{example}
This example shows that the existence of roots of the unity in \R is essential. If $\G=\{1,g,g^2\}$, \PO is the 3-antichain and $\R=\RR$, then $\Inc{\PO}{\RR}\cong\RR^3$ contains (at least) 3 nontrivial orthogonal idempotents while $\RR\G$ contains only two, namely $\ee=\frac{1}{3}(1+g+g^2)$ and $(1-\ee)=\frac{1}{3}(2-g-g^2)$, hence $\Inc{\PO}{\RR}$ and $\RR\G$ are not isomorphic.
\end{example}

\begin{example}
If \PO is the 2-chain $\CO_2$, then $\Inc{\CO_2}{\mathbb{C}}$ is 3-dimensional. If $\Inc{\CO_2}{\mathbb{C}}\cong\mathbb{C}\G$ then $\G$
is the group with 3 elements. But this is not possible since $\mathbb{C}\G$
is commutative and $\Inc{\CO_2}{\mathbb{C}}$ is not.
\end{example}
\begin{remark} In general, one is not to expect $\RG\cong\IPR$ to be semisimple. In fact,
since $\G$ is finite abelian and $\oG$ is a unit in \R, we have that $\RG\cong\IPR$ is semisimple if, and only if, the ring \R itself is semisimple (cf. \cite[Theorem 3.4.7]{Polcino-Sehgal}).
\end{remark}

%
%
%

\section{Gradings and Primitive Idempotents}\label{sec:xmin}
From this section onward, we assume that \R is \dR and that \PO is finite, unless stated otherwise. In the present section finiteness enters only through the existence of minimal and maximal elements in every nonempty subposet of \PO. The main result is the existence of a primitive homogeneous idempotent \ee when \PO has extremal elements and a precise description of its corresponding corner subalgebra $\Dalg_{\ee}=\ee\IPR\ee$, assuming that \IPR is graded by a group \G. Recall that \ee is primitive homogeneous if it is a nonzero homogeneous idempotent which is not a sum of two nonzero homogeneous orthogonal idempotents. For an arbitrary element $f\in\IPR$, we denote its homogeneous decomposition by
$f = \sum_{g\in\G}f^g$, where it is understood that all but finitely many of the homogeneous components $f^g$ are zero. Subsets of group elements that participate in the homogeneous decomposition of the elementary functions $\ee_{xy}$ play a crucial role in the our description of the structure of \IPR. First we require some notation:

\begin{definition}\label{def:H:phi}
Let $p, q, x, y \in \PO$. We define the following objects:
\begin{enumerate}
    \item \emph{The support set:}
    \[
    \H_{xy}^{pq} \coloneqq \{\, h \in \G \mid \ee_{xy}^h(p,q) \neq 0 \}.
    \]
    That is, \(\H_{xy}^{pq}\) is the (finite, possibly empty) subset of \G corresponding to nonzero values of $\ee_{xy}^h(p,q)$.

    \item \emph{The map:}
    \[
    \phi_{xy}^{pq} \colon \G \to \R,\quad g \mapsto |\H_{xy}^{pq}|\cdot \ee_{xy}^g(p,q).
    \]
\end{enumerate}

For ease of notation, we also set
\[
\H_{xy} \coloneqq \Hsup{xy}{xy},\quad \Hsup{x}{p} \coloneqq \Hsup{xx}{pp},\quad \text{and} \quad \H_x \coloneqq \H_{xx} = \Hsup{x}{x}.
\]
Under appropriate conditions, it will later be shown that $\H_x$ is a group and the map $
\chi_{x}^{p} \coloneqq \phisup{xx}{pp}$
defines a character of $\H_x$ (which resembles the character $\chi_{x}$ introduced in \Cref{lem:iso:struct}).
\end{definition}

The main result is this:

\begin{theorem}\label{thm:xmin}
If \PO has a minimal (resp. maximal) element $x$, then
\begin{enumerate}
    \item\label{thm:xmin:1} $\H_x = \{ h\in\G \mid \ee_{xx}^h(x,x) \neq 0\}$ is a finite abelian subgroup of \G whose order $|\H_x|$ is a unit in \R; $\R$ contains a primitive $\Exp(\H_x)$-th root of $1_\R$; moreover, $\ee_{xx}^h(x,x) = |\H_x|^{-1}$ for all $h\in\H_x$, while $\ee_{xx}^h\ee_x=0$ (resp. $\ee_x\ee_{xx}^h=0$) for all $h\notin\H_x$, where $\ee_x$ is the idempotent of \labelcref{thm:xmin:2}.

    \item\label{thm:xmin:2} $\ee_{x} \coloneqq |\H_x|\ee_{xx}^1$ is a primitive homogeneous idempotent whose support $\supf\ee_x$ is an antichain of cardinality $|\H_x|$;

    \item\label{thm:xmin:3}  $\Dalg_{\ee_x} = \Span\{\ee_{xx}^h \mid h \in \H_x \} \cong I(\supf\ee_x,\R) \cong \RH_x \cong \R^{|\H_x|}$ are graded isomorphisms of \R-algebras and $\Dalg_{\ee_x}$ is a left (resp. right) ideal of $\IPR$.

    \item\label{thm:xmin:4} Each $y\in\supf\ee_x$ is also minimal (resp. maximal) in \PO, $\supf\ee_y=\supf\ee_x$, $\H_y=\H_x$, and, with $\chi_x^y(h)\coloneqq|\H_x|\ee_{xx}^h(y,y)$,
    \begin{gather*}
    \ee_{xx}^h\,\ee_y=\chi_x^y(h)\,\ee_{yy}^h\quad(\text{resp. }\ee_y\,\ee_{xx}^h=\chi_x^y(h)\,\ee_{yy}^h),\\
    \ee_{xx}^h(p,p)=\chi_x^y(h)\,\ee_{yy}^h(p,p)\quad(p\in\PO)
    \end{gather*}
    for all $h\in\H_x$. In particular $\ee_{xx}^h=\chi_x^y(h)\ee_{yy}^h$ whenever $\ee_x=\ee_y$.
    In particular, $\chi_{x}^{y}$ is a character of $\H_x = \H_x^y = \H_y^x = \H_y$ (so that $\chi_x^y$ agrees with the map $\phi_{xx}^{yy}$ of \Cref{def:H:phi}), $\chi_x^y\chi_y^z=\chi_x^z$ for $y,z\in\supf\ee_x$, and $y\mapsto\chi_x^y$ is a bijection from $\supf\ee_x$ onto $\hH_x$. Moreover, for all $p,y\in\supf\ee_x$, there exists $q\in\supf\ee_x$ such that $\chi_{x}^{y} = \chi_{p}^{q}$.
\end{enumerate}
\end{theorem}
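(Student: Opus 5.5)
The plan is to exploit the one-sided annihilation produced by extremality. I treat the case $x$ minimal; the maximal case follows by applying the same argument to the opposite poset, where left and right are exchanged. Since $x$ is minimal, $f\ee_{xx}=f(x,x)\ee_{xx}$ for every $f\in\IPR$. Write $a_h=\ee_{xx}^h(x,x)$ and take $f$ homogeneous of degree $g$. Comparing homogeneous components of degree $gh$ on both sides gives
\[
f\,\ee_{xx}^h=f(x,x)\,\ee_{xx}^{gh}\qquad\text{for all }h\in\G .
\]
Taking $f=\ee_{xx}^k$ yields the multiplication table
\[
\ee_{xx}^k\ee_{xx}^h=a_k\,\ee_{xx}^{kh},
\]
and evaluating it at $(x,x)$ gives $a_ka_h=a_ka_{kh}$. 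In addition, $\sum_h a_h=1$.

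Hence $V=\Span\{\ee_{xx}^h\}$ is a graded subalgebra of \IPR and a left ideal: a general $f$ is a finite sum of homogeneous components, and each homogeneous $f$ maps $V$ into $V$ by the displayed formula. This already gives the left-ideal part of \labelcref{thm:xmin:3}, and also the vanishing $\ee_{xx}^h\ee_x=0$ for $h\notin\H_x$ once $\ee_x$ is known, because then $a_h=0$.

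The first key step, which I expect to be the main obstacle, is to show from these relations that $\H_x=\{h\mid a_h\neq0\}$ is a subgroup and that $a_h$ is constant on it. Nonvanishing of elements is not well behaved over a general indecomposable ring, so I would argue as follows. For $k\in\H_x$, the relation $a_ka_h=a_ka_{kh}$ shows that left multiplication by $k$ preserves the function $a$ up to the factor $a_k$. To make this usable I would pass to $\R/\iM$ for each maximal ideal $\iM$, where the relations $\sum a_h=1$ and $a_ka_h=a_ka_{kh}$ survive and nonzero scalars are invertible. Over the field $\R/\iM$ the reduced set $\bar\H=\{h\mid \bar a_h\neq0\}$ is a finite subset closed under left multiplication by its own elements, hence a subgroup. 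On $\bar\H$ the reduction $\bar a$ is constant, so $|\bar\H|\,\bar a_1=1$.

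Lifting back, I would check that $\ee=|\H_x|\ee_{xx}^1$ is an idempotent. Since \R is indecomposable, the idempotents $a_1$-type quantities are then forced to be $0$ or $1$, which makes $\bar\H$ independent of $\iM$. This shows that $\H_x$ is a subgroup, that $|\H_x|\in\uR$, and that $a_h=|\H_x|^{-1}$ on $\H_x$. With this in hand, the multiplication table gives $V\cong\RH_x$ via $|\H_x|\ee_{xx}^h\mapsto h$, and this isomorphism is graded. Moreover $\ee_x V\ee_x=V$, so $\Dalg_{\ee_x}\supseteq V$; the reverse inclusion holds because $V$ is a left ideal and $\ee_x\in V$, so $\Dalg_{\ee_x}=\ee_x\IPR\ee_x\subseteq\IPR\,\ee_x\subseteq V$.

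Next, \Cref{lem:iso:corner} gives $\Dalg_{\ee_x}\cong I(\supf\ee_x,\R)$, which is an incidence algebra isomorphic to a group algebra. \Cref{thm:ipr:iso:comm:fin:grpalg} and \Cref{lem:iso:struct} then yield that $\H_x$ is abelian, that $\supf\ee_x$ is an antichain of size $|\H_x|$, that $\R\H_x\cong\R^{|\H_x|}$, and, via \Cref{lem:ipr:iso:comm:fin:grpalg}, that \R contains a primitive $\Exp(\H_x)$-th root of unity. This proves \labelcref{thm:xmin:1} and \labelcref{thm:xmin:3}. Primitivity in \labelcref{thm:xmin:2} follows because the homogeneous idempotents of $\RH_x$ all have degree $1$, hence lie in $\R\cdot 1$, which has only the trivial idempotents.

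For \labelcref{thm:xmin:4}, I would first observe that if $y\in\supf\ee_x$ were not minimal, then the left-ideal property together with \Cref{lem:idemp:vanishing} would force a nonzero entry of $\ee_x$ below $y$. The way I would make this precise is by multiplying $\ee_x$ on the left by $\ee_{zz}$ for some $z<y$ and using that the result lies in $V$, which is diagonalizable with antichain support.

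Given that $y$ is minimal, I would run the whole construction at $y$. I would then compare $\ee_y$ and $\ee_x$ through the formula $f\ee_{yy}^h=f(y,y)\ee_{yy}^{gh}$ applied with $f=\ee_{xx}^h$: this gives $\ee_{xx}^h\ee_y=\chi_x^y(h)\ee_{yy}^h$ directly. Evaluating at $(p,p)$ gives the diagonal identity. Symmetry of the situation in $x$ and $y$ then gives $\H_x=\H_y$ and $\supf\ee_y=\supf\ee_x$.

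The remaining properties of the characters come from the Fourier description in \Cref{rem:galois:extensions}: the diagonal values $\chi_x^y$ are exactly the characters attached to the points of the support. This gives the bijection $y\mapsto\chi_x^y$ onto $\hH_x$ and the cocycle identity $\chi_x^y\chi_y^z=\chi_x^z$. The final assertion about $\chi_p^q$ is then the simple transitivity of translation by $\hH_x$ described in \Cref{rem:galois:extensions}\labelcref{rem:galois:galois}.
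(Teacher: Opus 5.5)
\textbf{The genuine gap: identifying $V$ with $\Dalg_{\ee_x}$.} Your left-ideal argument applies to the span of \emph{all} components $\ee_{xx}^h$, $h\in\G$. That space is in general strictly larger than $\Dalg_{\ee_x}$. The components with $h\notin\H_x$ have zero $(x,x)$-entry, and they vanish when $\ee_x$ is diagonal (\Cref{rem:ex:equals:e}), but not in general.

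For instance, grade $\Inc{\{1<2\}}{\R}$ by $\Aalg^1=\R(\ee_{11}-\ee_{12})\oplus\R(\ee_{22}+\ee_{12})$ and $\Aalg^g=\R\ee_{12}$ with $g\neq1$; this is a good grading conjugated by $1+\ee_{12}$. For $x=1$ one gets $\ee_{11}^1=\ee_{11}-\ee_{12}$, $\ee_{11}^g=\ee_{12}$, $\H_x=1$ and $\Dalg_{\ee_x}=\R\ee_x$, whereas $V=\R\ee_{11}\oplus\R\ee_{12}$. So $\ee_xV\ee_x=V$ and $V\cong\RH_x$ are false, and the left-ideal property of $V$ says nothing about $\Dalg_{\ee_x}$.

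You must work with $V'=\Span\{\ee_{xx}^h\mid h\in\H_x\}$ and prove separately that it is a left ideal. Evaluating $f\ee_{xx}^1=f(x,x)\ee_{xx}^g$ at $(x,x)$ gives $f(x,x)a_1=f(x,x)a_g$, so $f(x,x)=0$ for $f\in\IPR^g$ with $g\notin\H_x$. Then $f\ee_x=|\H_x|f(x,x)\ee_{xx}^g$ shows $\IPR\,\ee_x=V'=\Dalg_{\ee_x}$.

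The same confusion breaks the minimality step of (iv). Left multiplication by $\ee_{zz}$ only shows that row $z$ of $\ee_x$ vanishes when $z\notin\supf\ee_x$, which is no contradiction. \Cref{lem:idemp:vanishing} never produces a nonzero entry, so it cannot help here. The element to use is $\ee_{zy}\ee_x\in\Dalg_{\ee_x}$, whose $(z,y)$-entry is $\ee_x(y,y)=1$. The paper's computation
\[
\ee_x(y,y)\ee_{zy}=\ee_{zy}\ee_x\ee_{yy}=\ee_{zz}\ee_x\ee_{zy}\ee_x\ee_{yy}=\ee_x(z,z)\ee_x(y,y)\ee_{zy}
\]
forces $z\in\supf\ee_x$, contradicting that $\supf\ee_x$ is an antichain.

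\textbf{Repairable steps that need reordering.} Passing to the residue fields $\R/\iM$ is a valid substitute for the paper's pigeonhole argument (that $c_1$ divides every monomial of $(\sum_kc_k)^{|\H_x|+1}=1$). However, checking that $|\H_x|\ee_{xx}^1$ is idempotent presupposes $|\H_x|a_1=1$. Argue instead as follows:
\begin{itemize}
\item $1\in\bar{\H}$ for every $\iM$, so $a_1\in\uR$.
\item Your relation with $k=h$ and $h$ replaced by $1$ gives $a_h^2=a_1a_h$. So $a_1^{-1}a_h$ is an idempotent of \R, and $a_h\in\{0,a_1\}$.
\item Hence $\H_x=\bar{\H}$ and $|\H_x|a_1=\sum_ha_h=1$.
\end{itemize}

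In (iv), the comparison at $y$ is neither direct nor a matter of symmetry. The formula yields $\ee_{xx}^h\ee_y=|\H_y|\ee_{xx}^h(y,y)\ee_{yy}^h$, which equals $\chi_x^y(h)\ee_{yy}^h$ only once $|\H_y|=|\H_x|$. Exchanging $x$ and $y$ also needs $x\in\supf\ee_y$. Both follow from the case $h=1$: $\ee_x\ee_y=\ee_y$ gives $\supf\ee_y\subseteq\supf\ee_x$, and then $\ee_y=\ee_y\ee_x\ee_y$ forces $\ee_y\ee_x=\ee_y(x,x)\ee_x\neq0$.

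You also need two further facts:
\begin{itemize}
\item $\ee_{xx}^h=\ee_{xx}^h\ee_x$ for $h\in\H_x$, for the diagonal identity at $p\notin\supf\ee_x$;
\item $\ee_{xx}^h=\ee_x\ee_{xx}^h(1-\ee_x)$ for $h\notin\H_x$, for $\H_x^p\subseteq\H_x$.
\end{itemize}
Finally, read the cocycle identity off the diagonal identity rather than from \Cref{rem:galois:extensions}, since $\Dalg_{\ee_x}$ and $\Dalg_{\ee_y}$ may be different algebras.
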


\begin{proof}
Items \labelcref{thm:xmin:1} to \labelcref{thm:xmin:3}. For ease of notation, we denote $c_k = \ee_{xx}^k(x,x)$ ($k\in\G$) during the first part of this proof.

When $x$ is minimal in \PO, $f \ee_{xx}=f(x,x)\ee_{xx}$ for all $f\in\IPR$. Comparing homogeneous components when $f$ is itself homogeneous, one gets
\[
f(x,x) \ee_{xx}^g = f \ee_{xx}^{h^{-1}g}
\]
for all $g,h\in\G$ and $f\in\IPR^h$. In particular, by successively setting $f=\ee_{xx}^h$ and $f=\ee_{xx}^g$, and evaluating the resulting equation on $(x,x)$, one obtains $c_h c_g = c_h c_{h^{-1}g}$ and $c_g^2 = c_1c_g$ for all $g,h\in\G$. By the pigeonhole principle, $c_1$ divides each summand of
\[
\sum_{(m)\vdash |\H_x|+1}\binom{|\H_x|+1}{(m)} (c_{k_1}^{m_1} c_{k_2}^{m_2}\cdots) = \bigg(\sum_{k\in\H_x}c_k\bigg)^{|\H_x|+1} = 1,
\]
whereby it is a unit in \R. It follows successively that: $c_1^{-1}c_g$ is idempotent in \R for all $g\in\G$, hence equal to either $0$ or $1$ because \R is indecomposable; $c_h = c_1$ for all $h\in\H_x$; $1 = \sum_{k\in\H_x}c_k = |\H_x| c_1$; $c_{h^{-1}g} = c_g = |\H_x|^{-1}$ for all $g,h\in\H_x$; and finally $\H_x$ is a finite subgroup of \G.

Now since $|\H_x|f(x,x)\ee_{xx}^{\deg f} = \ee_x f \ee_x = f \ee_x$ for all $f$ homogeneous, $\Span\{ |\H_x|\ee_{xx}^h \mid h\in \H_x\} = \Dalg_{\ee_x}$ (for the reverse inclusion note that $f(x,x)=(f\ee_x)(x,x)=|\H_x|c_hf(x,x)$ for $f\in\IPR^h$, so that $f(x,x)=0$ unless $h\in\H_x$) as $\H_x$-graded algebras and $\Dalg_{\ee_x}$ is a left ideal of \IPR. In particular, when $f=\ff$ is any homogeneous idempotent whose support contains $x$, $\ee_x \ff \ee_x = \ee_x$ (i.e. $\ff\nless \ee_x$), whence $\ee_x$ is primitive. Moreover, $|\H_x|\ee_{xx}^{gh} = (|\H_x|\ee_{xx}^{g})(|\H_x|\ee_{xx}^{h})$ for all $g,h\in\H_x$, whence $\{|\H_x| \ee_{xx}^h \mid h\in \H_x\}$ is an \R-basis of $\Dalg_{\ee_x}$ (they lie in the distinct homogeneous components $\IPR^h$, $h\in\H_x$, so every term of a vanishing \R-combination vanishes, and $(|\H_x|\ee_{xx}^h)(x,x)=|\H_x|c_h=1$ then forces each coefficient to be zero) and the map $|\H_x|\ee_{xx}^{h} \mapsto h$ is a graded \R-algebra isomorphism from $\Dalg_{\ee_x}$ to $\RH_x$.

Applying \Cref{thm:ipr:iso:comm:fin:grpalg} to $\RH_x \cong \Dalg_{\ee_x} \cong I(\supf\ee_x,\R)$ (cf. \Cref{lem:ipr:iso:comm:fin:grpalg,lem:iso:corner}) then shows: $\H_x$ is abelian; $\supf\ee_x$ is an antichain; $|\supf\ee_x| = |\H_x|$; $\R$ contains an $\Exp(\H_x)$-th primitive root of $1_\R$; $\Dalg_{\ee_x} \cong \RH_x \cong \R^{|\H_x|}$ as commutative graded algebras, and $\ee_{xx}^g\ee_x = |\H_x|\ee_{xx}^g\ee_{xx}^1 = |\H_x| c_g \ee_{xx}^g = 0$ (resp. $\ee_x\ee_{xx}^g=0$) for all $g\notin \H_x$, by the displayed identity with $f=\ee_{xx}^g$ and $h=g$. Note also that the same identity with $f=\ee_{xx}^1$ gives $\ee_x\ee_{xx}^g=|\H_x|c_1\ee_{xx}^g=\ee_{xx}^g$ (resp. $\ee_{xx}^g\ee_x=\ee_{xx}^g$) for all $g\in\G$, and $\ee_{xx}^h\ee_x=\ee_{xx}^h$ (resp. $\ee_x\ee_{xx}^h=\ee_{xx}^h$) for $h\in\H_x$.

\Cref{thm:xmin:4}. Let $y\in\supf\ee_x$ and note that, since $\Dalg_{\ee_x}$ is a left ideal of \IPR, and a commutative algebra with identity $\ee_x$ (so that $\ee_{zy}\ee_x=\ee_x\ee_{zy}\ee_x$),
\begin{displaymath}
\ee_x(y,y) \ee_{zy} = \ee_{zy} \ee_x \ee_{yy} = \ee_{zz} \ee_x \ee_{zy} \ee_x \ee_{yy} = \ee_x(z,z) \ee_x(y,y) \ee_{zy}
\end{displaymath}
 for all $z \leq y$. Since $\supf\ee_x$ is an antichain, $z\nless y$, meaning $y=z$ is minimal in \PO. 

Repeating the same steps up to this point for $y$, which is extremal in \PO, yields a finite subgroup $\H_y$ of $\G$ and a primitive homogeneous idempotent $\ee_y = |\H_y|\ee_{yy}^1$. Since $y$ is minimal, $f\ee_{yy}^g=f(y,y)\ee_{yy}^{hg}$ for $f\in\IPR^h$ (as in the proof of \labelcref{thm:xmin:1}). With $f=\ee_{xx}^h$ and $g=1$ this gives
\[
\ee_{xx}^h\,\ee_y=|\H_y|\ee_{xx}^h(y,y)\,\ee_{yy}^h\qquad(h\in\G).
\]
For $h=1$ we get $\ee_x\ee_y=\ee_x(y,y)\ee_y=\ee_y$, so $\ee_y\in\ee_x\IPR$ and $\supf\ee_y\subseteq\supf\ee_x$. Since $\ee_y=\ee_y^2=\ee_y\ee_x\ee_y$, we have $\ee_y\ee_x\neq0$, and the identity $\ee_y\ee_x=\ee_y(x,x)\ee_x$ (from $x$ minimal) forces $\ee_y(x,x)=1$, hence $\ee_y\ee_x=\ee_x$ and $\supf\ee_x\subseteq\supf\ee_y$. It follows that $\supf\ee_y=\supf\ee_x$ and $|\H_y|=|\H_x|$ by \labelcref{thm:xmin:2}. (The idempotents $\ee_x$ and $\ee_y$ themselves need not coincide: they have the same diagonal part, but their strictly upper triangular parts may differ. They do coincide when $\ee_x$ is diagonal, and, more generally, whenever some diagonal homogeneous idempotent \ee has $\supf\ee=\supf\ee_x$, since then $\ee_{xx},\ee_{yy}\in\ee\IPR\ee$ and $\ee_x,\ee_y$ are homogeneous idempotents of degree $1$ in the fine-graded algebra $\ee\IPR\ee\cong\RH$, whose only such idempotents are $0$ and \ee. See also \Cref{rem:ex:equals:e} below.) Now let $h\in\H_x$. Since $|\H_y|=|\H_x|$, the displayed identity reads $\ee_{xx}^h\ee_y=\chi_x^y(h)\ee_{yy}^h$, which is the first identity of \labelcref{thm:xmin:4}. Taking diagonal parts in it, and using that the diagonal part of a product is the product of the diagonal parts,
\[
\ee_{xx}^h(p,p)\,\ee_y(p,p)=\chi_x^y(h)\,\ee_{yy}^h(p,p)\qquad(p\in\PO).
\]
For $p\in\supf\ee_y=\supf\ee_x$ we have $\ee_y(p,p)=1$, and the second identity of \labelcref{thm:xmin:4} follows. For $p\notin\supf\ee_x$ the right-hand side vanishes, and so does $\ee_{xx}^h(p,p)=(\ee_{xx}^h\ee_x)(p,p)=\ee_{xx}^h(p,p)\ee_x(p,p)$, because $\ee_{xx}^h=\ee_{xx}^h\ee_x$ for $h\in\H_x$ by \labelcref{thm:xmin:1}. This proves the two identities of \labelcref{thm:xmin:4} (the case of a maximal element is symmetric). Evaluating the second one at $p=x$ gives $|\H_x|^{-1}=\chi_x^y(h)\ee_{yy}^h(x,x)$, so $\ee_{xx}^h(y,y)\neq0\neq\ee_{yy}^h(x,x)$ for $h\in\H_x$. Moreover $\ee_y(x,x)=1$, being a nonzero idempotent of the indecomposable ring \R, as $x\in\supf\ee_y$, so $(\ee_{yy}^h\ee_y)(x,x)=\ee_{yy}^h(x,x)\ee_y(x,x)=\ee_{yy}^h(x,x)\neq0$ forces $h\in\H_y$ by \labelcref{thm:xmin:1} applied to $y$. Thus $\H_x\subseteq\H_x^y\cap\H_y^x\cap\H_y$, and by symmetry $\H_y\subseteq\H_x$. Conversely, $\H_x^p\subseteq\H_x$ for every $p\in\PO$: if $h\notin\H_x$, then $\ee_{xx}^h=\ee_x\ee_{xx}^h$ and $\ee_{xx}^h\ee_x=0$ by the identities at the end of the proof of \labelcref{thm:xmin:1} to \labelcref{thm:xmin:3}, so $\ee_{xx}^h=\ee_x\ee_{xx}^h(1-\ee_x)$ has diagonal entries $\ee_x(p,p)\ee_{xx}^h(p,p)(1-\ee_x(p,p))=0$, as $\ee_x(p,p)\in\{0,1\}$. Therefore $\H_x = \H_x^y = \H_y^x = \H_y$. Evaluating $\ee_{xx}^g \ee_{xx}^h = \ee_{xx}^g(x,x) \ee_{xx}^{gh}$ on $(y,y)$ shows $\chi_{x}^{y}$ to be an \R-valued character on $\H_x $. Moreover, for any $z\in\supf\ee_x$, $\ee_{xx}^h(p,p)=\chi_x^y(h)\ee_{yy}^h(p,p)=\chi_x^y(h)\chi_y^z(h)\ee_{zz}^h(p,p)$ for all $p$, hence $\chi_{x}^{y}\chi_{y}^{z} = \chi_{x}^{z}$.

Finally, the isomorphism $\Dalg_{\ee_x}\cong I(\supf\ee_x,\R)$ of \Cref{lem:iso:corner} preserves diagonal entries, so the characters $\chi_y$ of \Cref{lem:iso:struct} attached to the isomorphism $I(\supf\ee_x,\R)\cong\RH_x$ are the $\chi_x^y$, $y\in\supf\ee_x$. By that lemma they are pairwise distinct and, as $|\supf\ee_x|=|\H_x|=|\hH_x|$ (\Cref{lem:ipr:iso:comm:fin:grpalg}), $y\mapsto\chi_x^y$ is a bijection onto $\hH_x$. It follows that for any $p,y\in\supf\ee_x$, there exists a unique $q \in \supf\ee_x$ such that $\chi_{x}^{y} \chi_{x}^{p} = \chi_{x}^q$. Since $\chi_{p}^{x} = (\chi_{x}^{p})^{-1}$,
\[
\chi_{x}^{y} = \chi_{p}^{x} \chi_{x}^{p} \chi_{x}^{y} = \chi_{p}^{x} \chi_{x}^{q} = \chi_{p}^{q}.
\]
\end{proof}
Since every nonempty subposet of \PO has a minimal element, we obtain at once:
\begin{corollary}\label{cor:xmin:for:idemps}
Let $\ee$ be a primitive homogeneous idempotent and consider the corner subalgebra $\Dalg_\ee=\ee\IPR\ee$. Then $\Dalg_\ee\cong\Inc{\supf\ee}{\R}$ (cf. \Cref{lem:iso:corner}) is still \G-graded and all the conclusions of \Cref{thm:xmin} hold with \PO replaced by $\supf\ee$. In particular, $\ee$ is the preimage of $\ee_x$ in the isomorphism and $\supf\ee$ is an antichain of size $|\H_x|$, where $x$ is any element in $\supf\ee$.
\end{corollary}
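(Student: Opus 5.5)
The plan is to transport the problem into the corner algebra and apply \Cref{thm:xmin} there. Let $\QO=\supf\ee$; it is nonempty because a nonzero idempotent has nonempty support (\Cref{lem:idemp:vanishing}). Since \ee is homogeneous, it has degree $1$. By \Cref{lem:iso:corner}, the map $\psi\colon a\mapsto(h^{-1}\ee a\ee h)|_{\QO\times\QO}$ is an \R-algebra isomorphism $\Dalg_\ee\to\IQR$. Defining $\IQR^g=\psi(\ee\IPR^g\ee)$ turns \IQR into a \G-graded algebra and $\psi$ into a graded isomorphism; for this we need $\Dalg_\ee=\bigoplus_g\ee\IPR^g\ee$, which holds because $\deg\ee=1$. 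Moreover $\psi(\ee)=1_{\IQR}$ and $\psi$ preserves diagonal entries on \QO.

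Next we apply \Cref{thm:xmin} to the graded algebra \IQR. The finite poset \QO has a minimal element; in fact every $x\in\QO$ will turn out to work. Take $x$ minimal in \QO. \Cref{thm:xmin}\labelcref{thm:xmin:2} gives a primitive homogeneous idempotent $\ee_x=|\H_x|\ee_{xx}^1$ of \IQR, where the subgroup $\H_x$ is computed with respect to the grading of \IQR.

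The key step is to show $\ee_x=1_{\IQR}$, and I expect it to be the only real obstacle. Primitivity of \ee in \IPR, via the graded isomorphism $\psi$, means exactly that $1_{\IQR}$ is primitive homogeneous in \IQR: a decomposition $1=\ff_1+\ff_2$ into nonzero orthogonal homogeneous idempotents of \IQR pulls back to a decomposition of \ee in $\Dalg_\ee\subseteq\IPR$. Now $\ee_x$ and $1-\ee_x$ are orthogonal homogeneous idempotents of degree $1$ summing to $1$, and $\ee_x\neq0$, so $1-\ee_x=0$. Hence $\QO=\supf\ee_x$. By \Cref{thm:xmin}\labelcref{thm:xmin:2}, \QO is then an antichain of size $|\H_x|$. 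Consequently every element of \QO is minimal in \QO, and \Cref{thm:xmin}\labelcref{thm:xmin:4} shows that $\H_y=\H_x$ for all $y\in\QO$. So the choice of $x$ in \QO is immaterial.

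Finally, all items of \Cref{thm:xmin} hold for \IQR with $\PO$ replaced by $\QO=\supf\ee$. Since $\psi^{-1}(\ee_x)=\psi^{-1}(1)=\ee$, \ee is the preimage of $\ee_x$, and $\Dalg_\ee\cong\IQR\cong\RH_x$ as graded algebras. One bookkeeping point remains: we must check that the subgroup $\H_x$ and the characters are intrinsic. They are defined using diagonal entries of homogeneous components of $\ee_{xx}\in\IQR$, and since $\psi$ is diagonal-preserving these can be read off equally in $\Dalg_\ee$. This justifies the phrase ``where $x$ is any element in $\supf\ee$''.
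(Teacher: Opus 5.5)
Your proposal is correct and follows essentially the same route as the paper. Like the paper, you transport the grading to $\Inc{\supf\ee}{\R}$ through the isomorphism of \Cref{lem:iso:corner}, apply \Cref{thm:xmin} at a minimal element $x$, and use the primitivity of $\ee$ to force $\ee_x=1$, after which $\supf\ee$ is an antichain of size $|\H_x|$ and every point is extremal. The only difference is cosmetic: the paper pulls $\ee_x$ back to $\Dalg_\ee$ and splits $\ee=\ee'+(\ee-\ee')$, whereas you push primitivity forward to $1_{\IQR}$.
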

\begin{proof}
Let $\QO=\supf\ee$, which is nonempty by \Cref{lem:idemp:vanishing}, and let $x$ be a minimal element of \QO. \Cref{thm:xmin} applied to the graded algebra $\IQR\cong\Dalg_\ee$ yields a primitive homogeneous idempotent $\ee_x\in\IQR$. Its preimage $\ee'$ in $\Dalg_\ee$ is a nonzero homogeneous idempotent, and $\ee'$, $\ee-\ee'$ are orthogonal homogeneous idempotents with sum \ee. Since \ee is primitive, $\ee'=\ee$, that is, $\ee_x=1_{\IQR}$. Therefore $\QO=\supf\ee_x$ is an antichain of size $|\H_x|$, every element of \QO is extremal, and \Cref{thm:xmin} applies to each of them.
\end{proof}

\begin{remark}\label{rem:xmin:supp_idemp}
In the situation of the Corollary, the support $\{h\in\G\mid\Dalg_\ee^h\neq0\}$ of the grading of $\Dalg_\ee$ is a finite abelian subgroup of \G, which we shall denote by $\H_e$. It is the group $\H_x$ of \Cref{thm:xmin} computed in $\Inc{\supf\ee}{\R}$ for any $x\in\supf\ee$, and $\Dalg_\ee\cong \Inc{\supf\ee}{\R}\cong\RH_\ee$ as \G-graded \R-algebras. Notice that all the conclusions of \Cref{lem:ipr:iso:comm:fin:grpalg} also hold with respect to this isomorphism.

\end{remark}

\begin{remark}\label{rem:ex:equals:e}
Let \ee be a diagonal primitive homogeneous idempotent, $\QO=\supf\ee$ and $x\in\QO$. Then $\ee_{xx}=\ee\ee_{xx}\ee\in\Dalg_\ee$, and the isomorphism $\Dalg_\ee\to\IQR$ of \Cref{lem:iso:corner} is the restriction $a\mapsto a|_{\QO\times\QO}$, which carries $\ee_{xx}$ and its homogeneous components $\ee_{xx}^h\in\Dalg_\ee$ to the elementary function of $x$ in \IQR and its homogeneous components. Consequently the objects $\H_x$, $\ee_x=|\H_x|\ee_{xx}^1$ and $\chi_x^y$ of \Cref{thm:xmin} computed in \IQR coincide with the ones computed in \IPR, and \Cref{cor:xmin:for:idemps} gives $\H_x=\H_\ee$ and $\ee_x=\ee$. In particular, for a diagonal primitive homogeneous idempotent the identities of \Cref{thm:xmin}\labelcref{thm:xmin:4} hold exactly: $\ee_{xx}^h=\chi_x^y(h)\ee_{yy}^h$ for $x,y\in\supf\ee$ and $h\in\H_\ee$, while $\ee_{xx}^h=0$ for $h\notin\H_\ee$. Moreover, under the isomorphism $\Dalg_\ee\cong\RH_\ee$ of \Cref{thm:xmin}\labelcref{thm:xmin:3}, which sends $|\H_\ee|\ee_{xx}^h$ to $h$, the idempotent $\ee_{yy}=\sum_{h}\chi_x^y(h)^{-1}\ee_{xx}^h$ corresponds to the Fourier idempotent $\bee_{\chi_x^y}$. In particular $\ee_{xx}$ corresponds to $\bee_{\hat1}$, and $y\mapsto\ee_{yy}$ matches the bijection $y\mapsto\chi_x^y$ of \Cref{thm:xmin}\labelcref{thm:xmin:4} with the bijection $\chi\mapsto\bee_\chi$ of \hH onto the primitive idempotents of $\RH_\ee$.
\end{remark}

\begin{proposition}\label{thm:graded:jr:corner}If $\ee$ is a primitive homogeneous idempotent, then:
\begin{enumerate}
    \item\label{thm:graded:jr:corner:1} $\Dalg_\ee\cap\zIPR = 0$,
    \item\label{thm:graded:jr:corner:2} $\JR{\Dalg_\ee}$ is a graded ideal, corresponding to $\JR\R\H_\ee$ under the graded isomorphism $\Dalg_\ee\cong\RH_\ee$ and to $\JR{\R}^{|\supf\ee|}$ under the Fourier isomorphism $\RH_\ee\cong\R^{\hH_\ee}$,
    \item\label{thm:graded:jr:corner:3} $\NR{\Dalg_\ee}$ is a graded ideal, corresponding likewise to $\NR\R\H_\ee$ and to $\NR{\R}^{|\supf\ee|}$,
\end{enumerate}
where $\H_\ee=\H_x$ for any $x\in\supf\ee$ (\Cref{cor:xmin:for:idemps,rem:xmin:supp_idemp}).
\end{proposition}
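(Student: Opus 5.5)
Item \labelcref{thm:graded:jr:corner:1} comes first, by reducing to the diagonal entries of elements of $\Dalg_\ee$. By \Cref{cor:xmin:for:idemps} and \Cref{rem:xmin:supp_idemp}, $\Dalg_\ee$ is graded isomorphic to $\RH_\ee\cong\R^{|\supf\ee|}$. The isomorphism $\Dalg_\ee\cong\Inc{\supf\ee}{\R}$ of \Cref{lem:iso:corner} preserves diagonal entries, and $\supf\ee$ is an antichain, so this isomorphism is exactly $a\mapsto (a(p,p))_{p\in\supf\ee}$. Now take $a\in\Dalg_\ee\cap\zIPR$. Its diagonal entries all vanish, so its image in $\Inc{\supf\ee}{\R}=\R^{\supf\ee}$ is zero, and injectivity gives $a=0$. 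As an alternative route, $a=\ee a\ee$ lies in the commutative algebra $\Dalg_\ee\cong\R^{|\supf\ee|}$, and its image there has zero coordinates.

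For \labelcref{thm:graded:jr:corner:2} and \labelcref{thm:graded:jr:corner:3}, the radicals are invariant under ring isomorphism. Hence under the graded isomorphism $\Dalg_\ee\cong\RH_\ee$ the ideal $\JR{\Dalg_\ee}$ corresponds to $\JR{\RH_\ee}$, and under the Fourier isomorphism $\RH_\ee\cong\R^{\hH_\ee}$ of \Cref{lem:ipr:iso:comm:fin:grpalg} it corresponds to $\JR{\R^{|\supf\ee|}}=\JR{\R}^{|\supf\ee|}$, because Jacobson radical and nilradical commute with finite products. The same argument applies to $\NR{\cdot}$.

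The substantive point is gradedness, and since the isomorphism $\Dalg_\ee\cong\RH_\ee$ is graded it suffices to show that $\JR{\RH_\ee}$ and $\NR{\RH_\ee}$ are graded ideals of $\RH_\ee$. The plan is to identify them with $\JR{\R}\H_\ee=\bigoplus_h\JR{\R}h$ and $\NR{\R}\H_\ee$, which are visibly graded. Under the Fourier isomorphism, $\sum_h r_h h$ maps to $\bigl(\sum_h r_h\chi(h)\bigr)_{\chi}$, with the character values units. If all $r_h\in\JR\R$, every coordinate lies in $\JR\R$. Conversely, if every coordinate lies in $\JR\R$, invert the character table using the orthogonality relations: $r_h=|\H_\ee|^{-1}\sum_\chi \chi(h^{-1})\,\bigl(\sum_g r_g\chi(g)\bigr)$, with $|\H_\ee|\in\uR$. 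This puts each $r_h$ in $\JR\R$. The identical computation works for $\NR\R$, since both are ideals of \R.

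The main, if modest, obstacle is this inversion step. It needs the orthogonality relations over \R, which are available from \Cref{lem:ipr:iso:comm:fin:grpalg} because $|\H_\ee|$ is a unit and \R contains the required roots of unity by \Cref{thm:xmin}\labelcref{thm:xmin:1}. With that in hand, $\JR{\Dalg_\ee}$ and $\NR{\Dalg_\ee}$ are the preimages of graded ideals under a graded isomorphism, hence graded, which completes the proof.
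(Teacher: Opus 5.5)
Your proposal is correct and follows essentially the same route as the paper. Item (i) uses the diagonal-entry-preserving isomorphism $\Dalg_\ee\cong\Inc{\supf\ee}{\R}$ with $\supf\ee$ an antichain, and items (ii)--(iii) compute the radicals of $\RH_\ee$ through the Fourier isomorphism, using Fourier inversion (with $|\H_\ee|\in\uR$) to get $\JR{\RH_\ee}=\JR{\R}\H_\ee$ and $\NR{\RH_\ee}=\NR{\R}\H_\ee$, which are visibly graded.
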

\begin{proof}
Let $\QO=\supf\ee$, so that $\Dalg_\ee\cong\IQR$ as \R-algebras (cf. \cref{lem:iso:corner}). It follows from \Cref{cor:xmin:for:idemps} that \QO is an antichain, there exists a finite abelian subgroup $\H_\ee$ of \G such that $|\H_\ee|$ is a unit in \R and $\IQR\cong \RH_\ee\cong\R^{|\QO|}$ is commutative. Since \QO is an antichain, $\ZR{\IQR}=0$. As the isomorphism $\Dalg_\ee\to\IQR$ preserves diagonal entries (\Cref{lem:iso:corner}), an element of $\Dalg_\ee\cap\zIPR$ is carried to an element of $\ZR{\IQR}$, whence \labelcref{thm:graded:jr:corner:1}. For \labelcref{thm:graded:jr:corner:2} we compute $\JR{\RH_\ee}$ through the Fourier isomorphism $\RH_\ee\to\R^{\hH_\ee}$, $a\mapsto(\sum_h a_h\chi(h))_{\chi}$, of \Cref{lem:ipr:iso:comm:fin:grpalg}. The Jacobson radical of $\R^{\hH_\ee}$ is $\JR\R^{\hH_\ee}$, so it is enough to identify the preimage of the latter. If $a_h\in\JR\R$ for all $h$, then each coordinate $\sum_h a_h\chi(h)$ lies in $\JR\R$, because the $\chi(h)$ are units. Conversely, Fourier inversion gives $a_h=|\H_\ee|^{-1}\sum_\chi\chi(h^{-1})\sum_{h'}a_{h'}\chi(h')$, so all $a_h$ lie in $\JR\R$ as soon as all coordinates do. Hence $\JR{\RH_\ee}=\JR\R\H_\ee$, which is a graded ideal, being spanned by homogeneous elements. The nilradical is treated by the same computation, the nilradical of $\R^{\hH_\ee}$ being $\NR\R^{\hH_\ee}$, and this proves \labelcref{thm:graded:jr:corner:3}. Transporting back along the graded isomorphism $\Dalg_\ee\cong\RH_\ee$ of \Cref{cor:xmin:for:idemps,rem:xmin:supp_idemp} gives the assertions for $\Dalg_\ee$.
\end{proof}

%
%

\section{The \ssets}\label{sec:stars}
In this section we define a maximal set of primitive orthogonal idempotents that we call
a \sset (\Cref{def:sset}) using terminology borrowed from \cite[Definition~1]{Miller-Spiegel}, which will play a central role in our classification.
We show that a \sset exists and is unique up to graded inner automorphisms, and show that the nonzero Peirce blocks $\ee\IPR\ff$ define a partial order on it. The conditions and notations remain the same as before.

\begin{definition}\label{def:sset}A \highlight{\sset} is any set of primitive pairwise orthogonal homogeneous idempotents whose supports partition \PO.
\end{definition}
For instance, with the trivial grading, $\{\ee_{ii}\mid i=1,\ldots,n\}$ is a \sset in $\Inc{\CO_n}{\R}$. The elements of a \sset \Eset add up to the identity: $\cc=\sum_{\ee\in\Eset}\ee$ is an idempotent with $\cc(x,x)=1$ for every $x\in\PO$, so $1-\cc$ is an idempotent lying in the nil ideal \zIPR, whence $\cc=1$.


Let $\ee\in\IPR^1$ be an idempotent. If this
element is not primitive, then there are nonzero idempotents $\ee_1,\ee_2\in\IPR^1$ such that $\ee=\ee_1+\ee_2$ and $\ee_1\ee_2=0=\ee_2\ee_1$. Since \R is indecomposable,
it must be the case that $\supf\ee_1\cup\supf\ee_2=\supf\ee$ and $\supf\ee_1\cap\supf\ee_2=\varnothing$. If \PO is finite, it follows by
induction on $|\supf\ee|$ that $\ee=\ee_1+\cdots+\ee_n$ is a
sum of primitive pairwise orthogonal idempotents. This proves the following:

\begin{proposition}\label{thm:existence:prim:idemps}
If \PO is \dPO, then $\IPR^1$ contains a finite \sset.
\end{proposition}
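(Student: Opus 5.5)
The argument is an induction on the size of the support, applied to the identity $1\in\IPR^1$. First I record the two facts it needs. The unit $1$ is a homogeneous idempotent of degree $1$. Every nonzero idempotent has nonempty support, by \Cref{lem:idemp:vanishing}. I then prove by strong induction on $|\supf\ee|$ the claim that every nonzero idempotent $\ee\in\IPR^1$ is a finite sum of pairwise orthogonal idempotents of $\IPR^1$ that are primitive homogeneous and whose supports partition $\supf\ee$.

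Fix such an $\ee$. If $\ee$ is primitive homogeneous, there is nothing to prove. Otherwise, by definition, $\ee=\ee_1+\ee_2$ with $\ee_1,\ee_2$ nonzero orthogonal homogeneous idempotents. Any homogeneous idempotent has degree $1$: if $\ee_1\in\IPR^g$, then $\ee_1=\ee_1^2\in\IPR^{g^2}$, so $g=g^2$ and $g=1$. Hence $\ee_1,\ee_2\in\IPR^1$.

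Next I check the supports. Evaluating on the diagonal and using \Cref{lem:radicals}, the values $\ee_i(x,x)$ are idempotents of the indecomposable ring \R, hence lie in $\{0,1\}$. Orthogonality gives $\ee_1(x,x)\ee_2(x,x)=0$, and the relation $\ee=\ee_1+\ee_2$ gives $\ee_1(x,x)+\ee_2(x,x)=\ee(x,x)$. Therefore $\supf\ee_1$ and $\supf\ee_2$ partition $\supf\ee$. Both are nonempty by \Cref{lem:idemp:vanishing}, so each has strictly smaller cardinality than $\supf\ee$.

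Apply the induction hypothesis to $\ee_1$ and $\ee_2$ separately, and concatenate the two resulting decompositions. It remains to check orthogonality across the two groups. Let $\ff$ occur in the decomposition of $\ee_1$ and $\ff'$ in that of $\ee_2$. Then $\ff=\ee_1\ff$ and $\ff'=\ee_2\ff'$, because each piece lies in the corresponding corner, being a summand of an orthogonal sum equal to $\ee_i$. Hence $\ff\ff'=\ff\ee_1\ee_2\ff'=0$, and similarly $\ff'\ff=0$. The supports of all pieces partition $\supf\ee$, which closes the induction.

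Finally, apply the claim to $\ee=1$, whose support is all of \PO, which is finite. This yields a finite \sset contained in $\IPR^1$. The only step needing care is cross-orthogonality when concatenating the decompositions; the induction itself is routine because the supports shrink strictly.
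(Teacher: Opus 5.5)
Your proof is correct and follows the paper's own argument: induction on $|\supf\ee|$, splitting a non-primitive idempotent of $\IPR^1$ into two nonzero orthogonal homogeneous idempotents whose supports partition $\supf\ee$ (by indecomposability of \R), and then applying this to $1$. You also make explicit three points the paper leaves implicit. These are that homogeneous idempotents have degree $1$, that both pieces have nonempty support by \Cref{lem:idemp:vanishing} (so the supports strictly shrink), and that the pieces obtained from $\ee_1$ and from $\ee_2$ are mutually orthogonal.
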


\begin{remark}\label{rem:sset:diag}It follows from \Cref{rem:diagonalization} that a finite \sset is simultaneously diagonalizable by an inner automorphism of \IPR. We shall sometimes assume, without loss of generality, that the elements of the \sset are diagonal.
\end{remark}

By \Cref{rem:ex:equals:e}, for a diagonal \sset \Eset and $\ee\in\Eset$ one has $\ee_{xx}\in\Dalg_\ee$ and $\ee_x=\ee$ for every $x\in\supf\ee$.

The supports of the elements of a \sset are determined by the grading alone:

\begin{lemma}\label{lem:minsup:subalg}
Let \Dalg be a graded subalgebra of \IPR with unit $1_\Dalg$ and $x\in\supf_\Dalg 1_\Dalg$. Then
$$
\minsup_\Dalg(x) = \minsup(x).
$$
In particular, if \ee is a primitive homogeneous idempotent and $\Dalg=\ee\IPR\ee$, then $\minsup(x)=\supf \ee$.
\end{lemma}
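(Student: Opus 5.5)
The plan is to compare both sides with the support of a primitive homogeneous idempotent. Since $\R$ is indecomposable, \Cref{thm:existence:prim:idemps} gives a finite \sset $\Eset\subseteq\IPR^1$, and by \Cref{rem:sset:diag} we may take it diagonal. The grading is transported along an inner automorphism, and supports of homogeneous elements are unchanged by this, because the conjugating unit has diagonal part $1$. Let $\ee\in\Eset$ be the element with $x\in\supf\ee$. I aim to prove
\[
\minsup_\Dalg(x)=\minsup(x)=\supf\ee .
\]
The last assertion of the lemma then follows with $\Dalg=\ee\IPR\ee$, whose unit $\ee$ has support $\supf\ee\ni x$.

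\emph{Step 1: $\minsup(x)=\supf\ee$.} The idempotent $\ee$ is homogeneous with $\ee(x,x)=1$, so $\minsup(x)\subseteq\supf\ee$. Conversely, let $f\in\IPR^h$ with $f(x,x)\neq0$. Then $\ee f\ee\in\Dalg_\ee^h$, and its diagonal entries on $\supf\ee$ are the $f(p,p)$, because $\ee$ is diagonal. By \Cref{thm:xmin}\labelcref{thm:xmin:3} and \Cref{rem:ex:equals:e}, the component $\Dalg_\ee^h$ is $\R\,\ee_{xx}^h$, with $h\in\H_\ee$. Hence $\ee f\ee=r|\H_\ee|\ee_{xx}^h$ for some $r\in\R$, and evaluating at $(x,x)$ gives $r=f(x,x)\neq0$. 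By \Cref{rem:ex:equals:e}, $\ee_{xx}^h(p,p)=\chi_x^p(h)\,\ee_{pp}^h(p,p)=\chi_x^p(h)|\H_\ee|^{-1}$ for $p\in\supf\ee$. Therefore $f(p,p)=r\chi_x^p(h)$, which is nonzero because $\chi_x^p(h)$ is a unit. So $\supf f\supseteq\supf\ee$ for every such $f$, and $\minsup(x)=\supf\ee$.

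\emph{Step 2: $\minsup(x)\subseteq\minsup_\Dalg(x)$.} Every homogeneous element of $\Dalg$ is homogeneous in $\IPR$, and $\supf_\Dalg$ is simply $\supf$. The intersection defining $\minsup_\Dalg(x)$ therefore runs over a subfamily of the one defining $\minsup(x)$, which gives this inclusion. The family is nonempty, since $1_\Dalg$ is homogeneous of degree $1$ with $x$ in its support.

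\emph{Step 3: $\minsup_\Dalg(x)\subseteq\supf\ee$.} This is the main obstacle: we must exhibit homogeneous elements of $\Dalg$ whose supports cut $\supf 1_\Dalg$ down to $\supf\ee$, although $\ee$ itself need not lie in $\Dalg$. I would first apply the construction of \Cref{thm:xmin} inside $\Dalg$. Let $\QO$ be the set of $p\in\supf1_\Dalg$ such that some homogeneous element of $\Dalg$ has nonzero $(x,p)$- or $(p,x)$-entry structure. Choosing $x$ extremal relative to $\QO$, one can reproduce the argument of \Cref{thm:xmin}\labelcref{thm:xmin:1,thm:xmin:2} verbatim with $\IPR$ replaced by $\Dalg$. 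That argument only uses homogeneous $f$ and the identity $f\,\ee_{xx}=f(x,x)\ee_{xx}$, which must be replaced by its analogue for the degree-$1$ component $1_\Dalg$. The resulting homogeneous idempotent $\ee'\in\Dalg^1$ is primitive in $\Dalg$ and has $x\in\supf\ee'$. The same pigeonhole computation then shows that $\ee'\Dalg\ee'$ is a fine-graded split group algebra, so $\supf\ee'=\minsup_\Dalg(x)$.

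It then remains to see that $\supf\ee'\subseteq\supf\ee$. By Step 1, $\supf\ee'\supseteq\supf\ee$. Suppose the inclusion were strict. Then $\ee\ee'\ee$ and $\ee'-\ee'\ee\ee'$ would yield, after the conjugation of \Cref{rem:diagonalization}, a nontrivial decomposition of $\ee'$. The delicate point is to realize that decomposition by idempotents inside $\Dalg$. I would attempt this using that $\ee'\Dalg\ee'\cong\R^{|\supf\ee'|}$ already contains the Fourier idempotents of \Cref{lem:ipr:iso:comm:fin:grpalg}, matched with points of $\supf\ee'$ as in \Cref{rem:ex:equals:e}. If this cannot be done inside $\Dalg$, the fallback is to strengthen the hypotheses to cover the stated application, where $\Dalg=\ee\IPR\ee$ and Steps 1 and 2 suffice directly.
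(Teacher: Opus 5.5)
Your Steps 1 and 2 are correct, and Step 1 already proves the final assertion of the lemma. It is the paper's argument in compressed form: project $f$ to the corner by $f\mapsto\ee f\ee$, then use $\Dalg_\ee^h=\R\,\ee_{xx}^h$ and the fact that these elements have support $\supf\ee$. For that assertion, apply Step 1 to the given $\ee$ itself rather than to a member of a \sset fixed in advance. You can either conjugate $\ee$ alone to diagonal form by \Cref{rem:diagonalization}, which changes no diagonal entry of any element, or choose the \sset to contain $\ee$ by decomposing $1-\ee$.

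The gap is Step 3, and it cannot be closed, because the inclusion $\minsup_\Dalg(x)\subseteq\minsup(x)$ is false for general graded subalgebras with unit. Let $\PO=\{x,y\}$ be an antichain with the trivial grading and $\Dalg=\R\cdot1$. Then $\Dalg$ is a graded subalgebra with $1_\Dalg=1$, and every nonzero element of $\Dalg$ has support $\PO$, so $\minsup_\Dalg(x)=\PO$. On the other hand, the homogeneous element $\ee_{xx}$ gives $\minsup(x)=\{x\}$. Here the idempotent $\ee'$ you would build in $\Dalg$ is $1$, and the splitting of $\ee'$ that you hope to realize inside $\Dalg$ does not exist there.

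The paper's proof tacitly makes the same restriction. It asserts $1_\Dalg f1_\Dalg\in\Dalg$ for every homogeneous $f\in\IPR$, which holds exactly when $\Dalg=1_\Dalg\IPR1_\Dalg$. So the lemma should be read for corners of homogeneous idempotents. In that setting your fallback is right, and it needs no idempotents in $\Dalg$ at all. For homogeneous $f$ with $f(x,x)\neq0$, the element $1_\Dalg f1_\Dalg$ is homogeneous in $\Dalg$, has $(x,x)$-entry $f(x,x)$, and has support contained in $\supf f$. This gives $\minsup_\Dalg(x)\subseteq\minsup(x)$ for any such corner, primitive or not. Only the final assertion is used later, in the proof of \Cref{lem:star:sets:conj}.
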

\begin{proof}
First notice that, for a homogeneous $f\in\Dalg$, we can view $f$ as a homogeneous element in \IPR, and $\supf_\Dalg f=\supf f$. Hence, $\minsup(x)\subseteq \minsup_\Dalg(x)$. On the other hand, for a homogeneous $f\in\IPR$, we have $1_\Dalg f 1_\Dalg\in \Dalg$ is still homogeneous, and $\supf_\Dalg(1_\Dalg f 1_\Dalg)\subseteq \supf 1_\Dalg\,\cap\,\supf f\subseteq \supf f$. As $1_\Dalg$ is idempotent and \R is \dR, $1_\Dalg(x,x)$ is $0$ or $1$, and it is $1$ because $x\in\supf_\Dalg 1_\Dalg$. It follows that $(1_\Dalg f1_\Dalg)(x,x)=f(x,x)$, so $1_\Dalg f1_\Dalg$ is one of the elements entering the intersection that defines $\minsup_\Dalg(x)$ whenever $f$ is one of those defining $\minsup(x)$. Therefore, $\minsup_\Dalg(x)\subseteq \minsup(x)$.

To finish the proof, we simply observe that $\Dalg_\ee$ is spanned by the homogeneous elements $|\H_\ee|\ee_{xx}^h$ ($h\in\H_\ee$), each of which is invertible in $\Dalg_\ee$ and has support equal to $\supf\ee$ (\Cref{thm:xmin}\labelcref{thm:xmin:3,thm:xmin:4} and \Cref{cor:xmin:for:idemps}). Hence $\minsup_{\Dalg_\ee}(x)=\supf\ee$.
\end{proof}

\begin{theorem}\label{lem:star:sets:conj}
Let \IPR be graded by a group \G. Then, up to conjugation by a graded inner automorphism, \IPR has essentially only one \sset.
\end{theorem}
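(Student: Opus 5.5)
Given two \ssets $\Eset$ and $\Eset'$, I will first show that their supports induce the same partition of \PO, and then conjugate one onto the other by a unit of $\IPR^1$. For the partitions, take $\ee\in\Eset$ and $x\in\supf\ee$. By \Cref{lem:minsup:subalg}, $\supf\ee=\minsup(x)$, and this set depends only on the grading and on $x$. Likewise, if $\ee'\in\Eset'$ is the unique element with $x\in\supf\ee'$, then $\supf\ee'=\minsup(x)=\supf\ee$. Hence there is a bijection $\ee\mapsto\ee'$, $\Eset\to\Eset'$, with $\supf\ee=\supf\ee'$, and both sets are finite since \PO is finite. By \Cref{lem:iso:idemp}, each pair satisfies $\ee-\ee'\in\zIPR$.

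Next I build a homogeneous conjugating element. In analogy with \Cref{rem:diagonalization}, set
\[u=\sum_{\ee\in\Eset}\ee'\ee .\]
Each $\ee,\ee'$ is homogeneous of degree $1$, so $u\in\IPR^1$. The diagonal part of $\ee'\ee$ is the product of the diagonal parts, so $u$ has diagonal part $\sum_\ee \ee_d=1$. Thus $u-1\in\zIPR\subseteq\jIPR$ and $u$ is invertible. Its inverse also lies in $\IPR^1$: $u^{-1}=\sum_{k\geq0}(1-u)^k$ is a finite sum because $\zIPR$ is nilpotent (\PO finite), and $1-u\in\IPR^1$. Using orthogonality within each set,
\[u\ee=\ee'\ee=\ee' u\]
for every $\ee\in\Eset$. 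Hence $u\ee u^{-1}=\ee'$, and the graded inner automorphism determined by $u$ carries $\Eset$ onto $\Eset'$.

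The main point needing care is the first step, namely that the partition of supports is canonical. This rests on \Cref{lem:minsup:subalg}, which in turn uses the structure of $\Dalg_\ee$ from \Cref{thm:xmin} and \Cref{cor:xmin:for:idemps}, so here it is essentially a citation. A secondary check is that the nilpotence argument for $u^{-1}$ keeps everything in degree $1$. This is immediate because $u$ and all its powers stay in the subalgebra $\IPR^1$. The phrase ``essentially only one'' is then justified: the \sset is unique up to such conjugation, and its indexing by the blocks $\minsup(x)$ is intrinsic.
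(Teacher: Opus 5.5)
Your proof is correct and is the paper's argument: the bijection $\ee\mapsto\ee'$ with $\supf\ee=\supf\ee'$ comes from \Cref{lem:minsup:subalg}, and the conjugating unit $\sum_{\ee\in\Eset}\ee'\ee\in\IPR^1$ is the same one the paper uses, invertible because it differs from $1$ by an element of $\zIPR$. Your extra check that $u^{-1}\in\IPR^1$ is a harmless addition; it also follows from the general fact that an invertible homogeneous element of degree $1$ has a homogeneous inverse of degree $1$.
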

\begin{proof}
Let \Eset and $\Eset'$ be two \ssets for \IPR. It follows from the previous lemma that there is a bijective correspondence $\ee\mapsto\ee'$ between the two \ssets where $\ee'$ is the unique idempotent in $\Eset'$ with $\supf{\ee}=\supf{\ee}'$. In particular, $|\Eset|=|\Eset'|$.

Now, consider the element $h=\sum_{\ee\in\Eset}\ee'\ee$. This element is in $\IPR^1$ and satisfies $\ee' h = \ee'\ee = h \ee$ for all $\ee\in\Eset$. Since
\[
1-h=\sum_{\ee\in\Eset}(\ee-\ee')\ee\in\zIPR\subset\jIPR,
\]
it follows that $h$ is invertible. It is clear that this element defines a graded inner automorphism of \IPR sending \Eset onto $\Eset'$, as required.
\end{proof}

Finally, the nonzero Peirce blocks define a partial order on a \sset. The following regularity property of the blocks is the key point.

\begin{lemma}\label{lem:regularity}
Let $\ee,\ff$ be diagonal primitive homogeneous idempotents with $\ee\IPR\ff\neq0$. Then
\[
\ee_{xx}\IPR\ff\neq0\quad\text{and}\quad \ee\IPR\ee_{yy}\neq0\qquad(x\in\supf\ee,\ y\in\supf\ff).
\]
Consequently, for every $y\in\supf\ff$ there is $x\in\supf\ee$ with $x\leq y$, and for every $x\in\supf\ee$ there is $y\in\supf\ff$ with $x\leq y$.
\end{lemma}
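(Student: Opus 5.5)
The plan is to use the diagonal primitive homogeneous idempotents together with the identities of \Cref{rem:ex:equals:e}. For a diagonal primitive homogeneous idempotent \ee and $x,y\in\supf\ee$, that remark gives $\ee_{xx}^h=\chi_x^y(h)\ee_{yy}^h$ for $h\in\H_\ee$ and $\ee_{xx}^h=0$ otherwise, and the elements $|\H_\ee|\ee_{xx}^h$ are invertible in $\Dalg_\ee$. The idea is to move a nonzero element of $\ee\IPR\ff$ to a nonzero element of $\ee_{xx}\IPR\ff$ by multiplying on the left by suitable elements of $\Dalg_\ee$, and then argue symmetrically on the right.

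First I reduce to homogeneous elements. Since \ee and \ff are homogeneous of degree $1$, the block $\ee\IPR\ff$ is a graded submodule. Hence $\ee\IPR\ff\neq0$ gives a nonzero homogeneous $a=\ee a\ff$ of some degree $g$.

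Next, fix $x\in\supf\ee$ and write $\ee=\sum_{z\in\supf\ee}\ee_{zz}$, which holds because \ee is diagonal with entries $0$ or $1$. Then $a=\sum_z\ee_{zz}a$, so $\ee_{zz}a\neq0$ for some $z\in\supf\ee$. I now want to pass from $z$ to $x$. Decompose $\ee_{zz}=\sum_{h\in\H_\ee}\ee_{zz}^h$. Because $a$ is homogeneous, the homogeneous components of $\ee_{zz}a$ are the elements $\ee_{zz}^ha$, and some $h$ has $\ee_{zz}^ha\neq0$. By \Cref{rem:ex:equals:e}, $\ee_{zz}^h=\chi_z^x(h)\ee_{xx}^h$, and $\chi_z^x(h)$ is a unit, so $\ee_{xx}^ha\neq0$. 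Since $\ee_{xx}^h=\ee_{xx}\ee_{xx}^h$ (both lie in the commutative algebra $\Dalg_\ee\cong\RH_\ee$, and the component is supported at the idempotent $\ee_{xx}$), we get $\ee_{xx}(\ee_{xx}^ha)\neq0$, so $\ee_{xx}\IPR\ff\neq0$.

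The main obstacle is justifying $\ee_{xx}\ee_{xx}^h=\ee_{xx}^h$. I would read it off the Fourier picture of \Cref{rem:ex:equals:e}. There $|\H_\ee|\ee_{xx}^h$ corresponds to $h$ and $\ee_{xx}$ corresponds to $\bee_{\hat1}$, so the identity becomes $\bee_{\hat1}h=\bee_{\hat1}$ rather than $h$, which is false. So I switch to a cleaner route: avoid this identity and use invertibility instead. Multiplying $\ee_{xx}^ha\neq0$ on the left by the inverse of $|\H_\ee|\ee_{xx}^h$ in $\Dalg_\ee$ gives $\ee a\neq0$ trivially, so that does not help either. The right argument is to use that $y\mapsto\ee_{yy}$ corresponds to all the Fourier idempotents, and that $\hH_\ee$ acts transitively on them through the automorphisms $\alpha_\chi$. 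Concretely, $h\,\bee_\rho=\rho(h)\bee_\rho$ and $\bee_\rho$ ranges over all $\ee_{yy}$. Writing $\ee_{zz}a\neq0$ with $\ee_{zz}=\sum_h\chi(h^{-1})|\H_\ee|^{-1}\cdot(\text{element }h)$ shows that some $h\cdot a\neq0$, and then $\ee_{xx}\cdot(h\cdot a)$ should be shown nonzero by expanding $h=\sum_y\chi_x^y(h)\ee_{yy}$ and applying the homogeneity trick again. I expect this bookkeeping of degrees, using that distinct $h$ give distinct degrees $hg$, to be the delicate point. The claim for $\ee\IPR\ee_{yy}$ is symmetric.

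For the final assertion, take $0\neq b\in\ee\IPR\ee_{yy}$. By the remark after \Cref{lem:idemp:vanishing}, applied to the idempotents \ee and $\ee_{yy}$, there exist $x\in\supf\ee$ and $y'\in\supf\ee_{yy}=\{y\}$ with $x\le y'=y$. The other direction follows in the same way from $\ee_{xx}\IPR\ff\neq0$.
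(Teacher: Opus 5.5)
Your reduction to a nonzero homogeneous $a=\ee a\ff$ of degree $g$ is correct. So is your choice of $z\in\supf\ee$ and $h\in\H_\ee$ with $\ee_{zz}^ha\neq0$, and the transfer $\ee_{xx}^ha=\chi_z^x(h)^{-1}\ee_{zz}^ha\neq0$ via \Cref{rem:ex:equals:e}. Your deduction of the final assertion from the remark after \Cref{lem:idemp:vanishing} is also correct. The gap is the step you flag yourself: you never establish the passage from $\ee_{xx}^ha\neq0$ to $\ee_{xx}\IPR\ff\neq0$. The identity $\ee_{xx}\ee_{xx}^h=\ee_{xx}^h$ is false, since in $\RH_\ee$ it would read $\bee_{\hat1}h=h$, whereas $\bee_{\hat1}h=\bee_{\hat1}$. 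So $\ee_{xx}^ha$ need not lie in $\ee_{xx}\IPR\ff$. The replacement you sketch is only a plan, and it is circular: $h\cdot a$ is again a nonzero homogeneous element of $\ee\IPR\ff$ (of degree $hg$), and showing that $\ee_{xx}$ does not annihilate it is exactly the statement of the lemma. As written, the proposal does not prove the lemma.

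The missing observation is one you already used a line earlier, now applied to $x$ rather than $z$. Since $a$ is homogeneous of degree $g$ and $\ee_{xx}=\sum_{h'\in\H_\ee}\ee_{xx}^{h'}$, the homogeneous components of $\ee_{xx}a=\ee_{xx}\ee a\ff\in\ee_{xx}\IPR\ff$ are exactly the elements $\ee_{xx}^{h'}a$, which lie in the pairwise distinct degrees $h'g$. Hence the nonzero element $\ee_{xx}^ha$ is the degree-$hg$ component of $\ee_{xx}a$, and so $\ee_{xx}a\neq0$. Once you see this, the detour through $z$ is unnecessary. The degree-$g$ component of $\ee_{xx}a$ is $\ee_{xx}^1a=|\H_\ee|^{-1}\ee a=|\H_\ee|^{-1}a\neq0$, because $\ee=\ee_x=|\H_\ee|\ee_{xx}^1$ for a diagonal primitive homogeneous idempotent (\Cref{rem:ex:equals:e}). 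This is precisely the paper's proof, carried out there on the right. It writes $\ee_{yy}$ as the Fourier idempotent $|\H_\ff|^{-1}\sum_{k}\chi(k^{-1})u_k$ of $\Dalg_\ff\cong\RH_\ff$ and observes that the terms $mu_k$ of $m\ee_{yy}$ lie in the distinct degrees $gk$. Since the coefficient of $u_1=\ff$ is a unit, $m\ee_{yy}=0$ would force $m=0$.
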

\begin{proof}
Choose a nonzero homogeneous $m\in\ee\IPR\ff$, say of degree $g$, and let $y\in\supf\ff$. By \Cref{rem:ex:equals:e} we have $\ee_{yy}\in\Dalg_\ff$, and under the isomorphism $\Dalg_\ff\cong\RH_\ff$ of \Cref{thm:xmin}\labelcref{thm:xmin:3} the element $\ee_{yy}$ is the Fourier idempotent $\bee_\chi=|\H_\ff|^{-1}\sum_{k\in\H_\ff}\chi(k^{-1})u_k$ of some $\chi\in\hH_\ff$, where $u_k\in\Dalg_\ff^k$ denotes the image of $k$ and $u_1=\ff$. Now $mu_k$ is homogeneous of degree $gk$, and the degrees $gk$ ($k\in\H_\ff$) are pairwise distinct. It follows that the terms of
\[
m\,\ee_{yy}=|\H_\ff|^{-1}\sum_{k\in\H_\ff}\chi(k^{-1})\,mu_k
\]
lie in distinct homogeneous components. If $m\ee_{yy}=0$, then $\chi(k^{-1})mu_k=0$ for every $k$, and since $\chi(k^{-1})\in\uR$ this gives $mu_k=0$ for every $k$. In particular $m=m\ff=mu_1=0$, a contradiction. Therefore $\ee\IPR\ee_{yy}\ni m\ee_{yy}\neq0$, and the argument on the left is the same. The last assertion follows from \Cref{lem:idemp:vanishing}: applied to the idempotents \ee and $\ee_{yy}$ it yields $x\in\supf\ee$ with $x\leq y$, and applied to $\ee_{xx}$ and \ff it yields $y\in\supf\ff$ with $x\leq y$.
\end{proof}

\begin{proposition}\label{lem:idemp:relation}Let the relation $\ee\preccurlyeq\ff$ on the set of nonzero idempotents of $\IPR$ be defined by $\ee\IPR\ff\neq 0$. Then this relation is reflexive, and $\ee\preccurlyeq\ff$ if and only if $x\leq y$ for some $x\in\supf\ee$ and some $y\in\supf\ff$. Furthermore, this relation restricts to a partial order on any set of pairwise orthogonal primitive homogeneous idempotents, in particular on a \sset.
\end{proposition}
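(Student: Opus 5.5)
The plan is to treat the three claims separately: reflexivity, the support characterization, and the partial order on orthogonal primitive homogeneous idempotents.

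\emph{Reflexivity.} For a nonzero idempotent \ee we have $\ee=\ee\,\ee\,\ee\in\ee\IPR\ee$, so $\ee\preccurlyeq\ee$.

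\emph{Characterization by supports.} The implication ``$\ee\IPR\ff\neq0\Rightarrow x\leq y$ for some $x\in\supf\ee$, $y\in\supf\ff$'' is exactly the observation recorded right after \Cref{lem:idemp:vanishing}. For the converse, take $x\in\supf\ee$ and $y\in\supf\ff$ with $x\leq y$, and put $g=\ee\,\ee_{xy}\,\ff\in\ee\IPR\ff$. Its $(x,y)$-entry is
\[
g(x,y)=\sum_{u,v}\ee(x,u)\,\ee_{xy}(u,v)\,\ff(v,y)=\ee(x,x)\ff(y,y).
\]
Since \R is indecomposable, $\ee(x,x)=\ff(y,y)=1$, so $g(x,y)=1$ and $g\neq0$. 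Equivalently, $\ee_{xx}g\ee_{yy}=g(x,y)\ee_{xy}=\ee_{xy}\neq0$.

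\emph{Partial order.} Let \Eset be a set of pairwise orthogonal primitive homogeneous idempotents. For $\ee\neq\ff$ in \Eset, taking diagonal parts in $\ee\ff=0$ shows that $\supf\ee\cap\supf\ff=\varnothing$. Next I reduce to the diagonal case using \Cref{rem:diagonalization}. Conjugation by the unit $u$ there (whose diagonal part is $1$) preserves three things: the relation $\ee\IPR\ff\neq0$, the supports, and, after transporting the grading along the inner automorphism, homogeneity and primitivity. So I may assume the elements of \Eset are diagonal; since only finitely many elements are involved in each check, finiteness of \Eset is not needed. Then \Cref{lem:regularity} applies to any pair $\ee\preccurlyeq\ff$ in \Eset. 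It gives, for each $x\in\supf\ee$, some $y\in\supf\ff$ with $x\leq y$, and for each $y\in\supf\ff$, some $x\in\supf\ee$ with $x\leq y$.

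\emph{Transitivity.} Suppose $\ee\preccurlyeq\ff\preccurlyeq\kk$ and pick any $y\in\supf\ff$. By regularity there is $x\in\supf\ee$ with $x\leq y$, and $z\in\supf\kk$ with $y\leq z$. Then $x\leq z$, and the characterization gives $\ee\preccurlyeq\kk$.

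\emph{Antisymmetry.} Suppose $\ee\preccurlyeq\ff$ and $\ff\preccurlyeq\ee$ with $\ee\neq\ff$, and start from any $x_0\in\supf\ee$. Applying regularity alternately to the two relations produces an infinite chain
\[
x_0\leq y_0\leq x_1\leq y_1\leq\cdots,\qquad x_i\in\supf\ee,\ y_i\in\supf\ff.
\]
Since \PO is finite, some element repeats, and antisymmetry of $\leq$ forces every element between the two occurrences to be equal. In particular some $x_i=y_i$, which contradicts the disjointness of $\supf\ee$ and $\supf\ff$.

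The main obstacle I expect is the reduction to diagonal idempotents. One must check that the transported grading still makes the conjugated elements primitive homogeneous, so that \Cref{lem:regularity} really applies. Everything else follows directly from the support characterization and the finiteness of \PO.
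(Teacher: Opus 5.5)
Your proof is correct and follows the paper's argument. Reflexivity and the support characterization go through the $(x,y)$-entry of $\ee\,\ee_{xy}\ff$ and the observation after \Cref{lem:idemp:vanishing}; the order properties come from reducing to diagonal idempotents via \Cref{rem:diagonalization} and then applying \Cref{lem:regularity}.

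The only variation is in antisymmetry. The paper uses the fact that $\supf\ee$ is an antichain (\Cref{cor:xmin:for:idemps}); you instead build an alternating chain and let finiteness of \PO force a repetition, which makes two consecutive terms from different supports coincide. This is equally valid, though the coinciding pair may be $y_i=x_{i+1}$ rather than $x_i=y_i$, which changes nothing.
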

\begin{proof}
First notice that, since \R is indecomposable, we have $\ee(x,x)\in\{0,1\}$ for all idempotents \ee and all $x\in\PO$. If $\ee\IPR\ee=0$, then $\ee=\ee 1 \ee=0$, which is not possible, hence $\ee\preccurlyeq \ee$ for all nonzero idempotents.

Now, for any $x, y$ in \PO, we have $(\ee\ee_{xy}\ff)(x,y)=\ee(x,x)\ee_{xy}(x,y)\ff(y,y)$, which is nonzero when $x\leq y$, $x\in\supf\ee$ and $y\in\supf\ff$. Conversely, if $\ee\preccurlyeq\ff$, there must exist $x\in\supf\ee$ and $y\in\supf\ff$ such that $x\leq y$ (\Cref{lem:idemp:vanishing}).

Let now \Eset be a set of pairwise orthogonal primitive homogeneous idempotents. By \Cref{rem:diagonalization} an inner automorphism of \IPR carries \Eset onto a set of diagonal idempotents, which are primitive and homogeneous for the transported grading. The relation $\preccurlyeq$ is defined without reference to the grading and is preserved by inner automorphisms, so we may assume that the elements of \Eset are diagonal. Being orthogonal and diagonal, they have pairwise disjoint supports.

Transitivity: let $\ee\preccurlyeq\ff$ and $\ff\preccurlyeq\hh$ in \Eset. By the previous paragraph there are $y\in\supf\ff$ and $z\in\supf\hh$ with $y\leq z$, and by \Cref{lem:regularity} there is $x\in\supf\ee$ with $x\leq y$. Then $x\leq z$, so $\ee\preccurlyeq\hh$.

Antisymmetry: let $\ee\preccurlyeq\ff$ and $\ff\preccurlyeq\ee$ with $\ee\neq\ff$. Choose $y\in\supf\ff$ and $z\in\supf\ee$ with $y\leq z$, and then $x\in\supf\ee$ with $x\leq y$ by \Cref{lem:regularity}. Since $\supf\ee$ is an antichain (\Cref{cor:xmin:for:idemps}) and $x\leq y\leq z$, we get $x=z$ and hence $x=y\in\supf\ee\cap\supf\ff$, contradicting the disjointness of the supports. It follows that $\preccurlyeq$ is a partial order on \Eset.
\end{proof}
The transitivity argument is not available for arbitrary idempotents, and indeed $\preccurlyeq$ is not transitive on them: if $a<b$ in \PO and $c$ is comparable to neither, then $\ee_{aa}\preccurlyeq\ee_{bb}+\ee_{cc}\preccurlyeq\ee_{cc}$, while $\ee_{aa}\IPR\ee_{cc}=0$.

It follows at once from this that:
\begin{corollary}\label{cor:triang}If $\ee$ and $\ff$ are distinct primitive orthogonal homogeneous idempotents, then $\ee\IPR\ff=0$ or $\ff\IPR\ee=0$.
\end{corollary}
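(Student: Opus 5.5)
The corollary follows from the antisymmetry of $\preccurlyeq$ in \Cref{lem:idemp:relation}; the only work is to fit \ee and \ff into a set of the kind that result covers. Let $\ee\neq\ff$ be primitive orthogonal homogeneous idempotents and consider $\Eset=\{\ee,\ff\}$. This is a set of pairwise orthogonal primitive homogeneous idempotents. Both elements are nonzero, since primitive idempotents are nonzero by definition. Hence, by \Cref{lem:idemp:relation}, the relation $\preccurlyeq$ restricts to a partial order on \Eset.

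Now suppose, towards a contradiction, that $\ee\IPR\ff\neq0$ and $\ff\IPR\ee\neq0$. Then $\ee\preccurlyeq\ff$ and $\ff\preccurlyeq\ee$, and antisymmetry forces $\ee=\ff$, contrary to hypothesis. So one of the two Peirce blocks vanishes.

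Inside the proof of \Cref{lem:idemp:relation}, the relation was only checked to be a partial order after passing to diagonal idempotents. It is worth confirming that this reduction is legitimate for an arbitrary two-element set, and not only for a \sset. The reduction goes as follows. \Cref{rem:diagonalization} supplies an inner automorphism that diagonalizes \ee and \ff simultaneously, since they commute, being orthogonal. This automorphism preserves primitivity, homogeneity (for the transported grading) and orthogonality. It also preserves the vanishing or non-vanishing of $\ee\IPR\ff$, because conjugation by a unit $u$ maps $\ee\IPR\ff$ onto $u\ee u^{-1}\IPR u\ff u^{-1}$. Once \ee and \ff are diagonal, the antisymmetry argument applies. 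That argument uses \Cref{lem:regularity} together with the fact that $\supf\ee$ is an antichain (\Cref{cor:xmin:for:idemps}) and that the supports of \ee and \ff are disjoint.

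I expect no real obstacle here. The only point needing care is this bookkeeping about diagonalization, which is already handled in the proof of \Cref{lem:idemp:relation} for arbitrary sets of pairwise orthogonal primitive homogeneous idempotents.
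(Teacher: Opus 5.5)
Your proof is correct and is essentially the paper's argument: the paper states that the corollary follows at once from \Cref{lem:idemp:relation}, namely from antisymmetry of $\preccurlyeq$ on the set $\{\ee,\ff\}$ of pairwise orthogonal primitive homogeneous idempotents. Your extra check of the diagonalization step is correct but not needed, because that proposition is already stated for arbitrary sets of this kind and not only for \ssets.
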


%
%

\section{The Graded Peirce Decomposition of \IPR}\label{sec:peirce}

In this section, unless stated otherwise, \R is \dR, \PO is \dPO, and we assume that \IPR is graded by a group \G. The main result is \Cref{thm:graded:peirce} (see also \Cref{cor:grading:UTnR} and following remarks), which extends \cite[Theorem 1, Corollary 17]{Santulo-Souza-Yasumura}, and hereby follows as immediate consequence of
\Cref{lem:idemp:relation,cor:triang,thm:existence:prim:idemps,lem:star:sets:conj}. This result provides a detailed description of the graded structure on the algebra \IPR.
\begin{theorem}\label{thm:graded:peirce}
There exists a \sset \Eset in $\IPR^1$, partially ordered by relation $\preccurlyeq$ (cf. \Cref{lem:idemp:relation}), and a corresponding graded triangular Peirce decomposition of \IPR,
\begin{equation}
\IPR=\bigoplus_{\ee\in\Eset}\ee\IPR\ee\oplus\bigoplus_{\ee\prec\ff\in\Eset}\ee\IPR\ff.\label{thm:graded:peirce:decomp}
\end{equation}
Consequently, for each $\ee\in\Eset$, there exists a finite abelian subgroup $\H_\ee$ of \G such that $\Dalg_\ee=\ee\IPR\ee\cong \RH_\ee\cong\R^{|\H_\ee|}$ as graded \R-algebras (cf. \Cref{lem:ipr:iso:comm:fin:grpalg}) and $\supf\ee$ is an antichain in \PO of size $|\H_e|$.

Moreover, this decomposition is unique up to a graded inner automorphism of \IPR.
\qed
\end{theorem}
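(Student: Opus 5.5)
We assemble the theorem from the results already proved, since the paper says it follows immediately from \Cref{lem:idemp:relation,cor:triang,thm:existence:prim:idemps,lem:star:sets:conj}.

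First, \Cref{thm:existence:prim:idemps} provides a finite \sset $\Eset\subseteq\IPR^1$. Its elements are pairwise orthogonal with sum $1$, as observed after \Cref{def:sset}. Hence the ordinary Peirce decomposition holds:
\[
\IPR=\bigoplus_{\ee,\ff\in\Eset}\ee\IPR\ff.
\]
Each summand is a graded submodule. Indeed, $\ee$ and $\ff$ have degree $1$, so for $a\in\IPR$ with homogeneous components $a^g$ we have $(\ee a\ff)^g=\ee a^g\ff$, and the projection $a\mapsto\ee a\ff$ preserves degrees.

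Next we use \Cref{lem:idemp:relation}, which says that $\preccurlyeq$ is a partial order on \Eset. The summand $\ee\IPR\ff$ vanishes unless $\ee\preccurlyeq\ff$. For $\ee\neq\ff$, \Cref{cor:triang} excludes having both $\ee\preccurlyeq\ff$ and $\ff\preccurlyeq\ee$. The surviving summands are therefore exactly the diagonal corners $\ee\IPR\ee$ and the blocks $\ee\IPR\ff$ with $\ee\prec\ff$. This is \eqref{thm:graded:peirce:decomp}, and it is triangular with respect to a linear extension of $\preccurlyeq$.

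The description of the corners is quoted from \Cref{cor:xmin:for:idemps} and \Cref{rem:xmin:supp_idemp}. They give $\Dalg_\ee\cong\Inc{\supf\ee}{\R}\cong\RH_\ee\cong\R^{|\H_\ee|}$ as graded algebras, with $\H_\ee$ finite abelian. The same results show that $\supf\ee$ is an antichain of size $|\H_\ee|$, with the splitting data coming from \Cref{lem:ipr:iso:comm:fin:grpalg}.

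For uniqueness, let $\Eset'$ be another \sset. \Cref{lem:star:sets:conj} gives an invertible $h\in\IPR^1$ with $h\ee h^{-1}=\ee'$, where $\ee'\in\Eset'$ is the element with the same support. Conjugation by $h$ is a graded automorphism. It therefore maps $\ee\IPR\ff$ onto $\ee'\IPR\ff'$ as graded submodules. Since $\preccurlyeq$ is invariant under inner automorphisms, it also matches the two partial orders.

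The only point needing care is that \Cref{lem:idemp:relation} and \Cref{cor:triang} hold for non-diagonal idempotents. Their proofs pass to diagonal representatives through \Cref{rem:diagonalization}, and the relation $\preccurlyeq$ is inner-invariant, so no further work is required.
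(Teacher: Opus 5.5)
Your proposal is correct and follows the paper's route. The paper gives no separate argument: it assembles the theorem from \Cref{thm:existence:prim:idemps,lem:idemp:relation,cor:triang,lem:star:sets:conj} together with the corner description of \Cref{cor:xmin:for:idemps}, exactly as you do, and your extra checks (each block $\ee\IPR\ff$ is graded because $\deg\ee=\deg\ff=1$, and conjugation by the unit $h\in\IPR^1$ carries blocks to blocks) supply routine details the paper leaves implicit.
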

This result can be recast in the language of structural matrix algebras. More precisely:

\begin{corollary}\label{cor:struct:mat:alg}
Let $\Eset=\{\ee_1,\ldots,\ee_n\}$ be a \sset for \IPR. Then, as a \G-graded \R-algebra,
\[
\IPR \cong \left[ \begin{array}{cccc}
   \RH_1  & \mM_{12} & \cdots & \mM_{1n} \\
           & \RH_2     & \cdots & \mM_{2n} \\
           &            & \ddots & \vdots     \\
           &            &        & \RH_n
\end{array}\right]
\]
where the $\H_i$ are finite abelian subgroups of \G and $\mM_{ij}=\ee_i\IPR\ee_j$ are graded $(\RH_i,\RH_j)$-bimodules and free graded \R-modules of finite rank for all $i,j=1,\ldots,n$.\qed
\end{corollary}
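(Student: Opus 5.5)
We read the corollary off \Cref{thm:graded:peirce}, ordering the elements of \Eset so that the matrix becomes upper triangular. First we choose a linear extension of the partial order $\preccurlyeq$ on \Eset (\Cref{lem:idemp:relation}), i.e.\ we label $\Eset=\{\ee_1,\ldots,\ee_n\}$ so that $\ee_i\preccurlyeq\ee_j$ implies $i\le j$. By \Cref{cor:triang}, $\ee_i\IPR\ee_j=0$ whenever $i>j$. Since the $\ee_i$ are homogeneous of degree $1$, pairwise orthogonal and sum to $1$ (as observed after \Cref{def:sset}), the Peirce decomposition $\IPR=\bigoplus_{i\le j}\ee_i\IPR\ee_j$ is a decomposition into graded \R-submodules. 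Each block is graded because $\ee_i a\ee_j$ has homogeneous components $\ee_i a^g\ee_j$.

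The map $a\mapsto(\ee_i a\ee_j)_{i\le j}$ is then an \R-algebra isomorphism onto the generalized upper triangular matrix algebra with entries $\mM_{ij}=\ee_i\IPR\ee_j$ and multiplication induced by $\mM_{ij}\times\mM_{jk}\to\mM_{ik}$. It preserves degrees by the previous remark.

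For the diagonal entries we use \Cref{thm:graded:peirce} (equivalently \Cref{cor:xmin:for:idemps,rem:xmin:supp_idemp}), which gives graded isomorphisms $\Dalg_{\ee_i}\cong\RH_i$ with $\H_i=\H_{\ee_i}$ finite abelian. Transporting along these isomorphisms makes $\mM_{ij}$ a graded $(\RH_i,\RH_j)$-bimodule, and the product of the matrix algebra matches the product of \IPR.

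The step needing care is the claim that each $\mM_{ij}$ is a \emph{free} graded \R-module of finite rank. We first diagonalize \Eset via \Cref{rem:sset:diag}, transporting the grading along the inner automorphism. Then $\mM_{ij}=\ee_i\IPR\ee_j$ is the \R-span of those $\ee_{xy}$ with $x\in\supf\ee_i$, $y\in\supf\ee_j$ and $x\le y$, so it is free of finite rank as an \R-module. The difficulty is that it need not have a homogeneous basis, so we show that each homogeneous component $\mM_{ij}^g$ is free. Since $\mM_{ij}=\bigoplus_g\mM_{ij}^g$, each $\mM_{ij}^g$ is a direct summand of a free module and hence finitely generated projective.

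We then exploit the $\RH_j$-action. Right multiplication by the invertible homogeneous elements $|\H_j|\ee_{yy}^h$ permutes the components within $\H_j$-cosets of degrees. Combined with the Fourier idempotents, this identifies $\mM_{ij}^{g}$ with $\ee_i\IPR\ee_{yy}$-type pieces that are spanned by elementary functions up to unit scalars. If this is too delicate, the fallback is to claim only finitely generated projective components, which over indecomposable \R have constant rank, and to defer full freeness to the bimodule structure theorem of \Cref{sec:bimodules}.
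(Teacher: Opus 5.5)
Your proposal is correct and follows the paper, which gives no separate proof but reads the corollary off \Cref{thm:graded:peirce} exactly as you do. As in your argument, freeness of finite rank over \R comes from the span of the $\ee_{xy}$ after diagonalizing, while freeness of each homogeneous component $\mM_{ij}^g$ is only obtained later, in \Cref{teo:structure:bimod}\labelcref{teo:structure:bimod:2}, whose proof does not use this corollary.

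So your fallback is the right call. The one-sided sketch would not suffice by itself: it only exhibits $\mM_{ij}^g\cong\mM_{ij}^g\bee_\rho$ as a direct summand of the free module $\ee_i\IPR\ee_{yy}$, which makes it projective but not necessarily free over an indecomposable ring. What actually forces freeness are the two-sided Peirce components $\ee_{xx}\mM_{ij}\ee_{yy}$, each equal to $\R\ee_{xy}$ or $0$. By \Cref{prop:peirce:formula}, these make each $\mP_\theta(\mM_{ij})$ a direct summand of \R, hence $0$ or \R.
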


This conclusion is the same as that of \cite[Theorem 1]{Santulo-Souza-Yasumura}), except that there is no restriction on the group \G, and we ask only the ring \R to be indecomposable.

\begin{remark}\label{rem:poset:character:identification}Fix two distinct idempotents $\ee,\ff$ in a diagonal \sset \Eset (\Cref{rem:sset:diag}), so that $\ee_{xx}\in\Dalg_\ee$ for $x\in\supf\ee$ (\Cref{rem:ex:equals:e}), and let $\QO=\supf\ee\,\dot\cup\,\supf\ff$. Choose base points $x_\ee\in\supf\ee$ and $x_\ff\in\supf\ff$. By \Cref{thm:xmin}\labelcref{thm:xmin:4} the maps $p\mapsto\chi_p\coloneqq\chi_{x_\ee}^p$ and $q\mapsto\chi_q\coloneqq\chi_{x_\ff}^q$ are bijections of $\supf\ee$ onto $\hH_\ee$ and of $\supf\ff$ onto $\hH_\ff$. Together they give a bijection $\QO\to\hH_\ee\dot\cup\hH_\ff$ under which $\ee_{pp}$ becomes the Fourier idempotent $\bee_{\chi_p}$ of $\Dalg_\ee\cong\RH_\ee$ (\Cref{rem:ex:equals:e}), and similarly for \ff. A change of base points translates the labels of $\supf\ee$ and of $\supf\ff$ by fixed characters, in accordance with \Cref{rem:galois:extensions}\labelcref{rem:galois:base:points}. We transport the order of \QO to $\hH_\ee\dot\cup\hH_\ff$ along this bijection. Thus, by definition,
\[
\chi_p\leq\chi_q\iff p\leq q\iff \ee_{pp}\IPR\ee_{qq}\neq 0\iff \bee_{\chi_p}\mM_{\ee\ff}\bee_{\chi_q}\neq 0,
\]
where $\mM_{\ee\ff}=\ee\IPR\ff$, the last two equivalences coming from \Cref{lem:idemp:relation} and the Peirce decomposition. The resulting labelled poset is what \Cref{sec:bimodules} describes in terms of double cosets.
\end{remark}

\begin{corollary}\label{cor:equiv:good:grading}The group grading on \IPR is isomorphic to a good grading if, and only if, $\IPR^1$ contains a \sset whose supports are singletons.
\end{corollary}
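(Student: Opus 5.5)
The forward implication is the substantive one; the converse is nearly formal.

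\emph{Sufficiency.} Suppose $\IPR^1$ contains a \sset \Eset whose supports are singletons, so $\supf\ee=\{x_\ee\}$ for each $\ee\in\Eset$. By \Cref{rem:sset:diag} there is an inner automorphism of \IPR, given by the unit $u$ of \Cref{rem:diagonalization}, which carries \Eset to a set of diagonal idempotents. Transport the grading along this automorphism; the result is an isomorphic grading. The diagonal idempotents have the same supports, and since \R is indecomposable each has diagonal entries in $\{0,1\}$. Each is also diagonal, so it must equal $\ee_{x_\ee x_\ee}$. Because the supports partition \PO, every $\ee_{xx}$ is then homogeneous (of degree $1$) in the transported grading. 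That grading is therefore good.

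\emph{Necessity.} Suppose an automorphism $\psi$ of \IPR carries the given grading onto a good grading. In the good grading each $\ee_{xx}$ is a homogeneous idempotent of degree $1$ with singleton support $\{x\}$. It is primitive homogeneous: its corner is $\ee_{xx}\IPR\ee_{xx}=\R\ee_{xx}$, whose only idempotents are $0$ and $\ee_{xx}$ because \R is indecomposable. Hence $\{\ee_{xx}\mid x\in\PO\}$ is a \sset for the good grading.

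Pull this set back to $\Eset=\{\psi^{-1}(\ee_{xx})\}$. Its elements are primitive, pairwise orthogonal and homogeneous of degree $1$ for the original grading, and their sum is $1$. It remains to show the supports of these elements are singletons that partition \PO.

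\emph{The main obstacle.} The automorphism $\psi$ need not preserve supports, since it may be a non-inner automorphism permuting points. Supports are not directly transported, so they must be controlled by an algebraic invariant.

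\emph{Handling the obstacle.} I would use the corner algebras instead of supports. For an idempotent $\ff$ with support $\QO$, \Cref{lem:iso:corner} gives $\ff\IPR\ff\cong\IQR$, which has rank $|\QO|$ as an \R-module. Here $\psi^{-1}(\ee_{xx})\IPR\psi^{-1}(\ee_{xx})\cong\R\ee_{xx}$ has rank $1$, so each support has exactly one point.

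These supports are disjoint. Indeed, orthogonal idempotents have diagonal entries in $\{0,1\}$, and multiplying diagonal parts shows their supports cannot meet.

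They also cover \PO. The sum of the idempotents is $1$, and on the diagonal the sum of indicator functions of disjoint sets equals $1$ only if those sets cover \PO.

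Hence $\Eset$ is a \sset with singleton supports lying in $\IPR^1$, which proves necessity.
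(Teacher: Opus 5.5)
Your proof is correct and follows the paper's route. Sufficiency is the same diagonalization by the inner automorphism of \Cref{rem:diagonalization}. For necessity, the paper only observes that $\{\ee_{xx}\mid x\in\PO\}$ is a \sset when the grading is itself good, and leaves implicit the transport back along the automorphism $\psi$; you carry that step out explicitly via the rank of the corner algebra.

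There is one slip in that step. $\IQR$ is free of rank equal to the number of pairs $p\leq q$ in \QO, not $|\QO|$; for example, $\ff=1$ on a $2$-chain has corner of rank $3$. However, this number is at least $|\QO|$ and equals $1$ only when $|\QO|=1$, so your conclusion stands. Alternatively, $\psi^{-1}(\ee_{xx})$ is primitive homogeneous, so its support is an antichain by \Cref{cor:xmin:for:idemps}, and then the rank is exactly $|\QO|$.
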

\begin{proof}
If the grading is good, then each $\ee_{xx}$ is homogeneous, necessarily of degree $1$ (an idempotent of degree $g$ satisfies $g=g^2$), and $\ee_{xx}\IPR\ee_{xx}=\R\ee_{xx}$ has no idempotents other than $0$ and $\ee_{xx}$ because \R is indecomposable. Therefore $\{\ee_{xx}\mid x\in\PO\}$ is a \sset in $\IPR^1$ whose supports are singletons. Conversely, let \Eset be such a \sset. By \Cref{rem:diagonalization} there is an inner automorphism $\iota$ of \IPR carrying \Eset onto a set of diagonal idempotents, necessarily $\{\ee_{xx}\mid x\in\PO\}$, since the supports are singletons. Transporting the grading along $\iota$ therefore makes every $\ee_{xx}$ homogeneous of degree $1$, that is, produces an isomorphic good grading. This is equivalent to the description of Miller and Spiegel in \cite[Theorem 1]{Miller-Spiegel}, our definition of a \sset reducing to theirs when the supports are singletons.
\end{proof}

The following corollary is immediate (compare with \cite[Corollary 1]{Miller-Spiegel}).
\begin{corollary}If \G is torsion-free then any group grading on \IPR is isomorphic to a good grading.
\end{corollary}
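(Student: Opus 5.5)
Let $\Eset$ be a \sset in $\IPR^1$; it exists by \Cref{thm:existence:prim:idemps}. By \Cref{cor:equiv:good:grading} it suffices to show that every $\ee\in\Eset$ has singleton support. By \Cref{thm:graded:peirce}, or more directly \Cref{cor:xmin:for:idemps} and \Cref{rem:xmin:supp_idemp}, each $\ee\in\Eset$ determines a finite abelian subgroup $\H_\ee$ of \G with $|\supf\ee|=|\H_\ee|$. The plan is to use the torsion-freeness of \G to force $\H_\ee=\{1\}$.

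The key step is this. $\H_\ee$ is a finite subgroup of \G, so every element of $\H_\ee$ has finite order. Since \G is torsion-free, the only element of finite order is $1$, and hence $\H_\ee=\{1\}$. Consequently $|\supf\ee|=1$ for every $\ee\in\Eset$, so the supports are singletons.

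Since the supports of a \sset partition \PO, \Eset is then a \sset in $\IPR^1$ whose supports are singletons. \Cref{cor:equiv:good:grading} now shows that the grading is isomorphic to a good grading. Concretely, one conjugates \Eset to $\{\ee_{xx}\mid x\in\PO\}$ by the inner automorphism of \Cref{rem:diagonalization} and transports the grading along it.

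I expect no real obstacle. The only point to verify is the standing hypotheses: \R is indecomposable and \PO is finite, which are assumed throughout this section. Under these hypotheses the results cited are the ones that supply the group $\H_\ee$ and the equality $|\supf\ee|=|\H_\ee|$.
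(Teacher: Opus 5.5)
Your proposal is correct and is the deduction the paper intends, since the paper states this corollary without proof as immediate. It combines \Cref{thm:graded:peirce} (each $\ee$ in a \sset has a finite subgroup $\H_\ee$ with $|\supf\ee|=|\H_\ee|$) with torsion-freeness to get singleton supports, and then applies \Cref{cor:equiv:good:grading}.
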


Similarly, we have the following immediate corollary first stated in \cite[Corollary 2]{Miller-Spiegel} for any bounded countable locally finite partially ordered set:
\begin{corollary}\label{cor:integers}
If $\mathcal{O}$ is the ring of integers of a number field, then every \G-grading on $\Inc{\PO}{\mathcal{O}}$ is isomorphic to a good grading.
\end{corollary}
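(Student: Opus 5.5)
By \Cref{cor:equiv:good:grading}, it suffices to show that every \sset $\Eset$ of a $\G$-grading on $\Inc{\PO}{\mathcal{O}}$ consists of idempotents with singleton supports. By \Cref{thm:graded:peirce} and \Cref{cor:xmin:for:idemps}, each $\ee\in\Eset$ has $\supf\ee$ an antichain of size $|\H_\ee|$. Here $\H_\ee$ is a finite abelian subgroup of $\G$, and by \Cref{thm:xmin}\labelcref{thm:xmin:1} its order $|\H_\ee|$ is a unit in $\mathcal{O}$. So we only need to show that $|\H_\ee|=1$ for all $\ee$. First we check the standing hypothesis: $\mathcal{O}$ is an integral domain, hence has no idempotents other than $0,1$, and is therefore indecomposable. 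Thus all the results of \Cref{sec:xmin,sec:stars,sec:peirce} apply to $\R=\mathcal{O}$ with $\PO$ finite.

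The key step is the arithmetic observation that no integer $n>1$ is a unit in $\mathcal{O}$. Suppose $n\cdot 1\in\mathcal{O}^\times$. Then $1/n\in\mathcal{O}\cap\mathbb{Q}$. Since $\mathcal{O}$ is the integral closure of $\ZZ$ in the number field and $\ZZ$ is integrally closed, $\mathcal{O}\cap\mathbb{Q}=\ZZ$. Hence $1/n\in\ZZ$, which forces $n=\pm1$. (Alternatively, take the norm: $N(n)=n^{[K:\mathbb{Q}]}$ must be $\pm1$.) Applying this to $n=|\H_\ee|\geq1$ gives $|\H_\ee|=1$, so $\H_\ee$ is trivial. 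Consequently $|\supf\ee|=1$ for every $\ee\in\Eset$. A \sset exists by \Cref{thm:existence:prim:idemps}, so \Cref{cor:equiv:good:grading} yields an isomorphism with a good grading.

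The only point needing care is the integrality step, and it is standard. The rest is bookkeeping: we must confirm that the unit condition $|\H_x|\in\uR$ in \Cref{thm:xmin} really transfers to each $\H_\ee$ through \Cref{rem:xmin:supp_idemp}, and it does, since $\H_\ee=\H_x$ for any $x\in\supf\ee$. The argument works verbatim for any indecomposable $\R$ of characteristic zero in which no prime is invertible, for instance $\ZZ$ itself.
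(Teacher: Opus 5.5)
Your proof is correct, and it is essentially the argument the paper leaves implicit when it calls this corollary immediate. The unit condition $|\H_\ee|\in\uR$ of \Cref{thm:xmin}\labelcref{thm:xmin:1}, together with $\mathcal{O}\cap\mathbb{Q}=\ZZ$, forces every $\H_\ee$ to be trivial, so a \sset in the degree-one component has singleton supports and \Cref{cor:equiv:good:grading} applies.
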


\begin{example}
Any \G-grading of $\Inc{\PO}{\ZZ}$ must be isomorphic to a good grading due to \Cref{cor:integers}.
\end{example}

\begin{theorem}\label{cor:graded:radicals}
Let \PO be \dPO and assume that \IPR is graded by a group \G. Let \Eset be a \sset of \IPR. Then
\begin{enumerate}
\item\label{cor:graded:radicals:1} $\zIPR=\bigoplus_{\ee\prec\ff}\ee\IPR\ff$,
\item\label{cor:graded:radicals:2} $\jIPR=\bigoplus_{e\in\Eset}\JR{\ee\IPR\ee}\oplus \zIPR$,
\item\label{cor:graded:radicals:3} $\nIPR=\bigoplus_{\ee\in\Eset}\NR{\ee\IPR\ee}\oplus\zIPR$,
\end{enumerate}
are graded ideals of \IPR.
\end{theorem}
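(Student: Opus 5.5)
We first reduce to the case of a diagonal \sset. By \Cref{lem:star:sets:conj} any two \ssets are conjugate by a graded inner automorphism. By \Cref{rem:sset:diag} we may conjugate \Eset to a diagonal one, transporting the grading. Conjugation by an invertible element $u$ preserves \zIPR, \jIPR and \nIPR, since these are ideals intrinsic to the ring ($\zIPR$ being the kernel of the diagonal map of \Cref{lem:radicals}, which is invariant under inner automorphisms because $\IPR/\zIPR$ is commutative). It also carries $\ee\IPR\ff$ to $u\ee u^{-1}\IPR u\ff u^{-1}$, and it carries graded ideals for one grading to graded ideals for the transported grading. So we may assume every $\ee\in\Eset$ is diagonal. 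The supports of the elements of \Eset then partition \PO by \Cref{def:sset}.

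For \labelcref{cor:graded:radicals:1} we use the graded Peirce decomposition \eqref{thm:graded:peirce:decomp} of \Cref{thm:graded:peirce}. Take $f\in\zIPR$ and write it as a sum of its Peirce components $\ee f\ff$. Since the \ee are diagonal, $(\ee f\ee)(x,x)=\ee(x,x)f(x,x)\ee(x,x)=0$, so $\ee f\ee\in\Dalg_\ee\cap\zIPR=0$ by \Cref{thm:graded:jr:corner}\labelcref{thm:graded:jr:corner:1}. Conversely, for $\ee\prec\ff$ the supports are disjoint, so $(\ee a\ff)(x,x)=\ee(x,x)a(x,x)\ff(x,x)=0$ and $\ee\IPR\ff\subseteq\zIPR$. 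Hence $\zIPR=\bigoplus_{\ee\prec\ff}\ee\IPR\ff$. This is a sum of graded submodules, because each $\ee\in\Eset$ has degree $1$, so $\ee\IPR^g\ff\subseteq\IPR^g$. Therefore \zIPR is a graded ideal.

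For \labelcref{cor:graded:radicals:2,cor:graded:radicals:3} we use \Cref{lem:radicals}. Since $\zIPR\subseteq\jIPR$ and $\zIPR\subseteq\nIPR$, the radicals are the preimages of the radicals of $\IPR/\zIPR\cong\R^{\PO}$, namely of $\JR\R^{\PO}$ and $\NR\R^{\PO}$. For the nilradical this uses that \zIPR is nil, so $f$ is nilpotent modulo \zIPR exactly when $f$ is nilpotent. Hence
\[
\jIPR=\{f\mid f(x,x)\in\JR\R\ \text{for all } x\}.
\]
Decompose $f=\sum_\ee\ee f\ee+z$ with $z\in\zIPR$. The diagonal entries of $f$ on $\supf\ee$ are those of $\ee f\ee$. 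Now $\Dalg_\ee\cong\Inc{\supf\ee}{\R}\cong\R^{\supf\ee}$ by a map preserving diagonal entries (\Cref{lem:iso:corner}). Therefore $\ee f\ee\in\JR{\Dalg_\ee}$ if and only if $f(x,x)\in\JR\R$ for all $x\in\supf\ee$. This gives $\jIPR=\bigoplus_\ee\JR{\Dalg_\ee}\oplus\zIPR$, and the same argument with \NR in place of \JR gives the nilradical. Gradedness follows because each $\JR{\Dalg_\ee}$ and $\NR{\Dalg_\ee}$ is graded by \Cref{thm:graded:jr:corner}\labelcref{thm:graded:jr:corner:2,thm:graded:jr:corner:3}, and \zIPR is graded by \labelcref{cor:graded:radicals:1}.

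The main obstacle is the reduction step. We must check that, after transporting the grading by the diagonalizing unit $u$ of \Cref{rem:diagonalization}, the equalities and gradedness for the original \sset follow. The equalities are clear because $u$ preserves the three ideals. For gradedness, the direct sum decompositions pulled back by $u^{-1}$ are exactly the Peirce pieces for the original \Eset, with corners $\Dalg_\ee$ and their radicals. Alternatively, one can avoid diagonalization altogether. Argue directly that $\ee\IPR\ff\subseteq\zIPR$ for $\ee\prec\ff$ using $\ee\ff=0$, which gives disjoint supports. Also use that $\Dalg_\ee\cap\zIPR=0$ holds for any primitive homogeneous \ee by \Cref{thm:graded:jr:corner}.
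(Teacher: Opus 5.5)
Your proof is correct and follows the paper's argument. Part (i) uses the graded Peirce decomposition together with $\Dalg_\ee\cap\zIPR=0$ from \Cref{thm:graded:jr:corner}, and parts (ii) and (iii) use the description of the radicals by diagonal entries (\Cref{lem:radicals}) and the diagonal-preserving isomorphism of \Cref{lem:iso:corner}. The preliminary diagonalization is harmless but unnecessary: the diagonal of a product is the product of the diagonals, and the supports of a \sset partition \PO by definition. The paper therefore works directly with an arbitrary \sset, exactly as in your closing remark.
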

\begin{proof}
Consider the Peirce decomposition given in \Cref{thm:graded:peirce} \cref{thm:graded:peirce:decomp}.

\Cref{cor:graded:radicals:1}: Let $Z_0=\bigoplus_{\ee\prec\ff}\ee\IPR\ff$. If $\ee\prec\ff$ and $a\in\ee\IPR\ff$, then $a(x,x)=\ee(x,x)a(x,x)\ff(x,x)=0$ for all $x\in\PO$, hence $Z_0\subseteq\zIPR$. On the other hand, $\ee\zIPR\ee\subseteq\zIPR\cap\ee\IPR\ee=0$, hence $\zIPR\subseteq Z_0$. It follows that $\zIPR=Z_0$ is a graded ideal.

\Cref{cor:graded:radicals:2} and \Cref{cor:graded:radicals:3}: Since \PO is finite, $\zIPR$ is nilpotent, hence contained in $\jIPR$ and in $\nIPR$. By \Cref{lem:radicals} the quotient $\IPR/\zIPR$ is isomorphic to $\prod_{x\in\PO}\R$ by $f+\zIPR\mapsto(f(x,x))_{x\in\PO}$, whose Jacobson radical is $\prod_{x\in\PO}\JR{\R}$ and whose nilradical is $\prod_{x\in\PO}\NR{\R}$. Since $\zIPR$ lies in both radicals,
\begin{gather*}
\JR{\IPR/\zIPR}=\jIPR/\zIPR,\\
\NR{\IPR/\zIPR}=\nIPR/\zIPR.
\end{gather*}
Therefore
\[
\jIPR=\{a\in\IPR\mid a(x,x)\in\JR{\R}\text{ for all }x\in\PO\},
\]
and the same description holds for $\nIPR$ with $\NR\R$ in place of $\JR\R$.

Let $\ee\in\Eset$ and $\QO=\supf\ee$. Applying this description to the finite poset \QO and transporting it along the isomorphism $\Dalg_\ee\to\IQR$ of \Cref{lem:iso:corner}, which preserves diagonal entries and kills those outside \QO, gives
\[
\JR{\Dalg_\ee}=\{a\in\Dalg_\ee\mid a(x,x)\in\JR\R\text{ for all }x\in\QO\}=\Dalg_\ee\cap\jIPR,
\]
and likewise $\NR{\Dalg_\ee}=\Dalg_\ee\cap\nIPR$. Now let $a\in\IPR$ and write $a=\sum_{\ee\in\Eset}\ee a\ee+\sum_{\ee\prec\ff}\ee a \ff$. The supports of the idempotents in \Eset partition \PO and $\ee(x,x)=1$ for $x\in\supf\ee$, so $(\ee a\ee)(x,x)=a(x,x)$ for $x\in\supf\ee$ and $(\ee a\ee)(x,x)=0$ otherwise. The condition $a(x,x)\in\JR\R$ for all $x\in\PO$ therefore holds if and only if $\ee a\ee\in\JR{\Dalg_\ee}$ for every $\ee\in\Eset$. This proves \labelcref{cor:graded:radicals:2}, and \labelcref{cor:graded:radicals:3} follows in the same way. Both are graded ideals because each corner radical is graded by \Cref{thm:graded:jr:corner} and $\zIPR$ is graded by \labelcref{cor:graded:radicals:1}.
\end{proof}

The next \nameCref{cor:antichain:grading} partially generalizes \cite[Theorem 4]{Miller-Spiegel}.
\begin{corollary}\label{cor:antichain:grading}
Let \PO be \dPO and assume that \IPR is graded by a group \G, and let \Eset be a \sset. Then
$\IPR/\zIPR\cong \prod_{\ee\in\Eset}\RH_\ee$ as graded \R-algebras, and this quotient is isomorphic to $\R^\oP$ as an \R-algebra. The support of the grading on $\IPR/\zIPR$ is therefore the finite set $\bigcup_{\ee\in\Eset}\H_\ee$, a union of finite abelian subgroups of \G. In particular, if \PO is an antichain, then there exists a partition $\PO=\dot\cup_{i=1}^{n}\PO_i$ and corresponding finite abelian subgroups $\H_i$ of \G, such that $\Inc{\PO_i}{R}\cong\RH_i$ as graded \R-algebras, $|\PO_i|=|\H_i|$ is a unit in \R, and \R contains a primitive $\Exp\H_i$-root of the unity, for all $i$.
\end{corollary}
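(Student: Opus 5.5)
Using \Cref{cor:graded:radicals}\labelcref{cor:graded:radicals:1}, \zIPR is a graded ideal equal to $\bigoplus_{\ee\prec\ff}\ee\IPR\ff$. Hence $\IPR/\zIPR$ inherits a \G-grading. The graded Peirce decomposition of \Cref{thm:graded:peirce} gives $\IPR=\bigoplus_{\ee\in\Eset}\Dalg_\ee\oplus\zIPR$, a direct sum of graded \R-submodules.

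Let $\pi\colon\IPR\to\IPR/\zIPR$ be the projection and consider $\bigoplus_{\ee}\Dalg_\ee\to\IPR/\zIPR$. This map is a graded \R-module isomorphism by the decomposition above. It is multiplicative because for $\ee\neq\ff$ in \Eset we have $\Dalg_\ee\Dalg_\ff=\ee\IPR\ee\ff\IPR\ff=0$ by orthogonality, so the product on $\bigoplus_\ee\Dalg_\ee$ is componentwise. It also sends $\sum_\ee\ee=1$ to $1$. Thus $\IPR/\zIPR\cong\prod_{\ee}\Dalg_\ee$ as graded algebras. Composing with the graded isomorphisms $\Dalg_\ee\cong\RH_\ee$ of \Cref{thm:graded:peirce} (equivalently \Cref{cor:xmin:for:idemps,rem:xmin:supp_idemp}) gives $\IPR/\zIPR\cong\prod_\ee\RH_\ee$.

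The ungraded statement $\IPR/\zIPR\cong\R^\oP$ is \Cref{lem:radicals}, since \PO is finite. It is consistent with $\RH_\ee\cong\R^{|\H_\ee|}$ and $\sum_\ee|\H_\ee|=\sum_\ee|\supf\ee|=\oP$. In a product of graded algebras the support is the union of the supports of the factors. The support of $\RH_\ee$ is exactly $\H_\ee$, since each $h\in\H_\ee$ gives a nonzero component $\R h$. Hence the support is $\bigcup_\ee\H_\ee$, which is finite.

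For the antichain case, note that $\zIPR=0$, so $\IPR$ itself is $\prod_\ee\Dalg_\ee$. Set $\PO_i=\supf\ee_i$. These partition \PO by \Cref{def:sset}. By \Cref{lem:iso:corner} together with the graded isomorphism above, $\Inc{\PO_i}{\R}\cong\Dalg_{\ee_i}\cong\RH_i$ as graded algebras. The equality $|\PO_i|=|\H_i|$, the fact that $|\H_i|$ is a unit, and the existence of a primitive $\Exp\H_i$-th root of unity all come from \Cref{thm:xmin}\labelcref{thm:xmin:1,thm:xmin:2} via \Cref{cor:xmin:for:idemps}.

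The only point needing care is that the graded structure transported to $\Inc{\PO_i}{\R}$ is the one from \Cref{lem:iso:corner}. If \Eset is not diagonal, I would first conjugate it to a diagonal \sset by \Cref{rem:sset:diag}. Alternatively, in the antichain case \zIPR vanishes, so every idempotent is already diagonal and no conjugation is needed. I expect no substantial obstacle.
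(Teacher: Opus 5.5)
Your proof is correct and follows the route the paper intends. The paper gives no separate proof; the corollary is an immediate consequence of the graded Peirce decomposition (\Cref{thm:graded:peirce}), the gradedness of $\zIPR=\bigoplus_{\ee\prec\ff}\ee\IPR\ff$ (\Cref{cor:graded:radicals}\labelcref{cor:graded:radicals:1}), \Cref{lem:radicals}, and \Cref{cor:xmin:for:idemps}, exactly as you argue, and your remark that \IPR is commutative in the antichain case (so \Cref{lem:iso:corner} is mere restriction) settles the only delicate point.
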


The next corollary was first proved in \cite[Theorem 7]{Valenti-Zaicev}  for $\UT_n\F$, where \F is a field.
\begin{corollary}\label{cor:grading:UTnR}Every group grading on $\UT_n\R$ is isomorphic to an elementary grading. Consequently, $\UT_n\R/\ZR{\UT_n\R}\cong\R^n$ always bears the trivial grading.
\end{corollary}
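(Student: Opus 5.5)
The plan is to reduce to the good-grading criterion of \Cref{cor:equiv:good:grading}, using that $\UT_n\R=\Inc{\CO_n}{\R}$, where $\CO_n$ is the $n$-chain $1<2<\cdots<n$. Fix a $\G$-grading and let \Eset be a \sset, which exists by \Cref{thm:existence:prim:idemps}. By \Cref{cor:xmin:for:idemps}, the support of each $\ee\in\Eset$ is an antichain of $\CO_n$, hence a singleton. Consequently $|\H_\ee|=|\supf\ee|=1$ and $\H_\ee=\{1\}$. \Cref{cor:equiv:good:grading} then says the grading is isomorphic to a good grading, so after transporting along an inner automorphism we may assume every $\ee_{ii}$ is homogeneous.

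Next I use the observation from \Cref{sec:prelim}: over an indecomposable \R, a good grading makes every $\ee_{ij}$ homogeneous. Set $d_i=\deg\ee_{i,i+1}$ for $1\le i<n$, put $g_1=1$ and $g_{j+1}=g_jd_j$. For $i\le j$ we have
\[
\ee_{ij}=\ee_{i,i+1}\ee_{i+1,i+2}\cdots\ee_{j-1,j}\neq0 ,
\]
and a nonzero product of homogeneous elements is homogeneous of the product degree. Hence $\deg\ee_{ij}=d_i\cdots d_{j-1}=g_i^{-1}g_j$. For $i=j$ this gives $\deg\ee_{ii}=1$, consistent with idempotency. The transported grading is therefore elementary, so the original grading is isomorphic to an elementary one. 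No step looks like a genuine obstacle; the only care needed is invoking indecomposability to get homogeneity of the off-diagonal $\ee_{ij}$, which the paper has already done.

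For the consequence, apply \Cref{cor:antichain:grading}: $\UT_n\R/\ZR{\UT_n\R}\cong\prod_{\ee\in\Eset}\RH_\ee$ as graded algebras. Each $\H_\ee$ is trivial, so this product is $\R^n$ with all components in degree $1$, i.e. the trivial grading. Alternatively, in the elementary picture the quotient is spanned by the images of the $\ee_{ii}$, all of degree $1$.
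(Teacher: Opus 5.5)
Your proof is correct and follows the paper's argument. A \sset has singleton supports, so \Cref{cor:equiv:good:grading} yields a good grading, and the degrees of the homogeneous $\ee_{ij}$ are then read off multiplicatively. The paper sets $g_j=\deg\ee_{1j}$ and uses $\ee_{1i}\ee_{ij}=\ee_{1j}$, where you chain the consecutive $\ee_{i,i+1}$. For the quotient, the paper simply notes that $\ZR{\UT_n\R}$ is a graded ideal, which is equivalent to your observation via \Cref{cor:antichain:grading}.
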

\begin{proof}
$\UT_n\R\cong\IPR$ where \PO is a chain of length $n$. By \Cref{thm:graded:peirce}, there exists a \sset \Eset in $\IPR^1$ whose supports must be singletons (being antichains in \PO). It follows from \Cref{cor:equiv:good:grading} that the group grading is isomorphic to a good grading.

Since \Eset is finite, this set is simultaneously diagonalizable and, up to an inner automorphism, we may assume that it consists of the elements $\ee_{11},\ldots,\ee_{nn}$. Further, we may assume that this set is totally ordered so that $\ee_{ii}\preccurlyeq\ee_{jj}\iff i\leq j$. Let
$g_1=\deg\ee_{11},\ldots,g_n=\deg\ee_{1n}$. Then, for each $i\leq j$, since $\ee_{1i}\ee_{ij}=\ee_{1j}$, we have $\deg\ee_{ij}=(\deg\ee_{1i})^{-1}\deg\ee_{1j}=g_i^{-1}g_j$. It follows that the sequence $(g_1,\ldots,g_n)$ defines an elementary \G-grading on $\UT_n\R$. The rest follows from the fact that the ideal $\zIPR$ is graded.
\end{proof}

\begin{remark}
New and interesting phenomena occur when \R is decomposable. If $\R=\R_1\oplus\cdots\oplus\R_k$ with each $\R_i$ is indecomposable (for instance, when $\R$ is artinian), then $\UT_n\R\cong\UT_n\R_1\oplus\cdots\oplus \UT_n\R_k$ and each $(\UT_n\R_i)=\ee_i(\UT_n\R)$ (for some idempotent $\ee_i\in\R$) inherits the grading. From the \namecref{cor:grading:UTnR}, each $\UT_n\R_i$ is graded by an elementary grading, and these gradings need not be given by the same sequence of group elements. It follows that the grading on $\UT_n\R$ is equivalent to a sum of (possibly distinct) elementary gradings.
\end{remark}

%
%

\section{Graded structure of the bimodules $\mM_{\ee\ff}$}\label{sec:bimodules}

The purpose of this section is to unravel the details of the graded Peirce decomposition (\cref{thm:graded:peirce:decomp}) by giving a detailed description of the structure of the \G-graded $(\Dalg_\ee,\Dalg_\ff)$-bimodules $\mM_{\ee\ff}=\ee\IPR\ff$ for a pair of primitive homogeneous orthogonal idempotents $\ee\prec\ff$. The notation is the same as that introduced in the previous sections, and we recall that there are finite abelian subgroups $\H_\ee,\H_\ff$ of \G such that $\Dalg_\ee\cong \RH_\ee$ and $\Dalg_\ff\cong \RH_\ff$ as graded \R-algebras (see \Cref{thm:xmin}).

Throughout this section \H and \K denote finite abelian subgroups of \G whose group algebras are split, $\RH\cong\R^{\oH}$ and $\RK\cong\R^{\oK}$. In the application $\H=\H_\ee$ and $\K=\H_\ff$. We put
\[
\Q=\H\times\K .
\]
An $(\RH,\RK)$-bimodule \mM is the same thing as a left \RQ-module, via $(h,k)\cdot m=hmk$, and a \G-grading on \mM is a decomposition $\mM=\bigoplus_{g\in\G}\mM^g$ into \R-submodules such that $h\mM^gk\subseteq\mM^{hgk}$ for all $h\in\H$, $k\in\K$ (equality holds, since $h$ and $k$ are units). The \highlight{support} of \mM in the grading is $\supf_\G\mM=\{g\in\G\mid\mM^g\neq0\}$. Accordingly, \Q acts on the set \G by $(h,k)\cdot g=hgk$. The orbits of this action are the double cosets $\H g\K$, and the stabilizers are the groups
\[
\Sset_g=\Sset_g(\H,\K)=\{(h,k)\in\H\times\K\mid hgk=g\}.
\]
Since \Q is finite abelian and $\RQ\cong\RH\otimes_\R\RK\cong\R^{\oQ}$ is split, \Cref{lem:ipr:iso:comm:fin:grpalg} applies to \Q and, by \Cref{lem:split:subgroups} below, to every subgroup of \Q. In particular $\hQ=\hH\times\hK$, the pair $\xi=(\chi,\rho)$ being identified with the character $\xi(h,k)=\chi(h)\rho(k)$, and the Fourier idempotent $\bee_\xi=\bee_\chi\otimes\bee_\rho\in\RQ$ acts on a bimodule as the projection $m\mapsto\bee_\chi m\bee_\rho$ onto its $(\chi,\rho)$-\highlight{Peirce component}. A graded homomorphism of graded bimodules preserves each homogeneous component. We write $\cong_\gr$ for a graded isomorphism.

To state the main result of this section, we shall need a few definitions.

\begin{definition}\label{def:extensions}
For a subgroup $\S\leq\Q$ and a character $\lambda\in\hS$, let
\[
\Ext{\S}{\lambda}=\{\xi\in\hQ\mid \xi|_{\S}=\lambda\}
\]
be the set of \highlight{extensions} of $\lambda$ to \Q. Two characters $\lambda_1\in\hat{\Sset}_{g_1}(\H,\K)$ and $\lambda_2\in\hat{\Sset}_{g_2}(\H,\K)$ are \highlight{strongly distinct} if $\Ext{\Sset_{g_1}}{\lambda_1}\cap\Ext{\Sset_{g_2}}{\lambda_2}=\varnothing$, that is, if there is no $(\chi,\rho)\in\hH\times\hK$ such that $(\chi,\rho)|_{\Sset_{g_1}}=\lambda_1$ and $(\chi,\rho)|_{\Sset_{g_2}}=\lambda_2$. A family of characters is strongly distinct if its members are pairwise strongly distinct.
\end{definition}
In other words, strongly distinct characters are not just different: they cannot be restrictions of the same character of $\H\times\K$. (When $g_1=g_2$, this simply means $\lambda_1\neq\lambda_2$.)

\begin{definition}Let \mM be an \RH-module (group \H any) and $\lambda$ a character of \H. We say that a vector $m\in\mM$ is a \highlight{weight vector of weight $\lambda$} if it satisfies: $h\cdot m = \lambda(h)m$ for all $h\in\H$. If, in addition, the \R-module $\R m$ is free of rank 1, we shall say that $m$ is a \highlight{free weight vector}.
The \R-submodule $\mM_\lambda=\{m\in\mM\mid m\text{ is a weight vector of weight }\lambda\}$ is called a \highlight{weight space of weight $\lambda$}.
\end{definition}

We shall be interested in using this concept in the case where the group \H is replaced by the finite abelian group $\Sset_g(\H,\K)$ and \mM is a \G-graded $(\RH,\RK)$-bimodule, seen as an $\Sset_g(\H,\K)$-module by restriction of the $\H\times\K$-action as follows:

\begin{definition}\label{def:atomic}We say that a \G-graded $(\RH,\RK)$-bimodule \mM is \highlight{atomic (of weight $\lambda$)} if it is cyclic, that is $\mM=\RH m \RK$, generated by a free weight vector $m\in\mM$ which is homogeneous of some degree $g\in\G$ and whose weight $\lambda$ is a character of $\Sset_g(\H,\K)$.
\end{definition}
Since $\H\times\K$ is abelian, every element of an atomic $(\RH,\RK)$-bimodule $\mM=\RH m \RK$ is itself a (not necessarily free) weight vector for the same character $\lambda$, namely $\mM = \mM_\lambda$, so the suffix ``of weight $\lambda$'' makes sense. Moreover, when \R is a field, an atomic $(\RH,\RK)$-bimodule is exactly the same as a graded irreducible $(\RH,\RK)$-bimodule.

\begin{definition}\label{def:types}
Fix once and for all a representative $g_C\in C$ of each double coset $C\in\H\backslash\G/\K$ and put $\Sset_C=\Sset_{g_C}$ (by \Cref{lem:stabilizer:basic} below, $\Sset_t=\Sset_{C}$ for every $t\in C$). A \highlight{type} is a pair $\theta=(C,\lambda)$ with $C\in\H\backslash\G/\K$ and $\lambda\in\hat{\Sset}_C$. We write
\[
\Types=\coprod_{C\in\H\backslash\G/\K}\hat{\Sset}_C
\]
for the set of types, and $E_\theta=\Ext{\Sset_C}{\lambda}$ for the set of extensions of a type. It follows that two types are strongly distinct precisely when their extension sets are disjoint. The \highlight{type} of an atomic bimodule $\mM=\RH m\RK$ of weight $\lambda$ (\Cref{def:atomic}) is
\[
\Theta(\mM)=(\H(\deg m)\K,\ \lambda)\in\Types,
\]
where $\lambda$ is a character of $\Sset_{\deg m}=\Sset_{\H(\deg m)\K}$. That $\Theta(\mM)$ does not depend on the choice of the generator $m$ will be shown in \Cref{thm:bimod:classification}\labelcref{thm:bimod:classification:5}.
\end{definition}
The type of an atomic bimodule does not depend on the choice of the representatives $g_C$. The models $\mA_\theta$ of \Cref{def:induced} do depend on it (\Cref{rem:representatives}).

The main result of this section gives a very detailed multiplicity-free decomposition of the \G-graded $(\R\H_\ee,\R\H_\ff)$-bimodule $\mM_{\ee\ff}$ into atomic sub-bimodules, and also a decomposition into indecomposable (not graded) bimodules. We identify $\supf\ee$ with $\hH_\ee$ and $\supf\ff$ with $\hH_\ff$ as in \Cref{rem:galois:extensions,rem:poset:character:identification}, so that $\ee_{xx}$ acts on $\mM_{\ee\ff}$ as $\bee_{\chi_x}$ and $\ee_{yy}$ as $\bee_{\chi_y}$, and we put
\[
B_{\ee\ff}=\{(\chi_x,\chi_y)\in\hH_\ee\times\hH_\ff\mid x\in\supf\ee,\ y\in\supf\ff,\ x<y\}
\]
be the set of pairs of characters corresponding to the elementary functions $\ee_{xy}\in\mM_{\ee\ff}$.

\begin{theorem}\label{teo:structure:bimod}Let $\ee\prec\ff$ be fixed, both diagonal (\Cref{rem:sset:diag}), and put $\H=\H_\ee$, $\K=\H_\ff$, $\mM=\mM_{\ee\ff}$. Then:
\begin{enumerate}
\item\label{teo:structure:bimod:1} There is a unique set of types $\Theta_{\ee\ff}\subseteq\Types$ such that
\[
\mM=\bigoplus_{\theta\in\Theta_{\ee\ff}}\mM_\theta,
\]
where each $\mM_\theta$ is an atomic sub-bimodule of type $\theta$. Explicitly, $\mM_\theta=\bigoplus_{(\chi_x,\chi_y)\in E_\theta}\R\ee_{xy}$. The types in $\Theta_{\ee\ff}$ are strongly distinct and
\[
B_{\ee\ff}=\mathop{\dot\bigcup}_{\theta\in\Theta_{\ee\ff}}E_\theta .
\]
\item\label{teo:structure:bimod:2} For each $g\in\G$, $\mM^g=\bigoplus_{\theta}\mM_\theta^g$ is a free \R-module of rank $|\{\theta\in\Theta_{\ee\ff}\mid g\in C_\theta\}|$. If $m_\theta$ is a homogeneous generator of $\mM_\theta$ of degree $g_{C_\theta}$, the elements $q\cdot m_\theta$, $\theta\in\Theta_{\ee\ff}$, $q$ ranging over a transversal of $\Q/\Sset_{C_\theta}$, form a homogeneous \R-basis of \mM consisting of free weight vectors.
\item\label{teo:structure:bimod:3} The Peirce components of \mM are $\bee_{\chi_x}\mM\bee_{\chi_y}=\ee_{xx}\mM\ee_{yy}=\R\ee_{xy}$ if $x<y$, and $0$ otherwise. Each is an indecomposable $(\RH,\RK)$-bimodule, free of rank $1$ over \R, and every character of \hH and every character of \hK occurs in \mM.
\item\label{teo:structure:bimod:4} Two atomic bimodules are graded isomorphic if and only if they have the same type, and the graded isomorphism class of $\mM_{\ee\ff}$ is determined by $\Theta_{\ee\ff}$.
\end{enumerate}
\end{theorem}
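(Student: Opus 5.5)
The strategy is to read $\mM=\ee\IPR\ff$ as a \G-graded left \RQ-module, $\Q=\H\times\K$, and to use its two independent decompositions. Since \ee and \ff are diagonal, $\ee_{xx}\in\Dalg_\ee$ and $\ee_{yy}\in\Dalg_\ff$ (\Cref{rem:ex:equals:e}). Under $\Dalg_\ee\cong\RH$ and $\Dalg_\ff\cong\RK$ these become the Fourier idempotents $\bee_{\chi_x}$ and $\bee_{\chi_y}$, so $\bee_{(\chi_x,\chi_y)}m=\ee_{xx}m\ee_{yy}=m(x,y)\ee_{xy}$.

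\emph{Peirce components (item (iii)).} For $x\in\supf\ee$, $y\in\supf\ff$, the component $\bee_{\chi_x}\mM\bee_{\chi_y}$ is $\R\ee_{xy}$ if $x<y$ and $0$ otherwise; equality is impossible by disjointness of supports. This gives $\mM=\bigoplus_{B_{\ee\ff}}\R\ee_{xy}$. Each summand is free of rank one over \R. Because \R is indecomposable, it has no nontrivial idempotent endomorphisms as a bimodule, hence is indecomposable. \Cref{lem:regularity} shows that every $\chi\in\hH$ and every $\rho\in\hK$ occurs.

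\emph{Grading versus weights.} Take a nonzero homogeneous $m\in\mM^g$. Then $q\mapsto q\cdot m$ identifies the \R-span of the $\Q$-orbit with a module over $\R[\Q/\Sset_g]$, and $\Sset_g$ preserves $\mM^g$. First I would prove that $\Sset_g=\Sset_t$ for $t\in\H g\K$ (\Cref{lem:stabilizer:basic}, using that \Q is abelian). Next, decompose $\mM^g$ under the split group $\Sset_g$ (\Cref{lem:split:subgroups}) into weight spaces $\mM^g_\lambda$, $\lambda\in\hat\Sset_g$. For $m\in\mM^g_\lambda$, the span $\RQ m$ has homogeneous pieces $q m$, $q\in\Q/\Sset_g$, in distinct degrees. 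Its Fourier components $\bee_\xi m$ vanish unless $\xi|_{\Sset_g}=\lambda$, which is the usual projection formula $\bee_\xi m=|\Q|^{-1}\sum_q\xi(q^{-1})qm$ summed over cosets. So $\RQ m\subseteq\bigoplus_{(\chi_x,\chi_y)\in\Ext{\Sset_g}{\lambda}}\R\ee_{xy}$.

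\emph{Key step: rank one and multiplicity freeness.} For $\xi$ extending $\lambda$ I will show that $\bee_\xi m=|\Q:\Sset_g|^{-1}\sum_{q\in\Q/\Sset_g}\xi(q^{-1})qm$. Its $g$-component is $|\Q:\Sset_g|^{-1}m$, so $m$ is recovered from $\bee_\xi m$ by projecting to degree $g$. Every $\bee_\xi\mM$ is $\R\ee_{xy}$ or $0$, so $\bee_\xi m=r\ee_{xy}$. Hence $\ee_{xy}^g$ generates: $m=|\Q:\Sset_g|\,r\,\ee_{xy}^g$. Conversely, applying the formula to $\ee_{xy}$ shows $\ee_{xy}=\sum_g\bee_\xi(\ee_{xy}^g)$, and each $\bee_\xi(\ee_{xy}^g)$ is a multiple of $\ee_{xy}$. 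Indecomposability of \R, via the orthogonal idempotent argument used for good gradings in \Cref{sec:prelim}, then forces a single double coset $C$ and a single $\lambda$ for which $\ee_{xy}$ lies in $\RQ\,\ee_{xy}^{g}$, with $\ee_{xy}^g$ a free weight vector. I expect this to be the main obstacle: one must control all degrees $g$ in the homogeneous decomposition of $\ee_{xy}$ at once, without assuming \R is a field.

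Having done this, set $\mM_\theta$ equal to the sum of the $\R\ee_{xy}$ over the pairs assigned to $\theta=(C,\lambda)$. These pairs form exactly $E_\theta$, since the map $\xi\mapsto\bee_\xi m_\theta$ is injective on $E_\theta$ by the degree argument. This gives item (i), with strong distinctness following from the disjointness of the $E_\theta$. Uniqueness holds because $\theta$ is recovered from any of its Peirce components. Item (ii) is the count of the basis $q\,m_\theta$ by degree. Item (iv) comes from the model $\mA_\theta=\RQ\otimes_{\R\Sset_C}\R_\lambda$ graded by $q\mapsto q g_C$. Every atomic bimodule of type $\theta$ is graded isomorphic to this model, and its type is read off from its support and its $\Sset_C$-weight.
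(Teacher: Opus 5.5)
Your proposal is correct. It runs on the same engine as the paper's proof:
\begin{itemize}
\item the rank-one Peirce components $\bee_\xi\mM=\R\ee_{xy}$;
\item the splitting of \mM along the double cosets $C=\H g\K$ and the $\Sset_C$-weights;
\item the coset-averaged Fourier formula (\Cref{lem:echi:erho:restricted});
\item indecomposability of \R, used to force each $\bee_\xi\mM$ into a single summand.
\end{itemize}

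What differs is the packaging. The paper first classifies arbitrary \G-graded $(\RH,\RK)$-bimodules. It writes $\mM=\bigoplus_\theta\RQ\cdot\bee_\lambda\mM^{g_C}$, with each piece graded isomorphic to an induced model $\mA_\theta(\mP_\theta(\mM))$ (\Cref{lem:orbit:induction,thm:bimod:classification}), and shows $\bee_\xi\mM\cong\bigoplus_{\theta\,:\,\xi\in E_\theta}\mP_\theta(\mM)$ (\Cref{prop:peirce:formula}). Item (i) then drops out of \Cref{thm:multiplicity:free}, where indecomposability enters as ``a direct summand of \R is $e\R$''. You instead work element by element with the basis $\ee_{xy}$: you attach to each pair its double coset and the restriction $\xi|_{\Sset_C}$, and you produce explicit generators $m_\theta=\ee_{xy}^{g_C}$.

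Your route is shorter and self-contained for this theorem. The paper's detour yields a classification of all graded bimodules, not only multiplicity-free ones, which it reuses for the tensor products in \Cref{prop:mackey} and the structure constants in \Cref{sec:classification}. For the occurrence of every character, your appeal to \Cref{lem:regularity}, which gives $\bee_{\chi_x}\mM=\ee_{xx}\IPR\ff\neq0$ directly, is simpler than the paper's extension of a character from the projection of $\Sset_C$ to \K.

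The step you flag as the main obstacle does go through, but the idempotent argument must be run on double-coset blocks, not on single degrees.
\begin{itemize}
\item Each $\mM_C=\bigoplus_{t\in C}\mM^t$ is an \RQ-submodule, so $\R\ee_{xy}=\bee_\xi\mM=\bigoplus_C\bee_\xi\mM_C$.
\item Hence the ideals $I_C=\{r\in\R\mid r\ee_{xy}\in\mM_C\}$ satisfy $\R=\bigoplus_CI_C$, and exactly one of them equals \R.
\item Degree by degree the coefficients are not idempotents. Once $\ee_{xy}\in\mM_C$, isotypy gives $\ee_{xy}^{q\cdot g_C}=\xi(q)^{-1}\,q\cdot\ee_{xy}^{g_C}$. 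Therefore $\bee_\xi\ee_{xy}^{g}=[\Q:\Sset_C]^{-1}\ee_{xy}$ for every $g\in C$, and $\ee_{xy}\in\RQ\,\ee_{xy}^{g_C}$.
\item The weight needs no further argument: it is automatically $\xi|_{\Sset_C}$.
\end{itemize}

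Similarly, to see that the pairs assigned to $\theta$ are exactly $E_\theta$, the relevant fact is not injectivity of $\xi\mapsto\bee_\xi m_\theta$. What you need is that $\bee_\xi m_\theta\neq0$ for all $\xi\in E_\theta$. Then $\xi\in B_{\ee\ff}$, and $\bee_\xi m_\theta$ is a nonzero element of $\R\ee_\xi\cap\mM_C$, so $\ee_\xi\in\mM_C$ by the single-coset property. That $m_\theta$ generates $\mM_\theta$ then follows from your own observation that $\mM^{g_C}_\lambda\subseteq\R\ee_\xi^{g_C}$ for every $\xi\in E_\theta$.
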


The proof is given in \Cref{subsec:proof:structure}, after the general theory of graded bimodules over split group algebras has been developed in \Cref{subsec:graded:bimodules}.

%
%

\subsection{Modules over group algebras}\label{subsect:modules:group:algebras}
All the results in this subsection are well-known and available elsewhere in the literature in the more general form of graded modules over Galois extensions admitting a normal basis. We state them here in a form which will be more convenient for our purposes.

Let \R be \dR, \H a finite abelian subgroup of \G such that $\RH\cong R^{|\H|}$, namely a Galois extension of \R with  Galois group \hH (see \Cref{lem:ipr:iso:comm:fin:grpalg}). In this situation, $|\H|$ is a unit in \R, the ring contains a primitive $\Exp(\H)$-root of the unity (\R is a splitting ring for \RH), $\hH\cong\H$, and the elements of \hH give a complete set of central orthogonal idempotents $\bee_\chi$ (for $\chi\in\hH$) for \RH. Here we study the structure of \G-graded $\RH$-modules $\mM$ which are finitely generated and projective over \R. We recall that the action of \hH on \RH by automorphisms $\alpha_\chi$ is given by $\alpha_\chi(h)=\chi(h^{-1})h$ (equivalently $\alpha_\chi(\bee_\varrho)=\bee_{\chi\varrho}$), for all $\chi\in\hH$ and all $h\in\H$.

We study certain \RH-submodules of \mM which are associated with a character $\chi\in\hH$, namely the \RH-submodules $\mM_\chi=\{m\in\mM\mid hm=\chi(h)m,\text{for all }h\in\H\}$. It is easy to see that $\mM_\chi=\bee_{\chi}\mM$. We say that $\chi$ \highlight{occurs} in \mM whenever $\mM_\chi\neq 0$, and we say that $\dim_\R\mM_\chi$ is the multiplicity of $\chi$ in \mM (this makes sense when \mM is projective over \R, because \R is indecomposable).

Now, if $m^g\in \mM^g$ is homogeneous, then $\bee_\chi m^g=0$ implies $m^g=0$ (the components of $\bee_\chi m^g$ in the distinct degrees $hg$, $h\in\H$, are the elements $\oH^{-1}\chi(h^{-1})hm^g$), hence if $m^g\neq 0$, then $\bee_{\chi}m^g\neq 0$ for all $\chi\in\hH$. In particular, we see that $\mM_\chi \cap \mM^g=0$ for all $g\in\G$ unless \hH is trivial, and that all characters of \hH must occur in \mM provided $\mM\neq0$. In that case $\mM=\bigoplus_{\chi\in\hH}\mM_{\chi}$ where all terms are nonzero.

These simple observations lead us to the following:

\begin{lemma}\label{lem:galois:graded:modules}
Let \mM be a \G-graded left \RH-module. Then the following holds:
\begin{enumerate}
\item\label{lem:galois:graded:modules:0} If \mM is free of finite rank over \R having an \R-basis of homogeneous elements, then
$\bee_{\chi}(\mM^{g})$ is free of finite rank over \R, for all $\chi\in\hH$, $g\in \G$;

\item\label{lem:galois:graded:modules:1} For each $\chi\in\hH$, we have $\bee_{\chi}(\mM^{hg}) = \bee_{\chi}(\mM^{g})$, for all $h\in\H$, $g\in \G$;
\item\label{lem:galois:graded:modules:2} The Galois action of \hH on \RH via automorphisms extends to isomorphisms of \R-modules $\alpha_\varrho\colon\bee_{\chi}(\mM^g)\to \bee_{\chi\varrho}(\mM^g)$ via $\alpha_\varrho(\bee_{\chi}m^g)=\bee_{\varrho\chi}m^g$, for all $\chi,\varrho\in\hH$, $g\in \G$;
\item\label{lem:galois:graded:modules:3} If $T=\H\backslash\G$ is a right transversal of \G by \H, then $\bee_\chi\mM=\bigoplus_{t\in T}\bee_\chi(\mM^t)$. Hence the Galois action of \hH extends to automorphisms of \mM such that $\alpha_\varrho(\bee_\chi\mM)=\bee_{\varrho\chi}\mM$ for all $\chi,\varrho\in\hH$;
\item\label{lem:galois:graded:modules:4} If $\bee_\chi(\mM^g)$ has (projective) dimension $n_g=\dim_\R \bee_\chi(\mM^g)$, then this dimension is independent of $\chi\in\hH$. Consequently,
$\dim_{\R}{\bee_{\chi}\mM}=\sum_{t\in T}n_t$ is
also independent of $\chi\in \hH$.
%

\item\label{lem:galois:graded:modules:5}Set $\mN=\bee_{\hat{1}}\mM$, where $\hat{1}\in\hH$ is the identity. Then the map $\omega\colon \RH\otimes_{\R} \mN\to\mM$ given by $\omega(\bee_{\chi}\otimes n)=\bee_{\chi}\alpha_{\chi}(n)$ is an \RH-module isomorphism.  In particular, if \mN is free over \R, then \mM is free over \RH.

\end{enumerate}
Similar statements can be made with respect to $\mM\bee_\chi$ and $(\mM^g)\bee_\chi$ if $\mM$ is a \G-graded right \RH-module.
\end{lemma}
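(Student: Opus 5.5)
The whole argument rests on one computation: the homogeneous decomposition of $\bee_\chi m^g$. For $m^g\in\mM^g$ and $\chi\in\hH$ we have
\[
\bee_\chi m^g=\oH^{-1}\sum_{h\in\H}\chi(h^{-1})\,hm^g ,
\]
and $hm^g\in\mM^{hg}$, with the degrees $hg$ ($h\in\H$) pairwise distinct. So $\bee_\chi(\mM^g)\subseteq\bigoplus_{h\in\H}\mM^{hg}$. Projection onto the degree-$g$ component, $\bee_\chi m^g\mapsto\oH^{-1}m^g$, is injective on $\bee_\chi(\mM^g)$. Left multiplication by $h\in\H$ is an \R-isomorphism $\mM^g\to\mM^{hg}$ with inverse $h^{-1}$.

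\textbf{Items (ii), (iii) and (iv).} For (ii), since $\bee_\chi h=\chi(h)\bee_\chi$ and $\chi(h)\in\uR$, we get $\bee_\chi(\mM^{hg})=\bee_\chi h(\mM^g)=\chi(h)\bee_\chi(\mM^g)=\bee_\chi(\mM^g)$.

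For (iii), the map $\bee_\chi m^g\mapsto\bee_{\varrho\chi}m^g$ is well defined. Both sides are determined by $m^g$, and $\bee_\chi m^g=0$ forces $m^g=0$ by the injectivity above. The map is \R-linear and has inverse $\alpha_{\varrho^{-1}}$, so it is an isomorphism.

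For (iv), since $\mM=\bigoplus_g\mM^g=\bigoplus_{t\in T}\bigoplus_{h\in\H}\mM^{ht}$, we have $\bee_\chi\mM=\sum_{t}\bee_\chi(\mM^t)$ by (ii). This sum is direct because $\bee_\chi(\mM^t)\subseteq\bigoplus_h\mM^{ht}$ and distinct $t$ give disjoint cosets $\H t$. Gluing the maps of (iii) over $t$ and over $\chi$ then gives \R-automorphisms $\alpha_\varrho$ of $\mM=\bigoplus_\chi\bee_\chi\mM$ with $\alpha_\varrho(\bee_\chi\mM)=\bee_{\varrho\chi}\mM$.

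\textbf{Items (i) and (v).} For (i), $\mM$ is free over \R with a homogeneous basis, so each $\mM^g$ is free of finite rank. Define
\[
\pi_\chi\colon\mM^g\to\bee_\chi(\mM^g),\qquad m^g\mapsto\bee_\chi m^g .
\]
This map is surjective by definition and injective by the remark above, hence an \R-isomorphism. Therefore $\bee_\chi(\mM^g)$ is free of rank $\dim_\R\mM^g$. The same argument works for projective $\mM^g$.

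This also gives (v) at once: $n_g=\dim_\R\mM^g$ does not depend on $\chi$, and summing over $T$ via (iv) gives the formula for $\dim_\R\bee_\chi\mM$.

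\textbf{Item (vi): the main obstacle.} The step needing care is (vi), where one must check that $\omega$ is \RH-linear and bijective. On the summand $\bee_\chi\otimes\mN$ the map $\omega$ acts as $n\mapsto\alpha_\chi(n)$. By (iv), $\alpha_\chi$ restricts to an \R-isomorphism $\mN=\bee_{\hat1}\mM\to\bee_\chi\mM$; here the $\alpha_\chi$ of (iv) should be used, applied componentwise over $T$. Since $\RH\otimes_\R\mN=\bigoplus_\chi\bee_\chi\otimes\mN$ and $\mM=\bigoplus_\chi\bee_\chi\mM$, the map $\omega$ is a direct sum of isomorphisms, hence bijective.

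For linearity, it suffices to check the action of the idempotents $\bee_\varrho$, which span \RH. Since $\bee_\varrho\bee_\chi=\delta_{\varrho\chi}\bee_\chi$ and $\alpha_\chi(n)\in\bee_\chi\mM$, we get
\[
\omega(\bee_\varrho\bee_\chi\otimes n)=\delta_{\varrho\chi}\,\alpha_\chi(n)=\bee_\varrho\,\omega(\bee_\chi\otimes n).
\]
If \mN is free over \R, then $\RH\otimes_\R\mN$ is free over \RH, and so is \mM.

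The right-module statements follow by the same arguments with the sides exchanged.
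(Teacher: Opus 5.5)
Your proof is correct and follows the paper's argument essentially step for step: everything rests on the injectivity of $m^g\mapsto\bee_\chi m^g$ on $\mM^g$, read off from the homogeneous components of $\bee_\chi m^g$, and (vi) is the direct sum over $\chi$ of the isomorphisms $\alpha_\chi\colon\bee_{\hat{1}}\mM\to\bee_\chi\mM$ of (iv), which the paper packages as the explicit inverse $\eta(m)=\sum_\chi\bee_\chi\otimes\alpha_{\chi^{-1}}(\bee_\chi m)$. The only cosmetic difference is that you get the $\chi$-independence in (v) directly from $\bee_\chi(\mM^g)\cong\mM^g$ rather than through the maps of (iii); this is the same thing, since $\alpha_\varrho$ is just the composite $\bee_\chi m^g\mapsto m^g\mapsto\bee_{\varrho\chi}m^g$.
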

Here it is important to notice that none of these facts need to be true if \mM is not graded, as one can easily construct counterexamples.

\begin{proof}
First notice that all the facts in \Cref{lem:ipr:iso:comm:fin:grpalg} are holding. See also \Cref{rem:galois:extensions}.

\labelcref{lem:galois:graded:modules:0}: A homogeneous \R-basis of \mM restricts to an \R-basis of each $\mM^g$, so $\mM^g$ is free of finite rank. The map $m\mapsto\bee_\chi m$ is injective on $\mM^g$ by the observation preceding the lemma, whence $\bee_\chi(\mM^g)\cong\mM^g$.

\labelcref{lem:galois:graded:modules:1}: Since $h\mM^g=\mM^{hg}$, we have $\bee_\chi\mM^{g}=\bee_{\chi}h^{-1}\mM^{hg}=\chi(h^{-1})\bee_\chi\mM^{hg}=\bee_\chi\mM^{hg}$.

\labelcref{lem:galois:graded:modules:2}: To see that the map $\bee_{\chi} m^g\mapsto \alpha_\varrho(\bee_{\chi}m^g)=\bee_{\varrho\chi}m^g$, for arbitrary $m^g\in\mM^g$, is a well-defined isomorphism of \R-modules, it suffices to observe that, for any $\varrho\in\hH$, we have $0=\bee_\varrho m^g=\oH^{-1}\sum_{h\in\H}\varrho(h^{-1})h m^g$ implies $\oH^{-1}\varrho(h^{-1})h m^g=0$ for all $h\in\H$, hence $m^g=0$.

\labelcref{lem:galois:graded:modules:3}: For $g,t\in \G$ with $\H g\neq\H t$, the degrees occurring in $\bee_\chi\mM^g$ and $\bee_\chi\mM^t$ are distinct, hence $\bee_\chi\mM^g\,\cap\,\bee_\chi\mM^t=0$. Otherwise, $\H g=\H t$, $t=hg$ for some $h\in\H$, and $\bee_\chi\mM^g=\bee_\chi\mM^{hg}=\bee_\chi\mM^t$. It follows that $\bee_\chi\mM=\bigoplus_{t\in T}\bee_\chi\mM^t$. The automorphism action on \mM is clear.

\labelcref{lem:galois:graded:modules:4}: Follows at once from \labelcref{lem:galois:graded:modules:1,lem:galois:graded:modules:2,lem:galois:graded:modules:3}.

\labelcref{lem:galois:graded:modules:5}: It is clear that $\omega$ is well defined and \RH-linear. Let $\eta\colon \mM\to \RH\otimes_\R \mN$ be given by $\eta(m)=\sum_{\chi\in\hH}\bee_{\chi}\otimes \alpha_{\chi^{-1}}(\bee_{\chi}m)$. Then $\omega\eta=id_{\mM}$ and $\eta\omega=id_{\RH\otimes_\R \mN}$. Indeed,
$$
(\omega\eta)(m)=\sum_{\chi\in\hH}\bee_{\chi}\alpha_{\chi}\bigl(\alpha_{\chi^{-1}}(\bee_{\chi}m)\bigr)=\sum_{\chi\in\hH}\bee_{\chi}\bee_{\chi}m=\sum_{\chi\in\hH}\bee_{\chi}m=m,
$$
and
$$
(\eta\omega)(\bee_{\chi}\otimes n)=\eta(\bee_{\chi}\alpha_{\chi}(n))=\sum_{\psi\in\hH}\bee_{\psi}\otimes \alpha_{\psi^{-1}}(\bee_{\psi} \bee_{\chi}\alpha_{\chi}(n))=\bee_{\chi}\otimes n.
$$
Here we note that $\alpha_{\chi}(n)$ is not equal to $\bee_{\chi}n$, unless $n$ is homogeneous.
\end{proof}

\begin{remark}\label{rem:galois:module:properties}\Cref{lem:galois:graded:modules:4} of this \namecref{lem:galois:graded:modules} is really striking: it says that \textit{every character of \hH has to occur in \mM with the same multiplicity}, and thus $\mM=\bigoplus_{\chi\in\hH}\mM_\chi$, where each term is not just nonzero but \R-isomorphic to every other term. Furthermore, if \mN is a graded \RH-submodule of \mM, then the same conclusions of this lemma apply to \mN. In particular, when $\mN=\RH m$ is cyclic and $m$ is a homogeneous element with $\R m$ free over \R, then $\{h m\mid h\in\H\}$ forms a homogeneous \R-basis for \mN, $\mN\cong\RH$ as \R-modules, and \mN is free over \R of rank  $\dim_\R\mN=|\H|$. Notice that $\{\bee_\chi m\mid\chi\in\hH\}$ is another \R-basis, albeit not homogeneous, with respect to which $\mN$ further decomposes into indecomposable \RH-submodules as $\mN=\bigoplus_{\chi\in\hH}\mN_\chi$ with $\mN_\chi = \R\bee_\chi m$.
\end{remark}

As an immediate consequence of the previous \namecref{lem:galois:graded:modules}s and this \namecref{rem:galois:module:properties}, we obtain:

\begin{proposition}\label{prop:module:structure}If \mM is a \G-graded left \RH-module that is free over \R of finite rank and has a basis of homogeneous elements, then \mM is free over \RH with $\dim_\R\mM=\oH\dim_{\RH}\mM$, and, if $n=\dim_{\RH}\mM$, from its basis we may extract elements $m_1^{g_1},\ldots,m_n^{g_n}$, such that \mM decomposes into indecomposable \RH-submodules as
$$
\mM = \bigoplus_{i=1}^n\left(\bigoplus_{\chi\in\hH}\R\bee_{\chi}m_i^{g_i} \right)
$$
where all terms are nonzero.
\end{proposition}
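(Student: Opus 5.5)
The plan is to assemble the statement from \Cref{lem:galois:graded:modules} and \Cref{rem:galois:module:properties}, by induction on $\dim_\R\mM$.

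First, pass from the homogeneous \R-basis to the weight spaces. By \labelcref{lem:galois:graded:modules:0}, each $\bee_{\hat1}(\mM^g)$ is free of finite rank over \R. By \labelcref{lem:galois:graded:modules:3}, with a right transversal $T$ of $\H\backslash\G$, we have $\mN=\bee_{\hat1}\mM=\bigoplus_{t\in T}\bee_{\hat1}(\mM^t)$. Only finitely many of these summands are nonzero, because \mM has finite rank, so \mN is free over \R. Then \labelcref{lem:galois:graded:modules:5} gives $\mM\cong\RH\otimes_\R\mN$ as \RH-modules. Hence \mM is free over \RH of rank $n=\dim_\R\mN$, and $\dim_\R\mM=\oH\,n$. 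This equality can also be read off from \labelcref{lem:galois:graded:modules:4}, since all $|\hH|=\oH$ weight spaces have the same rank $n$.

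Second, I extract the elements $m_i^{g_i}$. Rather than building them from \mN, I pick them among the homogeneous basis vectors. Choose a basis vector $m_1$ of degree $g_1$. By \Cref{rem:galois:module:properties}, the cyclic submodule $\RH m_1$ is free over \R with homogeneous basis $\{hm_1\mid h\in\H\}$, and it equals $\bigoplus_\chi\R\bee_\chi m_1$ with every summand nonzero. The step I expect to be the main obstacle is complementing $\RH m_1$ inside \mM by a graded \RH-submodule that is again free over \R with a homogeneous basis, so that the induction can run.

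For the complement I would work degreewise. For each coset $\H t$, the map $\mM^t\to\bee_{\hat1}(\mM^t)$ is an isomorphism by \labelcref{lem:galois:graded:modules:0}. Also $\mM^{\H t}=\bigoplus_{h}h\,\mM^t$ as \R-modules, because the $h\mM^t$ lie in distinct degrees and each $h$ acts invertibly. So $\mM=\bigoplus_{t\in T}\RH\,\mM^t$ with $\RH\mM^t\cong\RH\otimes_\R\mM^t$, and the problem reduces to the free \R-modules $\mM^t$. Each $\mM^t$ has an \R-basis consisting of basis vectors of \mM. Taking these basis vectors, $m_{t,1},\dots,m_{t,n_t}$ for each $t\in T$, as the $m_i^{g_i}$ gives
\[
\mM=\bigoplus_{t\in T}\bigoplus_{j}\RH\,m_{t,j},
\]
with $n=\sum_t n_t$, in agreement with the first step.

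Finally, each $\RH\,m_{t,j}$ splits as $\bigoplus_\chi\R\bee_\chi m_{t,j}$. These summands are nonzero by the observation preceding \Cref{lem:galois:graded:modules}. They are indecomposable as \RH-modules because each is free of rank one over the indecomposable ring \R and \H acts on it by the scalar $\chi$, so every \RH-submodule is an \R-submodule and an \R-direct-summand decomposition of a rank-one free module over an indecomposable ring is trivial. This gives the displayed decomposition.
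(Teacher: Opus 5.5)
Your proof is correct and follows essentially the paper's route. Both fix a right transversal $T$ of $\H$ in $\G$, take as the $m_i^{g_i}$ the homogeneous basis vectors lying in the components $\mM^t$ ($t\in T$), and obtain $\RH$-freeness from \Cref{lem:galois:graded:modules}\labelcref{lem:galois:graded:modules:5}. The only difference is bookkeeping: you first write $\mM=\bigoplus_{t,j}\RH m_{t,j}$ and then split each cyclic summand into Fourier components, whereas the paper shows directly that $\{\bee_\chi m_i^{g_i}\}_i$ is an $\R$-basis of $\bee_\chi\mM$ for each $\chi\in\hH$.

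The announced induction on $\dim_\R\mM$ is never actually used, since your degreewise decomposition produces all the summands at once. That decomposition already exhibits $\{m_{t,j}\}$ as an $\RH$-basis, so the appeal to \labelcref{lem:galois:graded:modules:5} is in fact redundant.
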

\begin{proof}
Let $T$ be a right transversal of \H in \G and, for $t\in T$, let $m_1^t,\ldots,m_{n_t}^t$ be the elements of the given homogeneous basis lying in $\mM^t$. By \labelcref{lem:galois:graded:modules:1,lem:galois:graded:modules:3} of \Cref{lem:galois:graded:modules}, $\bee_\chi\mM=\bigoplus_{t\in T}\bee_\chi\mM^t$ and $\bee_\chi\mM^t$ is spanned by $\bee_\chi m_1^t,\ldots,\bee_\chi m_{n_t}^t$, a basis of it by \labelcref{lem:galois:graded:modules:0}. Listing the $m_i^t$ ($t\in T$, $1\leq i\leq n_t$) as $m_1^{g_1},\ldots,m_n^{g_n}$, we conclude that $\{\bee_\chi m_i^{g_i}\}_i$ is an \R-basis of $\bee_\chi\mM$ for every $\chi$, and $\mM=\bigoplus_\chi\bee_\chi\mM$ gives the displayed decomposition. Each $\R\bee_\chi m_i^{g_i}=\bee_\chi\RH m_i^{g_i}$ is an \RH-submodule, and it is indecomposable because it is free of rank one over $\R\cong\bee_\chi\RH$. Moreover $\mM\cong\RH\otimes_\R\bee_{\hat1}\mM$ is free over \RH of rank $n$ by \Cref{lem:galois:graded:modules}\labelcref{lem:galois:graded:modules:5}, so $\dim_\R\mM=\oH n$.
\end{proof}

%
%
%
%
\subsection{Graded bimodules over split group algebras}\label{subsec:graded:bimodules}

In this subsection we classify the \G-graded $(\RH,\RK)$-bimodules, \H and \K being as fixed at the beginning of the section. The incidence algebra plays no role until \Cref{subsec:proof:structure}.

\begin{lemma}\label{lem:stabilizer:basic}
Let $g\in\G$. The $\Q$-orbit of $g$ is the double coset $\H g\K$, the coordinate projections $\Sset_g\to\H$ and $\Sset_g\to\K$ are injective, and
\[
\Sset_g\cong\H\cap g\K g^{-1}\cong\K\cap g^{-1}\H g .
\]
Moreover $\Sset_t=\Sset_g$ for every $t\in\H g\K$.
\end{lemma}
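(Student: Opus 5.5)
The statement is elementary group theory about the action of $\Q=\H\times\K$ on \G by $(h,k)\cdot g=hgk$, and I would verify its four assertions directly and in order.

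\emph{The orbit.} The $\Q$-orbit of $g$ is $\{hgk\mid h\in\H,k\in\K\}=\H g\K$ by definition.

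\emph{Injectivity of the projections.} Suppose $(h,k)\in\Sset_g$ has trivial first coordinate, $h=1$. Then $gk=g$, so $k=1$, and the projection $\Sset_g\to\H$ is injective. Symmetrically, $k=1$ forces $hg=g$, hence $h=1$, and $\Sset_g\to\K$ is injective.

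\emph{The isomorphisms.} The image of the first projection is
\[
\{h\in\H\mid hgk=g\text{ for some }k\in\K\}.
\]
Here $hgk=g$ means $k=g^{-1}h^{-1}g$. So $h$ lies in the image exactly when $g^{-1}h^{-1}g\in\K$, that is, when $h\in g\K g^{-1}$. The image is therefore $\H\cap g\K g^{-1}$. In the same way, $k$ lies in the image of the second projection exactly when $h=gk^{-1}g^{-1}\in\H$, i.e.\ when $k\in g^{-1}\H g$, so that image is $\K\cap g^{-1}\H g$. Combined with injectivity, this gives both isomorphisms.

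\emph{Constancy on the double coset.} Let $t=h_0gk_0$. The general fact is that $\Sset_{q\cdot g}=q\Sset_gq^{-1}$ for $q\in\Q$, and this equals $\Sset_g$ because \Q is abelian (\H and \K are abelian). To check it directly, take $(h,k)\in\Sset_g$. Since $h$ commutes with $h_0$ in \H and $k$ commutes with $k_0$ in \K,
\[
htk=h_0(hgk)k_0=h_0gk_0=t.
\]
Thus $\Sset_g\subseteq\Sset_t$. The reverse inclusion follows by symmetry, since $g=h_0^{-1}tk_0^{-1}$.

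There is no real obstacle here. The only point needing care is the last step, where commutativity of \H and \K is essential: for nonabelian groups the stabilizers along an orbit are only conjugate, not equal.
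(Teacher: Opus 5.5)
Your proof is correct and follows essentially the same route as the paper. You solve $hgk=g$ for each coordinate in terms of the other, which gives injectivity of both projections and the images $\H\cap g\K g^{-1}$ and $\K\cap g^{-1}\H g$, and you use commutativity of $\H$ and $\K$ to get $\Sset_{h_0gk_0}=\Sset_g$; the paper writes this last step as a single chain of equivalences where you use an inclusion plus symmetry, which is only a cosmetic difference.
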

\begin{proof}
The orbit is $\{hgk\}=\H g\K$. If $(h,k)\in\Sset_g$ then $h=gk^{-1}g^{-1}$ and $k=g^{-1}h^{-1}g$, so each coordinate determines the other, and the images of the projections are $\H\cap g\K g^{-1}$ and $\K\cap g^{-1}\H g$. If $t=h_0gk_0$, then, \H and \K being abelian, $htk=t$ if and only if $hh_0gk_0k=h_0gk_0$, if and only if $hgk=g$. Hence $\Sset_t=\Sset_g$.
\end{proof}

\begin{lemma}\label{lem:split:subgroups}
For every subgroup $\S\leq\Q$ one has $\R\S\cong\R^{|\S|}$ and $\R(\Q/\S)\cong\R^{[\Q:\S]}$. Consequently, the restriction map $\hQ\to\hS$ is surjective, its kernel is $\S^\perp=\{\xi\in\hQ\mid\xi|_\S=1\}\cong\widehat{\Q/\S}$, and every $\lambda\in\hS$ has exactly $[\Q:\S]$ extensions to \Q: $\Ext{\S}{\lambda}$ is a coset of $\S^\perp$ in \hQ.
\end{lemma}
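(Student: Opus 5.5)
We apply \Cref{lem:ipr:iso:comm:fin:grpalg} to \S and to $\Q/\S$, and then count characters. The hypotheses are that \R is indecomposable, \Q is finite abelian with $\oQ\in\uR$, and \R contains a primitive $\Exp(\Q)$-th root of unity $\omega$, since $\RQ\cong\R^{\oQ}$ is split.

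First, every subgroup \S and every quotient $\Q/\S$ is finite abelian. Its order divides \oQ, so it is a unit in \R. Its exponent $d$ divides $n=\Exp(\Q)$, so $\omega^{n/d}$ is a primitive $d$-th root of unity in \R. By the converse direction of \Cref{lem:ipr:iso:comm:fin:grpalg} we get $\R\S\cong\R^{|\S|}$ and $\R(\Q/\S)\cong\R^{[\Q:\S]}$. The same lemma then gives $|\hS|=|\S|$ and $|\widehat{\Q/\S}|=[\Q:\S]$, and likewise $|\hQ|=\oQ$.

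Next, consider the restriction homomorphism $\mathrm{res}\colon\hQ\to\hS$. Its kernel is by definition $\S^\perp$. Composition with the projection $\Q\to\Q/\S$ identifies $\widehat{\Q/\S}$ with $\S^\perp$, because a character trivial on \S factors uniquely through the quotient. Hence $|\ker\mathrm{res}|=[\Q:\S]$, and
\[
|\mathrm{im}\,\mathrm{res}|=\oQ/[\Q:\S]=|\S|=|\hS|.
\]
So restriction is surjective. Finally, the fibre $\Ext{\S}{\lambda}$ over $\lambda\in\hS$ is nonempty by surjectivity. As a fibre of a group homomorphism, it is a coset $\xi\S^\perp$ of the kernel, and therefore has exactly $[\Q:\S]$ elements.

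The only step needing care is the counting. It relies on $|\hat{\mathsf X}|=|\mathsf X|$ for $\mathsf X\in\{\Q,\S,\Q/\S\}$, which holds only because the split hypothesis passes to subgroups and quotients. Indecomposability of \R is what bounds the number of roots of unity, and this is already built into \Cref{lem:ipr:iso:comm:fin:grpalg}. So that passage is the point to check first, and it is immediate from divisibility of orders and exponents.
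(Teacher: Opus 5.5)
Your proof is correct and follows the paper's argument. You check the three conditions of \Cref{lem:ipr:iso:comm:fin:grpalg} for \S and $\Q/\S$ by divisibility of orders and exponents, and then count using $|\hQ|=\oQ$, $|\ker|=[\Q:\S]$ and $|\hS|=|\S|$. The only difference is how you obtain the primitive $\Exp(\Q)$-th root of unity: you read it off the splitting $\RQ\cong\RH\otimes_\R\RK\cong\R^{\oQ}$ (asserted just before the lemma) via the forward direction of \Cref{lem:ipr:iso:comm:fin:grpalg}, whereas the paper builds it from the roots of orders $\Exp(\H)$ and $\Exp(\K)$, using that the $n$-th roots of unity in the indecomposable ring \R form a cyclic group.
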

\begin{proof}
By \Cref{lem:ipr:iso:comm:fin:grpalg}, applied to \H and to \K, the integers \oH and \oK are units in \R, and \R contains primitive roots of unity of orders $\Exp(\H)$ and $\Exp(\K)$. Put $n=\oH\oK$. Every divisor $d$ of $n$ is a unit in \R, so $x^d-1$ is separable over \R and has at most $d$ roots in \R because \R is indecomposable \cite[Corollary 2.5]{janusz}. The group of $n$-th roots of unity in \R therefore has at most $d$ elements of order dividing $d$ for each $d\mid n$, and is therefore cyclic. It follows that \R contains a primitive root of unity of order $\operatorname{lcm}(\Exp\H,\Exp\K)=\Exp(\Q)$. Now $|\S|$ and $[\Q:\S]$ divide $|\Q|=n$, $\Exp(\S)$ and $\Exp(\Q/\S)$ divide $\Exp(\Q)$, and \S, $\Q/\S$ are abelian, so the three conditions of \Cref{lem:ipr:iso:comm:fin:grpalg} hold for \S and for $\Q/\S$, whence $\R\S\cong\R^{|\S|}$ and $\R(\Q/\S)\cong\R^{[\Q:\S]}$. The same lemma gives $|\hS|=|\S|$ and $|\widehat{\Q/\S}|=[\Q:\S]$. The kernel of the restriction $\hQ\to\hS$ consists of the characters of \Q that factor through $\Q/\S$, so it is $\S^\perp\cong\widehat{\Q/\S}$, of order $[\Q:\S]$. The image therefore has $|\Q|/[\Q:\S]=|\S|$ elements and is all of \hS, and each fibre is a coset of $\S^\perp$.
\end{proof}

In particular, for each stabilizer $\Sset_g$ the Fourier idempotents
\[
\bee_\lambda=\frac{1}{|\Sset_g|}\sum_{s\in\Sset_g}\lambda(s^{-1})s\in\R\Sset_g\qquad(\lambda\in\hat{\Sset}_g)
\]
form a complete set of primitive orthogonal idempotents of $\R\Sset_g$, and $s\bee_\lambda=\lambda(s)\bee_\lambda$ for $s\in\Sset_g$. Therefore every $\R\Sset_g$-module \mN is the direct sum of its weight spaces $\mN_\lambda=\bee_\lambda\mN$.

\begin{lemma}\label{lem:orbit:induction}
Let \mM be a \G-graded $(\RH,\RK)$-bimodule, let $C=\H g\K$ and put $\mM_C=\bigoplus_{t\in C}\mM^t$. Then $\mM_C$ is a graded sub-bimodule, $\mM=\bigoplus_{C\in\H\backslash\G/\K}\mM_C$, and the map
\[
\Phi_g\colon\RQ\otimes_{\R\Sset_g}\mM^g\longrightarrow\mM_C,\qquad q\otimes m\longmapsto q\cdot m,
\]
is an isomorphism of graded bimodules, where the left-hand side is graded by $\deg(q\otimes m)=q\cdot g$.
\end{lemma}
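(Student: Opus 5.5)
First I would check that $\mM_C$ is a graded sub-bimodule and that $\mM$ is the direct sum of the $\mM_C$. Since $h\mM^tk\subseteq\mM^{htk}$ and $htk\in C$ whenever $t\in C$, the module $\mM_C$ is stable under the action of $\Q=\H\times\K$. It is graded by construction. The double cosets partition \G, so $\mM=\bigoplus_{g\in\G}\mM^g=\bigoplus_C\mM_C$.

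Next I would check that $\Phi_g$ is well defined. The stabilizer $\Sset_g$ acts on $\mM^g$ because $s\cdot\mM^g\subseteq\mM^{s\cdot g}=\mM^g$. Hence $\mM^g$ is an $\R\Sset_g$-module, and \RQ is a right $\R\Sset_g$-module by multiplication. The map $(q,m)\mapsto q\cdot m$ is balanced, since $(qs)\cdot m=q\cdot(s\cdot m)$, and it is \RQ-linear, i.e.\ a bimodule map. For the grading on the source, I would choose a transversal $T$ of $\Q/\Sset_g$. Because $\R\Q$ is free as a right $\R\Sset_g$-module on $T$, we get
\[
\RQ\otimes_{\R\Sset_g}\mM^g=\bigoplus_{q\in T}q\otimes\mM^g .
\]
The map $q\Sset_g\mapsto q\cdot g$ is a bijection from $\Q/\Sset_g$ onto $C$ by orbit–stabilizer (\Cref{lem:stabilizer:basic}). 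So declaring $q\otimes\mM^g$ to have degree $q\cdot g$ is a well-defined \G-grading, independent of the choice of $q$ in its coset, because $qs\otimes m=q\otimes sm$ and $qs\cdot g=q\cdot g$. This grading is compatible with the \Q-action, and $\Phi_g$ sends $q\otimes\mM^g$ into $\mM^{q\cdot g}$, so $\Phi_g$ is graded.

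For bijectivity I would argue degree by degree. In degree $t=q\cdot g$ with $q\in T$, the source component is $q\otimes\mM^g\cong\mM^g$, and the map to $\mM^t$ is $m\mapsto q\cdot m$. This map is bijective, because $q$ acts invertibly, sending $\mM^g$ onto $\mM^{qg}$ with inverse given by $q^{-1}$. Since both sides are direct sums over $C$ of their homogeneous components, $\Phi_g$ is an isomorphism.

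The only point needing care is the identification of the tensor product as the direct sum over $T$. This holds because $\RQ=\bigoplus_{q\in T}q\,\R\Sset_g$ is free as a right $\R\Sset_g$-module, giving $\RQ\otimes_{\R\Sset_g}\mM^g\cong\bigoplus_{q\in T}\mM^g$; this step is routine. No splitting hypothesis on \R is needed for this lemma.
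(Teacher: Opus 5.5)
Your proposal is correct and follows essentially the same route as the paper. It uses the freeness of $\RQ=\bigoplus_{t\in T}t\,\R\Sset_g$ over $\R\Sset_g$ to write $\RQ\otimes_{\R\Sset_g}\mM^g=\bigoplus_{t\in T}t\otimes\mM^g$, matches homogeneous components through the bijection $t\Sset_g\mapsto t\cdot g$ onto $C$, and exhibits $\Phi_g$ as the direct sum of the isomorphisms $\mM^g\to\mM^{t\cdot g}$ given by the units $t$. Your extra remarks, that the degree does not depend on the coset representative and that no splitting hypothesis is needed, are accurate and simply make explicit what the paper leaves implicit.
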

\begin{proof}
Since $C$ is a \Q-orbit, $\mM_C$ is a graded sub-bimodule and \mM is the direct sum of the $\mM_C$. The grading on the tensor product is well defined because $\Sset_g$ fixes $g$, and $\Phi_g$ is well defined because $\Phi_g(qs\otimes m)=q\cdot(s\cdot m)=\Phi_g(q\otimes s\cdot m)$ for $s\in\Sset_g$. If $T$ is a transversal of $\Q/\Sset_g$, then $\RQ=\bigoplus_{t\in T}t\,\R\Sset_g$ is free as a right $\R\Sset_g$-module, so $\RQ\otimes_{\R\Sset_g}\mM^g=\bigoplus_{t\in T}t\otimes\mM^g$. For each $t\in T$, multiplication by the unit $t$ is an \R-isomorphism $\mM^g\to\mM^{t\cdot g}$, and $t\Sset_g=t'\Sset_g$ if and only if $t\cdot g=t'\cdot g$. It follows that $\Phi_g$ is the direct sum of these isomorphisms, and it is graded.
\end{proof}

\begin{definition}\label{def:induced}
Let $\theta=(C,\lambda)$ be a type and \mP an \R-module. Write $\mP_\lambda$ for \mP with the $\R\Sset_C$-module structure $s\cdot p=\lambda(s)p$, and define the \highlight{induced bimodule}
\[
\mA_\theta(\mP)=\RQ\otimes_{\R\Sset_C}\mP_\lambda,\qquad \deg(q\otimes p)=q\cdot g_C .
\]
We put $\mA_\theta=\mA_\theta(\R)$. The degree-$g_C$ component of $\mA_\theta(\mP)$ is $1\otimes\mP\cong\mP$, since only the elements of $\Sset_C$ fix $g_C$. As an \R-module, $\mA_\theta(\mP)\cong\mP^{[\Q:\Sset_C]}$. Finally $\mA_\theta$ is atomic of weight $\lambda$, generated by the homogeneous free weight vector $1\otimes1$ of degree $g_C$, with $\supf_\G\mA_\theta=C$.
\end{definition}

\begin{theorem}\label{thm:bimod:classification}
Let \mM be a \G-graded $(\RH,\RK)$-bimodule. For a type $\theta=(C,\lambda)$ put
\[
\mP_\theta(\mM)=\bee_\lambda\mM^{g_C}\qquad\text{and}\qquad \mM_\theta=\RQ\cdot\mP_\theta(\mM)=\RH\,\mP_\theta(\mM)\,\RK .
\]
\begin{enumerate}
\item\label{thm:bimod:classification:1} $\mM=\bigoplus_{\theta\in\Types}\mM_\theta$, and $\mM_\theta\cong_\gr\mA_\theta(\mP_\theta(\mM))$ for every type $\theta$.
\item\label{thm:bimod:classification:2} For \R-modules $\mP,\mP'$ and types $\theta,\varphi$,
\[
\Homgr_{(\RH,\RK)}\bigl(\mA_\theta(\mP),\mA_\varphi(\mP')\bigr)\cong
\begin{cases}\Hom_\R(\mP,\mP'),&\theta=\varphi,\\ 0,&\theta\neq\varphi.\end{cases}
\]
\item\label{thm:bimod:classification:3} Two graded bimodules \mM and \mN are graded isomorphic if and only if $\mP_\theta(\mM)\cong_\R\mP_\theta(\mN)$ for every type $\theta$.
\item\label{thm:bimod:classification:4} \mM is finitely generated over \R if and only if only finitely many $\mP_\theta(\mM)$ are nonzero and each of them is finitely generated over \R. In that case, \mM is a finite direct sum of atomic bimodules if and only if every $\mP_\theta(\mM)$ is free of finite rank $m_\theta$, and then $\mM\cong_\gr\bigoplus_\theta\mA_\theta^{\,m_\theta}$ with the multiplicities $m_\theta$ uniquely determined.
\item\label{thm:bimod:classification:5} An atomic bimodule $\RH m\RK$ of weight $\lambda$ with $\deg m\in C$ is graded isomorphic to $\mA_{(C,\lambda)}$, and its only nonzero module $\mP_\theta$ is $\mP_{(C,\lambda)}\cong\R$. In particular, atomic bimodules are graded isomorphic if and only if they have the same type, every graded homomorphism between atomic bimodules of different types is zero, $\End^\gr_{(\RH,\RK)}(\mA_\theta)\cong\R$ and $\Aut^\gr_{(\RH,\RK)}(\mA_\theta)\cong\uR$.
\end{enumerate}
\end{theorem}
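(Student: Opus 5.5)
The plan is to reduce everything to the orbit decomposition of \Cref{lem:orbit:induction} and then to weight-space decompositions of \R\Sset_C-modules. For \labelcref{thm:bimod:classification:1}, write $\mM=\bigoplus_C\mM_C$ and, for each double coset $C$, use the isomorphism $\Phi_{g_C}\colon\RQ\otimes_{\R\Sset_C}\mM^{g_C}\to\mM_C$. The stabilizer $\Sset_C$ acts on $\mM^{g_C}$, since it fixes $g_C$. By \Cref{lem:split:subgroups} the Fourier idempotents $\bee_\lambda$, $\lambda\in\hat\Sset_C$, are a complete orthogonal set in $\R\Sset_C$. Hence $\mM^{g_C}=\bigoplus_\lambda\bee_\lambda\mM^{g_C}=\bigoplus_\lambda\mP_{(C,\lambda)}(\mM)_\lambda$ as $\R\Sset_C$-modules. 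Tensoring commutes with direct sums, so
\[
\mM_C\cong_\gr\bigoplus_{\lambda}\RQ\otimes_{\R\Sset_C}\mP_{(C,\lambda)}(\mM)_\lambda=\bigoplus_\lambda\mA_{(C,\lambda)}\bigl(\mP_{(C,\lambda)}(\mM)\bigr).
\]
The image of each summand under $\Phi_{g_C}$ is exactly $\RQ\cdot\mP_\theta(\mM)=\mM_\theta$. Since the grading on the tensor product is defined through $q\cdot g_C$, the isomorphism is graded.

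For \labelcref{thm:bimod:classification:2}, a graded bimodule map $F\colon\mA_\theta(\mP)\to\mA_\varphi(\mP')$ is determined by its restriction to the degree-$g_C$ component $1\otimes\mP$, because this component generates over \RQ. If the double cosets differ, then $F=0$ by degree, since the supports are $C$ and $C'$. If $C=C'$, the restriction is an \R-map $f\colon\mP\to\mP'$ into the degree-$g_C$ component $1\otimes\mP'$, and it must commute with the $\Sset_C$-action. That action is $\lambda$ on the source and $\lambda'$ on the target, so $(\lambda(s)-\lambda'(s))f=0$ for all $s\in\Sset_C$. If $\lambda\ne\lambda'$, pick $s$ with $\lambda(s)\neq\lambda'(s)$. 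Then $\lambda(s)-\lambda'(s)=\lambda'(s)(\zeta-1)$ with $\zeta\ne1$ a root of unity, and $\zeta-1$ is a unit by \cite[Lemma 2.1]{janusz}, so $f=0$. Conversely, when $\theta=\varphi$, every $f\in\Hom_\R(\mP,\mP')$ induces $\mathrm{id}\otimes f$, which is graded. This gives the bijection in \labelcref{thm:bimod:classification:2}.

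Item \labelcref{thm:bimod:classification:3} then follows. If \mM and \mN are graded isomorphic, the isomorphism maps $\mM^{g_C}$ onto $\mN^{g_C}$ and commutes with $\bee_\lambda$, so $\mP_\theta(\mM)\cong_\R\mP_\theta(\mN)$. The converse follows from \labelcref{thm:bimod:classification:1}. For \labelcref{thm:bimod:classification:4}, note that $\mM_\theta\cong\mP_\theta^{[\Q:\Sset_C]}$ as \R-modules and that each $\mP_\theta(\mM)$ is a direct summand of \mM. Finite generation of \mM is therefore equivalent to finitely many nonzero $\mP_\theta$, each finitely generated. If each $\mP_\theta$ is free of rank $m_\theta$, then $\mA_\theta(\R^{m})=\mA_\theta^{m}$. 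Conversely, for a finite sum of atomic bimodules each $\mP_\theta$ is a direct sum of copies of $\mP_\theta(\mA_{\theta'})$, which is \R or $0$ by \labelcref{thm:bimod:classification:5}; hence it is free. Uniqueness of $m_\theta$ comes from \labelcref{thm:bimod:classification:3}, since rank is well defined over the indecomposable ring \R.

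The main obstacle is \labelcref{thm:bimod:classification:5}, namely computing $\mP_{\theta'}(\RH m\RK)$ for an atomic bimodule with $m$ free of degree $g$ and weight $\lambda$. First replace $m$ by $q\cdot m$ with $q\cdot g=g_C$. This is harmless: it keeps $m$ free (because $q$ is a unit), and since \Q is abelian the weight is unchanged. Now $\mM=\RQ m$, so $\mM^{g_C}=\R\Sset_C\,m=\R m$, because $s\cdot m=\lambda(s)m$. Thus $\bee_\lambda\mM^{g_C}=\R m\cong\R$, while $\bee_{\lambda'}m=0$ for $\lambda'\neq\lambda$; all other $\mP_{\theta'}$ vanish by degree. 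Applying \labelcref{thm:bimod:classification:1} gives $\mM\cong_\gr\mA_{(C,\lambda)}$. Independence of the type from the generator also follows, since the type is read off from the unique nonzero $\mP_\theta$. Finally, \labelcref{thm:bimod:classification:2} with $\mP=\mP'=\R$ gives the statements on $\End^\gr$ and $\Aut^\gr$, namely $\End^\gr\cong\R$ and $\Aut^\gr\cong\uR$.
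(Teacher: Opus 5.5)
Your proposal is correct and follows the paper's proof essentially step for step. The shared steps are the orbit decomposition of \Cref{lem:orbit:induction} refined by the Fourier idempotents of $\R\Sset_C$, graded homomorphisms determined by their restriction to the degree-$g_C$ component, and, for \labelcref{thm:bimod:classification:5}, moving the generator to degree $g_C$ and computing $\mM^{g_C}=\R m$. The differences are cosmetic: you kill $f$ when $\lambda\neq\lambda'$ because $\lambda(s)-\lambda'(s)$ is a unit, where the paper notes that $\bee_\lambda$ acts as $1$ on $\mP_\lambda$ and as $0$ on $\mP'_{\lambda'}$; and you get the finiteness in \labelcref{thm:bimod:classification:4} from the direct sum $\mM=\bigoplus_\theta\mM_\theta$ rather than from the homogeneous components of a finite generating set.
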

\begin{proof}
\labelcref{thm:bimod:classification:1}: By \Cref{lem:orbit:induction}, $\mM=\bigoplus_C\mM_C$ with $\mM_C\cong_\gr\RQ\otimes_{\R\Sset_C}\mM^{g_C}$. Since $\R\Sset_C$ is split, $\mM^{g_C}=\bigoplus_{\lambda\in\hat{\Sset}_C}\bee_\lambda\mM^{g_C}$ as $\R\Sset_C$-modules, and $\Sset_C$ acts on $\bee_\lambda\mM^{g_C}$ through $\lambda$. Tensoring with \RQ gives $\mM_C=\bigoplus_\lambda\RQ\cdot\bee_\lambda\mM^{g_C}$ with $\RQ\cdot\bee_\lambda\mM^{g_C}=\Phi_{g_C}(\RQ\otimes\bee_\lambda\mM^{g_C})\cong_\gr\mA_{(C,\lambda)}(\bee_\lambda\mM^{g_C})$.

\labelcref{thm:bimod:classification:2}: If the double cosets of $\theta$ and $\varphi$ differ, the supports in the grading are disjoint and every graded homomorphism is zero. Let $\theta=(C,\lambda)$, $\varphi=(C,\mu)$. A graded homomorphism $\psi\colon\mA_\theta(\mP)\to\mA_\varphi(\mP')$ maps the degree-$g_C$ component $1\otimes\mP$ into $1\otimes\mP'$, and it is determined by this restriction because $\mA_\theta(\mP)$ is generated by $1\otimes\mP$ under \Q. The restriction is an $\R\Sset_C$-linear map $\mP_\lambda\to\mP'_\mu$, and conversely every such map $f$ extends to the graded homomorphism $q\otimes p\mapsto q\otimes f(p)$. If $\lambda\neq\mu$, the idempotent $\bee_\lambda$ acts as the identity on $\mP_\lambda$ and as zero on $\mP'_\mu$, so $f=0$. If $\lambda=\mu$, then $\Sset_C$ acts by the same scalars on both sides, and every \R-linear map is $\R\Sset_C$-linear.

\labelcref{thm:bimod:classification:3}: A graded isomorphism $\mM\to\mN$ restricts to \R-isomorphisms $\mP_\theta(\mM)\to\mP_\theta(\mN)$. Conversely, given such isomorphisms, \labelcref{thm:bimod:classification:2} produces graded isomorphisms $\mM_\theta\to\mN_\theta$ whose direct sum is a graded isomorphism, by \labelcref{thm:bimod:classification:1}.

\labelcref{thm:bimod:classification:4}: Each $\mP_\theta(\mM)$ is an \R-direct summand of \mM (project onto $\mM^{g_C}$ and apply $\bee_\lambda$), and $\mM\cong_\R\bigoplus_\theta\mP_\theta(\mM)^{[\Q:\Sset_C]}$ by \labelcref{thm:bimod:classification:1}. Suppose \mM is finitely generated over \R. Replacing a finite generating set by the homogeneous components of its elements shows that only finitely many degrees, hence finitely many double cosets and finitely many types, occur. Each $\mP_\theta(\mM)$ is then finitely generated, being a direct summand of \mM. The converse is clear. If every $\mP_\theta(\mM)$ is free of finite rank $m_\theta$, then $\mM_\theta\cong_\gr\mA_\theta^{m_\theta}$, and the $m_\theta$ are determined by \mM since $\mP_\theta(\mA_\varphi)=0$ for $\varphi\neq\theta$ and $\mP_\theta(\mA_\theta)=\R$. The converse follows from \labelcref{thm:bimod:classification:5}.

\labelcref{thm:bimod:classification:5}: Let $\mM=\RH m\RK$ be atomic of weight $\lambda$, $\deg m\in C$. Replacing $m$ by $q\cdot m$ for a suitable $q\in\Q$ we may assume $\deg m=g_C$. The weight is unchanged, because \Q is abelian: $s\cdot(q\cdot m)=q\cdot(s\cdot m)=\lambda(s)\,q\cdot m$. Every element of $\mM^{g_C}$ is an \R-combination of elements $hmk$ with $hg_Ck=g_C$, that is, with $(h,k)\in\Sset_C$, each of which equals $\lambda(h,k)m$. Hence $\mM^{g_C}=\R m\cong\R$ and $\mM^{g_C}=\bee_\lambda\mM^{g_C}$. By \Cref{lem:orbit:induction}, $\mM\cong_\gr\RQ\otimes_{\R\Sset_C}\R m\cong_\gr\mA_{(C,\lambda)}$, and $\mP_\varphi(\mM)=0$ for $\varphi\neq(C,\lambda)$. The remaining assertions follow from \labelcref{thm:bimod:classification:2}, since $\Hom_\R(\R,\R)=\R$.
\end{proof}

\begin{remark}\label{def:type:atomic}
By \Cref{thm:bimod:classification}\labelcref{thm:bimod:classification:5}, the type $\Theta(\mM)$ of an atomic bimodule (\Cref{def:types}) is well defined, being independent of the generator $m$ and of the representatives $g_C$, and it determines \mM up to graded isomorphism.
\end{remark}

\begin{remark}\label{rem:representatives}
The models $\mA_\theta$ depend on the representatives: if $g_C$ is replaced by $t=q_0\cdot g_C$, the new model is obtained from the old one by the shift $q\otimes p\mapsto qq_0^{-1}\otimes p$ of the grading. The two are graded isomorphic by \labelcref{thm:bimod:classification:5}, but they are different gradings on the same bimodule. The distinction matters when several bimodules are combined into an algebra (\Cref{sec:classification}).
\end{remark}

The homogeneous free generators of an atomic bimodule are the unit multiples of the translates of any one of them:

\begin{lemma}\label{lem:atomic:bimod:hom:gens}Let $\mM=\RH m \RK$ be an atomic $(\RH,\RK)$-bimodule and let $n\in\mM$ be homogeneous with $\R n$ free over \R of rank $1$. Then there are $(h,k)\in\H\times\K$ and $a\in\R$ such that $\deg n = h(\deg m)k$ and $n = a h m k$. If moreover $n$ generates, that is $\mM=\RH n \RK$, then $a$ is a unit. In particular the homogeneous free generators of \mM are exactly the elements $a\,hmk$ with $a\in\uR$ and $(h,k)\in\H\times\K$.
\end{lemma}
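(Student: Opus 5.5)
The plan is to pass through the model $\mA_\theta$ provided by \Cref{thm:bimod:classification}\labelcref{thm:bimod:classification:5}. Put $g=\deg m$, $C=\H g\K$ and let $\lambda$ be the weight of $m$. After replacing $m$ by $q\cdot m$ for a suitable $q\in\Q$ (which alters only the $(h,k)$ we are looking for) we may take $g=g_C$. By \Cref{lem:orbit:induction} and the proof of \labelcref{thm:bimod:classification:5}, $\mM=\bigoplus_{t\in T}t\cdot\R m$, where $T$ is a transversal of $\Q/\Sset_C$. The component $\mM^{t\cdot g}=\R\,(t\cdot m)$ is free of rank one, and $\supf_\G\mM=C$.

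Now let $n$ be homogeneous of some degree $g'$. The case $n=0$ is excluded, because $\R n$ is free of rank $1$. Hence $\mM^{g'}\neq0$, so $g'\in C$, and we can write $g'=hgk$ for some $(h,k)\in\Q$. This is the degree assertion. Moreover $\mM^{g'}=\R\,hmk$ by the description above: translation by the unit $(h,k)$ is an \R-isomorphism $\mM^{g}=\R m\to\mM^{g'}$. Therefore $n=a\,hmk$ for some $a\in\R$. Note that $\R hmk\cong\R$ is free, since $\R m$ is free.

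For the generating case, assume $\mM=\RH n\RK$. Then $\mM^{g'}$ is spanned by the elements $h'nk'$ with $(h',k')\in\Sset_{g'}=\Sset_C$. Since $n$ lies in $\mM_\lambda$, each such element equals $\lambda(h',k')n$, so $\mM^{g'}=\R n=\R a\,(hmk)$. We also have $\mM^{g'}=\R\,(hmk)$ with $hmk$ a free generator. Writing $hmk=b\,a\,hmk$ and using freeness gives $ba=1$, so $a\in\uR$. Conversely, if $a\in\uR$, then $a\,hmk$ is homogeneous and free, and it generates because $m=a^{-1}h^{-1}(a\,hmk)k^{-1}$. This gives the final description of the homogeneous free generators.

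The only step requiring care is the identification $\mM^{g'}=\R\,hmk$, that is, showing that each homogeneous component of an atomic bimodule is free of rank one and spanned by a translate of $m$. This is exactly what \Cref{lem:orbit:induction} provides once $\mM^{g_C}=\R m$ is known, and the latter is the stabilizer computation already carried out in \Cref{thm:bimod:classification}\labelcref{thm:bimod:classification:5}. Notice that the hypothesis that $\R n$ is free is used only to guarantee $n\neq0$ and thus $g'\in C$.
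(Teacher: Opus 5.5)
Your proof is correct and follows essentially the paper's argument. The paper likewise uses that $\mM^{g}=\R m$ (the grading of an atomic bimodule is fine and supported on $\H g\K$) to write $n=a\,hmk$, and gets $a\in\uR$ by observing that the relevant translates of the generator are unit multiples of it and comparing their span with a free rank-one component. The only differences are that the paper pulls $n$ back to degree $g$ via $h^{-1}nk^{-1}=am$ instead of working in degree $\deg n$, and that your preliminary normalisation $\deg m=g_C$ is harmless but unnecessary.
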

\begin{proof}
Assume that $\deg m=g$, and notice that the grading on \mM is fine and supported on the double coset $\H g \K$. Since $n$ is homogeneous, we have $n\in\mM^t$ for some $t\in\H g\K$, so that $t = h g k$ for some $(h,k)\in\H\times \K$.
Now $h^{-1}nk^{-1}\in\mM^g=\R m$, so $h^{-1}nk^{-1}=a m$ for some $a\in \R$, that is $n = a h m k$. Suppose $\mM=\RH n\RK$. Then $am=h^{-1}nk^{-1}$ also generates \mM, so its translates $h_0(am)k_0$ span \mM. Those of degree $g$ are the ones with $(h_0,k_0)\in\Sset_g$, and $h_0(am)k_0=a\lambda(h_0,k_0)m$ because $m$ is a weight vector of weight $\lambda$. Comparing with $\mM^g=\R m$ gives $a\R=\R$, so $a$ is a unit. Conversely $a\,hmk$ generates \mM for every $a\in\uR$, since $m=a^{-1}h^{-1}(a\,hmk)k^{-1}$.
\end{proof}
The hypothesis that $n$ generates cannot be dropped: for $\R=\ZZ$, $\H=\K=\G=1$ and $\mM=\ZZ$ with $m=1$, the element $n=2$ is homogeneous with $\R n$ free of rank $1$, and $n=2m$ with $2$ not a unit.

\begin{corollary}[Schur's lemma for atomic bimodules]\label{lem:homo:atomic:bimodules}Let $\mM=\RH m \RK$ and $\mN=\RH n \RK$ be two atomic bimodules of weights $\lambda\in\hat{\Sset}_g(\H,\K)$ and $\sigma\in\hat{\Sset}_t(\H,\K)$, respectively, and let $\psi\colon\mM\to\mN$ be a graded $(\RH,\RK)$-bimodule homomorphism. If $\supf_\G\mM\neq\supf_\G\mN$ (that is, $t\notin \H g \K$) or $\lambda\neq\sigma$, then $\psi= 0$. Otherwise, there are $a\in\R$ and $(h,k)\in\H\times \K$ such that $\psi(m)=a h n k$, and $\psi$ is an isomorphism if and only if $a$ is a unit in \R.
\end{corollary}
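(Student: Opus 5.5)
The plan is to derive the statement from \Cref{thm:bimod:classification}\labelcref{thm:bimod:classification:2,thm:bimod:classification:5} together with \Cref{lem:atomic:bimod:hom:gens}, working in three steps.

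\emph{Vanishing case.} Suppose $t\notin\H g\K$. A graded homomorphism carries $\mM^u$ into $\mN^u$. The support of $\mM$ is $\H g\K$ and the support of $\mN$ is $\H t\K$, and these are disjoint double cosets, so $\psi=0$. Now suppose the double cosets agree, $C=\H g\K=\H t\K$, but $\lambda\neq\sigma$. By \Cref{lem:stabilizer:basic} we have $\Sset_g=\Sset_t=\Sset_C$, so $\lambda$ and $\sigma$ are characters of the same group and the types $(C,\lambda)\neq(C,\sigma)$ differ. By \Cref{thm:bimod:classification}\labelcref{thm:bimod:classification:5}, $\mM\cong_\gr\mA_{(C,\lambda)}$ and $\mN\cong_\gr\mA_{(C,\sigma)}$. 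Then \labelcref{thm:bimod:classification:2} gives $\psi=0$. One can also see this directly: $\psi(m)$ lies in $\mN$, where every element is a weight vector of weight $\sigma$, while $s\cdot\psi(m)=\psi(s\cdot m)=\lambda(s)\psi(m)$. Taking $s$ with $\lambda(s)\neq\sigma(s)$ and using that $\lambda(s)-\sigma(s)$ is a unit (a difference of distinct roots of unity, \cite[Lemma 2.1]{janusz}) forces $\psi(m)=0$, and hence $\psi=0$ since $m$ generates.

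\emph{Form of $\psi$.} Now let $t\in\H g\K$ and $\lambda=\sigma$. Write $t=h_0gk_0$, so that $n'=h_0^{-1}nk_0^{-1}$ is a homogeneous free generator of degree $g$. The image $\psi(m)$ lies in $\mN^g$. This is a free module of rank one spanned by $n'$: indeed $\mN^g=\R n'$, as in the proof of \labelcref{thm:bimod:classification:5}. Hence $\psi(m)=a\,n'=a\,h n k$ with $(h,k)=(h_0^{-1},k_0^{-1})$ and some $a\in\R$. Alternatively, apply \Cref{lem:atomic:bimod:hom:gens} to $\psi(m)$ when $\R\psi(m)$ is free; the direct computation above avoids that hypothesis.

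\emph{Isomorphism criterion.} If $a\in\uR$, then $\psi(m)$ is a homogeneous free generator of $\mN$ by \Cref{lem:atomic:bimod:hom:gens}, so $\psi$ is surjective. Since $\psi$ is graded, it restricts to maps $\mM^u\to\mN^u$ between free rank-one modules. On the component $\mM^{qg}$ it is multiplication by $a$ in the bases $q\cdot m$ and $q\cdot n'$. So $\psi$ is bijective exactly when $a$ is a unit, which gives both directions. Equivalently, under $\End^\gr(\mA_\theta)\cong\R$ the map $\psi$ corresponds to $a$, and $\Aut^\gr\cong\uR$.

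I expect the only delicate step to be checking that $\psi$ is multiplication by $a$ on every component. Two points need care here. First, $\psi$ commutes with the $\Q$-action. Second, $q\cdot n'$ is a basis of $\mN^{q\cdot g}$: it depends only on the coset $q\Sset_C$, up to the unit $\lambda(s)$, which is harmless.
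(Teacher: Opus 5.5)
Your proposal is correct and follows essentially the paper's argument: you get the vanishing from \Cref{thm:bimod:classification}\labelcref{thm:bimod:classification:2,thm:bimod:classification:5}, then obtain $\psi(m)\in\mN^g=\R\,h^{-1}nk^{-1}$ after translating $n$ to degree $g$, and finish with the unit criterion. Your componentwise check that $\psi$ acts as multiplication by $a$ on each $\mM^{q\cdot g}$ just spells out the paper's terse step ``$\psi$ is an isomorphism iff it maps the free generator $m$ to a free generator''.
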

\begin{proof}
The first assertion is \Cref{thm:bimod:classification}\labelcref{thm:bimod:classification:2,thm:bimod:classification:5}. If $t\in\H g\K$ and $\lambda=\sigma$, then $t=hgk$ for some $(h,k)$, so $h^{-1}nk^{-1}$ is again a homogeneous free generator of \mN, now of degree $g$ and $\psi(m)\in\mN^g=\R h^{-1}nk^{-1}$, say $\psi(m)=a\,h^{-1}nk^{-1}$. Renaming $(h,k)$ gives the displayed form. Finally $\psi$ is an isomorphism if and only if it maps the free generator $m$ to a free generator, if and only if $a\in\uR$.
\end{proof}

We turn to the Peirce components. The component $\bee_\chi\mM\bee_\rho=\{m\in\mM\mid hmk=\chi(h)\rho(k)m\text{ for all }(h,k)\in\Q\}$ is the isotypic component of \mM of character $\xi=(\chi,\rho)$. It follows that $\mM=\bigoplus_{\xi\in\hQ}\bee_\chi\mM\bee_\rho$, and an \R-linear map between two $\xi$-isotypic components is a bimodule homomorphism. Unless \H and \K are trivial, the isotypic components contain no nonzero homogeneous elements (\Cref{rem:galois:module:properties}).

\begin{proposition}\label{prop:peirce:formula}
Let $\theta=(C,\lambda)$ be a type, \mP an \R-module and $\xi=(\chi,\rho)\in\hQ$. Then
\[
\bee_\chi\,\mA_\theta(\mP)\,\bee_\rho\cong_\R
\begin{cases}\mP,&\xi\in E_\theta,\\ 0,&\xi\notin E_\theta.\end{cases}
\]
Consequently, for every graded bimodule \mM,
\[
\bee_\chi\mM\bee_\rho\cong_\R\bigoplus_{\theta\,:\,\xi\in E_\theta}\mP_\theta(\mM).
\]
\end{proposition}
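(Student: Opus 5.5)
The plan is to compute the Peirce components of the model $\mA_\theta(\mP)=\RQ\otimes_{\R\Sset_C}\mP_\lambda$ directly from its structure as an induced module, and then to pass to an arbitrary graded bimodule through the decomposition of \Cref{thm:bimod:classification}\labelcref{thm:bimod:classification:1}.

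First I identify the $(\chi,\rho)$-Peirce component with the image of the Fourier idempotent $\bee_\xi\in\RQ$, $\xi=(\chi,\rho)$, acting on the left of the induced module. Since $\bee_\xi$ is central in the commutative ring $\RQ$, we get
\[
\bee_\xi\,\mA_\theta(\mP)=\bee_\xi\RQ\otimes_{\R\Sset_C}\mP_\lambda .
\]
Here $\bee_\xi\RQ=\R\bee_\xi$ is free of rank one over \R, and as a right $\R\Sset_C$-module it is \R with $s$ acting by $\xi(s)$, because $\bee_\xi s=\xi(s)\bee_\xi$.

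Next I use that $\R\Sset_C$ is commutative, so that $\R_{\xi|_{\Sset_C}}\otimes_{\R\Sset_C}\mP_\lambda$ is the quotient of $\mP$ by the submodule generated by the elements $(\xi(s)-\lambda(s))p$, $s\in\Sset_C$. There are two cases.
\begin{itemize}
\item If $\xi|_{\Sset_C}=\lambda$, that is $\xi\in E_\theta$, the relations are trivial and the tensor product is $\mP$.
\item If $\xi|_{\Sset_C}\neq\lambda$, pick $s$ with $\xi(s)\neq\lambda(s)$. Then $\xi(s)\lambda(s)^{-1}=\zeta\neq1$ is a root of unity of order dividing $|\Sset_C|$, which is a unit, so $1-\zeta$ is a unit by \cite[Lemma 2.1]{janusz}, exactly as in the converse part of \Cref{lem:ipr:iso:comm:fin:grpalg}. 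The quotient is therefore $0$.
\end{itemize}

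An alternative argument, avoiding the quotient description, decomposes $\RQ\otimes_{\R\Sset_C}\mP_\lambda\cong\R(\Q/\Sset_C)\otimes\mP$, twisted by an extension $\xi_0\in E_\theta$, which exists by \Cref{lem:split:subgroups}. Its Fourier idempotents are indexed by the coset $\xi_0\Sset_C^\perp=E_\theta$, each with a one-dimensional image tensored with $\mP$.

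Finally, for an arbitrary \mM, \Cref{thm:bimod:classification}\labelcref{thm:bimod:classification:1} gives
\[
\mM=\bigoplus_\theta\mM_\theta,\qquad \mM_\theta\cong_\gr\mA_\theta(\mP_\theta(\mM)).
\]
Projection onto Peirce components commutes with direct sums and with bimodule isomorphisms, so summing the first part over $\theta$ yields the stated formula.

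The only delicate point is the vanishing in the second case. It requires that $\xi(s)-\lambda(s)$ be a unit, not merely nonzero, and this is where indecomposability of \R and the invertibility of $|\Q|$ enter.
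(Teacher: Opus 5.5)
Your proposal is correct and follows essentially the paper's proof: both identify $\bee_\xi\mA_\theta(\mP)$ with $\R\bee_\xi\otimes_{\R\Sset_C}\mP_\lambda$, and both then sum over the decomposition of \Cref{thm:bimod:classification}\labelcref{thm:bimod:classification:1}. The only local difference is the vanishing step: the paper writes $\bee_\xi\otimes p=\bee_\xi\bee_\lambda\otimes p$ and uses the orthogonality relation $\bee_\xi\bee_\lambda=0$ for $\xi|_{\Sset_C}\neq\lambda$, whereas you present the tensor product as the quotient of $\mP$ by the elements $(\xi(s)-\lambda(s))p$ and use that $1-\zeta$ is a unit, which is the same fact those orthogonality relations rest on.
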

\begin{proof}
Put $\S=\Sset_C$. Since $\bee_\xi\RQ=\R\bee_\xi\cong\R$, we have $\bee_\xi\mA_\theta(\mP)=\bee_\xi\RQ\otimes_{\R\S}\mP_\lambda$, and $\bee_\xi\otimes p=\bee_\xi\bee_\lambda\otimes p$ because $\bee_\lambda$ acts as the identity on $\mP_\lambda$. Using $s\bee_\xi=\xi(s)\bee_\xi$ for $s\in\S$ and the orthogonality relations in \S,
\[
\bee_\xi\bee_\lambda=\frac{1}{|\S|}\sum_{s\in\S}\lambda(s^{-1})\xi(s)\,\bee_\xi=
\begin{cases}\bee_\xi,&\xi|_\S=\lambda,\\ 0,&\xi|_\S\neq\lambda.\end{cases}
\]
Hence the component vanishes unless $\xi\in E_\theta$, in which case $p\mapsto\bee_\xi\otimes p$ is an isomorphism $\mP\to\bee_\xi\mA_\theta(\mP)$ (every element of the target has this form, and $r\bee_\xi\otimes p\mapsto rp$ is a well-defined inverse since $\bee_\xi s=\lambda(s)\bee_\xi$ for $s\in\S$). The second statement follows by applying $\bee_\xi$ to the decomposition of \Cref{thm:bimod:classification}\labelcref{thm:bimod:classification:1}.
\end{proof}

For an atomic bimodule the Peirce components can be written down explicitly.

\begin{lemma}\label{lem:echi:erho:restricted}
Let \mM be a \G-graded $(\RH,\RK)$-bimodule, let $g\in\G$ and let $T$ be a transversal of $\Sset_g=\Sset_g(\H,\K)$ in $\H\times\K$. Then, for any $m\in\mM$ and $(\chi,\rho)\in\hH\times\hK$,
$$
\bee_\chi m \bee_\rho = \frac{|\Sset_g|}{|\H||\K|}\sum_{(h,k)\in T}\chi(h^{-1})\rho(k^{-1})h (\bee_{(\chi,\rho)|_{\Sset_g}} m) k.
$$
Moreover, if $m$ is a weight vector of weight $\lambda\in\hat{\Sset}_g$, then
$$
\bee_\chi m \bee_\rho = \begin{cases}
    \dfrac{|\Sset_g|}{|\H||\K|}\sum_{(h,k)\in T}\chi(h^{-1})\rho(k^{-1})h m k, & \text{if }(\chi,\rho)|_{\Sset_g}=\lambda, \\
    0, & \text{otherwise.}
\end{cases}
$$
In the first case, if $m$ is homogeneous of degree $g$, then $\bee_\chi m \bee_\rho=0$ if and only if $m=0$.
\end{lemma}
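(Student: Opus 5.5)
The plan is to compute $\bee_\chi m\bee_\rho$ directly in the group algebra $\RQ$. We use the factorization of the Fourier idempotent of \Q through the subgroup $\Sset_g$.

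Write $\bee_\chi m\bee_\rho=\bee_\xi\cdot m$ with $\xi=(\chi,\rho)\in\hQ$ and
\[
\bee_\xi=\frac{1}{|\Q|}\sum_{q\in\Q}\xi(q^{-1})q .
\]
Every $q\in\Q$ is uniquely $q=ts$ with $t\in T$ and $s\in\Sset_g$, and $\xi(q^{-1})=\xi(t^{-1})\xi(s^{-1})$, so the sum splits as
\[
\bee_\xi=\frac{1}{|\Q|}\sum_{t\in T}\xi(t^{-1})\,t\sum_{s\in\Sset_g}\xi|_{\Sset_g}(s^{-1})s=\frac{|\Sset_g|}{|\Q|}\sum_{t\in T}\xi(t^{-1})\,t\,\bee_{\xi|_{\Sset_g}} .
\]
The last step uses the definition of the Fourier idempotents of $\R\Sset_g$. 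These make sense because $\R\Sset_g$ is split by \Cref{lem:split:subgroups}, and $\xi|_{\Sset_g}\in\hat{\Sset}_g$. Applying this to $m$ and writing $t=(h,k)$, so that $t\cdot n=hnk$ and $\xi(t^{-1})=\chi(h^{-1})\rho(k^{-1})$, gives the first formula with $|\Q|=|\H||\K|$.

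For the weight-vector case, if $m$ has weight $\lambda$ then
\[
\bee_\mu m=\frac{1}{|\Sset_g|}\sum_{s}\mu(s^{-1})\lambda(s)\,m .
\]
By the orthogonality relations in $\hat{\Sset}_g$, which hold by \Cref{lem:ipr:iso:comm:fin:grpalg} applied to $\Sset_g$ via \Cref{lem:split:subgroups}, this equals $m$ if $\mu=\lambda$ and $0$ otherwise. Substituting $\mu=\xi|_{\Sset_g}$ gives the case distinction.

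For the last assertion, suppose $m$ is homogeneous of degree $g$ and $(\chi,\rho)|_{\Sset_g}=\lambda$. The terms $hmk$ with $(h,k)\in T$ have degrees $hgk$. These degrees are pairwise distinct, since $hgk=h'gk'$ would force $(h'^{-1}h,k k'^{-1})\in\Sset_g$, which is impossible for distinct representatives of $T$. Hence the summands lie in distinct homogeneous components. If the sum vanishes, each summand vanishes. The summand for the representative of the trivial coset is a unit multiple of $m$: the coefficient involves the unit $|\Sset_g|/(|\H||\K|)$ (recall $|\H|,|\K|\in\uR$) and values of characters, which are units, and $h,k$ are units in the group algebras. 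So $m=0$. The converse is trivial.

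There is no real obstacle here. The only points needing care are the right use of commutativity of \Q to factor $\bee_\xi$ through the coset decomposition, and making sure the orthogonality relations are available for $\Sset_g$ over the indecomposable ring \R, which \Cref{lem:split:subgroups} provides.
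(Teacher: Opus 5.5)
Your proof is correct and follows essentially the same route as the paper. Both arguments group the terms of $\bee_\chi m\bee_\rho$ along the cosets of $\Sset_g$ (you factor $\bee_\xi$ once in $\RQ$, while the paper regroups after applying to $m$), then use $\bee_\mu m=\delta_{\mu\lambda}m$ for weight vectors, and finally use that the summands $hmk$, $(h,k)\in T$, lie in distinct homogeneous components. One small simplification: any single summand vanishing already forces $m=0$, since its coefficient is a unit and $h,k$ are invertible, so you need not single out the trivial coset.
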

\begin{proof}
Write $\bee_\chi m\bee_\rho=\frac{1}{|\H||\K|}\sum_{(h,k)\in\H\times\K}\chi(h^{-1})\rho(k^{-1})hmk$ and group the terms according to the cosets $(h,k)\Sset_g$. For $s=(s_1,s_2)\in\Sset_g$,
\[
\chi((hs_1)^{-1})\rho((ks_2)^{-1})\,hs_1\,m\,ks_2=\chi(h^{-1})\rho(k^{-1})\,(\chi,\rho)(s^{-1})\,h(s\cdot m)k,
\]
and $\sum_{s\in\Sset_g}(\chi,\rho)(s^{-1})\,s\cdot m=|\Sset_g|\bee_{(\chi,\rho)|_{\Sset_g}}m$. This gives the first formula, and the second follows since $\bee_\mu m=\delta_{\mu\lambda}m$ for a weight vector $m$ of weight $\lambda$. If $m$ is homogeneous of degree $g$, the terms $hmk$, $(h,k)\in T$, lie in the distinct homogeneous components $\mM^{hgk}$ (for $hgk=h'gk'$ means $(h,k)\Sset_g=(h',k')\Sset_g$), so their combination with unit coefficients vanishes only if $m=0$.
\end{proof}

\begin{corollary}\label{prop:atomic:bimod:decomposition}
Let $\mM=\RH m \RK$ be an atomic $(\RH,\RK)$-bimodule of type $\Theta(\mM)=(\H g \K,\lambda)$, and let $(\chi,\rho)\in\hH\times\hK$. Then $\bee_\chi \mM \bee_\rho=\R\bee_\chi m\bee_\rho$ if $(\chi,\rho)\in E_{\Theta(\mM)}$ and $0$ otherwise. Each nonzero $\bee_\chi\mM\bee_\rho$ is free of rank $1$ over \R, and
$$
\mM = \bigoplus_{(\chi,\rho)\in E_{\Theta(\mM)}}\R\bee_\chi m \bee_\rho
$$
is a decomposition into (non-graded) indecomposable $(\RH,\RK)$-bimodules of rank 1 over \R.
\end{corollary}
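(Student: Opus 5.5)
The plan is to read the statement off \Cref{prop:peirce:formula,thm:bimod:classification,lem:echi:erho:restricted}, specialized to $\mP=\R$. By \Cref{thm:bimod:classification}\labelcref{thm:bimod:classification:5}, the atomic bimodule \mM is graded isomorphic to $\mA_\theta$ with $\theta=\Theta(\mM)=(C,\lambda)$, $C=\H g\K$. The generator $m$ is a free weight vector of weight $\lambda$ and degree $g$. By \Cref{lem:stabilizer:basic} we have $\Sset_g=\Sset_C$, so the character data of $E_\theta$ are the same whichever representative is used.

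First I would compute each Peirce component directly. Every element of \mM is an \R-combination of translates $hmk$. Since $\bee_\chi hmk\bee_\rho=\chi(h)\rho(k)\bee_\chi m\bee_\rho$, the component $\bee_\chi\mM\bee_\rho$ is spanned by $\bee_\chi m\bee_\rho$, which gives $\bee_\chi\mM\bee_\rho=\R\bee_\chi m\bee_\rho$. \Cref{lem:echi:erho:restricted} then says that $\bee_\chi m\bee_\rho=0$ when $(\chi,\rho)|_{\Sset_g}\neq\lambda$, that is, when $(\chi,\rho)\notin E_\theta$. When $(\chi,\rho)\in E_\theta$, the same lemma says that $\bee_\chi m\bee_\rho$ is a combination of the elements $hmk$, $(h,k)\in T$, with unit coefficients $\tfrac{|\Sset_g|}{\oH\oK}\chi(h^{-1})\rho(k^{-1})$. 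These elements lie in distinct homogeneous components.

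The key point is freeness of rank one. If $r\bee_\chi m\bee_\rho=0$ for some $r\in\R$, comparing homogeneous components gives $r\,hmk=0$ for every $(h,k)\in T$. In particular $rm=0$, and since $\R m$ is free, $r=0$. So $\R\bee_\chi m\bee_\rho\cong\R$. As a cross-check, \Cref{prop:peirce:formula} with $\mP=\R$ gives the same dimension count.

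Finally, $\mM=\bigoplus_{\xi\in\hQ}\bee_\chi\mM\bee_\rho$, using the complete set of orthogonal idempotents $\bee_\xi=\bee_\chi\otimes\bee_\rho$ of the split algebra \RQ. Dropping the zero terms yields the displayed decomposition. Each summand is a sub-bimodule, because \H and \K act on it by the scalars $\chi(h)\rho(k)$. It is indecomposable: any bimodule decomposition of it would be an \R-module decomposition of \R. An \R-module decomposition of \R corresponds to an idempotent of \R, and these are only $0$ and $1$ because \R is indecomposable. The only step needing care is the freeness argument above, where homogeneity of $m$ is essential.
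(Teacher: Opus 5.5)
Your argument is correct, but it takes a different route from the paper.

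\textbf{The paper's route.} The paper first normalizes. By \Cref{thm:bimod:classification}\labelcref{thm:bimod:classification:5} and \Cref{lem:atomic:bimod:hom:gens} it replaces $\mM$ by the model $\mA_\theta$ and $m$ by $1\otimes1$, noting that a translate of the generator only rescales each $\bee_\chi m\bee_\rho$ by a unit. It then reads everything off the explicit isomorphism $r\mapsto r\bee_\xi\otimes1$ from the proof of \Cref{prop:peirce:formula}. There the computation $\bee_\xi\bee_\lambda\in\{\bee_\xi,0\}$ gives the vanishing outside $E_\theta$.

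\textbf{Your route.} You stay inside $\mM$ with the given generator.
\begin{enumerate}
\item Spanning comes from the identity $\bee_\chi(hmk)\bee_\rho=\chi(h)\rho(k)\bee_\chi m\bee_\rho$.
\item Vanishing outside $E_\theta$ comes from \Cref{lem:echi:erho:restricted}.
\item Freeness comes from the grading. The element $r\bee_\chi m\bee_\rho$ is a combination, with unit coefficients, of the elements $r\,hmk$, which lie in distinct homogeneous components. So it vanishes only if $rm=0$, i.e.\ $r=0$.
\end{enumerate}
The graded isomorphism with $\mA_\theta$ and the cross-check with \Cref{prop:peirce:formula} that you mention are not actually needed.

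\textbf{Comparison.} Your argument is more elementary and self-contained: it uses only the weight property, homogeneity of $m$ and freeness of $\R m$, and it avoids normalizing the generator. The paper's argument reuses the induced-module computation that also underlies \Cref{thm:multiplicity:free}. It also makes explicit that $\bee_\chi m\bee_\rho$ corresponds to $\bee_\xi\otimes1$ in the model, which is the normalization used later for the generators $\ww_{\chi\rho}$.
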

\begin{proof}
By \Cref{thm:bimod:classification}\labelcref{thm:bimod:classification:5} we may assume $\mM=\mA_\theta$ and $m=1\otimes1$. Indeed $m$ generates \mM, so \Cref{lem:atomic:bimod:hom:gens} makes it a unit multiple of a translate of $1\otimes1$, and such a translate rescales each Peirce component generator by a unit, because $\bee_\chi(hmk)\bee_\rho$ is a unit multiple of $\bee_\chi m\bee_\rho$. The isomorphism $\R\to\bee_\xi\mA_\theta$ of the proof of \Cref{prop:peirce:formula} is $r\mapsto r\bee_\xi\otimes1=r\,\bee_\chi m\bee_\rho$, so $\bee_\chi\mM\bee_\rho=\R\bee_\chi m\bee_\rho\cong\R$ when $(\chi,\rho)\in E_\theta$, and it is zero otherwise. The rest is the isotypic decomposition of \mM.
\end{proof}

The next proposition decides when the extension sets of two types meet.

\begin{proposition}\label{prop:extensions:intersection}
Let $\S,\Tset\leq\Q$ be subgroups, $\lambda\in\hS$ and $\mu\in\widehat{\Tset}$. Then $\Ext{\S}{\lambda}\cap\Ext{\Tset}{\mu}\neq\varnothing$ if and only if $\lambda|_{\S\cap\Tset}=\mu|_{\S\cap\Tset}$. In that case there is a unique $\nu\in\widehat{\S\Tset}$ extending both $\lambda$ and $\mu$, and $\Ext{\S}{\lambda}\cap\Ext{\Tset}{\mu}=\Ext{\S\Tset}{\nu}$ has $[\Q:\S\Tset]$ elements. In particular, for types $\theta=(C,\lambda)$ and $\varphi=(D,\mu)$,
\[
E_\theta\cap E_\varphi=\varnothing\iff\lambda|_{\Sset_C\cap\Sset_D}\neq\mu|_{\Sset_C\cap\Sset_D},
\]
and if $\Sset_C\cap\Sset_D=1$, then $E_\theta\cap E_\varphi\neq\varnothing$ for all types $\theta,\varphi$ based on $C$ and $D$.
\end{proposition}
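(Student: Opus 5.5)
The plan is to reduce everything to the restriction homomorphism $r\colon\hQ\to\widehat{\S\Tset}$, using \Cref{lem:split:subgroups} for the subgroups $\S$, $\Tset$, $\S\cap\Tset$ and $\S\Tset$ of $\Q$. Each of these has a split group algebra, so restriction from $\hQ$ to each of their duals is surjective with kernel the annihilator. Necessity is immediate: if $\xi\in\Ext{\S}{\lambda}\cap\Ext{\Tset}{\mu}$, then $\lambda|_{\S\cap\Tset}=\xi|_{\S\cap\Tset}=\mu|_{\S\cap\Tset}$.

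For sufficiency, assume $\lambda$ and $\mu$ agree on $\S\cap\Tset$. We would define $\nu(st)=\lambda(s)\mu(t)$ for $s\in\S$, $t\in\Tset$. This is well defined: if $st=s't'$, then $s'^{-1}s=t't^{-1}\in\S\cap\Tset$, and the agreement gives $\lambda(s)\mu(t)=\lambda(s')\mu(t')$. Since $\Q$ is abelian, $\nu$ is multiplicative, so $\nu\in\widehat{\S\Tset}$ and it extends both $\lambda$ and $\mu$. Uniqueness is clear, because $\S\Tset$ is generated by $\S$ and $\Tset$.

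For the intersection, note that a character $\xi\in\hQ$ restricts to $\lambda$ on $\S$ and to $\mu$ on $\Tset$ exactly when $\xi|_{\S\Tset}$ agrees with $\lambda$ and $\mu$ on the generators, i.e. when $\xi|_{\S\Tset}=\nu$. Hence $\Ext{\S}{\lambda}\cap\Ext{\Tset}{\mu}=\Ext{\S\Tset}{\nu}$. By \Cref{lem:split:subgroups} applied to $\S\Tset$, this set is nonempty and is a coset of $(\S\Tset)^\perp$, so it has $[\Q:\S\Tset]$ elements.

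The statement about types follows by taking $\S=\Sset_C$ and $\Tset=\Sset_D$, since $E_\theta=\Ext{\Sset_C}{\lambda}$ by \Cref{def:types}. When $\Sset_C\cap\Sset_D=1$, the restriction condition holds trivially because both characters are $1$ on the identity, so the extension sets always meet.

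There is no serious obstacle. The only point needing care is well-definedness of $\nu$, which is exactly where the hypothesis on $\S\cap\Tset$ enters. One should also note that $\S\Tset$ is a subgroup, which holds because $\Q$ is abelian, so that \Cref{lem:split:subgroups} applies to it.
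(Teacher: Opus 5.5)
Your proposal is correct and follows essentially the same argument as the paper's proof. Both define $\nu(st)=\lambda(s)\mu(t)$ on $\S\Tset$, check that it is well defined via $s'^{-1}s=t't^{-1}\in\S\cap\Tset$, note that the common extensions of $\lambda$ and $\mu$ are exactly the extensions of $\nu$, and count these with \Cref{lem:split:subgroups} applied to $\S\Tset$.
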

\begin{proof}
A common extension has equal restrictions to $\S\cap\Tset$. Conversely, if $\lambda|_{\S\cap\Tset}=\mu|_{\S\cap\Tset}$, then $\nu(st)=\lambda(s)\mu(t)$ is well defined (if $st=s't'$ then $s'^{-1}s=t't^{-1}\in\S\cap\Tset$) and is the unique character of $\S\Tset$ extending both. By \Cref{lem:split:subgroups} it extends to \Q, and the common extensions of $\lambda$ and $\mu$ are exactly the extensions of $\nu$.
\end{proof}

\begin{definition}\label{def:multiplicity:free}
A finitely generated \G-graded $(\RH,\RK)$-bimodule \mM is \highlight{multiplicity-free} if every nonzero Peirce component $\bee_\chi\mM\bee_\rho$ is free of rank $1$ over \R. For $\xi=(\chi,\rho)\in\hQ$ let $\R_\xi$ denote the rank-one (ungraded) bimodule \R with $h\cdot r\cdot k=\chi(h)\rho(k)r$, and for $B\subseteq\hQ$ put $\mM_B=\bigoplus_{\xi\in B}\R_\xi$.
\end{definition}

\begin{theorem}\label{thm:multiplicity:free}
Let \mM be a finitely generated \G-graded $(\RH,\RK)$-bimodule.
\begin{enumerate}
\item\label{thm:multiplicity:free:1} \mM is multiplicity-free if and only if $\mM=\bigoplus_{\theta\in\Theta}\mM_\theta$ with $\mM_\theta\cong_\gr\mA_\theta$ for a finite set $\Theta\subseteq\Types$ of pairwise strongly distinct types (that is, with pairwise disjoint extension sets $E_\theta$). In that case $\Theta=\{\theta\mid\mP_\theta(\mM)\neq0\}$ is uniquely determined, $\mM_\theta=\bigoplus_{\xi\in E_\theta}\bee_\xi\mM$, and the underlying ungraded bimodule of \mM is isomorphic to $\mM_B$ with $B=\dot\bigcup_{\theta\in\Theta}E_\theta$.
\item\label{thm:multiplicity:free:2} For $B\subseteq\hQ$, the graded isomorphism classes of multiplicity-free gradings on the bimodule $\mM_B$ correspond bijectively to the finite sets $\Theta\subseteq\Types$ such that $B=\dot\bigcup_{\theta\in\Theta}E_\theta$. Distinct types with the same extension set (necessarily based on different double cosets) give non-isomorphic gradings.
\end{enumerate}
\end{theorem}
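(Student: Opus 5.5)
The plan is to reduce everything to \Cref{thm:bimod:classification} and \Cref{prop:peirce:formula}, which together say that every graded bimodule is $\bigoplus_\theta\mM_\theta$ with $\mM_\theta\cong_\gr\mA_\theta(\mP_\theta(\mM))$, and that the $\xi$-Peirce component is $\bigoplus_{\theta:\xi\in E_\theta}\mP_\theta(\mM)$.

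For \labelcref{thm:multiplicity:free:1}, the forward direction runs as follows. Assume \mM is multiplicity-free and finitely generated. By \Cref{thm:bimod:classification}\labelcref{thm:bimod:classification:4} only finitely many $\mP_\theta(\mM)$ are nonzero, and each is finitely generated and an \R-direct summand of \mM. Fix $\theta$ with $\mP_\theta(\mM)\neq0$ and some $\xi\in E_\theta$, which is nonempty by \Cref{lem:split:subgroups}. Then $\bee_\xi\mM\cong_\R\bigoplus_{\varphi:\xi\in E_\varphi}\mP_\varphi(\mM)$ is free of rank $1$. The point is that over an indecomposable ring a projective module of constant rank $1$ cannot split as a direct sum of two nonzero projective summands, since their ranks would add. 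Each $\mP_\varphi(\mM)$ is a summand of the f.g.\ projective module $\bee_\xi\mM$ and hence f.g.\ projective, and its rank is $0$ only if it is zero: localize at a prime, where a f.g.\ projective module of rank $0$ vanishes. So exactly one $\varphi$ with $\xi\in E_\varphi$ has $\mP_\varphi\neq0$, namely $\varphi=\theta$, and then $\mP_\theta(\mM)\cong\R$. This gives $\mM_\theta\cong_\gr\mA_\theta$. It also shows that the extension sets of distinct types in $\Theta=\{\theta\mid\mP_\theta\neq0\}$ are disjoint, i.e.\ the types are strongly distinct.

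The converse is immediate from \Cref{prop:peirce:formula}: disjoint $E_\theta$ and $\mP_\theta\cong\R$ give Peirce components that are $0$ or $\R$. Uniqueness of $\Theta$ holds because $\mP_\theta(\mA_\varphi)=\R$ if $\varphi=\theta$ and $0$ otherwise, so $\Theta$ is read off from \mM. Applying $\bee_\xi$ to the decomposition yields $\mM_\theta=\bigoplus_{\xi\in E_\theta}\bee_\xi\mM$. Finally, each $\bee_\xi\mM$ is a free rank-one isotypic component, and an \R-linear map between $\xi$-isotypic pieces is bilinear, so the ungraded bimodule is $\mM_B$ with $B=\dot\bigcup E_\theta$.

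For \labelcref{thm:multiplicity:free:2}, to a multiplicity-free grading on $\mM_B$ we attach the set $\Theta$ from \labelcref{thm:multiplicity:free:1}. Its ungraded structure forces $\dot\bigcup E_\theta=B$, because the set of $\xi$ with nonzero component is an isomorphism invariant. Graded-isomorphic gradings have the same $\Theta$ by \Cref{thm:bimod:classification}\labelcref{thm:bimod:classification:3}, and conversely the same $\Theta$ gives isomorphic graded bimodules $\bigoplus_{\theta\in\Theta}\mA_\theta$. For surjectivity, given $\Theta$ with $B=\dot\bigcup E_\theta$, the bimodule $\bigoplus_{\theta\in\Theta}\mA_\theta$ is multiplicity-free with underlying ungraded bimodule $\mM_B$; transporting its grading along this ungraded isomorphism gives a grading on $\mM_B$. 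For the last claim, suppose distinct types $\theta=(C,\lambda)$ and $\varphi=(D,\mu)$ have the same extension set. If $C=D$, restricting any common extension to $\Sset_C$ forces $\lambda=\mu$, so necessarily $C\neq D$. The sets $\{\theta\}$ and $\{\varphi\}$ then differ, so the gradings are non-isomorphic by uniqueness.

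The main obstacle is the rank argument in the forward direction: I must make sure the summands $\mP_\varphi$ are projective of constant rank, so that rank additivity over an indecomposable \R applies. This is exactly where indecomposability of \R enters.
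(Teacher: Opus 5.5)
Your proposal is correct and follows essentially the same route as the paper. You reduce part (i) to \Cref{thm:bimod:classification} and \Cref{prop:peirce:formula} by showing that a nonzero $\mP_\theta(\mM)$ exhausts the rank-one component $\bee_\xi\mM$ for $\xi\in E_\theta$, and part (ii) to \Cref{thm:bimod:classification}(iii) plus transport of the grading; the one difference is that you justify the key non-splitting step by rank additivity of projective summands over an indecomposable ring (with localization to see that rank zero forces zero), whereas the paper simply observes that a direct summand of $\R$ is $e\R$ for an idempotent $e$, hence $0$ or $\R$, which is shorter and avoids localization.
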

\begin{proof}
\labelcref{thm:multiplicity:free:1}: Let \mM be multiplicity-free and $\theta$ a type with $\mP_\theta(\mM)\neq0$, and choose $\xi\in E_\theta$. By \Cref{prop:peirce:formula}, $\mP_\theta(\mM)$ is a nonzero direct summand of $\bee_\xi\mM\cong\R$. A direct summand of \R is an ideal $e\R$ with $e$ idempotent, so, \R being indecomposable, $\mP_\theta(\mM)\cong\R$ and no other type $\varphi$ with $\xi\in E_\varphi$ has $\mP_\varphi(\mM)\neq0$. By \Cref{thm:bimod:classification}\labelcref{thm:bimod:classification:4}, only finitely many $\mP_\theta(\mM)$ are nonzero. This gives the decomposition with pairwise disjoint extension sets. Conversely, if $\mM=\bigoplus_\Theta\mM_\theta$ with pairwise disjoint $E_\theta$, \Cref{prop:peirce:formula} shows that every Peirce component is $\R$ or $0$. The description of $\mM_\theta$ and of the ungraded bimodule follows from \Cref{prop:peirce:formula} and the isotypic decomposition.

\labelcref{thm:multiplicity:free:2}: By \labelcref{thm:multiplicity:free:1}, a multiplicity-free graded bimodule whose underlying bimodule is $\mM_B$ is $\bigoplus_{\theta\in\Theta}\mM_\theta$ with $\dot\bigcup E_\theta=B$. Conversely, given such $\Theta$, the graded bimodule $\bigoplus_{\theta\in\Theta}\mA_\theta$ has underlying bimodule isomorphic to $\mM_B$, and transporting its grading along an ungraded isomorphism gives a multiplicity-free grading on $\mM_B$. Two such gradings are graded isomorphic if and only if their sets of types coincide, by \Cref{thm:bimod:classification}\labelcref{thm:bimod:classification:3}.
\end{proof}

\begin{remark}
Each extension set $E_\theta$ is a coset of $\Sset_C^\perp$ in \hQ. Thus a multiplicity-free grading on $\mM_{\hQ}$, with all Peirce components nonzero, amounts to a partition of $\hQ=\hH\times\hK$ into cosets of subgroups of the form $\Sset_C^\perp$, one type being attached to each coset. When \G is abelian, $\Sset_g=\{(d,d^{-1})\mid d\in\H\cap\K\}$ for all $g$, so all stabilizers coincide and a type is a coset of $\H\K$ together with a character of $\H\cap\K$. Compare \cite[Section 4]{Santulo-Souza-Yasumura}. For nonabelian \G the stabilizers vary with the double coset, and by \Cref{prop:extensions:intersection} types based on different double cosets can occur in the same multiplicity-free bimodule only when their characters differ on the intersection of the stabilizers.
\end{remark}

We conclude with the tensor product of two atomic bimodules over a common group algebra. The multiplication of a graded incidence algebra will be described in these terms in \Cref{sec:classification}. Let \L be a third finite abelian subgroup of \G with \RL split, and let $\theta=(C,\lambda)\in\Tset_{\H,\K}(\G)$ and $\theta'=(C',\lambda')\in\Tset_{\K,\L}(\G)$. Then $\mA_\theta\otimes_{\RK}\mA_{\theta'}$ is a \G-graded $(\RH,\RL)$-bimodule, with $\deg(x\otimes y)=\deg x\deg y$. Put
\begin{gather*}
\K_1=\pi_\K(\Sset_C)=\K\cap g_C^{-1}\H g_C,\qquad \K_2=\pi_\K(\Sset_{C'})=\K\cap g_{C'}\L g_{C'}^{-1},\\
t_k=g_Ckg_{C'}\quad(k\in\K),
\end{gather*}
and let $\U=\U_{\theta\theta'}\leq\H\times\L$ be the \highlight{composable subgroup}
\begin{gather*}
\U=\{(h,l)\in\H\times\L\mid \text{there is }k\in\K\text{ with }(h,k)\in\Sset_C\text{ and }(k^{-1},l)\in\Sset_{C'}\},\\
\nu(h,l)=\lambda(h,k)\lambda'(k^{-1},l).
\end{gather*}
The element $k$ is unique (\Cref{lem:stabilizer:basic}), $k\mapsto(h,l)$ is an isomorphism $\K_1\cap\K_2\to\U$, and $\nu$ is a character of \U. Indeed, $(h,k)\in\Sset_C$ and $(k^{-1},l)\in\Sset_{C'}$ amount to $h=g_Ck^{-1}g_C^{-1}$ and $l=g_{C'}^{-1}kg_{C'}$, which require $k\in\K_1\cap\K_2$ and then determine $h$ and $l$. The resulting map is a homomorphism because $\Sset_C$ and $\Sset_{C'}$ are groups, and $\nu$ is a character because $\lambda$ and $\lambda'$ are.

\begin{proposition}\label{prop:mackey}
With the above notation, $\U\leq\Sset_{t_k}(\H,\L)$ for every $k\in\K$, and there is an isomorphism of graded $(\RH,\RL)$-bimodules
\[
\mA_\theta\otimes_{\RK}\mA_{\theta'}\ \cong_\gr\ \bigoplus_{\bar k\in\K/\K_1\K_2}\ \ \bigoplus_{\substack{\lambda''\in\hat{\Sset}_{t_k}(\H,\L)\\ \lambda''|_{\U}=\nu}}\mA_{(\H t_k\L,\ \lambda'')},
\]
where $k$ denotes any representative of $\bar k$. The inner sum does not depend on the choice of the representative, because $t_{k\kappa}\in\H t_k\L$ for $\kappa\in\K_1\K_2$, so that $\Sset_{t_{k\kappa}}=\Sset_{t_k}$. Consequently, for $\theta''=(C'',\lambda'')\in\Tset_{\H,\L}(\G)$,
\begin{align*}
m(\theta,\theta',\theta'')&\coloneqq\operatorname{rank}_\R\mP_{\theta''}(\mA_\theta\otimes_{\RK}\mA_{\theta'})\\
&=\begin{cases}
|\{\bar k\in\K/\K_1\K_2\mid \H t_k\L=C''\}|,&\lambda''|_\U=\nu,\\
0,&\lambda''|_\U\neq\nu,
\end{cases}
\end{align*}
and $\Homgr_{(\RH,\RL)}(\mA_\theta\otimes_{\RK}\mA_{\theta'},\mA_{\theta''})$ is a free \R-module of rank $m(\theta,\theta',\theta'')$.
\end{proposition}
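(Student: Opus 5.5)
The plan is to reduce the tensor product to a Mackey-type computation for induced modules over the abelian group $\Q$-actions, and then read off the multiplicities from \Cref{thm:bimod:classification}.

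\emph{Step 1: a single orbit structure.} Write $\mA_\theta=\RQ\otimes_{\R\Sset_C}\R_\lambda$ and $\mA_{\theta'}=\R(\K\times\L)\otimes_{\R\Sset_{C'}}\R_{\lambda'}$, with generators $m=1\otimes1$ of degree $g_C$ and $m'=1\otimes1$ of degree $g_{C'}$. Because $\mA_\theta$ is free over $\RK$ on the right and $\mA_{\theta'}$ is free over $\RK$ on the left (by \Cref{prop:module:structure}), the product $\mA_\theta\otimes_{\RK}\mA_{\theta'}$ is spanned over $\R$ by the homogeneous elements $h\,m\,k\otimes m'\,l$, where $h\in\H$, $k\in\K$, $l\in\L$. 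The element $h\,m\,k\otimes m'\,l$ has degree $h\,g_Ckg_{C'}\,l=h\,t_k\,l$.

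Using the relations $m\,k_1=\lambda(h_1,k_1)\,h_1^{-1}m$ for $(h_1,k_1)\in\Sset_C$, and the analogous relations for $m'$, the middle variable $k$ may be moved only modulo $\K_1\K_2$. This gives the decomposition
\[
\mA_\theta\otimes_{\RK}\mA_{\theta'}=\bigoplus_{\bar k\in\K/\K_1\K_2}\RH\,(m\,k\otimes m')\,\RL .
\]
I would establish this by first writing $\mA_\theta\otimes_{\RK}\mA_{\theta'}$ as $\R(\H\times\K\times\L)$-induced data and then computing the rank. On the $\R$-level the rank is $\frac{|\H||\K|}{|\Sset_C|}\cdot\frac{|\K||\L|}{|\Sset_{C'}|}\cdot\frac1{|\K|}$. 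Matching this against the sum of the ranks of the claimed summands checks that there is no further collapse.

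\emph{Step 2: weights of the generators.} Put $n_k=m\,k\otimes m'$, which is homogeneous of degree $t_k$. For $(h,l)\in\U$ with associated element $\kappa\in\K_1\cap\K_2$, I expect the computation to give
\[
h\,n_k\,l=\nu(h,l)\,n_k .
\]
The reason is that $h\,m=\lambda(h,\kappa)\,m\,\kappa^{-1}$, that $\kappa^{-1}m'\,l=\lambda'(\kappa^{-1},l)\,m'$, and that $\K$ is abelian, so $k$ commutes past $\kappa$. The same computation, with $h\,t_k\,l=t_k$, shows $\U\leq\Sset_{t_k}(\H,\L)$.

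The sub-bimodule $\RH\,n_k\,\RL$ is graded with support $\H t_k\L$. Its degree-$t_k$ component is spanned by the elements $h\,n_k\,l$ with $(h,l)\in\Sset_{t_k}$. Counting ranks, I expect this component to be free of rank $[\Sset_{t_k}:\U]$ and to be isomorphic, as an $\R\Sset_{t_k}$-module, to the module induced from the character $\nu$ of $\U$. By \Cref{lem:split:subgroups} this induced module splits as $\bigoplus_{\lambda''|_\U=\nu}\R_{\lambda''}$. Applying \Cref{lem:orbit:induction} and \Cref{thm:bimod:classification}\labelcref{thm:bimod:classification:1} then yields the inner sum of atomic bimodules $\mA_{(\H t_k\L,\lambda'')}$.

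\emph{Step 3: independence and multiplicities.} Independence of the representative follows from $t_{k\kappa}\in\H t_k\L$ when $\kappa\in\K_1\K_2$; this holds because moving $\kappa$ into $m$ or $m'$ translates the degree by an element of $\H\times\L$. The formula for $m(\theta,\theta',\theta'')$ is then read off from \Cref{thm:bimod:classification}\labelcref{thm:bimod:classification:4}, since each $\lambda''$ extending $\nu$ occurs once per class $\bar k$ with $\H t_k\L=C''$. The statement about $\Homgr$ follows from \Cref{thm:bimod:classification}\labelcref{thm:bimod:classification:2}.

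The main obstacle is Step 1: showing that the sum over $\bar k$ is direct and that the degree-$t_k$ component has exactly rank $[\Sset_{t_k}:\U]$ with no extra relations. Different classes $\bar k$ can give the same double coset $\H t_k\L$, so the grading alone does not separate them. I would try to prove directness with an explicit $\R$-basis of the tensor product, obtained from transversals of $\K/\K_1$ and $\K/\K_2$, and then compare total ranks.
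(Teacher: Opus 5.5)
Your proposal is correct and follows essentially the paper's route. The paper likewise writes $\mA_\theta\otimes_{\RK}\mA_{\theta'}$ as a quotient of an induced module, namely the one induced from the image $\Sset$ of $\Sset_C\times\Sset_{C'}$ in $\H\times\K\times\L$, whose $\K$-projection is $\K_1\K_2$. It rules out collapse by comparing with the rank $|C||C'|/|\K|$, which comes from $\mA_\theta$ being free over $\RK$, and then splits each channel $\mN_{\bar k}\cong\R(\H\times\L)\otimes_{\R\U}\R_\nu$ by the Fourier idempotents of $\R\Sset_{t_k}$. Your weight identity $h\,n_k\,l=\nu(h,l)\,n_k$, your spanning reduction modulo $\K_1\K_2$ and your rank bookkeeping are exactly what is needed to make rigorous the directness step you flag as the main obstacle.
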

\begin{proof}
Write $x_{h,k}=(h,k)\otimes1\in\mA_\theta$ and $y_{k,l}=(k,l)\otimes1\in\mA_{\theta'}$. Therefore $x_{h,k}k'=x_{h,kk'}$, $k'y_{k,l}=y_{k'k,l}$, $x_{hh_0,kk_0}=\lambda(h_0,k_0)x_{h,k}$ for $(h_0,k_0)\in\Sset_C$ and $y_{k_0'k,ll_0'}=\lambda'(k_0',l_0')y_{k,l}$ for $(k_0',l_0')\in\Sset_{C'}$. Since $x_{h,k}\otimes y_{k',l}=x_{h,1}\otimes y_{kk',l}$, the tensor product is spanned by the elements $z_{h,k,l}=x_{h,1}\otimes y_{k,l}$, of degree $ht_kl$, and $(h,l)\cdot z_{h_1,k,l_1}=z_{hh_1,k,l_1l}$. Let $\Sset\leq\H\times\K\times\L$ be the image of the homomorphism $\Sset_C\times\Sset_{C'}\to\H\times\K\times\L$, $((h_0,k_0),(k_0',l_0'))\mapsto(h_0,k_0k_0',l_0')$. It is injective, because $(1,k_0)\in\Sset_C$ forces $k_0=1$, so $\tilde\nu(h_0,k_0k_0',l_0')=\lambda(h_0,k_0)\lambda'(k_0',l_0')$ is a well-defined character of \Sset. The relations above say that $z_{s\cdot(h,k,l)}=\tilde\nu(s)z_{h,k,l}$ for $s\in\Sset$, so $(h,k,l)\otimes1\mapsto z_{h,k,l}$ is a well-defined surjective homomorphism of graded $\R(\H\times\L)$-modules $\R(\H\times\K\times\L)\otimes_{\R\Sset}\R_{\tilde\nu}\to\mA_\theta\otimes_{\RK}\mA_{\theta'}$, the source being graded by $\deg((h,k,l)\otimes1)=ht_kl$. The source is free over \R of rank $|\H||\K||\L|/|\Sset|=|\H||\K||\L|/(|\Sset_C||\Sset_{C'}|)$. The target is free over \R of rank $(|C|/|\K|)\cdot|C'|$: the elements $x_{h,k}$, one for each coset of $\Sset_C$ in $\H\times\K$, form a homogeneous \R-basis of $\mA_\theta$, and $x_{h,k}k'=x_{h,kk'}$ is a unit multiple of a basis element. Hence, if $T$ is a set of representatives of the cosets of $\Sset_C\cdot(1\times\K)$ in $\H\times\K$, then $\mA_\theta=\bigoplus_{(h,k)\in T}x_{h,k}\RK$ with each summand free over \RK, so that $\mA_\theta$ is free as a right \RK-module of rank $|\H||\K|/(|\Sset_C||\K|)=|C|/|\K|$. Moreover $|C|=|\H||\K|/|\Sset_C|$ and $|C'|=|\K||\L|/|\Sset_{C'}|$. The ranks agree, so the surjection is an isomorphism: $\mA_\theta\otimes_{\RK}\mA_{\theta'}\cong_\gr\R(\H\times\K\times\L)\otimes_{\R\Sset}\R_{\tilde\nu}$.

The projection of \Sset to \K is $\K_1\K_2$. Hence, for a coset $\bar k\in\K/\K_1\K_2$, the elements $z_{h,k',l}$ with $k'\in\bar k$ span a graded sub-bimodule $\mN_{\bar k}$, $\mA_\theta\otimes_{\RK}\mA_{\theta'}=\bigoplus_{\bar k}\mN_{\bar k}$, and every $z_{h,k',l}$ with $k'\in\bar k$ is a unit multiple of some $z_{h',k,l'}$ (use the relations for $(h_0,k_0)\in\Sset_C$ and $(k_0',l_0')\in\Sset_{C'}$ with $k'=k_0k_0'k$). It follows that $\mN_{\bar k}=\R(\H\times\L)\cdot z_{1,k,1}$, and $z_{hh_0,k,ll_0}$ is proportional to $z_{h,k,l}$ exactly when $(h_0,1,l_0)\in\Sset$, that is, when $(h_0,l_0)\in\U$, the factor being $\tilde\nu(h_0,1,l_0)=\nu(h_0,l_0)$. Therefore $\mN_{\bar k}\cong\R(\H\times\L)\otimes_{\R\U}\R_\nu$ with $(h,l)\otimes1\mapsto z_{h,k,l}$ of degree $ht_kl$. If $(h_0,l_0)\in\U$ with $k_0$ as in the definition of \U, then $h_0g_C=g_Ck_0^{-1}$ and $g_{C'}l_0=k_0g_{C'}$, so $h_0t_kl_0=g_Ck_0^{-1}kk_0g_{C'}=t_k$, because \K is abelian. It follows that $\U\leq\Sset''\coloneqq\Sset_{t_k}(\H,\L)$. By \Cref{lem:orbit:induction}, $\mN_{\bar k}\cong_\gr\R(\H\times\L)\otimes_{\R\Sset''}\mN_{\bar k}^{t_k}$, and the degree-$t_k$ component is $\mN_{\bar k}^{t_k}=\R\Sset''\otimes_{\R\U}\R_\nu$. Its $\lambda''$-weight space, for $\lambda''\in\hat{\Sset''}$, is $\bee_{\lambda''}\R\Sset''\otimes_{\R\U}\R_\nu=\R\bee_{\lambda''}\otimes_{\R\U}\R_\nu$, which is free of rank one if $\lambda''|_\U=\nu$ and zero otherwise (the computation in the proof of \Cref{prop:peirce:formula}, with $\bee_{\lambda''}\bee_\nu=\bee_{\lambda''}$ or $0$ according as $\lambda''|_\U=\nu$ or not). As $\R\Sset''$ is split, $\mN_{\bar k}^{t_k}=\bigoplus_{\lambda''|_\U=\nu}\R_{\lambda''}$, and tensoring with $\R(\H\times\L)$ over $\R\Sset''$ gives $\mN_{\bar k}\cong_\gr\bigoplus_{\lambda''|_\U=\nu}\mA_{(\H t_k\L,\lambda'')}$ (\Cref{def:induced}, together with \Cref{thm:bimod:classification}\labelcref{thm:bimod:classification:5} to pass from the representative $t_k$ to $g_{C''}$). The formula for $m(\theta,\theta',\theta'')$ follows from \Cref{thm:bimod:classification}\labelcref{thm:bimod:classification:4}, and the last assertion from \labelcref{thm:bimod:classification:2}.
\end{proof}

\begin{remark}
When \G is abelian, all stabilizers are ``diagonal'', $\Sset_g(\H,\K)=\{(d,d^{-1})\mid d\in\H\cap\K\}$, so that $\K_1=\H\cap\K$, $\K_2=\K\cap\L$, $\U=\{(d,d^{-1})\mid d\in\H\cap\K\cap\L\}$, and $\H t_k\L=t_k\H\L$ depends on $k$ only through its class modulo $\K\cap\H\L$. The channels $\bar k\in\K/(\H\cap\K)(\K\cap\L)$ mapping to the same coset of $\H\L$ are the ones responsible for multiplicities $m(\theta,\theta',\theta'')>1$. In \Cref{ex:octahedron} there are two of them.
\end{remark}

%
%
\subsection{Proof of \Cref{teo:structure:bimod}}\label{subsec:proof:structure}

Let $\ee\prec\ff$ be diagonal, $\H=\H_\ee$, $\K=\H_\ff$ and $\mM=\mM_{\ee\ff}=\ee\IPR\ff$. Then \mM is the free \R-module with basis $\{\ee_{xy}\mid x\in\supf\ee,\ y\in\supf\ff,\ x<y\}$, and is in particular finitely generated over \R. Under the identifications $\supf\ee=\hH$, $\supf\ff=\hK$ of \Cref{rem:poset:character:identification}, the idempotent $\ee_{xx}$ acts on \mM as $\bee_{\chi_x}$ and $\ee_{yy}$ as $\bee_{\chi_y}$, so the Peirce components of \mM are
\[
\bee_{\chi_x}\mM\bee_{\chi_y}=\ee_{xx}\,\ee\IPR\ff\,\ee_{yy}=\ee_{xx}\IPR\ee_{yy}=
\begin{cases}\R\ee_{xy},& x<y,\\ 0,&\text{otherwise},\end{cases}
\]
which proves \labelcref{teo:structure:bimod:3} except for the last assertion, and shows that \mM is multiplicity-free with $\{\xi\in\hQ\mid\bee_\xi\mM\neq0\}=B_{\ee\ff}$. \Cref{thm:multiplicity:free}\labelcref{thm:multiplicity:free:1} now gives a unique finite set $\Theta_{\ee\ff}$ of pairwise strongly distinct types with $\mM=\bigoplus_{\theta\in\Theta_{\ee\ff}}\mM_\theta$, $\mM_\theta\cong_\gr\mA_\theta$ atomic of type $\theta$, $\mM_\theta=\bigoplus_{\xi\in E_\theta}\bee_\xi\mM=\bigoplus_{(\chi_x,\chi_y)\in E_\theta}\R\ee_{xy}$ and $B_{\ee\ff}=\dot\bigcup_\theta E_\theta$. This is \labelcref{teo:structure:bimod:1}. Every element of $\hH$ (resp. \hK) occurs as a first (resp. second) coordinate of some pair in $B_{\ee\ff}$. Indeed, given $\chi\in\hH$ and a type $\theta=(C,\lambda)\in\Theta_{\ee\ff}$, the rule $k\mapsto\lambda(h,k)\chi(h)^{-1}$, $(h,k)\in\Sset_C$, is a well-defined character of the image of the injective projection $\Sset_C\to\K$ (\Cref{lem:stabilizer:basic}). It extends to a character $\rho$ of \K (\Cref{lem:split:subgroups}), so that $(\chi,\rho)\in E_\theta\subseteq B_{\ee\ff}$. Therefore every character of \hH and of \hK occurs in \mM, completing \labelcref{teo:structure:bimod:3}. For \labelcref{teo:structure:bimod:2}, $\mM^g=\bigoplus_\theta\mM_\theta^g$ and $\mM_\theta^g\cong\mA_\theta^g$ is $\R$ if $g\in C_\theta$ and $0$ otherwise. If $m_\theta$ is a homogeneous generator of $\mM_\theta$ of degree $g_{C_\theta}$, then $\mM_\theta=\bigoplus_{q\in T_\theta}\R\,q\cdot m_\theta$ for a transversal $T_\theta$ of $\Q/\Sset_{C_\theta}$ (\Cref{lem:orbit:induction}), and each $q\cdot m_\theta$ is a homogeneous free weight vector. Finally, \labelcref{teo:structure:bimod:4} is \Cref{thm:bimod:classification}\labelcref{thm:bimod:classification:3,thm:bimod:classification:5}. \qed

\begin{remark}\label{rem:structure:consequences}
Two remarks on \Cref{teo:structure:bimod} are in order. First, the decomposition into atomic sub-bimodules is unique as a decomposition, not merely up to permutation: $\mM_\theta$ is the span of the $\ee_{xy}$ with $(\chi_x,\chi_y)\in E_\theta$. The relation $x<y$ between $\supf\ee$ and $\supf\ff$ is thus partitioned into classes, one for each type, each class being a coset of $\Sset_C^\perp$ in $\hH\times\hK$ with $|\Sset_C^\perp|=[\Q:\Sset_C]=|\H g\K|$ elements. These classes are described by the notation of \Cref{def:H:phi}. Indeed $\H_{xy}$ is the double coset $C_\theta$ of the type of the summand containing $\ee_{xy}$, and two pairs $(x,y)$ and $(p,q)$ lie in the same class precisely when $\H_{xy}^{pq}\neq\varnothing$. The maps $\phi_{xy}^{pq}$ are the transition functions $\ee_{xy}^g=\phi_{xy}^{pq}(g)\ee_{pq}^g$ between the elementary functions of one class, and involve no base point. Second, $\Theta_{\ee\ff}$ determines the bimodule $\mM_{\ee\ff}$ up to graded isomorphism but not its grading: by \Cref{thm:multiplicity:free}\labelcref{thm:multiplicity:free:2} the gradings of type $\Theta_{\ee\ff}$ on $\mM_{\ee\ff}$ are conjugate under the group $\prod_{\xi\in B_{\ee\ff}}\uR$ of bimodule automorphisms $\ee_{xy}\mapsto u_{xy}\ee_{xy}$, which need not extend to automorphisms of \IPR. The gradings of the various blocks are tied together by the multiplication $\mM_{\ee\ff}\mM_{\ff\hh}\subseteq\mM_{\ee\hh}$. This is the subject of \Cref{sec:classification}.
\end{remark}

\subsection{Further properties of the bimodules $\mM_{\ee\ff}$}

We draw some consequences of \Cref{teo:structure:bimod} for the poset \PO.

The next result gives a necessary condition for the existence of a group grading on \IPR which depends directly on the partially ordered set. Recall that a \highlight{bipartite graph} is a graph whose vertex set can be divided into two disjoint sets such that no two vertices within the same set are adjacent. A bipartite graph is \highlight{biregular} if all the vertices within the same subset have the same \highlight{degree} (number of adjacent edges).

\begin{theorem}\label{thm:biregular:graph}Let $\ee\prec\ff$ be two primitive homogeneous orthogonal idempotents.
Then, as a subpartially ordered subset of \PO seen as a graph, the Hasse diagram of $\supf\ee\,\cup\supf\ff$ is a biregular bipartite graph.
\end{theorem}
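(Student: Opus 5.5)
The plan is to reduce the statement to a counting problem about the extension sets $E_\theta$ of \Cref{teo:structure:bimod}. First I will normalize. By \Cref{rem:diagonalization}, some inner automorphism makes $\ee$ and $\ff$ simultaneously diagonal. Inner automorphisms preserve supports, since they preserve diagonal parts, and they preserve the relation $\preccurlyeq$. So I may assume $\ee,\ff$ diagonal, with disjoint supports.

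If neither $\ee\preccurlyeq\ff$ nor $\ff\preccurlyeq\ee$, then no element of $\supf\ee$ is comparable with an element of $\supf\ff$ (\Cref{lem:idemp:relation}). The Hasse diagram then has no edges, and it is trivially biregular with both degrees equal to $0$. By \Cref{cor:triang} I may otherwise assume $\ee\prec\ff$.

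Next I identify the Hasse diagram. By \Cref{cor:xmin:for:idemps}, $\supf\ee$ and $\supf\ff$ are antichains. Hence any comparable pair in $\supf\ee\cup\supf\ff$ has the form $x<y$ with $x\in\supf\ee$, $y\in\supf\ff$, because $y<x$ would give $\ff\preccurlyeq\ee$, contradicting antisymmetry. Such a pair is automatically a cover in the subposet: an intermediate $z$ would lie in one of the two antichains and be comparable with $x$ or with $y$ there, which is impossible. So the Hasse diagram is exactly the bipartite graph with parts $\supf\ee$, $\supf\ff$ and edges $\{x,y\}$ for $x<y$. Under the identification of \Cref{rem:poset:character:identification}, its edge set is $B_{\ee\ff}$.

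It remains to show two things. The number $\deg(x)=|\{\rho\in\hK\mid(\chi_x,\rho)\in B_{\ee\ff}\}|$ must be independent of $\chi_x\in\hH$, and symmetrically for $y$. By \Cref{teo:structure:bimod}\labelcref{teo:structure:bimod:1}, $B_{\ee\ff}=\dot\bigcup_{\theta\in\Theta_{\ee\ff}}E_\theta$, so it suffices to prove that for each type $\theta=(C,\lambda)$ and each $\chi\in\hH$ the fibre $\{\rho\mid(\chi,\rho)\in E_\theta\}$ has size $[\K:\K_1]=|\K|/|\Sset_C|$, independent of $\chi$.

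This fibre count is the only real step. I will argue it as in the proof of \Cref{teo:structure:bimod}\labelcref{teo:structure:bimod:3}. By \Cref{lem:stabilizer:basic}, the projections of $\Sset_C$ to $\H$ and to $\K$ are injective. So the condition $(\chi,\rho)|_{\Sset_C}=\lambda$ is equivalent to $\rho$ agreeing on $\K_1=\pi_\K(\Sset_C)$ with the well-defined character $k\mapsto\lambda(h,k)\chi(h)^{-1}$, where $(h,k)\in\Sset_C$. By \Cref{lem:split:subgroups} applied inside $\K$ (the split hypothesis holds for subgroups), this character of $\K_1$ has exactly $[\K:\K_1]$ extensions to $\K$. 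Hence every $x$ has degree $\sum_{\theta\in\Theta_{\ee\ff}}|\K|/|\Sset_{C_\theta}|$. The symmetric argument gives every $y$ degree $\sum_{\theta\in\Theta_{\ee\ff}}|\H|/|\Sset_{C_\theta}|$, so the graph is biregular.

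As a consistency check, counting edges both ways gives $|\H|$ times the first degree equal to $|\K|$ times the second, and both equal $\sum_\theta|\H\times\K|/|\Sset_{C_\theta}|=\sum_\theta|E_\theta|$.
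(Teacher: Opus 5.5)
Your proof is correct. The reduction is the same as in the paper: diagonalize, observe that both supports are antichains so that every cross relation $x<y$ is a cover in the subposet, and read the edge set as $B_{\ee\ff}$. Where you differ is the step showing that the degrees are constant.

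The paper does not use the type decomposition at this point. It notes that the degree of $x$ is $\dim_\R\ee_{xx}\mM_{\ee\ff}=\dim_\R\bee_{\chi_x}\mM_{\ee\ff}$. By \Cref{lem:galois:graded:modules}\labelcref{lem:galois:graded:modules:4}, the Galois action of $\hH_\ee$ on the graded module $\mM_{\ee\ff}$ forces all isotypic components $\bee_\chi\mM_{\ee\ff}$ to have the same rank, and likewise on the right.

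You instead go through the atomic decomposition $B_{\ee\ff}=\dot\bigcup_{\theta}E_\theta$ of \Cref{teo:structure:bimod}\labelcref{teo:structure:bimod:1}. Using the injective projections of \Cref{lem:stabilizer:basic} and the extension count of \Cref{lem:split:subgroups}, you show that each $E_\theta$ has exactly $[\H_\ff:\pi_{\H_\ff}(\Sset_{C_\theta})]$ points over every $\chi\in\hH_\ee$.

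The paper's route is lighter: it needs only that $\mM_{\ee\ff}$ is a graded module free over $\R$, not the multiplicity-free classification. Yours relies on the heavier \Cref{teo:structure:bimod}, which is legitimate since that theorem's proof does not use the present result. In exchange you get the two degrees explicitly, namely $\sum_{\theta\in\Theta_{\ee\ff}}|C_\theta|/|\H_\ee|$ and $\sum_{\theta\in\Theta_{\ee\ff}}|C_\theta|/|\H_\ff|$. These are $\dim_{\Dalg_\ee}\mM_{\ee\ff}$ and $\dim_{\Dalg_\ff}\mM_{\ee\ff}$, expressed through the double cosets of the types.
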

\begin{proof}
Let $\QO=\supf\ee\,\cup\supf\ff$ be considered as a subpartially ordered set of \PO. Since $\ee\neq\ff$, this union is disjoint and the elements in each part are not related (see \Cref{cor:xmin:for:idemps,rem:xmin:supp_idemp}), hence the Hasse diagram of \QO is in fact a bipartite graph.

Also, let $\Dalg_\ee=\ee\IPR\ee$, $\Dalg_\ff=\ff\IPR\ff$, and let $\mM=\ee\IPR\ff$, which is a \G-graded $(\Dalg_\ee,\Dalg_\ff)$-bimodule. Here, it is useful to recall that $\Dalg_\ee\cong\RH_\ee$ and $\Dalg_\ff\cong\RH_\ff$ (cf. \Cref{rem:xmin:supp_idemp}) for certain finite abelian subgroups $\H_\ee$ and $\H_\ff$ of \G. Without loss of generality, we may assume that $\ee$ and $\ff$ are diagonal (cf. \Cref{rem:diagonalization}).

For each $(x,y)\in\supf\ee\times\supf\ff$ with $x\leq y$, let $\bee_x\in \RH_\ee$ be the primitive (central) idempotent corresponding to $x$, and similarly for $\bee_y\in\RH_\ff$ (the preimages of these elements in \IPR are $\ee_{xx}$ and $\ee_{yy}$, respectively).
Consider the left $\RH_\ee$-submodule $\mM_x=\bee_x\mM$ and the right $\RH_\ff$-submodule $\mM^y=\mM\bee_y$ of \mM. Since \mM is free over \R, these modules are projective and have a finite dimension. It follows from \Cref{lem:galois:graded:modules}\labelcref{lem:galois:graded:modules:4} that the ranks of these modules are independent of $x$ and $y$.
But since $\mM_x=\bigoplus_{\substack{y\in\supf\ff \\ x\leq y}}\R\ee_{xy}$,
it follows that $|\{y\in\supf\ff\mid x\leq y\}|=\dim_{\R}\mM_x$ is constant for all $x\in\supf\ee$.
Similarly, $|\{x\in\supf\ee\mid x\leq y\}|=\dim_{\R}\mM^y$ is constant for all $y\in\supf\ff$. These are the degrees of the vertices in the Hasse diagram of \QO. Indeed a relation $x<y$ with $x\in\supf\ee$ and $y\in\supf\ff$ is a cover in \QO. An intermediate $x<z<y$ with $z\in\QO$ lies in one of the two parts, and is then comparable with $x$ or with $y$ inside that part, which is an antichain. This shows that the vertices in each part of the Hasse diagram of \QO all have the same degree, and thus this graph is biregular, as required.

\end{proof}

\begin{example}Let $\PO=\dot\cup_{k=1}^n \CO_k$ ($n>1$) be the disjoint union of increasingly long chains. The only way to partition \PO into antichains such that, taken in pairs as subpartially ordered sets of \PO, their Hasse diagram become bipartite biregular graphs is to partition into singletons. Indeed, an antichain meets a chain in at most one element, so a part meets each $\CO_k$ in at most one element and every vertex of the Hasse diagram of a pair of parts has degree $0$ or $1$. Such a pair is biregular exactly when the comparabilities between the two parts form a perfect matching or there are none. Write $\ell_k=|\CO_k|$. Exactly $\ell_k$ parts meet $\CO_k$, and two consecutive elements of $\CO_k$ lie in distinct parts which are therefore matched, so all the parts meeting $\CO_k$ have the same size and meet the same set $S_k$ of chains. If $\CO_j\in S_k$, these $\ell_k$ parts also meet $\CO_j$, whence $\ell_k\leq\ell_j$, and they are among the $\ell_j$ parts meeting $\CO_j$, so that $S_j=S_k$ and $\ell_j\leq\ell_k$ by symmetry. The lengths $\ell_1,\ldots,\ell_n$ being distinct, $\ell_j=\ell_k$ gives $j=k$, so $S_k=\{\CO_k\}$, that is, every part is a singleton. The hypothesis on the lengths is essential, as the two parts $\{x_1,y_1\}$ and $\{x_2,y_2\}$ of $\CO\dot\cup\CO'$, $\CO=\{x_1<x_2\}$ and $\CO'=\{y_1<y_2\}$, already show. Therefore, from \Cref{thm:biregular:graph,cor:equiv:good:grading} it follows that \IPR admits only good gradings.
\end{example}

Fix $\ee\prec\ff$ two primitive orthogonal homogeneous idempotents, which we may assume are diagonal, and let $\mM=\mM_{\ee\ff}$. Let $\QO=\supf(\ee)\,\cup\supf(\ff)$ as a subpartially ordered set of \PO, so that $\IQR=\Dalg_\ee\oplus\mM\oplus\Dalg_\ff$ (cf. \Cref{thm:graded:peirce}). If \mN is a nonzero graded sub-bimodule of \mM which is free over \R, let \RO be the same as \QO with a new ordering given by $x\leq y\iff \ee_{xx}\mN\ee_{yy}\neq 0$. This defines a partial order on \RO called \mN-restricted, which is a subpartial order of that on \QO. If $\ee_{xy}\in\mN$ whenever $\ee_{xx}\mN\ee_{yy}\neq 0$, then with respect to this order, we have $\IRR=\Dalg_\ee\oplus \mN\oplus\Dalg_\ff$, a graded subalgebra of \IQR.
\Cref{thm:biregular:graph} applies to \IRR and proves the following:

\begin{corollary}\label{cor:bimodule:biregular:bipartite}Let $\ee\prec\ff$ be two primitive orthogonal homogeneous idempotents. If \mN is a nonzero graded sub-bimodule of $\mM_{\ee\ff}$ which is free over \R, $\ee_{xy}\in\mN$ whenever $\ee_{xx}\mN\ee_{yy}\neq 0$, and $\RO=\supf(\ee)\,\cup\,\supf(\ff)$ is equiped with the \mN-restricted partial order, then, seen as a graph, the Hasse diagram of \RO is biregular bipartite. In particular, if  $\supf(\ee)$ or $\supf(\ff)$ is a singleton, then $\mM_{\ee\ff}$ is atomic: if, say, $\H_\ee=1$, all stabilizers $\Sset_g(\H_\ee,\H_\ff)$ are trivial, every extension set is all of $\hH_\ee\times\hH_\ff$, and \Cref{teo:structure:bimod}\labelcref{teo:structure:bimod:1} leaves room for a single type.
\end{corollary}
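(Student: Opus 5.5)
The corollary has two parts, and I would handle them separately.

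For the first part, the plan is to build a graded incidence algebra on $\RO$ and apply \Cref{thm:biregular:graph} to it.

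\begin{enumerate}
\item I first check that the $\mN$-restricted relation is a partial order on $\RO$. Reflexivity holds by convention on the diagonal. Antisymmetry and transitivity are inherited, because a relation of $\RO$ only joins an element of $\supf\ee$ to an element of $\supf\ff$. So no chains of length two arise between distinct parts, and both parts are antichains.
\item Next I use the hypothesis that $\ee_{xy}\in\mN$ whenever $\ee_{xx}\mN\ee_{yy}\neq0$. Together with $\mN=\bigoplus\ee_{xx}\mN\ee_{yy}$, which holds because $\mN$ is a sub-bimodule and the $\ee_{xx}$ lie in $\Dalg_\ee$, this gives $\mN=\bigoplus_{x<_\RO y}\R\ee_{xy}$.
\item It follows that $\Dalg_\ee\oplus\mN\oplus\Dalg_\ff$ is exactly the span of the $\ee_{xy}$ with $x\leq_\RO y$, so it equals $\IRR$ as an $\R$-algebra. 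It is closed under multiplication because $\mN\mN=0$ and $\mN$ is a $(\Dalg_\ee,\Dalg_\ff)$-bimodule.
\item This subalgebra is graded, since each summand is graded ($\mN$ by hypothesis). Hence $\IRR$ carries a $\G$-grading in which $\ee$ and $\ff$ remain primitive homogeneous orthogonal idempotents. Primitivity survives because the corners $\ee\IRR\ee=\Dalg_\ee$ and $\ff\IRR\ff=\Dalg_\ff$ are unchanged, and $\ee\prec\ff$ still holds because $\mN\neq0$.
\item Now \Cref{thm:biregular:graph} applies to $\IRR$ and gives that the Hasse diagram of $\RO$ is biregular bipartite.
\end{enumerate}

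The step needing most care is the equality $\mN=\bigoplus\R\ee_{xy}$, which is exactly where the extra hypothesis is used; the rest is bookkeeping.

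For the ``in particular'' part, suppose $\supf\ee$ is a singleton. By \Cref{cor:xmin:for:idemps}, $|\H_\ee|=|\supf\ee|=1$. By \Cref{lem:stabilizer:basic}, the projection $\Sset_g\to\H_\ee$ is injective, so every stabilizer $\Sset_g$ is trivial. Then every type has the form $(C,\hat1)$ with extension set all of $\hQ$, by \Cref{lem:split:subgroups}. Since the types in $\Theta_{\ee\ff}$ of \Cref{teo:structure:bimod}\labelcref{teo:structure:bimod:1} are strongly distinct, i.e.\ have disjoint extension sets, at most one type occurs. At least one occurs because $\mM_{\ee\ff}\neq0$. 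Thus $\mM_{\ee\ff}=\mM_\theta$ is atomic. The case where $\supf\ff$ is a singleton is symmetric, using the projection to $\H_\ff$.
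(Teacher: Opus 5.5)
Your proposal is correct and follows the paper's own argument. With $\ee,\ff$ taken diagonal (as the paper assumes, so that $\ee_{xx}\in\Dalg_\ee$), the hypothesis gives $\Dalg_\ee\oplus\mN\oplus\Dalg_\ff=\IRR$ as a graded subalgebra, to which \Cref{thm:biregular:graph} applies, and the atomicity claim comes from trivial stabilizers making every extension set equal to $\hH_\ee\times\hH_\ff$, so that strong distinctness leaves a single type.
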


\begin{proposition}\label{lem:RH:free:module}Let $\ee\prec\ff$ be two primitive orthogonal homogeneous idempotents, and let $\mN\subseteq\mM_{\ee\ff}$ be a nonzero graded sub-bimodule all of whose Peirce components $\bee_\chi\mN\bee_\rho$ ($\chi\in\hH_\ee$, $\rho\in\hH_\ff$) are free over \R, as is the case for $\mN=\mM_{\ee\ff}$ and for any sum of atomic summands of $\mM_{\ee\ff}$. Then \mN is free as a left $\Dalg_\ee$-module and as a right $\Dalg_\ff$-module, and
\begin{align*}\dim_{\R} (\mN) &= \dim_\R (\Dalg_\ee) \dim_{\Dalg_\ee}(\mN)\\
&=\dim_{\Dalg_\ff}(\mN)\dim_{\R}(\Dalg_\ff).\end{align*}
\end{proposition}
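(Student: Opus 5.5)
The plan is to reduce to \Cref{lem:galois:graded:modules}\labelcref{lem:galois:graded:modules:5} and \Cref{prop:module:structure}. We may take \ee and \ff diagonal (\Cref{rem:sset:diag}). Then $\Dalg_\ee\cong\RH_\ee$ and $\Dalg_\ff\cong\RH_\ff$ as graded algebras (\Cref{thm:xmin}\labelcref{thm:xmin:3}), and \mN is a \G-graded left $\RH_\ee$-module and right $\RH_\ff$-module. It therefore suffices to prove that \mN is free over $\RH_\ee$ (the right side being symmetric). Once this holds, $\dim_\R\mN=|\H_\ee|\dim_{\RH_\ee}\mN=\dim_\R(\Dalg_\ee)\dim_{\Dalg_\ee}\mN$, since $\dim_\R\Dalg_\ee=|\H_\ee|$ by \Cref{thm:xmin}.

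By \Cref{lem:galois:graded:modules}\labelcref{lem:galois:graded:modules:5}, $\mN\cong\RH_\ee\otimes_\R\bee_{\hat1}\mN$ as left $\RH_\ee$-modules. So freeness of \mN over $\RH_\ee$ follows once $\bee_{\hat1}\mN$ is free over \R. Now write
\[
\bee_{\hat1}\mN=\bigoplus_{\rho\in\hH_\ff}\bee_{\hat1}\mN\bee_\rho .
\]
This decomposition holds because the $\bee_\rho$ form a complete set of orthogonal idempotents acting on the right and commuting with the left action. Each summand is free over \R by hypothesis, so $\bee_{\hat1}\mN$ is free, and it has finite rank since \mN is a submodule of the finite free module $\mM_{\ee\ff}$ with free summands. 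The right-hand statement follows by the symmetric version of \Cref{lem:galois:graded:modules} noted after its statement. That version uses $\mN\bee_{\hat1}=\bigoplus_\chi\bee_\chi\mN\bee_{\hat1}$.

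Next I would check the two cited instances. For $\mN=\mM_{\ee\ff}$, \Cref{teo:structure:bimod}\labelcref{teo:structure:bimod:3} gives Peirce components $\R\ee_{xy}$ or $0$. For a sum of atomic summands $\bigoplus_{\theta\in\Theta'}\mM_\theta$ with $\Theta'\subseteq\Theta_{\ee\ff}$, the extension sets are disjoint. Each Peirce component therefore equals either the corresponding component of $\mM_{\ee\ff}$ or $0$, by \Cref{teo:structure:bimod}\labelcref{teo:structure:bimod:1} and \Cref{prop:atomic:bimod:decomposition}.

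The step I expect to be the main obstacle is ensuring that \Cref{lem:galois:graded:modules}\labelcref{lem:galois:graded:modules:5} applies with no freeness assumption on \mN as a graded \R-module, that is, with \mN possibly lacking a homogeneous \R-basis. Its proof only uses the gradedness of \mN to define the $\alpha_\chi$ via part \labelcref{lem:galois:graded:modules:3}, and that part holds for any \G-graded module. So I would verify this directly, rather than invoke \Cref{prop:module:structure}, which needs a homogeneous basis.
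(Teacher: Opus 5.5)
Your proposal is correct and follows the paper's proof essentially step for step. The paper likewise applies \Cref{lem:galois:graded:modules}\labelcref{lem:galois:graded:modules:5} to obtain $\mN\cong\RH_\ee\otimes_\R\bee_{\hat1}\mN$, splits $\bee_{\hat1}\mN$ into the Peirce components $\bee_{\hat1}\mN\bee_\rho$, treats the right side symmetrically, and checks the two listed cases via \Cref{teo:structure:bimod}\labelcref{teo:structure:bimod:3} and \Cref{prop:peirce:formula}; your explicit check that part \labelcref{lem:galois:graded:modules:5} needs no homogeneous \R-basis is correct and makes precise a point the paper leaves implicit.
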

\begin{proof}
By \Cref{lem:galois:graded:modules}\labelcref{lem:galois:graded:modules:5}, applied to \mN as a graded left $\RH_\ee$-module, $\mN\cong\RH_\ee\otimes_\R\bee_{\hat1}\mN$, and $\bee_{\hat1}\mN=\bigoplus_{\rho\in\hH_\ff}\bee_{\hat1}\mN\bee_\rho$ is free over \R by hypothesis. It follows that \mN is free over $\RH_\ee\cong\Dalg_\ee$ and $\dim_\R\mN=|\H_\ee|\dim_{\RH_\ee}\mN$. The right-hand version is symmetric. The Peirce components of $\mM_{\ee\ff}$, and of any sum of its atomic summands, are $\R$ or $0$ by \Cref{teo:structure:bimod}\labelcref{teo:structure:bimod:3} and \Cref{prop:peirce:formula}.
\end{proof}

\section{Graded Isomorphisms}\label{sec:isos}
In this section we describe the graded isomorphisms between graded incidence algebras and derive from them the invariants of a grading. The ring \R is still assumed to be \dR and \PO a finite partially ordered set.

The following result extends \cite[Proposition 31]{Santulo-Souza-Yasumura}:

\begin{theorem}\label{thm:isomorphism:classes}
Let \PO and \QO be two finite partially ordered sets, $\Aalg=\IPR=\bigoplus_{g\in\G}\Aalg^g$ and $\Balg=\IQR=\bigoplus_{g\in\G}\Balg^g$ be two incidence algebras graded by the same group \G, and assume that $\varphi\colon\Aalg\to\Balg$ is a graded isomorphism of \R-algebras.
Then \PO and \QO are isomorphic and, if $\Eset$ and $\Eset'$ are respective \ssets, there exists an isomorphism of partially ordered sets $\sigma\colon\Eset\to\Eset'$ such that $\ee\Aalg\ff\cong \sigma(\ee)\Balg\sigma(\ff)$ for all $\ee\preccurlyeq\ff$ in \Eset. Consequently, we have
\[
\Aalg\cong\bigoplus_{\ee\in\Eset}\RH_\ee\oplus\bigoplus_{\ee\prec\ff\in\Eset}\mM_{\ee\ff}\cong\bigoplus_{\sigma(\ee)\in\Eset'}\RH_{\sigma(\ee)}\oplus\bigoplus_{\sigma(\ee)\prec\sigma(\ff)\in\Eset'}\mM_{\sigma(\ee)\sigma(\ff)}\cong\Balg,
\]
where $\H_\ee = \H_{\sigma(\ee)}$ for all $\ee\in\Eset$.
\end{theorem}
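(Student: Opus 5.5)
The plan is to transport everything along $\varphi$ and then use the uniqueness of \ssets (\Cref{lem:star:sets:conj}). First, the poset isomorphism $\PO\cong\QO$ needs no grading at all. It is the classical fact that incidence algebras over a commutative indecomposable ring determine their posets. I would prove it directly with the tools already at hand. By \Cref{lem:iso:idemp}, the diagonal idempotents $\ee_{xx}$ form a complete set of primitive orthogonal idempotents of \IPR. Their images under $\varphi$ then form such a set in \IQR, and each image has support of size one: a primitive idempotent has support a single point, since otherwise it splits after diagonalization (\Cref{rem:diagonalization}). This yields a bijection $\tau\colon\PO\to\QO$ with $\varphi(\ee_{xx})$ conjugate to $\ee_{\tau(x)\tau(x)}$. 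After conjugating by an inner automorphism of \IQR, we may assume $\varphi(\ee_{xx})=\ee_{\tau(x)\tau(x)}$. Then $x\leq y$ iff $\ee_{xx}\IPR\ee_{yy}\neq0$ iff $\ee_{\tau(x)\tau(x)}\IQR\ee_{\tau(y)\tau(y)}\neq0$ iff $\tau(x)\leq\tau(y)$, by \Cref{lem:idemp:relation}.

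For the graded statements, let \Eset be a \sset of \Aalg. Since $\varphi$ is a graded algebra isomorphism, $\varphi(\Eset)$ consists of primitive homogeneous pairwise orthogonal idempotents of degree $1$ summing to $1$. I would check that their supports partition \QO. Each support is nonempty (\Cref{lem:idemp:vanishing}). After simultaneous diagonalization (\Cref{rem:diagonalization}), orthogonal diagonal idempotents have disjoint supports, and their sum is $1$, so the supports cover \QO. Hence $\varphi(\Eset)$ is a \sset of \Balg. By \Cref{lem:star:sets:conj} there is an invertible $u\in\Balg^1$ with $u\varphi(\Eset)u^{-1}=\Eset'$. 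Set $\sigma(\ee)=u\varphi(\ee)u^{-1}$ and $\varphi'=\iota_u\circ\varphi$; the latter is again a graded isomorphism. Then $\varphi'$ restricts to graded isomorphisms $\ee\Aalg\ff\to\sigma(\ee)\Balg\sigma(\ff)$ of graded bimodules, compatible with the corner isomorphisms $\Dalg_\ee\to\Dalg_{\sigma(\ee)}$. Consequently $\ee\Aalg\ff\neq0$ iff $\sigma(\ee)\Balg\sigma(\ff)\neq0$, so $\sigma$ is an isomorphism of posets for $\preccurlyeq$ (\Cref{lem:idemp:relation}).

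Finally, the graded isomorphism $\Dalg_\ee\cong\Dalg_{\sigma(\ee)}$ identifies their supports in \G. By \Cref{rem:xmin:supp_idemp} these supports are exactly $\H_\ee$ and $\H_{\sigma(\ee)}$, which gives $\H_\ee=\H_{\sigma(\ee)}$. The displayed chain of isomorphisms then follows by applying \Cref{thm:graded:peirce} on both sides and matching summands via $\varphi'$.

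I expect the main obstacle to be the first step, the ungraded poset isomorphism. The delicate point is verifying that primitive idempotents of \IPR have singleton support, and that the relation $\preccurlyeq$ is transported correctly. The first goes through \Cref{lem:iso:idemp} and indecomposability of \R: a diagonal idempotent with two support points is a sum of two orthogonal diagonal idempotents. The second is handled by the inner-automorphism normalization, using that $\preccurlyeq$ is invariant under inner automorphisms. Everything graded is then formal.
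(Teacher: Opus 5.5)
Your proof is correct and follows the paper's argument. You transport the \sset along $\varphi$, conjugate it onto $\Eset'$ by a graded inner automorphism (\Cref{lem:star:sets:conj}), read the order isomorphism off the nonvanishing of the Peirce blocks, and get $\H_\ee=\H_{\sigma(\ee)}$ from the graded corner isomorphisms.

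There are two differences, both additions. You prove $\PO\cong\QO$ directly by simultaneously diagonalizing the images of the $\ee_{xx}$, which is the argument the paper itself uses later in \Cref{prop:incidence:condition}, whereas the paper here simply quotes Corollary 7.2.11 of Spiegel--O'Donnell. You also check explicitly that $\varphi(\Eset)$ is again a \sset, which the paper leaves implicit. One small correction: the primitivity of $\ee_{xx}$ comes from $\ee_{xx}\IPR\ee_{xx}=\R\ee_{xx}$ and the indecomposability of \R, not from \Cref{lem:iso:idemp}.
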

\begin{proof}
That $\PO\cong\QO$ follows from \cite[Corollary 7.2.11]{Spiegel-ODonnell}. From \Cref{lem:star:sets:conj}, the \ssets $\varphi(\Eset)$ and $\Eset'$ are conjugate by a graded inner automorphism $\psi$ of \Balg, so $\sigma(\Eset)=\Eset'$, where $\sigma=\psi\circ\varphi$. In particular, $|\Eset|=|\Eset'|$. We have
\[
\ee\preccurlyeq\ff\in\Eset\iff 0\neq \sigma(\ee\Aalg\ff)=\sigma(\ee)\Balg\sigma(\ff)\iff \sigma(\ee)\preccurlyeq\sigma(\ff)\in\Eset',
\]
therefore $\sigma\colon\Eset\to\Eset'$ is an isomorphism of partially ordered sets. Since $\RH_{\ee}\cong \ee\Aalg\ee\cong\sigma(\ee)\Balg\sigma(\ee)\cong\RH_{\sigma(\ee)}$ as graded \R-algebras, it follows that $\H_\ee = \H_{\sigma(\ee)}$.
\end{proof}

The converse of \Cref{thm:isomorphism:classes} is false: two gradings may have isomorphic posets of idempotents, the same corner algebras and blockwise graded isomorphic bimodules $\mM_{\ee\ff}$ without being graded isomorphic (\Cref{ex:octahedron}). The blockwise isomorphisms must in addition be compatible with the multiplication $\mM_{\ee\ff}\mM_{\ff\hh}\subseteq\mM_{\ee\hh}$. The next theorem makes this precise and shows that graded isomorphisms have a rigid form. We use the identifications of \Cref{rem:galois:extensions}: for a diagonal primitive homogeneous idempotent \ee with base point $x_\ee\in\supf\ee$, the map $p\mapsto\chi_p=\chi_{x_\ee}^p$ identifies $\supf\ee$ with $\hH_\ee$, $\Dalg_\ee$ with $\RH_\ee$ (by $|\H_\ee|\ee_{x_\ee x_\ee}^h\mapsto h$), and $\ee_{pp}$ with $\bee_{\chi_p}$.

\begin{theorem}\label{thm:graded:iso:structure}
Let $\Aalg=\IPR$ and $\Balg=\IQR$ be graded by \G, with diagonal \ssets \Eset and $\Eset'$ (\Cref{rem:sset:diag}). The following are equivalent:
\begin{enumerate}
\item\label{thm:graded:iso:structure:1} \Aalg and \Balg are isomorphic as graded \R-algebras.
\item\label{thm:graded:iso:structure:2} There exist an isomorphism of partially ordered sets $\sigma\colon(\Eset,\preccurlyeq)\to(\Eset',\preccurlyeq)$ and \R-linear isomorphisms $\psi_\ee\colon\Dalg_\ee\to\Dalg_{\sigma(\ee)}$ ($\ee\in\Eset$) and $\psi_{\ee\ff}\colon\mM_{\ee\ff}\to\mM_{\sigma(\ee)\sigma(\ff)}$ ($\ee\prec\ff$ in \Eset), all preserving degrees, such that $\psi=\bigoplus_\ee\psi_\ee\oplus\bigoplus_{\ee\prec\ff}\psi_{\ee\ff}$ is multiplicative: $\psi(xy)=\psi(x)\psi(y)$ for all $x\in\ee\Aalg\ff$, $y\in\ff\Aalg\hh$ and $\ee\preccurlyeq\ff\preccurlyeq\hh$ in \Eset.
\item\label{thm:graded:iso:structure:3} There exist an isomorphism of partially ordered sets $\tau\colon\PO\to\QO$ and units $u_{pq}\in\uR$ ($p<q$ in \PO) such that
\begin{enumerate}[label=\textup{(\alph*)}]
\item $u_{pq}u_{qr}=u_{pr}$ whenever $p<q<r$;
\item $\tau$ carries support blocks to support blocks: for every $\ee\in\Eset$ there is a unique $\sigma(\ee)\in\Eset'$ with $\tau(\supf\ee)=\supf\sigma(\ee)$. The resulting map $\sigma\colon\Eset\to\Eset'$ is a bijection, and $\H_\ee=\H_{\sigma(\ee)}$, and in the labellings by characters $\tau|_{\supf\ee}$ is a translation: there is $t_\ee\in\hH_\ee$ with $\chi_{\tau(p)}=t_\ee\chi_p$ for all $p\in\supf\ee$;
\item the \R-algebra isomorphism $\varphi_{\tau,u}\colon\Aalg\to\Balg$, $\ee_{pq}\mapsto u_{pq}\ee_{\tau(p)\tau(q)}$ ($u_{pp}=1$), is graded.
\end{enumerate}
\end{enumerate}
Moreover, every graded isomorphism $\Aalg\to\Balg$ is of the form $\iota\circ\varphi_{\tau,u}$ with $\iota$ a graded inner automorphism of \Balg (conjugation by a unit of $\Balg^1$) and $(\tau,u)$ as in \labelcref{thm:graded:iso:structure:3}.
\end{theorem}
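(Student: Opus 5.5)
The proof will run around the cycle \labelcref{thm:graded:iso:structure:3}$\Rightarrow$\labelcref{thm:graded:iso:structure:1}$\Rightarrow$\labelcref{thm:graded:iso:structure:3}, with \labelcref{thm:graded:iso:structure:2} attached to \labelcref{thm:graded:iso:structure:1} along the way. The ``moreover'' clause will come out of the argument for \labelcref{thm:graded:iso:structure:1}$\Rightarrow$\labelcref{thm:graded:iso:structure:3}.

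The easy implications come first. For \labelcref{thm:graded:iso:structure:3}$\Rightarrow$\labelcref{thm:graded:iso:structure:1}, condition (a) says precisely that $\varphi_{\tau,u}$ respects $\ee_{pq}\ee_{qr}=\ee_{pr}$. Together with bijectivity of $\tau$ as a poset map, this makes $\varphi_{\tau,u}$ an algebra isomorphism, and it is graded by (c). For \labelcref{thm:graded:iso:structure:1}$\Rightarrow$\labelcref{thm:graded:iso:structure:2}, take a graded isomorphism $\varphi$. By \Cref{lem:star:sets:conj} we may compose with a graded inner automorphism of \Balg so that $\varphi(\Eset)=\Eset'$, exactly as in \Cref{thm:isomorphism:classes}. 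The restrictions of $\varphi$ to the Peirce blocks then give the required $\psi_\ee$ and $\psi_{\ee\ff}$, and $\sigma$ is a poset isomorphism by \Cref{lem:idemp:relation}. Conversely, for \labelcref{thm:graded:iso:structure:2}$\Rightarrow$\labelcref{thm:graded:iso:structure:1}, the map $\psi$ is a bijective, degree-preserving, \R-linear map between the graded Peirce decompositions (\Cref{thm:graded:peirce}). Blocks multiply to zero unless they are composable, so multiplicativity on composable pairs gives multiplicativity everywhere, and $\psi$ is a graded algebra isomorphism.

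The substance is \labelcref{thm:graded:iso:structure:1}$\Rightarrow$\labelcref{thm:graded:iso:structure:3} together with the factorization. Start from $\varphi$ normalized so that $\varphi(\Eset)=\Eset'$, with both \ssets diagonal. Then $\varphi$ restricts to graded isomorphisms $\Dalg_\ee\to\Dalg_{\sigma(\ee)}$, that is, to graded automorphisms of $\RH_\ee$ after the identifications of \Cref{rem:galois:extensions}. By \Cref{rem:galois:extensions}\labelcref{rem:galois:galois}, these are exactly the $\alpha_\chi$. Each therefore permutes the Fourier idempotents $\bee_\chi$, equivalently the diagonal idempotents $\ee_{pp}$, $p\in\supf\ee$, by a translation $t_\ee$. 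This defines a bijection $\tau$ with $\varphi(\ee_{pp})=\ee_{\tau(p)\tau(p)}$ on each block, and it already yields (b); the equality $\H_\ee=\H_{\sigma(\ee)}$ comes from \Cref{thm:isomorphism:classes}.

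Since $\varphi$ sends the diagonal idempotents $\ee_{pp}$ to diagonal idempotents, we get $\varphi(\ee_{pq})=\varphi(\ee_{pp}\ee_{pq}\ee_{qq})\in\ee_{\tau(p)\tau(p)}\Balg\ee_{\tau(q)\tau(q)}=\R\ee_{\tau(p)\tau(q)}$. The same holds for $\varphi^{-1}$, so $\varphi(\ee_{pq})=u_{pq}\ee_{\tau(p)\tau(q)}$ with $u_{pq}\in\uR$, and $p<q$ if and only if $\tau(p)<\tau(q)$. This shows that $\tau$ is a poset isomorphism, and multiplicativity gives (a). Hence the normalized $\varphi$ equals $\varphi_{\tau,u}$, which is graded, so (c) holds. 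Undoing the normalization gives the factorization $\iota\circ\varphi_{\tau,u}$.

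The main obstacle is the step where the diagonal idempotents are transported. A graded isomorphism between the corners need not send the base-point labelling to the base-point labelling, so one must verify that the corner isomorphism, read through the two chosen base points, is really a graded automorphism of $\RH_\ee$. One then uses that graded automorphisms of a split group algebra are exactly the twists $h\mapsto t(h)h$, so that only translations occur. I would check this first, using $\varphi(|\H_\ee|\ee_{x_\ee x_\ee}^h)\in\Dalg^h_{\sigma(\ee)}=\R\,|\H|\ee_{x'x'}^h$ and the fact that a degree-preserving algebra map must multiply these basis elements by the values of a character.
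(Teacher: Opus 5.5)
Your proposal is correct and follows essentially the same route as the paper. You normalize $\varphi$ by a graded inner automorphism so that $\varphi(\Eset)=\Eset'$, read each corner restriction as a graded automorphism $h\mapsto\alpha_\ee(h)h$ of the fine-graded split group algebra $\RH_\ee$ so that the Fourier idempotents $\ee_{pp}=\bee_{\chi_p}$ are translated, then sandwich $\varphi(\ee_{pq})$ between $\ee_{\tau(p)\tau(p)}$ and $\ee_{\tau(q)\tau(q)}$ and use $\varphi^{-1}$ to obtain the units $u_{pq}$ and the poset isomorphism $\tau$; the remaining implications are handled blockwise as in the paper. The ``obstacle'' you flag is resolved in the paper exactly as you propose: the base-point identifications $|\H_\ee|\ee_{x_\ee x_\ee}^h\mapsto h$ are themselves graded algebra isomorphisms, so degree preservation alone forces the corner map to be a character twist.
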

\begin{proof}
\labelcref{thm:graded:iso:structure:1}$\Rightarrow$\labelcref{thm:graded:iso:structure:3}: Let $\varphi\colon\Aalg\to\Balg$ be a graded isomorphism. Then $\varphi(\Eset)$ is a \sset of \Balg, hence conjugate to $\Eset'$ by a graded inner automorphism $\iota^{-1}$ of \Balg (\Cref{lem:star:sets:conj}). Replacing $\varphi$ by $\iota^{-1}\varphi$ we may assume $\varphi(\Eset)=\Eset'$, and we let $\sigma=\varphi|_\Eset$. As in the proof of \Cref{thm:isomorphism:classes}, $\sigma$ is an isomorphism of posets. For $\ee\in\Eset$, $\varphi$ restricts to a graded algebra isomorphism $\Dalg_\ee\to\Dalg_{\sigma(\ee)}$. Both algebras are split group algebras with their fine gradings, $\Dalg_\ee^h=\R\cdot|\H_\ee|\ee_{x_\ee x_\ee}^h$ for $h\in\H_\ee$ and $0$ otherwise. Since $\varphi$ maps nonzero components of degree $h$ onto nonzero components of degree $h$, we get $\H_\ee=\H_{\sigma(\ee)}=:\H$ and $\varphi(h)=\alpha_\ee(h)\,h$ for a function $\alpha_\ee\colon\H\to\uR$ which is multiplicative, that is, $\alpha_\ee\in\hH$. Consequently
\[
\varphi(\ee_{pp})=\varphi(\bee_{\chi_p})=\frac1{|\H|}\sum_{h\in\H}\chi_p(h^{-1})\alpha_\ee(h)\,h=\bee_{\chi_p\alpha_\ee^{-1}}=\ee_{p'p'},
\]
where $\chi_{p'}=\alpha_\ee^{-1}\chi_p$.
Thus $\varphi$ permutes the diagonal elementary functions, $\varphi(\ee_{pp})=\ee_{\tau(p)\tau(p)}$ for a bijection $\tau\colon\PO\to\QO$ which maps $\supf\ee$ onto $\supf\sigma(\ee)$ and is a translation there, by $t_\ee\coloneqq\alpha_\ee^{-1}$. We keep the two symbols apart throughout: $\alpha_\ee$ is the character of the corner automorphism and $t_\ee=\alpha_\ee^{-1}$ is the translation of the point labels. For $p\leq q$, $\varphi(\ee_{pq})=\varphi(\ee_{pp})\varphi(\ee_{pq})\varphi(\ee_{qq})\in\ee_{\tau(p)\tau(p)}\Balg\,\ee_{\tau(q)\tau(q)}=\R\ee_{\tau(p)\tau(q)}$, so $\varphi(\ee_{pq})=u_{pq}\ee_{\tau(p)\tau(q)}$ with $u_{pq}\in\R$. Applying the same argument to $\varphi^{-1}$ shows that $u_{pq}$ is a unit and that $\ee_{\tau(p)\tau(q)}\neq0$, i.e.\ $\tau(p)\leq\tau(q)$, if and only if $p\leq q$. Hence $\tau$ is an isomorphism of posets, $u_{pp}=1$, and $\ee_{pq}\ee_{qr}=\ee_{pr}$ gives $u_{pq}u_{qr}=u_{pr}$. So $\varphi=\varphi_{\tau,u}$, which is graded by assumption. This also proves the last assertion.

\labelcref{thm:graded:iso:structure:3}$\Rightarrow$\labelcref{thm:graded:iso:structure:1} is clear, and so is \labelcref{thm:graded:iso:structure:1}$\Rightarrow$\labelcref{thm:graded:iso:structure:2} (take $\psi_\ee$, $\psi_{\ee\ff}$ to be the restrictions of $\varphi_{\tau,u}$, the bijection $\sigma$ being an isomorphism of posets by the first part). Finally, assume \labelcref{thm:graded:iso:structure:2}. Since $\Aalg=\bigoplus_\ee\Dalg_\ee\oplus\bigoplus_{\ee\prec\ff}\mM_{\ee\ff}$ (\Cref{thm:graded:peirce}) and likewise for \Balg, and since $\sigma$ preserves the order, so that $\ee\prec\ff$ in \Eset if and only if $\sigma(\ee)\prec\sigma(\ff)$ in $\Eset'$ and the two decompositions are indexed compatibly, $\psi$ is a degree-preserving \R-linear bijection $\Aalg\to\Balg$. It is multiplicative on products of elements lying in blocks $\ee\Aalg\ff$, $\ff\Aalg\hh$ by hypothesis, and products $(\ee\Aalg\ff)(\ff'\Aalg\hh)$ with $\ff\neq\ff'$ vanish on both sides, because $\sigma(\ff)\sigma(\ff')=0$. It follows that $\psi$ is a graded algebra isomorphism.
\end{proof}

\begin{remark}\label{rem:sigma:poset}
The requirement that $\sigma$ preserve the order in \labelcref{thm:graded:iso:structure:2} cannot be weakened to a bijection. Let \PO be a two element antichain and \QO a two element chain, both with the trivial grading. Then $\Aalg=\IPR\cong\R\times\R$ is commutative and $\Balg=\IQR\cong\UT_2\R$ is not, so they are not isomorphic. Yet the two \ssets have two elements each, all four corners are \R, and \Eset has no pair $\ee\prec\ff$, so any bijection $\sigma$ together with the identity maps $\R\to\R$ meets every other requirement. \Cref{thm:classification} is not affected, equivalence of data requiring an isomorphism of posets by \Cref{lem:datum:equivalence}.
\end{remark}

\begin{corollary}\label{cor:graded:iso:data}
Let $\varphi\colon\Aalg\to\Balg$ be a graded isomorphism, normalised as in \Cref{thm:graded:iso:structure}, with $\sigma$, $\tau$, $(t_\ee)$ and $u$ as there. Then, for $\ee\prec\ff$ in \Eset, $\varphi$ maps each atomic summand $(\mM_{\ee\ff})_\theta$ onto an atomic summand of $\mM_{\sigma(\ee)\sigma(\ff)}$, and the induced bijection of type sets is the \emph{translation}
\[
\Theta_{\ee\ff}\longrightarrow\Theta_{\sigma(\ee)\sigma(\ff)},\qquad (C,\lambda)\longmapsto\bigl(C,\ \lambda\cdot(t_\ee,t_\ff)|_{\Sset_C}\bigr).
\]
On the generators, $\varphi(m_\theta)=s_\theta\,m_{\theta'}$ for units $s_\theta\in\uR$, one for each type. In particular the following are invariants of the graded isomorphism class of \Aalg: the poset \Eset (up to isomorphism), the subgroups $\H_\ee$, and the type sets $\Theta_{\ee\ff}$ up to translations by $\hH_\ee\times\hH_\ff$.
\end{corollary}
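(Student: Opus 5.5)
Since $\varphi=\varphi_{\tau,u}$ after normalisation, it maps $\ee_{pq}\mapsto u_{pq}\ee_{\tau(p)\tau(q)}$. Fix $\ee\prec\ff$ in \Eset. By \Cref{teo:structure:bimod}\labelcref{teo:structure:bimod:1}, each atomic summand is $(\mM_{\ee\ff})_\theta=\bigoplus_{(\chi_p,\chi_q)\in E_\theta}\R\ee_{pq}$. Hence $\varphi$ maps $(\mM_{\ee\ff})_\theta$ onto $\bigoplus_{(\chi_p,\chi_q)\in E_\theta}\R\ee_{\tau(p)\tau(q)}$. By \Cref{thm:graded:iso:structure}\labelcref{thm:graded:iso:structure:3}(b), $\chi_{\tau(p)}=t_\ee\chi_p$ and $\chi_{\tau(q)}=t_\ff\chi_q$. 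So the image is the span of the $\ee_{p'q'}$ with $(\chi_{p'},\chi_{q'})\in(t_\ee,t_\ff)E_\theta$.

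The first step is to see that this image is a single atomic summand. It is a graded sub-bimodule of $\mM_{\sigma(\ee)\sigma(\ff)}$, since $\varphi$ is graded and maps $\Dalg_\ee,\Dalg_\ff$ onto $\Dalg_{\sigma(\ee)},\Dalg_{\sigma(\ff)}$. It is also cyclic, generated by the homogeneous element $\varphi(m_\theta)$, which spans a free rank-one module. The remaining point is that $\varphi(m_\theta)$ is a weight vector for the $\H\times\K$-action in $\Balg$.

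For this, the key computation is how $\varphi$ intertwines the actions. On corners, $\varphi(h)=\alpha_\ee(h)h$ with $\alpha_\ee=t_\ee^{-1}$, and similarly for \ff. So for $(h,k)\in\Sset_C$,
\[
h\,\varphi(m_\theta)\,k=\alpha_\ee(h)^{-1}\alpha_\ff(k)^{-1}\varphi(hm_\theta k)=t_\ee(h)\,t_\ff(k)\,\lambda(h,k)\,\varphi(m_\theta).
\]
Thus $\varphi(m_\theta)$ is a free weight vector of degree $g_C$ with weight $\lambda\cdot(t_\ee,t_\ff)|_{\Sset_C}$. This makes the image atomic of type $\theta'=(C,\lambda(t_\ee,t_\ff)|_{\Sset_C})$. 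As a consistency check, its extension set is $(t_\ee,t_\ff)E_\theta$, which matches the index set found above.

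Uniqueness of the decomposition in \Cref{teo:structure:bimod}\labelcref{teo:structure:bimod:1} then identifies this image with the summand $(\mM_{\sigma(\ee)\sigma(\ff)})_{\theta'}$. Because $\varphi$ is a bijection on the whole block, $\theta\mapsto\theta'$ is a bijection $\Theta_{\ee\ff}\to\Theta_{\sigma(\ee)\sigma(\ff)}$.

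Next, $\varphi(m_\theta)$ and $m_{\theta'}$ are both homogeneous free generators of degree $g_C$ of the same atomic bimodule. By \Cref{lem:atomic:bimod:hom:gens}, $\varphi(m_\theta)=a\,hm_{\theta'}k$ with $a\in\uR$. Since $hg_Ck=g_C$, we have $(h,k)\in\Sset_C$, so $hm_{\theta'}k$ is a unit multiple of $m_{\theta'}$. This gives $\varphi(m_\theta)=s_\theta m_{\theta'}$ with $s_\theta\in\uR$.

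The invariance statements follow from \Cref{thm:isomorphism:classes} and the above. Different normalisations, via graded inner automorphisms and changes of base point, only alter the translations, by \Cref{rem:galois:extensions}\labelcref{rem:galois:base:points}.

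The main obstacle is the sign convention in the intertwining formula: one must confirm that the corner character is $\alpha_\ee=t_\ee^{-1}$ and not $t_\ee$. I would verify it against $\varphi(\bee_{\chi_p})=\bee_{\chi_p\alpha_\ee^{-1}}$, which is exactly what fixes the translation as multiplication by $(t_\ee,t_\ff)$.
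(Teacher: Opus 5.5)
Your proof is correct and follows the paper's argument. In both, $\varphi_{\tau,u}$ is applied to the elementary functions to show that $(\mM_{\ee\ff})_\theta$ is carried onto the span of the $\ee_\xi$ with $\xi\in(t_\ee,t_\ff)E_\theta=E_{\theta'}$; this span is identified with the atomic summand of type $\theta'$, and $s_\theta$ is obtained from the rank-one degree-$g_C$ component.

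The only difference is how $\theta'$ is read off. You compute the weight of $\varphi(m_\theta)$ from $\varphi(hmk)=\alpha_\ee(h)\alpha_\ff(k)\,h\varphi(m)k$, which is the computation the paper carries out in \Cref{lem:datum:equivalence}. The paper instead reads $\theta'$ directly from the coset identity $(t_\ee,t_\ff)E_\theta=E_{\theta'}$, which you keep as a consistency check. Your sign convention $\alpha_\ee=t_\ee^{-1}$ is the correct one.
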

\begin{proof}
By \Cref{teo:structure:bimod}\labelcref{teo:structure:bimod:1}, $(\mM_{\ee\ff})_\theta=\bigoplus_{(\chi_p,\chi_q)\in E_\theta}\R\ee_{pq}$, and $\varphi(\ee_{pq})=u_{pq}\ee_{\tau(p)\tau(q)}$ with $(\chi_{\tau(p)},\chi_{\tau(q)})=(t_\ee\chi_p,t_\ff\chi_q)$. Therefore $\varphi((\mM_{\ee\ff})_\theta)=\bigoplus_{\xi\in(t_\ee,t_\ff)E_\theta}\R\ee_\xi$, and $(t_\ee,t_\ff)E_\theta=E_{\theta'}$ with $\theta'=(C,\lambda\cdot(t_\ee,t_\ff)|_{\Sset_C})$, because $E_\theta$ is a coset of $\Sset_C^\perp$ and restriction to $\Sset_C$ is multiplicative. The remaining assertions follow from \Cref{thm:bimod:classification}\labelcref{thm:bimod:classification:5}, since $\varphi$ restricts to a graded bimodule isomorphism $(\mM_{\ee\ff})_\theta\to(\mM_{\sigma\ee\sigma\ff})_{\theta'}$ up to the identifications of the corner algebras.
\end{proof}

The invariants of \Cref{cor:graded:iso:data} do not determine the graded isomorphism class. The missing invariant is the multiplication between the blocks, which is analysed in the next section. There we also decide which graded structural matrix algebras of the form displayed in \Cref{cor:struct:mat:alg} are graded isomorphic to graded incidence algebras.

The number of nonisomorphic elementary gradings on \IPR were counted in \cite{Talpo-Schutzer} when \R is a field. The situation is more complicated for good gradings, as exemplified in \cite{Jones}. Of course, we may expect it to be increasingly difficult to try and count all the possible group gradings in the general setting.

\section{The classification of \G-gradings on incidence algebras}\label{sec:classification}

In this section we identify a complete invariant of a \G-graded incidence algebra $\Aalg=\IPR$ up to graded isomorphism, and we determine which values of the invariant occur. Together these classify the \G-graded incidence algebras of finite posets over \R. \Cref{thm:graded:peirce,teo:structure:bimod} furnish the following partial invariants: the poset $(\Eset,\preccurlyeq)$ of a \sset, the subgroups $\H_\ee$, and for $\ee\prec\ff$ the set $\Theta_{\ee\ff}$ of types of the atomic summands of $\mM_{\ee\ff}$, the latter up to the translations of \Cref{cor:graded:iso:data}. By \Cref{ex:octahedron} these do not suffice: the multiplication between the blocks $\mM_{\ee\ff}$ is an independent invariant. Since the blocks are direct sums of atomic bimodules, whose graded endomorphisms are scalars, the multiplication is described by finitely many structure constants.

\begin{definition}\label{def:datum}
A \highlight{\G-graded incidence datum} $\mathcal D=(\Eset,\preccurlyeq,\H,\Theta,\mu)$ consists of
\begin{enumerate}
\item\label{def:datum:1} a finite partially ordered set $(\Eset,\preccurlyeq)$;
\item\label{def:datum:2} for each $\ee\in\Eset$, a finite abelian subgroup $\H_\ee\leq\G$ such that $\RH_\ee\cong\R^{|\H_\ee|}$;
\item\label{def:datum:3} for each $\ee\prec\ff$ in \Eset, a nonempty finite set $\Theta_{\ee\ff}\subseteq\Tset_{\H_\ee,\H_\ff}(\G)$ of pairwise strongly distinct types. We put $\mM_{\ee\ff}(\mathcal D)=\bigoplus_{\theta\in\Theta_{\ee\ff}}\mA_\theta$, a multiplicity-free \G-graded $(\RH_\ee,\RH_\ff)$-bimodule (\Cref{thm:multiplicity:free}), and $B_{\ee\ff}=\dot\bigcup_{\theta\in\Theta_{\ee\ff}}E_\theta\subseteq\hH_\ee\times\hH_\ff$;
\item\label{def:datum:4} for each chain $\ee\prec\ff\prec\hh$ in \Eset, a homomorphism of graded $(\RH_\ee,\RH_\hh)$-bimodules
\[
\mu_{\ee\ff\hh}\colon\mM_{\ee\ff}(\mathcal D)\otimes_{\RH_\ff}\mM_{\ff\hh}(\mathcal D)\longrightarrow\mM_{\ee\hh}(\mathcal D),
\]
\end{enumerate}
subject to the requirement that the \G-graded \R-module
\[
\Aalg(\mathcal D)=\bigoplus_{\ee\in\Eset}\RH_\ee\ \oplus\ \bigoplus_{\ee\prec\ff}\mM_{\ee\ff}(\mathcal D),
\]
with the multiplication given by the algebra structure of the $\RH_\ee$, the bimodule structures of the $\mM_{\ee\ff}(\mathcal D)$, the maps $\mu_{\ee\ff\hh}$, and zero on all remaining pairs of summands, be associative. Since the bimodule axioms take care of all products involving a diagonal factor, this amounts to
\[
\mu_{\ee\hh\kk}(\mu_{\ee\ff\hh}(x,y),z)=\mu_{\ee\ff\kk}(x,\mu_{\ff\hh\kk}(y,z))\qquad(\ee\prec\ff\prec\hh\prec\kk).
\]
Then $\Aalg(\mathcal D)$ is a \G-graded \R-algebra with identity $\sum_\ee1_{\RH_\ee}$, and $\{1_{\RH_\ee}\mid\ee\in\Eset\}$ is a set of pairwise orthogonal homogeneous idempotents with $1_{\RH_\ee}\Aalg(\mathcal D)1_{\RH_\ff}=\mM_{\ee\ff}(\mathcal D)$ for $\ee\prec\ff$. We call the maps $\mu_{\ee\ff\hh}$ the \highlight{structure maps} of $\mathcal D$.
\end{definition}

\begin{corollary}\label{cor:structure:constants}
Let $\ee\prec\ff\prec\hh$ in \Eset, $\theta=(C,\lambda)\in\Theta_{\ee\ff}$, $\theta'=(C',\lambda')\in\Theta_{\ff\hh}$, and let $\K_1,\K_2$, $t_k$, $\U=\U_{\theta\theta'}$ and $\nu$ be as in \Cref{prop:mackey}, with $\H=\H_\ee$, $\K=\H_\ff$, $\L=\H_\hh$. For every $k\in\K$ fix an element $q_k\in\H\times\L$ with $q_k\cdot g_{\H t_k\L}=t_k$. The same $q_k$ serves every $\theta''=(C'',\lambda'')\in\Theta_{\ee\hh}$ with $C''=\H t_k\L$, the base point $g_{C''}$ depending only on that double coset. Then the restriction of the structure map $\mu_{\ee\ff\hh}$ to $\mA_\theta\otimes_{\RK}\mA_{\theta'}$ is determined by the scalars $c_k(\theta,\theta';\theta'')\in\R$ defined by
\[
\mu_{\ee\ff\hh}\bigl(\bee_{\lambda''}(m_\theta\otimes k\cdot m_{\theta'})\bigr)=c_k(\theta,\theta';\theta'')\ q_k\cdot m_{\theta''},
\]
one for each $k\in\K$ and each $\theta''\in\Theta_{\ee\hh}$ with $C''=\H t_k\L$ and $\lambda''|_\U=\nu$, the remaining ones being zero. The double coset $\H t_k\L$ and the group $\Sset_{t_k}$ depend only on the \highlight{channel} $\bar k\in\K/\K_1\K_2$, and the scalars attached to two elements of one channel determine one another, so one $k$ per channel suffices. We write $c(\theta,\theta',\bar k;\theta'')$ for the scalar attached to a chosen representative $k$ of $\bar k$. Here $\bee_{\lambda''}\in\R\Sset_{t_k}(\H,\L)$ is the Fourier idempotent. Conversely, every family of such scalars defines a homomorphism of graded bimodules $\mA_\theta\otimes_{\RK}\mA_{\theta'}\to\mM_{\ee\hh}(\mathcal D)$. In particular, the structure maps of a datum are determined by finitely many scalars.
\end{corollary}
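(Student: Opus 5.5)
The argument rests on the graded Mackey decomposition of \Cref{prop:mackey} and the Schur lemma \Cref{thm:bimod:classification}\labelcref{thm:bimod:classification:2,thm:bimod:classification:5}. Write $\mN=\mA_\theta\otimes_{\RK}\mA_{\theta'}=\bigoplus_{\bar k}\mN_{\bar k}$ as in the proof of \Cref{prop:mackey}. Each channel summand $\mN_{\bar k}$ is generated by $z_{1,k,1}=m_\theta\otimes k\cdot m_{\theta'}$, which is homogeneous of degree $t_k$. By that proof, $\mN_{\bar k}\cong_\gr\bigoplus_{\lambda''|_\U=\nu}\mA_{(\H t_k\L,\lambda'')}$. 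Here the atomic summand of weight $\lambda''$ is generated by the free weight vector $\bee_{\lambda''}z_{1,k,1}$, with $\bee_{\lambda''}\in\R\Sset_{t_k}(\H,\L)$; it lies in degree $t_k$ because $\Sset_{t_k}$ fixes $t_k$. A graded bimodule map $\mu$ from $\mN$ to $\mM_{\ee\hh}(\mathcal D)=\bigoplus_{\theta''\in\Theta_{\ee\hh}}\mA_{\theta''}$ is therefore the sum of its restrictions to these atomic summands. By Schur's lemma, the restriction to $\mA_{(\H t_k\L,\lambda'')}$ is zero unless $(\H t_k\L,\lambda'')=\theta''$ for some $\theta''\in\Theta_{\ee\hh}$. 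Such a $\theta''$ is unique, since the types in $\Theta_{\ee\hh}$ are distinct and are determined by $(C'',\lambda'')$.

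When $\theta''$ exists, the image of the generator $\bee_{\lambda''}z_{1,k,1}$ is homogeneous of degree $t_k$ in $\mA_{\theta''}$. The component $\mA_{\theta''}^{t_k}$ is free of rank one, spanned by $q_k\cdot m_{\theta''}$, because $q_k\cdot g_{C''}=t_k$ and the grading is fine on the orbit. This defines $c_k(\theta,\theta';\theta'')$ uniquely, and $\mu$ on the summand is then determined, since the summand is cyclic. Uniqueness of $\theta''$ also shows that $\mu(\bee_{\lambda''}z_{1,k,1})$ has no component in other summands of $\mM_{\ee\hh}(\mathcal D)$. The base point $g_{C''}$ depends only on $C''$, so one $q_k$ serves all $\theta''$ with this double coset.

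For the channel claim, take $k'=k_0k_0'k$ with $(h_0,k_0)\in\Sset_C$ and $(k_0',l_0')\in\Sset_{C'}$. The relations in the proof of \Cref{prop:mackey} give
\[
z_{1,k',1}=\tilde\nu(\cdot)^{\pm1}\,(h_0,l_0')^{\pm1}\cdot z_{1,k,1},
\]
a unit multiple of a $(\H\times\L)$-translate. Applying $\bee_{\lambda''}$, which commutes with $\Q$, and using $\H\times\L$-equivariance of $\mu$ shows that $c_{k'}$ equals $c_k$ times an explicit unit. This unit involves $\lambda,\lambda',\lambda''$ and the factor relating $q_{k'}$ to the translated $q_k$, the latter computed through the weight $\lambda''$ of $m_{\theta''}$ on $\Sset_{C''}$. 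I expect this bookkeeping to be the main obstacle: it requires checking that the translate $(h_0,l_0')q_k$ and $q_{k'}$ differ by an element of $\Sset_{C''}$, which holds because both send $g_{C''}$ to $t_{k'}$.

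For the converse, any choice of scalars prescribes the images of the free generators of the atomic summands $\mA_{(\H t_k\L,\lambda'')}$ of $\mN$, one $k$ per channel. Each such image lies in the degree-$t_k$ component and has weight $\lambda''$, since $q_k\cdot m_{\theta''}$ has weight $\lambda''$ on $\Sset_{t_k}=\Sset_{C''}$. By \Cref{thm:bimod:classification}\labelcref{thm:bimod:classification:2}, these prescriptions extend uniquely to graded homomorphisms, and summing them gives the required map. Finiteness follows because there are finitely many channels, types and $\lambda''$.
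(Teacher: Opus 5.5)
Your proposal is correct and follows the paper's own argument. The paper likewise decomposes $\mA_\theta\otimes_{\RK}\mA_{\theta'}$ via \Cref{prop:mackey} into atomic summands generated by $\bee_{\lambda''}z_k$ in degree $t_k$, then uses \Cref{thm:bimod:classification}\labelcref{thm:bimod:classification:2,thm:bimod:classification:5} to see that the structure map on each summand is a scalar multiple of $\bee_{\lambda''}z_k\mapsto q_k\cdot m_{\theta''}$. Your extra check that the scalars within one channel differ by a unit is also correct: you translate by $(h_0^{-1},l_0'^{-1})$ and correct $q_k$ by an element of $\Sset_{C''}$. This fills in a point the paper only asserts.
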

\begin{proof}
By \Cref{prop:mackey} and its proof, $\mA_\theta\otimes_{\RK}\mA_{\theta'}=\bigoplus_{\bar k}\mN_{\bar k}$, where $\mN_{\bar k}$ is generated by $z_k=m_\theta\otimes k\cdot m_{\theta'}$ of degree $t_k$, $\mN_{\bar k}^{t_k}=\bigoplus_{\lambda''|_\U=\nu}\R\,\bee_{\lambda''}z_k$, and $\R(\H\times\L)\cdot\bee_{\lambda''}z_k$ is atomic of type $(\H t_k\L,\lambda'')$ with homogeneous free generator $\bee_{\lambda''}z_k$ of degree $t_k$. By \Cref{thm:bimod:classification}\labelcref{thm:bimod:classification:2,thm:bimod:classification:5}, a graded bimodule homomorphism from this summand into $\mM_{\ee\hh}(\mathcal D)=\bigoplus_{\theta''\in\Theta_{\ee\hh}}\mA_{\theta''}$ is zero on all summands of a type different from $(\H t_k\L,\lambda'')$ and is a scalar multiple of the isomorphism $\bee_{\lambda''}z_k\mapsto q_k\cdot m_{\theta''}$ on the summand of that type, if present.
\end{proof}

Changing the representatives $k$ or $q_k$ multiplies the scalars by roots of unity ($\lambda''(s)$ for $s\in\Sset_{t_k}$, and the factors of \Cref{prop:mackey}). The products $\xi(q_k)c_k(\theta,\theta';\theta'')$, $\xi\in E_{\theta''}$, appearing in \Cref{prop:incidence:explicit} below do not depend on the choice of $q_k$, since replacing $q_k$ by $q_kq$ with $q\in\Sset_{t_k}$ divides $c_k$ by $\lambda''(q)$ and multiplies $\xi(q_k)$ by $\xi(q)=\lambda''(q)$.

\begin{definition}\label{def:datum:equivalence}
Two data $\mathcal D$ and $\mathcal D'$ are \highlight{equivalent} if there is an isomorphism of \G-graded \R-algebras $\psi\colon\Aalg(\mathcal D)\to\Aalg(\mathcal D')$ and a bijection $\sigma\colon\Eset\to\Eset'$ such that $\psi(\RH_\ee)=\RH'_{\sigma(\ee)}$ for all $\ee\in\Eset$.
\end{definition}

\begin{lemma}\label{lem:datum:equivalence}
An equivalence $(\psi,\sigma)$ as in \Cref{def:datum:equivalence} is the same thing as the following collection of data:
\begin{enumerate}
\item an isomorphism of posets $\sigma\colon\Eset\to\Eset'$ with $\H_\ee=\H'_{\sigma(\ee)}$ for all $\ee\in\Eset$,
\item characters $\alpha_\ee\in\hH_\ee$, giving $\psi(h)=\alpha_\ee(h)h$ on $\RH_\ee$, with label translations $t_\ee=\alpha_\ee^{-1}$ as in \Cref{thm:graded:iso:structure},
\item for each $\ee\prec\ff$, the translation $\Theta_{\ee\ff}\to\Theta'_{\sigma(\ee)\sigma(\ff)}$, $\theta=(C,\lambda)\mapsto\theta'=(C,\lambda\cdot(\alpha_\ee,\alpha_\ff)^{-1}|_{\Sset_C})$, which is the translation by $(t_\ee,t_\ff)$ of \Cref{cor:graded:iso:data} and must be a bijection,
\item units $s_\theta\in\uR$ with $\psi(m_\theta)=s_\theta m'_{\theta'}$ for the standard generators $m_\theta=1\otimes1$ of degree $g_C$,
\item the compatibility $\psi\circ\mu_{\ee\ff\hh}=\mu'_{\sigma\ee\sigma\ff\sigma\hh}\circ(\psi\otimes\psi)$ for all chains $\ee\prec\ff\prec\hh$.
\end{enumerate}
\end{lemma}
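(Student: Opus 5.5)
The plan is to prove the two directions separately. Throughout, $\Aalg(\mathcal D)$ and $\Aalg(\mathcal D')$ are regarded as graded algebras with their distinguished idempotents $1_{\RH_\ee}$. We mimic the argument of \Cref{thm:graded:iso:structure}, \labelcref{thm:graded:iso:structure:1}$\Rightarrow$\labelcref{thm:graded:iso:structure:3}, but at the level of blocks rather than elementary functions, since $\Aalg(\mathcal D)$ need not yet be known to be an incidence algebra.

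Given an equivalence $(\psi,\sigma)$, the first step is to show that $\sigma$ is a poset isomorphism. Since $\psi(1_{\RH_\ee})=1_{\RH'_{\sigma(\ee)}}$ (identities of corners go to identities), we get $\psi(\mM_{\ee\ff}(\mathcal D))=1_{\sigma\ee}\Aalg(\mathcal D')1_{\sigma\ff}$. This block is nonzero exactly when $\sigma\ee\preccurlyeq\sigma\ff$. Here we use that $\Theta_{\ee\ff}$ is nonempty, so the blocks are nonzero precisely along the order, and $\ee\preccurlyeq\ff$ iff $1_\ee\Aalg(\mathcal D)1_\ff\neq0$.

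Next, $\psi$ restricts to graded algebra isomorphisms $\RH_\ee\to\RH'_{\sigma\ee}$. Both gradings are fine with supports $\H_\ee$ and $\H'_{\sigma\ee}$, so these supports coincide. The restriction then has the form $h\mapsto\alpha_\ee(h)h$, and multiplicativity forces $\alpha_\ee\in\hH_\ee$, exactly as in the proof of \Cref{thm:graded:iso:structure}.

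On each block, $\psi$ is then a graded bimodule isomorphism that is twisted by $(\alpha_\ee,\alpha_\ff)$ on the two sides. Untwisting means regarding $\mM'$ as a bimodule through $\alpha$. The twisted bimodule has the same supports, but the weights are multiplied by $(\alpha_\ee,\alpha_\ff)^{-1}|_{\Sset_C}$, since $h\cdot m\cdot k$ in the pulled-back structure is $\alpha_\ee(h)\alpha_\ff(k)hmk$. By \Cref{thm:bimod:classification}\labelcref{thm:bimod:classification:5} and \Cref{thm:multiplicity:free}\labelcref{thm:multiplicity:free:1}, $\psi$ sends each atomic summand of type $\theta$ onto the atomic summand of the translated type $\theta'$. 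The resulting map on type sets is a bijection because $\psi$ is bijective. Moreover, $\psi(m_\theta)$ is a homogeneous free generator of degree $g_C$, so by \Cref{lem:atomic:bimod:hom:gens} together with the degree condition it equals $s_\theta m'_{\theta'}$ with $s_\theta\in\uR$. Finally, multiplicativity of $\psi$ on products of two off-diagonal blocks is precisely the compatibility with the maps $\mu$.

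For the converse, we define $\psi$ on $\RH_\ee$ by $h\mapsto\alpha_\ee(h)h$, and on $\mA_\theta$ by $q\otimes1\mapsto\alpha(q)\,s_\theta\, q\otimes1$. Here $\alpha(q)=\alpha_\ee(h)\alpha_\ff(k)$ for $q=(h,k)$. This map is well defined because the weight $\lambda$ of $\theta$ corresponds to the weight of $\theta'$ times $\alpha|_{\Sset_C}$; checking that these scalars agree on $\Sset_C$ is the only place where the translation formula is used. The map $\psi$ is graded and bijective, and it is multiplicative on diagonal-times-block products by construction. It is multiplicative on block-times-block products by the compatibility condition, while all other products vanish on both sides because $\sigma$ is order preserving. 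Hence $\psi$ is a graded isomorphism with $\psi(\RH_\ee)=\RH'_{\sigma\ee}$. The map $(\psi,\sigma)\mapsto$ data is inverse to this construction, because $\psi$ is determined by its values on the $h\in\H_\ee$ and on the generators $m_\theta$.

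I expect the main obstacle to be the bookkeeping of the twist. One must check carefully that the type translation comes out as $\lambda(\alpha_\ee,\alpha_\ff)^{-1}|_{\Sset_C}$ and not its inverse, and that it agrees with the $t_\ee$-translation of \Cref{cor:graded:iso:data}.
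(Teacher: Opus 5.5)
Your proposal is correct and follows essentially the same route as the paper's proof: you get the poset isomorphism from the nonvanishing blocks $1_{\RH_{\ee}}\Aalg(\mathcal D)1_{\RH_{\ff}}$ and the characters $\alpha_{\ee}$ from the fine-graded corners, the weight computation for $\psi(m_\theta)$ yields the translated type, \Cref{lem:atomic:bimod:hom:gens} supplies the unit $s_\theta$, and the compatibility condition is multiplicativity on the off-diagonal blocks. The only difference is that you make the converse explicit, via $q\otimes1\mapsto\alpha(q)s_\theta\,q\otimes1$ and its well-definedness check on $\Sset_C$; the paper only asserts that such data define a multiplicative graded bijection.
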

\begin{proof}
Since $1_{\RH_\ee}\Aalg(\mathcal D)1_{\RH_\ff}\neq0$ if and only if $\ee\preccurlyeq\ff$, $\sigma$ is an isomorphism of posets. The restriction of $\psi$ to $\RH_\ee$ is a graded algebra isomorphism $\RH_\ee\to\RH'_{\sigma(\ee)}$ between fine-graded split group algebras, hence $\H_\ee=\H'_{\sigma(\ee)}$ and $\psi(h)=\alpha_\ee(h)h$ for a character $\alpha_\ee$, as in the proof of \Cref{thm:graded:iso:structure}. Next $\psi$ maps $\mM_{\ee\ff}(\mathcal D)$ onto $\mM_{\sigma\ee\sigma\ff}(\mathcal D')$ and, for $q=(h,k)\in\H_\ee\times\H_\ff$, $\psi(q\cdot x)=\alpha_\ee(h)\alpha_\ff(k)\,q\cdot\psi(x)$. It follows that $\psi(m_\theta)$ is a homogeneous element of degree $g_C$ with $\R\psi(m_\theta)$ free of rank one, and $s\cdot\psi(m_\theta)=\lambda(s)(\alpha_\ee,\alpha_\ff)(s)^{-1}\psi(m_\theta)$ for $s\in\Sset_C$. It is therefore a free weight vector of weight $\lambda\cdot(\alpha_\ee,\alpha_\ff)^{-1}|_{\Sset_C}$. So $\psi$ maps $\mA_\theta$ onto the summand $\mA_{\theta'}$ of $\mM_{\sigma\ee\sigma\ff}(\mathcal D')$ (\Cref{thm:multiplicity:free}\labelcref{thm:multiplicity:free:1}). As $m_\theta$ generates $\mA_\theta$, its image generates $\mA_{\theta'}$, and both $\psi(m_\theta)$ and $m'_{\theta'}$ lie in the degree $g_C$ component of $\mA_{\theta'}$, which is free of rank one. Therefore $\psi(m_\theta)=s_\theta m'_{\theta'}$ with $s_\theta\in\uR$ by \Cref{lem:atomic:bimod:hom:gens}. The compatibility with the structure maps is the multiplicativity of $\psi$ on the off-diagonal blocks. Conversely, such a collection defines a degree-preserving \R-linear bijection $\Aalg(\mathcal D)\to\Aalg(\mathcal D')$ which is multiplicative on every pair of summands, hence an equivalence.
\end{proof}

\begin{definition}\label{def:datum:of:grading}
Let $\Aalg=\IPR$ be \G-graded, with \PO finite. Choose a diagonal \sset \Eset, base points $x_\ee\in\supf\ee$, and for each $\ee\prec\ff$ and $\theta\in\Theta_{\ee\ff}$ a homogeneous free generator $m_\theta$ of $(\mM_{\ee\ff})_\theta$ of degree $g_{C_\theta}$. Identify $\Dalg_\ee=\RH_\ee$ and $(\mM_{\ee\ff})_\theta=\mA_\theta$ via $q\cdot m_\theta\mapsto q\otimes1$ (\Cref{thm:bimod:classification}\labelcref{thm:bimod:classification:5}). The multiplication of \Aalg then induces bimodule maps $\mu_{\ee\ff\hh}\colon\mM_{\ee\ff}(\mathcal D)\otimes_{\RH_\ff}\mM_{\ff\hh}(\mathcal D)\to\mM_{\ee\hh}(\mathcal D)$, and
\[
\mathcal D(\Aalg)=(\Eset,\preccurlyeq,(\H_\ee),(\Theta_{\ee\ff}),(\mu_{\ee\ff\hh}))
\]
is a \G-graded incidence datum, the \highlight{datum of the grading}, with $\Aalg(\mathcal D(\Aalg))\cong\Aalg$ as graded algebras (\Cref{thm:graded:peirce}). By \Cref{lem:datum:equivalence}, a different choice of \sset (conjugate by a graded inner automorphism, \Cref{lem:star:sets:conj}), of base points (translations $t_\ee$) or of generators (units $s_\theta$) replaces $\mathcal D(\Aalg)$ by an equivalent datum.
\end{definition}

\begin{theorem}\label{thm:classification}
Let \PO and \QO be finite posets and let $\Aalg=\IPR$ and $\Balg=\IQR$ be \G-graded. Then \Aalg and \Balg are isomorphic as graded \R-algebras if and only if the data $\mathcal D(\Aalg)$ and $\mathcal D(\Balg)$ are equivalent. Consequently, the map $\Aalg\mapsto\mathcal D(\Aalg)$ induces a bijection between the graded isomorphism classes of \G-graded incidence algebras of finite posets over \R and the equivalence classes of \G-graded incidence data satisfying the incidence condition of \Cref{prop:incidence:condition} below.
\end{theorem}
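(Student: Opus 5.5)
The proof has two directions, followed by a formal observation for the bijection.

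\emph{Isomorphic implies equivalent.} Suppose $\varphi\colon\Aalg\to\Balg$ is a graded isomorphism. Fix the choices used to define $\mathcal D(\Aalg)$ and $\mathcal D(\Balg)$ as in \Cref{def:datum:of:grading}: diagonal \ssets \Eset and $\Eset'$, base points, and generators $m_\theta$. By \Cref{thm:graded:iso:structure} we may replace $\varphi$ by $\iota^{-1}\varphi$ with $\iota$ a graded inner automorphism, so that $\varphi(\Eset)=\Eset'$ and $\varphi=\varphi_{\tau,u}$. Conjugation by a unit of $\Balg^1$ is itself a graded isomorphism, so nothing is lost. Now identify $\Aalg$ with $\Aalg(\mathcal D(\Aalg))$ and $\Balg$ with $\Aalg(\mathcal D(\Balg))$ through the graded isomorphisms of \Cref{def:datum:of:grading}. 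These identifications send $\Dalg_\ee$ onto $\RH_\ee$. Transported along them, $\varphi$ becomes a graded algebra isomorphism $\psi\colon\Aalg(\mathcal D(\Aalg))\to\Aalg(\mathcal D(\Balg))$ with $\psi(\RH_\ee)=\RH'_{\sigma(\ee)}$, where $\sigma=\varphi|_{\Eset}$. This is exactly an equivalence in the sense of \Cref{def:datum:equivalence}. If one prefers the explicit description, \Cref{lem:datum:equivalence} and \Cref{cor:graded:iso:data} supply the characters $\alpha_\ee$, the translated types and the units $s_\theta$.

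\emph{Equivalent implies isomorphic.} Let $(\psi,\sigma)$ be an equivalence. Then the composite
\[
\Aalg\cong\Aalg(\mathcal D(\Aalg))\xrightarrow{\ \psi\ }\Aalg(\mathcal D(\Balg))\cong\Balg
\]
is a graded \R-algebra isomorphism, since each of its factors is one. The only point to check is that the identification $\Aalg\cong\Aalg(\mathcal D(\Aalg))$ really is a graded algebra isomorphism and not merely a graded module map. It is, because the structure maps $\mu_{\ee\ff\hh}$ are by definition the multiplication of \Aalg transported along the block identifications, and \Cref{thm:graded:peirce} says every product of blocks is either a bimodule action, a $\mu$, or zero.

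\emph{The bijection.} The map $\Aalg\mapsto\mathcal D(\Aalg)$ is well defined on graded isomorphism classes, with values in equivalence classes of data. Independence of the choices is the last sentence of \Cref{def:datum:of:grading}, and independence of the representative of the class is the first direction. Injectivity is the second direction. Surjectivity onto the data satisfying the incidence condition is essentially the content of \Cref{prop:incidence:condition}: a datum satisfies that condition exactly when $\Aalg(\mathcal D)$ is graded isomorphic to some \IPR, and then $\mathcal D(\IPR)$ is equivalent to $\mathcal D$ by the same argument as in the first direction. We only need to make sure the condition is invariant under equivalence, which holds if it is phrased as ``$\Aalg(\mathcal D)$ is graded isomorphic to an incidence algebra.''

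The main obstacle is the bookkeeping in the first direction. An equivalence requires $\psi(\RH_\ee)=\RH'_{\sigma(\ee)}$ exactly, not just up to conjugation. We get this by first normalizing $\varphi$ via \Cref{thm:graded:iso:structure} so that it carries \Eset onto $\Eset'$, and hence $\Dalg_\ee$ onto $\Dalg_{\sigma(\ee)}$. Everything else is transport of structure.
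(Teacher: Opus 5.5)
Your proposal is correct and follows the paper's proof. You normalize the graded isomorphism by a graded inner automorphism (\Cref{thm:graded:iso:structure}, \Cref{lem:star:sets:conj}) so that it carries \Eset onto $\Eset'$, transport it through the identifications $\Aalg\cong\Aalg(\mathcal D(\Aalg))$ and $\Balg\cong\Aalg(\mathcal D(\Balg))$ to obtain an equivalence, compose the other way for the converse, and get the bijection from \Cref{prop:incidence:condition}. Your extra bookkeeping (well-definedness on classes, invariance of the incidence condition under equivalence, and (iii)$\Rightarrow$(ii) for surjectivity) spells out what the paper compresses into its last sentence.
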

\begin{proof}
If $\varphi\colon\Aalg\to\Balg$ is a graded isomorphism, we may assume, after composing with a graded inner automorphism of \Balg, that $\varphi$ maps the chosen \sset of \Aalg onto that of \Balg (\Cref{thm:graded:iso:structure}). Then $\varphi$ maps $\Dalg_\ee$ onto $\Dalg_{\sigma(\ee)}$, and transporting $\varphi$ through the identifications $\Aalg\cong\Aalg(\mathcal D(\Aalg))$, $\Balg\cong\Aalg(\mathcal D(\Balg))$ gives an equivalence of data. Conversely, an equivalence $\Aalg(\mathcal D(\Aalg))\to\Aalg(\mathcal D(\Balg))$ composed with these identifications is a graded isomorphism $\Aalg\to\Balg$. The last assertion follows from \Cref{prop:incidence:condition}, which characterises the data equivalent to some $\mathcal D(\Aalg)$.
\end{proof}

It remains to decide which data come from gradings on incidence algebras. Let $\mathcal D$ be a datum and put $\PO(\mathcal D)=\coprod_{\ee\in\Eset}\hH_\ee$. The Fourier idempotents $\bee_\chi$, $\chi\in\PO(\mathcal D)$, form a complete set of pairwise orthogonal idempotents of $\Aalg(\mathcal D)$. By \Cref{thm:multiplicity:free}, for $\chi\in\hH_\ee$ and $\tau\in\hH_\hh$ the Peirce component $\bee_\chi\Aalg(\mathcal D)\bee_\tau$ is $\R\bee_\chi$ if $\chi=\tau$, is free of rank one if $\ee\prec\hh$ and $(\chi,\tau)\in B_{\ee\hh}$, and is zero otherwise. Define a relation on $\PO(\mathcal D)$ by
\[
\chi\leq\tau\iff\chi=\tau\quad\text{or}\quad\ee\prec\hh\text{ and }(\chi,\tau)\in B_{\ee\hh}\qquad(\chi\in\hH_\ee,\ \tau\in\hH_\hh),
\]
choose generators $\ww_{\chi\tau}$ of the rank-one components $\bee_\chi\Aalg(\mathcal D)\bee_\tau$ ($\chi<\tau$), put $\ww_{\chi\chi}=\bee_\chi$, and for $\chi<\rho<\tau$ define $c(\chi,\rho,\tau)\in\R$ by
\begin{gather*}
\ww_{\chi\rho}\ww_{\rho\tau}=c(\chi,\rho,\tau)\,\ww_{\chi\tau}\quad\text{if }\chi\leq\tau,\\
\ww_{\chi\rho}\ww_{\rho\tau}=0\ \text{ and }\ c(\chi,\rho,\tau)=0\quad\text{otherwise}.
\end{gather*}

The next proposition establishes an incidence condition on the datum.
\begin{proposition}\label{prop:incidence:condition}
For a \G-graded incidence datum $\mathcal D$ the following are equivalent:
\begin{enumerate}
\item\label{prop:incidence:condition:1} $\Aalg(\mathcal D)\cong\IPR$ as \R-algebras for some finite poset \PO;
\item\label{prop:incidence:condition:2} $\mathcal D$ is equivalent to the datum $\mathcal D(\Aalg)$ of a \G-grading on some $\Aalg=\IPR$, \PO finite (we then say that $\mathcal D$ is \highlight{realizable});
\item\label{prop:incidence:condition:3} \textup{(a)} whenever $\chi<\rho<\tau$ in $\PO(\mathcal D)$, one has $\chi<\tau$ and $c(\chi,\rho,\tau)\in\uR$, and \textup{(b)} the function $c$, which by \textup{(a)} is a $2$-cocycle on the poset $\PO(\mathcal D)$ with values in \uR, is a coboundary: there are units $a_{\chi\tau}$ ($\chi<\tau$) with $c(\chi,\rho,\tau)=a_{\chi\rho}a_{\rho\tau}a_{\chi\tau}^{-1}$ for all $\chi<\rho<\tau$.
\end{enumerate}
When these hold, $\leq$ is a partial order on $\PO(\mathcal D)$ and $\PO\cong\PO(\mathcal D)$. Write $\ee_{\chi\tau}$ for the elementary function of $\Inc{\PO(\mathcal D)}{\R}$. Then $\ee_{\chi\tau}\mapsto a_{\chi\tau}^{-1}\ww_{\chi\tau}$ extends to an \R-algebra isomorphism $\Inc{\PO(\mathcal D)}{\R}\to\Aalg(\mathcal D)$, which transports the grading of $\Aalg(\mathcal D)$ to a \G-grading on $\Inc{\PO(\mathcal D)}{\R}$ with \sset $\{\sum_{\chi\in\hH_\ee}\ww_{\chi\chi}\mid\ee\in\Eset\}$ and datum equivalent to $\mathcal D$.
\end{proposition}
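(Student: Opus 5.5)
I would prove the cycle \labelcref{prop:incidence:condition:2}$\Rightarrow$\labelcref{prop:incidence:condition:1}$\Rightarrow$\labelcref{prop:incidence:condition:3}$\Rightarrow$\labelcref{prop:incidence:condition:2}. The first implication is immediate: an equivalence of data is in particular a graded algebra isomorphism $\Aalg(\mathcal D)\cong\Aalg(\mathcal D(\Aalg))$, and $\Aalg(\mathcal D(\Aalg))\cong\Aalg=\IPR$ by \Cref{def:datum:of:grading}.

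For \labelcref{prop:incidence:condition:1}$\Rightarrow$\labelcref{prop:incidence:condition:3}, fix an \R-algebra isomorphism $\varphi\colon\Aalg(\mathcal D)\to\IPR$. The Fourier idempotents $\bee_\chi$, $\chi\in\PO(\mathcal D)$, are a complete set of pairwise orthogonal idempotents. Each corner $\bee_\chi\Aalg(\mathcal D)\bee_\chi=\R\bee_\chi$ has only trivial idempotents, so each $\bee_\chi$ is primitive. Their images are therefore primitive orthogonal idempotents of \IPR summing to $1$. By \Cref{rem:diagonalization} they are simultaneously conjugate to diagonal idempotents. Primitivity forces singleton supports, since a diagonal idempotent with support of size at least two splits. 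Composing with this inner automorphism, I may assume $\varphi(\bee_\chi)=\ee_{\tau(\chi)\tau(\chi)}$ for a bijection $\tau\colon\PO(\mathcal D)\to\PO$. Then $\varphi$ carries $\bee_\chi\Aalg(\mathcal D)\bee_\rho$ onto $\R\ee_{\tau\chi\,\tau\rho}$. Hence $\chi\leq\rho$ iff $\tau\chi\leq\tau\rho$, so $\leq$ is a partial order isomorphic to \PO, and $\varphi(\ww_{\chi\rho})=b_{\chi\rho}\ee_{\tau\chi\tau\rho}$ with $b_{\chi\rho}\in\uR$, because $\varphi$ is bijective between rank-one free modules. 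Multiplying, $c(\chi,\rho,\tau)b_{\chi\tau}=b_{\chi\rho}b_{\rho\tau}$ for $\chi<\rho<\tau$, with $\chi<\tau$ forced by transitivity. This gives (a), and (b) holds with $a=b$.

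For \labelcref{prop:incidence:condition:3}$\Rightarrow$\labelcref{prop:incidence:condition:2}, condition (a) gives transitivity of $\leq$. Antisymmetry comes from the fact that $\preccurlyeq$ on \Eset is a partial order and that different points in the same $\hH_\ee$ are incomparable. Define the \R-linear map $\Inc{\PO(\mathcal D)}{\R}\to\Aalg(\mathcal D)$ by $\ee_{\chi\tau}\mapsto a_{\chi\tau}^{-1}\ww_{\chi\tau}$ (with $a_{\chi\chi}=1$). It is bijective because the $\ww_{\chi\tau}$ form an \R-basis: $\Aalg(\mathcal D)$ is the direct sum of its Peirce components, each $\R$ or $0$ as recorded before the statement. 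The coboundary identity gives multiplicativity on $\ee_{\chi\rho}\ee_{\rho\tau}$. Products $\ww_{\chi\rho}\ww_{\rho'\tau}$ with $\rho\neq\rho'$ vanish by orthogonality of the $\bee$'s. When $\rho=\rho'$ but $\chi\not\leq\tau$, the product is zero by the definition of $c$. Transport the grading. The elements $\sum_{\chi\in\hH_\ee}\ww_{\chi\chi}=1_{\RH_\ee}$ are homogeneous of degree $1$, orthogonal, and primitive homogeneous, since their corners are $\RH_\ee$ whose degree-$1$ component is $\R1$. Their supports are $\hH_\ee$, which partition $\PO(\mathcal D)$, so they form a \sset. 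They are diagonal, and the datum read off via \Cref{def:datum:of:grading}, with generators $m_\theta$ taken from $\mathcal D$, is equivalent to $\mathcal D$ through the identity map (\Cref{lem:datum:equivalence}).

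The main obstacle is the reduction in \labelcref{prop:incidence:condition:1}$\Rightarrow$\labelcref{prop:incidence:condition:3}: the isomorphism $\varphi$ is ungraded, so one must argue purely ring-theoretically that primitive idempotents of \IPR are conjugate to some $\ee_{xx}$. I would handle this via \Cref{lem:iso:idemp} and the diagonalization in \Cref{rem:diagonalization}. A second delicate point is checking that the transported grading has the \sset stated, which requires verifying that the $1_{\RH_\ee}$ are primitive as homogeneous idempotents.
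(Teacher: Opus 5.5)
Your proposal is correct and follows essentially the paper's route. In one direction you diagonalize the images of the Fourier idempotents $\bee_\chi$ to obtain an order isomorphism $\PO(\mathcal D)\cong\PO$, and read off $c$ as the coboundary of the unit scalars relating the $\ww_{\chi\tau}$ to the elementary functions; in the other you rescale the basis $\ww_{\chi\tau}$ by $a_{\chi\tau}^{-1}$ and check primitivity of the $1_{\RH_\ee}$ through the fine grading of the corners, exactly as the paper does (the only blemish is notational: you use $\tau$ both for the bijection $\PO(\mathcal D)\to\PO$ and for a point of $\PO(\mathcal D)$, so rename one of them).
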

\begin{proof}
\labelcref{prop:incidence:condition:3}$\Rightarrow$\labelcref{prop:incidence:condition:1}: By (a), $\leq$ is transitive, and it is antisymmetric because the levels $\hH_\ee$ are antichains and \Eset is a poset. Hence $\PO(\mathcal D)$ is a finite poset, and $\{\ww_{\chi\tau}\mid\chi\leq\tau\}$ is an \R-basis of $\Aalg(\mathcal D)$ (the Peirce decomposition). Associativity of $\Aalg(\mathcal D)$ gives $c(\rho,\tau,\upsilon)c(\chi,\rho,\upsilon)=c(\chi,\tau,\upsilon)c(\chi,\rho,\tau)$ for $\chi<\rho<\tau<\upsilon$, i.e.\ $c$ is a $2$-cocycle. If $c=\delta a$ as in (b), then the elements $\ww'_{\chi\tau}=a_{\chi\tau}^{-1}\ww_{\chi\tau}$ satisfy $\ww'_{\chi\rho}\ww'_{\rho\tau}=\ww'_{\chi\tau}$, so that $\ee_{\chi\tau}\mapsto\ww'_{\chi\tau}$ is an isomorphism $\Inc{\PO(\mathcal D)}{\R}\to\Aalg(\mathcal D)$. Transporting the grading, $\Inc{\PO(\mathcal D)}{\R}$ becomes \G-graded. The images $1_{\RH_\ee}=\sum_{\chi\in\hH_\ee}\ww_{\chi\chi}$ of the identities of the corners form a \sset. They are homogeneous and orthogonal, with supports partitioning $\PO(\mathcal D)$. They are primitive because a homogeneous idempotent $i\leq1_{\RH_\ee}$ lies in $1_{\RH_\ee}\Aalg(\mathcal D)1_{\RH_\ee}=\RH_\ee$, and the only nonzero homogeneous idempotent of the fine-graded algebra $\RH_\ee$ is $1$. Indeed, if $rh$ is a nonzero homogeneous idempotent, then $r^2h^2=rh$ forces $h^2=h$, for otherwise the two sides lie in distinct homogeneous components, hence $h=1$ and $r^2=r$, so $r=1$ because \R is \dR. The corner algebras, atomic summands and structure maps of this grading are those of $\mathcal D$, so its datum is equivalent to $\mathcal D$. This also proves \labelcref{prop:incidence:condition:3}$\Rightarrow$\labelcref{prop:incidence:condition:2}, and \labelcref{prop:incidence:condition:2}$\Rightarrow$\labelcref{prop:incidence:condition:1} is clear.

\labelcref{prop:incidence:condition:1}$\Rightarrow$\labelcref{prop:incidence:condition:3}: Let $\psi\colon\IPR\to\Aalg(\mathcal D)$ be an \R-algebra isomorphism. The idempotents $\psi^{-1}(\bee_\chi)$ form a complete set of pairwise orthogonal idempotents of \IPR, each primitive because $\bee_\chi\Aalg(\mathcal D)\bee_\chi=\R\bee_\chi$ and \R is indecomposable. By \Cref{rem:diagonalization} we may conjugate $\psi$ by an inner automorphism of \IPR and assume that they are diagonal. A diagonal primitive idempotent of \IPR is some $\ee_{pp}$, so $\psi(\ee_{p(\chi)p(\chi)})=\bee_\chi$ for a bijection $p\colon\PO(\mathcal D)\to\PO$. Then $\psi(\ee_{p(\chi)p(\tau)})\in\bee_\chi\Aalg(\mathcal D)\bee_\tau$, so $p(\chi)\leq p(\tau)$ forces $\bee_\chi\Aalg(\mathcal D)\bee_\tau\neq0$, that is $\chi\leq\tau$. Conversely $\psi^{-1}$ carries $\bee_\chi\Aalg(\mathcal D)\bee_\tau$ into $\ee_{p(\chi)p(\chi)}\IPR\,\ee_{p(\tau)p(\tau)}$, which is $\R\ee_{p(\chi)p(\tau)}$ and vanishes unless $p(\chi)\leq p(\tau)$, so $\chi\leq\tau$ forces $p(\chi)\leq p(\tau)$. It follows that $p$ is an isomorphism of relations and $\leq$ is a partial order. Moreover $\psi(\ee_{p(\chi)p(\tau)})=a_{\chi\tau}^{-1}\ww_{\chi\tau}$ with $a_{\chi\tau}\in\uR$ (as $\psi$ is bijective), and applying $\psi$ to $\ee_{pq}\ee_{qr}=\ee_{pr}$ gives $c(\chi,\rho,\tau)=a_{\chi\rho}a_{\rho\tau}a_{\chi\tau}^{-1}$. In particular $c(\chi,\rho,\tau)$ is a unit and $\chi<\tau$ whenever $\chi<\rho<\tau$.
\end{proof}

\begin{remark}\label{rem:coefficient:systems}
The proposition also yields a concrete description of all \G-gradings on a fixed $\Aalg=\IPR$ with prescribed partial invariants. Fix a \sset structure, that is, a partition of \PO into antichains $\PO_\ee$ ($\ee\in\Eset$) with bijections $\PO_\ee\cong\hH_\ee$, and type sets $\Theta_{\ee\ff}$ with $\dot\bigcup_{\theta\in\Theta_{\ee\ff}}E_\theta=B_{\ee\ff}=\{(\chi_p,\chi_q)\mid p\in\PO_\ee,\ q\in\PO_\ff,\ p<q\}$. Every grading on \Aalg with these invariants has, for each type $\theta$, a homogeneous generator of the form
\[
m_\theta=\sum_{(\chi_p,\chi_q)\in E_\theta}a_{pq}\,\ee_{pq}\qquad(a_{pq}\in\uR),
\]
of degree $g_{C_\theta}$, unique up to a unit factor, and it is determined by the \highlight{coefficient system} $a=(a_{pq})_{p<q}$: the homogeneous components of $\mM_{\ee\ff}$ are $\R\,q\cdot m_\theta=\R\sum_{\xi\in E_\theta}\xi(q)a_\xi\ee_\xi$. Conversely, a coefficient system defines a grading if and only if the products $m_\theta\,(k\cdot m_{\theta'})$ ($\theta\in\Theta_{\ee\ff}$, $\theta'\in\Theta_{\ff\hh}$, $k\in\H_\ff$) are homogeneous of degree $g_{C_\theta}kg_{C_{\theta'}}$ in $\mM_{\ee\hh}$, that is,
\[
\sum_{\rho}\rho(k)\,a_{\chi\rho}a_{\rho\tau}\in\R\cdot\xi(q)a_{\chi\tau}\quad\text{with a factor independent of }\xi=(\chi,\tau)\in E_{\theta''},
\]
where $\rho$ runs over the ``bridges'' with $(\chi,\rho)\in E_\theta$, $(\rho,\tau)\in E_{\theta'}$, and $q\cdot g_{C_{\theta''}}=g_{C_\theta}kg_{C_{\theta'}}$ (the sum must vanish if no such $q$ exists). Two coefficient systems define isomorphic gradings if and only if they differ by a translation of the labels and by a \emph{multiplicative} unit function $u$ ($u_{pq}u_{qr}=u_{pr}$), cf.\ \Cref{thm:graded:iso:structure}\labelcref{thm:graded:iso:structure:3}, up to unit factors on each $E_\theta$. In cohomological terms, the ratios $r(\chi,\rho,\tau)=a_{\chi\rho}a_{\rho\tau}a_{\chi\tau}^{-1}$ form a $2$-coboundary on \PO, the isomorphism class of the grading depends on $r$ only, and the structure constants of $\mathcal D(\Aalg)$ are the Fourier coefficients of $r$ along the bridges. The constants $c$ of \Cref{prop:incidence:condition} are the values of $r$ in the normalisation $\ww_{\chi\tau}=\bee_\chi m_\theta\bee_\tau$.
\end{remark}

The incidence condition can be written directly in terms of the structure constants.

\begin{proposition}\label{prop:incidence:explicit}
Let $\mathcal D$ be a datum and normalise the generators of \Cref{prop:incidence:condition} as $\ww_{\chi\rho}=\bee_\chi m_\theta\bee_\rho$ for $(\chi,\rho)\in E_\theta$, $\theta\in\Theta_{\ee\ff}$. Let $\chi<\rho<\tau$ in $\PO(\mathcal D)$, with $(\chi,\rho)\in E_\theta$, $\theta\in\Theta_{\ee\ff}$, and $(\rho,\tau)\in E_{\theta'}$, $\theta'\in\Theta_{\ff\hh}$. If $\xi=(\chi,\tau)\in E_{\theta''}$ for some $\theta''\in\Theta_{\ee\hh}$, then, in the notation of \Cref{cor:structure:constants},
\[
c(\chi,\rho,\tau)=\frac{1}{|\K|}\sum_{k\in\K}\rho(k)^{-1}\,\xi(q_k)\,c_k(\theta,\theta';\theta''),
\]
Here $c_k(\theta,\theta';\theta'')$ is the scalar of \Cref{cor:structure:constants} attached to the given $k$, which vanishes unless $\H t_k\L=C''$ and $\lambda''|_\U=\nu$. If $(\chi,\tau)$ lies in no $E_{\theta''}$, then $\ww_{\chi\rho}\ww_{\rho\tau}=0$. Consequently, $\mathcal D$ is realizable by a graded incidence algebra if and only if
\begin{enumerate}
\item for all $\chi<\rho<\tau$, the pair $(\chi,\tau)$ lies in $B_{\ee\hh}$ and the displayed sum is a unit of \R, and
\item the $2$-cocycle $c$ so defined is a coboundary on $\PO(\mathcal D)$.
\end{enumerate}
\end{proposition}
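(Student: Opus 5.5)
The plan is to compute $\ww_{\chi\rho}\ww_{\rho\tau}$ directly, with $\ww_{\chi\rho}=\bee_\chi m_\theta\bee_\rho$ and $\ww_{\rho\tau}=\bee_\rho m_{\theta'}\bee_\tau$. Since $\bee_\rho m_\theta$ and $m_\theta\bee_\rho$ agree inside the bimodule structure, the product is
\[
\bee_\chi\,\mu_{\ee\ff\hh}\bigl(m_\theta\bee_\rho\otimes m_{\theta'}\bigr)\,\bee_\tau .
\]
We then expand the Fourier idempotent of $\RK$ as
\[
\bee_\rho=\frac1{|\K|}\sum_{k\in\K}\rho(k^{-1})k .
\]
This gives
\[
\ww_{\chi\rho}\ww_{\rho\tau}=\frac1{|\K|}\sum_{k\in\K}\rho(k)^{-1}\,\bee_\chi\,\mu_{\ee\ff\hh}\bigl(m_\theta\otimes k\cdot m_{\theta'}\bigr)\,\bee_\tau .
\]
So everything reduces to evaluating each term $\bee_\chi\,\mu(z_k)\,\bee_\tau$, where $z_k=m_\theta\otimes k\cdot m_{\theta'}$ is the channel generator of degree $t_k$ from the proof of \Cref{prop:mackey}.

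To evaluate it, decompose $z_k=\sum_{\lambda''}\bee_{\lambda''}z_k$ over $\lambda''\in\hat\Sset_{t_k}(\H,\L)$. By the proof of \Cref{prop:mackey}, only the $\lambda''$ with $\lambda''|_\U=\nu$ survive. Apply \Cref{cor:structure:constants}: we have $\mu(\bee_{\lambda''}z_k)=c_k(\theta,\theta';\theta'')\,q_k\cdot m_{\theta''}$ for the type $\theta''=(\H t_k\L,\lambda'')$ if it lies in $\Theta_{\ee\hh}$, and zero otherwise. Next project by $\bee_\chi(\cdot)\bee_\tau$. Using $\bee_\xi(q\cdot m)=\xi(q)\,\bee_\xi m$ and \Cref{prop:atomic:bimod:decomposition}, the projection of $q_k\cdot m_{\theta''}$ is $\xi(q_k)\,\ww_{\chi\tau}$ when $\xi\in E_{\theta''}$, and zero otherwise. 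The types in $\Theta_{\ee\hh}$ are strongly distinct, so the $E_{\theta''}$ are disjoint. Hence for fixed $\xi$ at most one $\theta''$ contributes, namely the one with $\xi\in E_{\theta''}$, and then only for those $k$ with $\H t_k\L=C''$ and $\lambda''|_\U=\nu$. For all other $k$ the scalar $c_k$ is zero by convention. Summing over $k$ gives the displayed formula. If $\xi$ lies in no $E_{\theta''}$, every term vanishes, so $\ww_{\chi\rho}\ww_{\rho\tau}=0$.

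The final equivalence is then just \Cref{prop:incidence:condition}\labelcref{prop:incidence:condition:3} rewritten with this formula. Condition (a) there requires that $\chi<\tau$, which means $(\chi,\tau)\in B_{\ee\hh}$ because $\ee\prec\hh$ follows from transitivity in \Eset. It also requires that $c(\chi,\rho,\tau)$ be a unit, and $c(\chi,\rho,\tau)$ is the displayed sum. Condition (b) is the coboundary condition verbatim.

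The main obstacle is the bookkeeping of normalisations in the middle step. We must check three things: that $\bee_\chi(q_k\cdot m_{\theta''})\bee_\tau$ equals exactly $\xi(q_k)\,\ww_{\chi\tau}$ with $\ww_{\chi\tau}=\bee_\chi m_{\theta''}\bee_\tau$; that $\bee_\xi$ acting on $\bee_{\lambda''}z_k$ picks out $\lambda''=\xi|_{\Sset_{t_k}}$ consistently with the identification of $(\chi,\tau)$ with a character of $\H\times\L$; and that the sign/inverse convention $\rho(k)^{-1}$ matches the Fourier idempotent $\bee_\rho=|\K|^{-1}\sum\rho(k^{-1})k$. I would verify these on the model $\mA_{\theta''}$ using the computation in \Cref{prop:peirce:formula}.
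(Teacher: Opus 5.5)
Your proposal is correct and follows the paper's proof essentially step for step: merge the two copies of $\bee_\rho$ and expand $\bee_\rho=|\K|^{-1}\sum_{k\in\K}\rho(k)^{-1}k$ across the tensor sign. Then split each channel generator $z_k$ into its $\lambda''$-components, apply \Cref{cor:structure:constants}, project with $\bee_\chi(\cdot)\bee_\tau$ via \Cref{prop:atomic:bimod:decomposition}, and finish with \Cref{prop:incidence:condition}. The only slip is your justification that $\bee_\rho m_\theta$ and $m_\theta\bee_\rho$ ``agree'', which does not make sense since the left action on $m_\theta$ is by \RH, not \RK. What you actually use is $\bee_\rho^2=\bee_\rho$ together with the \RK-balancing $m_\theta\bee_\rho\otimes m_{\theta'}=m_\theta\otimes\bee_\rho m_{\theta'}$ in the tensor product.
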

\begin{proof}
Since $\bee_\rho=|\K|^{-1}\sum_{k\in\K}\rho(k)^{-1}k$ and $\bee_\rho^2=\bee_\rho$,
\[
\ww_{\chi\rho}\ww_{\rho\tau}=\bee_\chi m_\theta\,\bee_\rho\,m_{\theta'}\bee_\tau=\frac1{|\K|}\sum_{k\in\K}\rho(k)^{-1}\,\bee_\chi\,\mu_{\ee\ff\hh}(m_\theta\otimes k\cdot m_{\theta'})\,\bee_\tau .
\]
Now $m_\theta\otimes k\cdot m_{\theta'}=\sum_{\lambda''}\bee_{\lambda''}(m_\theta\otimes k\cdot m_{\theta'})$ (sum over $\hat{\Sset}_{t_k}(\H,\L)$), and $\mu_{\ee\ff\hh}$ maps the $\lambda''$-term to $c_k(\theta,\theta';\theta'')\,q_k\cdot m_{\theta''}$ if $\theta''=(\H t_k\L,\lambda'')\in\Theta_{\ee\hh}$ and to $0$ otherwise. Now $\bee_\chi(q_k\cdot m_{\theta''})\bee_\tau=\xi(q_k)\bee_\chi m_{\theta''}\bee_\tau$, which is $\xi(q_k)\ww_{\chi\tau}$ if $\xi\in E_{\theta''}$ and $0$ otherwise (\Cref{prop:atomic:bimod:decomposition}). Summing up gives the formula, and the last statement is \Cref{prop:incidence:condition}.
\end{proof}

By \Cref{thm:classification,prop:incidence:explicit}, the classification of \G-gradings with prescribed partial invariants $(\Eset,\preccurlyeq,\H,\Theta)$ reduces to a question about the structure constants $c(\theta,\theta',\bar k;\theta'')$. One has to determine those which satisfy the polynomial equations expressing the associativity of $\Aalg(\mathcal D)$ and the incidence condition, modulo the translations and the torus $\prod_\theta\uR$ of \Cref{lem:datum:equivalence}. Over an algebraically closed field of characteristic zero this problem has only finitely many solutions.

\begin{theorem}\label{thm:rigidity}
Let \F be an algebraically closed field of characteristic zero, \Aalg a finite-dimensional \F-algebra, \G a group and $X\subseteq\G$ a finite subset. Then there are only finitely many isomorphism classes of \G-gradings on \Aalg whose support is contained in $X$.
\end{theorem}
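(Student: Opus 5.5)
The plan is to realise the gradings with support in $X$ as the points of an algebraic variety with a linear action of $\Aut(\Aalg)$, and to show that every orbit is open. A variety with finitely many irreducible components, each of which is a union of open orbits, can carry only finitely many orbits, since distinct open orbits in an irreducible component would have to meet.

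First I fix the combinatorial data, of which there are finitely many choices. These are the dimension vector $(d_x)_{x\in X}$ with $\sum_x d_x=\dim\Aalg$, and the multiplication pattern: for each pair $x,y\in X$, whether $xy\in X$, and if so which element it is. This pattern is determined by the group law on the finite set $X$ and does not depend on the grading. For fixed $(d_x)$, I let $V$ be the set of tuples $(W_x)_{x\in X}\in\prod_x\mathrm{Gr}(d_x,\Aalg)$ satisfying three conditions:
\begin{enumerate}
\item $\Aalg=\bigoplus_x W_x$, which is an open condition;
\item $W_xW_y\subseteq W_{xy}$ whenever $xy\in X$, which is closed;
\item $W_xW_y=0$ whenever $xy\notin X$, which is closed.
\end{enumerate}
Then $V$ is a quasi-projective variety, and $\Aut(\Aalg)$, a linear algebraic group, acts on it algebraically. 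Two gradings are isomorphic exactly when their points lie in the same orbit. It therefore suffices to prove that each orbit $\Aut(\Aalg)\cdot\Gamma$ is open in $V$. Because $V$ is of finite type, it has finitely many irreducible components, which gives the result.

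To prove openness I compare tangent spaces at a point $\Gamma=(\Aalg^x)$. A tangent vector to $V$ at $\Gamma$ is a family of linear maps $\delta_x\colon\Aalg^x\to\bigoplus_{y\neq x}\Aalg^y$, coming from the tangent spaces of the Grassmannians. Let $\pi_z$ denote the projection onto $\Aalg^z$. The linearised closed conditions read
\[
\pi_z\bigl(\delta_x(a)b+a\delta_y(b)\bigr)=\delta_{xy}(ab)\quad(z\neq xy)
\]
for $a\in\Aalg^x$ and $b\in\Aalg^y$, with the obvious modification when $xy\notin X$. Put $\delta=\sum_x\delta_x\pi_x$. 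The key claim is that $\delta$ can be corrected by a map of degree $1$ to a derivation $D$ of $\Aalg$. Such a $D$ lies in the Lie algebra of $\Aut(\Aalg)$, and its image under the differential of the orbit map is $\delta$.

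Concretely, I would define $D$ by $D(a)=\delta_x(a)+\beta_x(a)$ for $a\in\Aalg^x$, where $\beta_x\colon\Aalg^x\to\Aalg^x$ is still to be determined. The Leibniz defect $D(ab)-D(a)b-aD(b)$ vanishes in the components $z\neq xy$ by the equation above. The remaining constraint, in the component $z=xy$, is the condition that $\beta$ solves a derivation-type equation with a prescribed right-hand side. I expect that this right-hand side is exactly what makes the choice $\beta=0$ work after the tangent condition is rewritten, but this bookkeeping is the main obstacle. If it fails, I would instead interpret the tangent space as a first cohomology group of the graded algebra and show that it vanishes relative to $\mathrm{Der}(\Aalg)$. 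Granting the claim, the differential of the orbit map is surjective onto $T_\Gamma V$, so
\[
\dim\bigl(\Aut(\Aalg)\cdot\Gamma\bigr)\geq\dim T_\Gamma V\geq\dim_\Gamma V .
\]
This uses characteristic zero: the orbit map is then separable, so its image has the expected dimension and the orbit is smooth. The inequality shows that the orbit contains a neighbourhood of $\Gamma$ in $V$, and hence that it is open.
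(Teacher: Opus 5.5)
Your overall route is the paper's: the variety $V$ of gradings with a fixed dimension vector inside a product of Grassmannians, the $\Aut(\Aalg)$-action whose orbits are the isomorphism classes, the reduction to openness of orbits, and a tangent-space comparison via derivations. However, the proposal stops exactly at the step that carries the proof. You assert that every tangent vector $\delta$ at $\Gamma$ is the off-diagonal part of a derivation and say you \emph{expect} $\beta=0$ to work. You then call the verification ``the main obstacle'' and continue ``granting the claim''. The cohomological fallback is not formulated, and it is not needed. As written, the one nontrivial step has no argument.

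\textbf{The missing step.} It follows at once from cancellation in $\G$, and it shows that $\beta=0$ works, i.e.\ $\delta$ is itself a derivation. Let $a\in\Aalg^x$ and $b\in\Aalg^y$.
\begin{itemize}
\item Since $\delta(a)\in\bigoplus_{s\neq x}\Aalg^s$ and $\Gamma$ is a grading, $\delta(a)b\in\bigoplus_{s\neq x}\Aalg^{sy}$. As $sy\neq xy$ for $s\neq x$, the element $\delta(a)b$ has zero $\Aalg^{xy}$-component.
\item The same holds for $a\delta(b)$.
\item It also holds for $\delta(ab)$, because $ab\in\Aalg^{xy}$ and $\delta$ is off-diagonal.
\end{itemize}
So the $\Aalg^{xy}$-component of $\delta(a)b+a\delta(b)-\delta(ab)$ vanishes automatically. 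Your linearized closed conditions kill all the other components; when $xy\notin X$ they read $\delta(a)b+a\delta(b)=0=\delta(ab)$ directly. Hence $T_\Gamma V$ consists of off-diagonal derivations, and the differential $D\mapsto(\text{off-diagonal part of }D)$ of the orbit map is onto it.

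\textbf{Two smaller corrections.}
\begin{itemize}
\item Characteristic zero enters because $\Aut(\Aalg)$ is then smooth with Lie algebra $\operatorname{Der}(\Aalg)$. That is what makes ``$D$ lies in the Lie algebra of $\Aut(\Aalg)$'' true. Separability of the orbit map is not what you need, and orbits are smooth in any characteristic.
\item To get openness from $\dim O\geq\dim T_\Gamma V\geq\dim_\Gamma V$, note that these are equalities because $O\subseteq V$. So $V$ is smooth, hence locally irreducible, at $\Gamma$. The closure of the orbit of the identity component is irreducible of dimension $\dim_\Gamma V$, so it is the unique component of $V$ through $\Gamma$. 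That orbit is open in its closure, and therefore contains a neighbourhood of $\Gamma$ in $V$.
\end{itemize}
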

\begin{proof}
Since $\dim\Aalg<\infty$, there are finitely many dimension vectors $(d_g)_{g\in X}$ with $\sum_gd_g=\dim\Aalg$, and we may fix one. Let $\mathrm{Gr}=\prod_{g\in X}\mathrm{Gr}(d_g,\Aalg)$ be the corresponding product of Grassmannians and let $V\subseteq\mathrm{Gr}$ be the set of tuples $(\Aalg^g)_{g\in X}$ such that $\Aalg=\bigoplus_g\Aalg^g$ and $\Aalg^g\Aalg^t\subseteq\Aalg^{gt}$ for all $g,t\in X$, where $\Aalg^{gt}=0$ if $gt\notin X$. The first condition is open and the second is closed, so $V$ is a locally closed subvariety of $\mathrm{Gr}$, and its points are the \G-gradings on \Aalg with support in $X$ and dimension vector $(d_g)$. The algebraic group $\Aut(\Aalg)$ acts on $V$, and two gradings are isomorphic if and only if they lie in the same orbit. Since $V$ is quasi-compact and distinct orbits are disjoint, it suffices to show that every orbit is open in $V$.

Let $x=(\Aalg^g)\in V$. The tangent space of $\mathrm{Gr}$ at $x$ is $\prod_g\Hom_\F(\Aalg^g,\Aalg/\Aalg^g)$, which we identify with the space of \emph{off-diagonal} endomorphisms $N$ of \Aalg, that is, those with $N(\Aalg^g)\subseteq\bigoplus_{t\neq g}\Aalg^t$ for all $g$. The tangent vector $N$ corresponds to the $\F[\varepsilon]$-point $\bigl((1+\varepsilon N)\Aalg^g\bigr)_g$ of $\mathrm{Gr}$ ($\varepsilon^2=0$). This point lies in $V$ if and only if, for all $a\in\Aalg^g$ and $b\in\Aalg^t$, $(a+\varepsilon N(a))(b+\varepsilon N(b))$ lies in $(1+\varepsilon N)(\Aalg^{gt}\otimes\F[\varepsilon])$, i.e.\ if and only if
\[
N(a)b+aN(b)-N(ab)\in\Aalg^{gt}\qquad(a\in\Aalg^g,\ b\in\Aalg^t).
\]
Thus $T_xV$ is the space of off-diagonal $N$ satisfying this condition. But for an off-diagonal $N$ the $\Aalg^{gt}$-component of $N(a)b+aN(b)-N(ab)$ vanishes automatically: $N(a)\in\bigoplus_{s\neq g}\Aalg^s$, so $N(a)b\in\bigoplus_{s\neq g}\Aalg^{st}$ has no component in $\Aalg^{gt}$, and likewise for $aN(b)$. Nor has $N(ab)$, because $ab\in\Aalg^{gt}$. Therefore every $N\in T_xV$ is a derivation of \Aalg. On the other hand $\car\F=0$, so the algebraic group $\Aut(\Aalg)$ is smooth with Lie algebra $\operatorname{Der}(\Aalg)$. The orbit map $\Aut(\Aalg)\to V$, $\varphi\mapsto\varphi\cdot x$, has differential $\operatorname{Der}(\Aalg)\to T_xV$, $D\mapsto$ (off-diagonal part of $D$). By what we have just shown, its image is all of $T_xV$. Since this image is contained in the tangent space at $x$ of the orbit $O=\Aut(\Aalg)\cdot x$, we get $T_xO=T_xV$.

Now $O$ is a locally closed subvariety of $V$, smooth because it is a finite union of translates of the orbit $O^\circ$ of the identity component $\Aut(\Aalg)^\circ$, which is a homogeneous space. It follows that $\dim O=\dim T_xO=\dim T_xV\geq\dim_xV\geq\dim O$, so that $\dim T_xV=\dim_xV$: $V$ is smooth at $x$, hence irreducible in a neighbourhood of $x$, and $O^\circ$ (irreducible, locally closed, of dimension $\dim_xV$) contains an open neighbourhood of $x$ in $V$. As $x\in O$ was arbitrary and the same holds at every point of $O$, the orbit $O$ is open in $V$.
\end{proof}

\begin{remark}\label{rem:rigidity:char}
Characteristic zero is used only once, to know that $\Aut(\Aalg)$ is smooth with Lie algebra $\operatorname{Der}(\Aalg)$. The rest of the argument is characteristic-free. The theorem therefore holds over any algebraically closed field for which $\Aut(\Aalg)$ is smooth. In positive characteristic $\operatorname{Der}(\Aalg)$ is only the Lie algebra of the automorphism group scheme, which may be non-reduced, and the computation of $T_xV$ above does not by itself bound the number of orbits.
\end{remark}

\begin{corollary}\label{cor:finite:classes}
Let \F be an algebraically closed field of characteristic zero and \PO a finite poset. For fixed partial invariants $(\Eset,\preccurlyeq,\H,\Theta)$, there are only finitely many graded isomorphism classes of \G-gradings on $\Inc{\PO}{\F}$ with these invariants. Equivalently, the structure constants of a realizable datum with these invariants take, up to equivalence, only finitely many values. In particular there are no continuous families of pairwise non-isomorphic gradings.
\end{corollary}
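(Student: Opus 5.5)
The corollary follows from \Cref{thm:rigidity}, applied with $\Aalg=\Inc{\PO}{\F}$ and a suitable finite set $X\subseteq\G$. The first step is to check that the partial invariants $(\Eset,\preccurlyeq,\H,\Theta)$ force the support of every grading having them into one finite subset $X$ of \G, chosen independently of the grading.

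By \Cref{thm:graded:peirce}, the corner $\Dalg_\ee$ has support $\H_\ee$. By \Cref{teo:structure:bimod}\labelcref{teo:structure:bimod:2}, the support of $\mM_{\ee\ff}$ is $\bigcup_{\theta\in\Theta_{\ee\ff}}C_\theta$. Each $C_\theta$ is a double coset $\H_\ee g_{C_\theta}\H_\ff$ of finite subgroups, so it is finite. Hence
\[
X=\bigcup_{\ee\in\Eset}\H_\ee\ \cup\ \bigcup_{\ee\prec\ff}\ \bigcup_{\theta\in\Theta_{\ee\ff}}C_\theta
\]
is finite and depends only on the partial invariants. There is one subtlety. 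The invariants are only defined up to the translations of \Cref{cor:graded:iso:data}, and these change the characters $\lambda$ but not the double cosets $C$. So $X$ is well defined, and every grading with the prescribed invariants has support in $X$.

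\Cref{thm:rigidity} then gives finitely many isomorphism classes of \G-gradings on $\Inc{\PO}{\F}$ with support in $X$. The gradings with the prescribed partial invariants form a subset of these, so they also fall into finitely many classes. Here "isomorphism of gradings" means conjugation by an algebra automorphism, which is exactly graded isomorphism in the sense of \Cref{sec:prelim}.

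For the reformulation in terms of structure constants, I use \Cref{thm:classification}. It identifies graded isomorphism classes with equivalence classes of realizable data, and by \Cref{cor:structure:constants} a datum with fixed partial invariants is determined by its finitely many scalars $c(\theta,\theta',\bar k;\theta'')$. So finitely many isomorphism classes means the constants take finitely many values up to equivalence.

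The absence of continuous families is the same statement read differently. A family of gradings parametrized by an irreducible curve, with pairwise non-isomorphic members, would have constant partial invariants along the curve (they range over a finite set). It would then give infinitely many classes, which is impossible.

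The only point needing care is the uniformity of $X$. In particular, I must check that the supports really are the double cosets, so that an empty homogeneous component cannot enlarge the support. This is exactly what \Cref{teo:structure:bimod}\labelcref{teo:structure:bimod:2} provides.
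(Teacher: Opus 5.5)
Your proposal is correct and follows the paper's proof. The paper likewise takes the finite set $X=\bigcup_\ee\H_\ee\cup\bigcup_{\ee\prec\ff}\bigcup_{\theta\in\Theta_{\ee\ff}}C_\theta$, applies \Cref{thm:rigidity}, and uses \Cref{thm:classification} to restate the result in terms of structure constants. Your check that label translations change the characters but not the double cosets, so that $X$ depends only on the invariants, is a detail the paper leaves implicit.
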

\begin{proof}
The support of such a grading is contained in the finite set $X=\bigcup_\ee\H_\ee\cup\bigcup_{\ee\prec\ff}\bigcup_{\theta\in\Theta_{\ee\ff}}C_\theta$, so \Cref{thm:rigidity} applies. The reformulation in terms of data is \Cref{thm:classification}.
\end{proof}

\begin{example}\label{ex:octahedron}
Let $\G=\{1,a,b,ab\}\cong\ZZ_2\times\ZZ_2$, let \R contain $\frac12$ (so that $\R\ZZ_2\cong\R^2$), and let \PO be the poset with three levels $\PO_\ee=\{p_1,p_2\}$, $\PO_\ff=\{q_1,q_2\}$, $\PO_\hh=\{r_1,r_2\}$, each an antichain, and $p_i<q_j<r_l$ for all $i,j,l$ (the order complex of \PO is the octahedron). Put $\H=\H_\ee=\langle a\rangle$, $\K=\H_\ff=\langle ab\rangle$, $\L=\H_\hh=\langle b\rangle$, label the points of each level by the trivial and the sign character of the corresponding group, and let
\[
m=\sum_{i,j}\ee_{p_iq_j},\qquad m'=\sum_{j,l}\ee_{q_jr_l},\qquad m''=\sum_{i,l}\ee_{p_ir_l}.
\]
Since $\H\cap\K=\K\cap\L=\H\cap\L=1$, all stabilizers are trivial, each of $\H\K$, $\K\L$, $\H\L$ is all of \G, and $\Tset_{\H,\K}(\G)=\Tset_{\K,\L}(\G)=\Tset_{\H,\L}(\G)$ consists of the single type $\theta_0=(\G,1)$ with $E_{\theta_0}=\hH\times\hK$ (resp.\ $\hK\times\hL$, $\hH\times\hL$). For $g,g',g''\in\G$ define a grading $\Aalg(g,g',g'')$ on \IPR by
\[
\Aalg^t=\bigoplus_{\ee\in\Eset}\Dalg_\ee^t\ \oplus\ \bigoplus_{\substack{h\in\H,k\in\K\\ hgk=t}}\R\,hmk\ \oplus\ \bigoplus_{\substack{k\in\K,l\in\L\\ kg'l=t}}\R\,km'l\ \oplus\ \bigoplus_{\substack{h\in\H,l\in\L\\ hg''l=t}}\R\,hm''l,
\]
where $\Dalg_\ee^h=\R\sum_p\chi_p(h)\ee_{pp}$ is the fine grading of $\Dalg_\ee\cong\RH$ and $h$, $k$, $l$ act through these identifications (so that $hmk=\sum_{i,j}\chi_{p_i}(h)\chi_{q_j}(k)\ee_{p_iq_j}$, and similarly for $m',m''$). Each block is an atomic bimodule of type $\theta_0$ with homogeneous generator $m$ (resp.\ $m'$, $m''$) of degree $g$ (resp.\ $g'$, $g''$), and $\Aalg(g,g',g'')$ is a grading if and only if the products are compatible: $(hmk)(k'm'l)=h\,m(kk')m'\,l$ and
\[
m\,(kk')\,m'=\sum_{i,l}\Bigl(\sum_j\chi_{q_j}(kk')\Bigr)\ee_{p_ir_l}=
\begin{cases}2m'',&kk'=1,\\ 0,&kk'=ab,\end{cases}
\]
so that $\Aalg^{hgk}\Aalg^{k^{-1}g'l}=\R\,hm''l\subseteq\Aalg^{hg''l}$ is required, i.e.\ $gg'=g''$. It follows that $\Aalg_1=\Aalg(1,1,1)$ and $\Aalg_2=\Aalg(b,b,1)$ are \G-gradings on \IPR, with the same \sset $\{\ee,\ff,\hh\}$, $\ee=\ee_{p_1p_1}+\ee_{p_2p_2}$ etc., the same corner algebras, and the same type sets $\Theta_{\ee\ff}=\Theta_{\ff\hh}=\Theta_{\ee\hh}=\{\theta_0\}$ (which are moreover invariant under all translations). They are not graded isomorphic: in $\Aalg_1$, $\ee\Aalg_1^1\ff\cdot\ff\Aalg_1^1\hh=\R m\cdot\R m'=\R m''\neq0$, whereas in $\Aalg_2$, $\ee\Aalg_2^1\ff=\R\,(a\,m\,ab)$ and $\ff\Aalg_2^1\hh=\R\,(m'b)$ (as $a\cdot b\cdot ab=1=b\cdot b$), and $(a\,m\,ab)(m'b)=a\,(m\,(ab)\,m')\,b=0$. A graded isomorphism $\Aalg_1\to\Aalg_2$ would map the \sset of $\Aalg_1$ onto a \sset of $\Aalg_2$, conjugate to $\{\ee,\ff,\hh\}$ by a degree-preserving inner automorphism, and would preserve the chain $\ee\prec\ff\prec\hh$. The vanishing or not of $\ee\Aalg^1\ff\cdot\ff\Aalg^1\hh$ is therefore an invariant, and $\Aalg_1\not\cong\Aalg_2$. In the language of \Cref{def:datum}, the two data differ only in their structure maps: $\mu(m\otimes m')=2m''$ in the first case, while in the second $\mu(m_1\otimes m_1')=0$ and $\mu(m_1\otimes(ab)m_1')=2\,(a,b)\cdot m''$ for the degree-$1$ generators $m_1=a\,m\,ab$, $m'_1=m'b$.

If \R contains a square root $i$ of $-1$ (and $\frac12$), there is a third class. Take the coefficient system with $a_{p_1q_1}=a_{p_1q_2}=1$, $a_{p_2q_1}=-i$, $a_{p_2q_2}=i$, $a_{q_jr_l}=\frac12(1\pm i)$ according to $\chi_{q_j}(ab)\chi_{r_l}(b)=\pm1$, and $a_{p_ir_l}=1$ except $a_{p_2r_2}=-1$. It defines a grading $\Aalg_3$ with all generators of degree $1$ and the same invariants, in which both $\ee\Aalg_3^1\ff\cdot\ff\Aalg_3^1\hh$ and $\ee\Aalg_3^1\ff\cdot\ff\Aalg_3^{ab}\hh$ are nonzero. The three gradings are pairwise non-isomorphic, since the pair of products $\bigl(\ee\Aalg^1\ff\cdot\ff\Aalg^1\hh,\ \ee\Aalg^1\ff\cdot\ff\Aalg^{ab}\hh\bigr)$ is $(\neq0,0)$ for $\Aalg_1$, $(0,\neq0)$ for $\Aalg_2$ and $(\neq0,\neq0)$ for $\Aalg_3$. Both products are invariants of the graded isomorphism class, because a graded isomorphism carries the \sset to a conjugate one and fixes each $\ee,\ff,\hh$, the groups $\H_\ee,\H_\ff,\H_\hh$ being distinct. These are all the gradings with these invariants up to isomorphism. In terms of \Cref{rem:coefficient:systems}, the ratios $r(\chi,\rho,\tau)$ are here forced to have the form $\frac12(c_1+c_{ab}\,\chi(a)\rho(ab)\tau(b))$, the requirement that $r$ be a coboundary on the $2$-sphere is $(c_1-c_{ab})^4=(c_1+c_{ab})^4$, and $c_{ab}/c_1\in\{0,\infty,\pm i\}$, the last two values being identified by translations.
\end{example}

\begin{remark}\label{rem:classification:comments}
(1) A \G-graded incidence algebra of a finite poset over \R is determined up to graded isomorphism by its datum: the poset \Eset of a \sset, the split abelian groups $\H_\ee$, the type sets $\Theta_{\ee\ff}$ and the structure constants $c(\theta,\theta',\bar k;\theta'')$, the last two up to translations and rescaling (\Cref{thm:classification,lem:datum:equivalence,cor:structure:constants}). A datum arises from an incidence algebra if and only if $\Aalg(\mathcal D)$ is associative and satisfies the incidence condition, which is a finite system of polynomial equations in the structure constants (\Cref{prop:incidence:condition,prop:incidence:explicit}). For fixed $(\Eset,\preccurlyeq,\H,\Theta)$ this system has only finitely many solutions up to equivalence when \R is an algebraically closed field of characteristic zero (\Cref{cor:finite:classes}). For an arbitrary indecomposable \R the same equations, with coefficients in the subring generated by the roots of unity involved, describe the realizable data. Which of the solutions are defined over \R, and which of them are inequivalent over \R, is an arithmetic question. The third class of \Cref{ex:octahedron}, which exists precisely when $i\in\R$, illustrates this.

(2) The incidence condition contains two ingredients of different nature: a combinatorial one, the transitivity in \labelcref{prop:incidence:condition:3}\textup{(a)}, which in terms of types says that the bridges between two types must land in types of $\Theta_{\ee\hh}$ with nonvanishing structure constants, and a cohomological one, \labelcref{prop:incidence:condition:3}\textup{(b)}. The latter is automatic when $H^2(\PO(\mathcal D);\uR)=0$, e.g.\ when the order complex of $\PO(\mathcal D)$ is contractible, but not in general, as the octahedron shows.

(3) The example shows that the type data are insufficient even for abelian \G and $\R=\CC$. The description of \cite[Section 4]{Santulo-Souza-Yasumura}, in which types reduce to cosets of $\H_\ee\H_\ff$ and characters of $\H_\ee\cap\H_\ff$, must therefore be supplemented by the structure constants. When \G is abelian, \R is an algebraically closed field of characteristic zero and the subgroup $\G_0\leq\G$ generated by the support is finite, the grading corresponds to an action of the finite dual group $\widehat{\G_0}$, and \Cref{thm:rigidity} reduces to the rigidity of actions of finite groups. The support itself is only a subset of \G, so it is the subgroup it generates that acts here. The argument given above does not use this correspondence and applies to arbitrary \G.
\end{remark}

%
%
%
\section*{Acknowledgments}
This project began some ten years ago, in unpublished work of the second author on gradings of incidence algebras over commutative rings. It was taken up again, and brought to the present form, after the first author joined it. The present work draws ideas from and was greatly inspired by the excellent works \cite{Santulo-Souza-Yasumura} and \cite{Miller-Spiegel}. We thank Prof. F. Y. Yasumura for the stimulating conversations we had about this topic.

\addcontentsline{toc}{chapter}{References}
%
%

\clearpage \thispagestyle{empty}
\end{document}